\newtheorem{thm}{Theorem}[subsection]
\newtheorem{prop}[thm]{Proposition}
\newtheorem*{prop*}{Proposition}
\newtheorem{cor}[thm]{Corollary}
\newtheorem{lemma}[thm]{Lemma}
\newtheorem{defi}[thm]{Definition}
\newtheorem{exa}[thm]{Example}
\newtheorem{algo}[thm]{Algorithm}
\newenvironment{de}[1][]{\begin{defi}[#1]\rm}{\end{defi}}
\newenvironment{ex}{\begin{exa}\rm}{\end{exa}}
\newenvironment{alg}{\begin{algo}\rm}{\end{algo}}
\newcommand{\defn}[1]{\textbf{\textit{#1}}}
\numberwithin{equation}{section}
\newcommand{\set}[2]{\ensuremath{\{#1 : #2 \}}}
\newcommand{\genset}[1]{\ensuremath{\langle\: #1 \:\rangle}}
\renewcommand{\to}{\longrightarrow}
\DeclareMathOperator{\im}{im}
\newcommand{\num}{\mathbf{num}}
\newcommand{\vect}{\mathbf{vec}}
\newcommand{\B}{\mathbb{B}}
\renewcommand{\S}{\mathbb{S}}
\newcommand{\Bn}{M_n(\B)}
\newcommand{\Bm}[1]{M_{#1}(\B)}
\newcommand{\Bmn}{M_{m,n}(\B)}
\newcommand{\Refln}{M_n^{\text{id}}(\B)}
\newcommand{\Refl}[1]{M_{#1}^{\text{id}}(\B)}
\newcommand{\Halln}{M_n^{\text{S}}(\B)}
\newcommand{\MTn}{M_n^{\mathcal{T}}(\B)}
\newcommand{\UTn}{UT_n(\B)}
\newcommand{\LTn}{LT_n(\B)}
\renewcommand{\L}{\mathscr{L}}
\newcommand{\R}{\mathscr{R}}
\newcommand{\D}{\mathscr{D}}
\newcommand{\J}{\mathscr{J}}
\renewcommand{\H}{\mathscr{H}}
\newcommand{\N}{\mathbb{N}}
\newcommand{\Np}{\N^{+}}
\newcommand{\Z}{\mathbb{Z}}
\renewcommand{\d}{\mathbf{d}}
\newcommand{\K}{\mathbb{K}}
\newcommand{\Kmin}{\K^{\infty}}
\newcommand{\Kmint}{\K^{\infty}_t}
\newcommand{\Kmax}{\K^{-\infty}}
\newcommand{\Kmaxt}{\K^{-\infty}_t}
\newcommand{\BGSet}{\mathcal{B}_{n,n}}
\newcommand{\AND}{\qquad\text{and}\qquad}
\newcommand{\mat}[4]{\begin{pmatrix}#1&#2\\#3&#4\end{pmatrix}}
\newcommand{\COMMA}{,\quad}
\newcommand{\RowS}{\Lambda}
\newcommand{\RowB}{\lambda}
\newcommand{\ColS}{P}
\newcommand{\ColB}{\rho}
\title{Minimal generating sets for matrix monoids}
\author{F. Hivert, J. D. Mitchell, F. L. Smith, and W. A. Wilson}
\date{\today}
\begin{document}

\maketitle

\begin{abstract}
  In this paper, we determine minimal generating sets for several well-known monoids of 
  matrices over certain semirings. In particular, 
  we find minimal generating sets for the monoids consisting of: all $n\times n$
  boolean matrices when $n\leq 8$; the $n\times n$ boolean matrices containing
  the identity matrix (the \textit{reflexive} boolean matrices) when $n\leq 7$;
  the $n\times n$ boolean matrices containing a permutation (the \textit{Hall}
  matrices) when $n \leq 8$; the upper, and lower, triangular boolean matrices
  of every dimension; the $2 \times 2$ matrices over the semiring $\N \cup
  \{-\infty\}$ with addition $\oplus$ defined by $x\oplus y = \max(x, y)$ and
  multiplication $\otimes$ given by $x\otimes y = x + y$ (the \textit{max-plus}
  semiring); the $2\times 2$ matrices over any quotient of the max-plus semiring
  by the congruence generated by $t = t + 1$ where $t\in \N$; the $2\times
  2$ matrices over the min-plus semiring and its finite quotients by the
  congruences generated by $t = t + 1$ for all $t\in \N$; and the $n \times n$
  matrices over $\Z / n\Z$ relative to their group of units.
\end{abstract}

\tableofcontents

\section{Introduction}

In this paper we find minimum cardinality generating sets for several well-known
finite monoids of matrices over semirings. The topic of determining such
minimum cardinality generating sets for algebraic objects is classical, and has
been studied extensively in the literature; see, for
example,~\cite{Branco2019aa, Colarte2020aa, DallaVolta1995aa, Dimitrova2020aa,
  Gelander2020aa, Holt2013aa, Moori1993aa, Rosenberger1988aa}.  In this
paper we are principally concerned with monoids of matrices over the boolean
semiring $\B$; we also present some results about monoids of min-plus and
max-plus matrices of dimension $2$, and matrices of arbitrary dimension over the
rings $\Z / n\Z$, $n\in \Np$.

If $S$ is a semigroup, then the least cardinality of a generating set for $S$ is
often called the \defn{rank} of $S$ and is denoted by $\mathbf{d}(S)$. 
A \defn{semiring} is a set $\mathbb{S}$ with two operations, $\oplus$ and $\otimes$,
such that $(\mathbb{S}, \oplus)$ forms a commutative monoid with identity $e$,
$(\mathbb{S}, \otimes)$ forms a monoid, $e\otimes x = x\otimes e = e$ for all $x
\in \mathbb{S}$, and multiplication distributes over addition. We refer to $e$
and $f$ as the \defn{zero} and \defn{one} of the semiring respectively.  One
natural example of a semiring is the natural numbers $\N$; note that in this
paper $0 \in \N$ and we write $\Np$ for the set of positive natural numbers.
Another well-known example is the \defn{boolean semiring} $\B$. This is the set
$\{0, 1\}$ with addition defined by 
\begin{align*}
  0 \oplus 1 = 1 \oplus 0 = 1 \oplus 1 = 1 \AND
  0 \oplus 0 = 0
\end{align*}
and multiplication defined as usual for the real numbers $0$ and $1$. If $n\in
\Np$, then we denote by $\Bn$ the monoid consisting of all $n\times n$ matrices
with entries in $\B$.
The semiring $\B$ is one of the simplest examples of a semiring, and the matrix
monoids $\Bn$ for $n \in \N$ have been widely studied in the literature since
the 1960s to the present day;
see, for example,~\cite{Breen2001aa, Breen1997aa, Butler1974aa, Caen1981aa, Cho1993aa,
  Cho1993ab, Fenner2018aa, Kim1982aa, Konieczny1992aa, Li1995aa, Plemmons1970aa,
  Plemmons1970ab, Roush1977aa, Schwarz1973aa, Shaofang1998aa, Tan2000aa,
  Zivkovic2006aa}. If $\alpha$ is a binary relation on the set $\{1, \ldots,
  n\}$, then we can define an $n\times n$ matrix $A_{\alpha}$ with entries in
the boolean semiring $\B$ such that the entry in row $i$, column $j$ of
$A_{\alpha}$ is $1$ if and only if $(i, j) \in \alpha$. The function that maps
every binary relation $\alpha$ to the corresponding $A_{\alpha}$ is an
isomorphism  between the monoid of binary relations on $\{1,  \ldots, n\}$ and
$\Bn$. Functions are a special type of binary relations, and composition of
functions coincides with composition of binary relations when applied to
functions.  As such the monoid $\Bn$ can be thought of as a natural
generalisation of the full transformation monoid consisting of all functions
from $\{1, \ldots, n\}$ to itself, under composition of functions.  In
comparison to the full transformation monoid and its peers, such as the
symmetric inverse monoid or the so-called diagram monoids, whose structures are
straightforward to describe, $\Bn$ has a rich and complex structure.  For
example, every finite group appears as a maximal subgroup of $\Bn$ for some $n
\in \N$ (see~\cite{Clifford1970aa, Montague1969aa, Plemmons1970ab}), and the
Green's structure of $\Bn$ is highly complicated; neither the number of
$\J$-classes of $\Bn$ nor the largest length of a chain of $\J$-classes is known
for $n\geq 9$. In this landscape it is perhaps not surprising that the minimum
sizes $\mathbf{d}(\Bn)$ of generating sets for the monoids $\Bn$ were previously
unknown for $n \geq 6$. On the other hand, a description of a minimal generating
set for $\Bn$ was given by  Devadze in 1968~\cite{Devadze1968aa} (see
\cref{thm:DevadzeKon}). There is no proof in~\cite{Devadze1968aa} that
the given generating sets are minimal, a gap that was filled by Konieczny in
2011~\cite{Konieczny2011aa}. The minimal generating sets given by Devadze and
Konieczny are specified in terms of representatives of certain $\J$-classes of
$\Bn$, the exact number of which is difficult to compute, and grows very quickly
with $n$; see~\cref{cor:ExponentialGenSets}.

The monoid $\Bn$ has several natural submonoids, such as the
monoids: $\Refln$ consisting of the \defn{reflexive boolean
  matrices} (that is matrices containing the identity matrix); $\Halln$
consisting of all \defn{Hall matrices} (matrices containing a permutation); and
$\UTn$ of \defn{upper triangular boolean matrices}. Each of these submonoids has
been extensively studied in their own right; see, for example,
\cite{Butler1974aa, Cho1993ab, Gaysin2021aa, Kim1977aa, Li2011aa, Pin1985aa,
  Schwarz1973aa, Straubing1980aa, Tan2000aa, Zhang2020aa}. Many other submonoids of $\Bn$ have also been investigated, although we do not consider these submonoids in this article; for a recent example, see~\cite{Carvalho2021aa}. 
  Unlike $\Bn$,
$\Refln$ is $\J$-trivial, and so has precisely $|\Refln| = 2 ^ {n ^ 2 - n}$
$\J$-classes. Similarly, $A, B\in \Halln$ are $\J$-related if and only if one
can be obtained from the other by permuting the rows and columns. However, the
size of $\Halln$ is not known for $n\geq 8$. This question was raised by Kim Ki
Hang~\cite[Problem 13]{Kim1982aa} in 1982 and recently raised again
in~\cite{Gaysin2020aa}; see~\cite{OEISHall} for the known values. In contrast,
the upper-triangular boolean matrix monoid $\UTn$ is easily seen to have size
$2^{\frac{n(n+1)}{2}}$.
The monoid $\UTn$ appears to have been primarily studied in the context of
varieties; see for instance~\cite{Li2011aa, Zhang2020aa}. It is somewhat
surprising that there appears to be no description of the unique minimal
generating set of $\UTn$ in the literature, in particular because this minimal
generating set is more straightforward to determine than that of the other
submonoids of $\Bn$ we consider (see \cref{sec:TriBoolMat}).

We also consider monoids of matrices over certain \defn{tropical} semirings. 
The \defn{min-plus semiring} $\Kmin$ is the set $\N \cup \{\infty\}$, with
$\oplus = \min$ and $\otimes$ extending the usual addition on $\N$ so that
$x\otimes\infty = \infty\otimes x = \infty$ for all $x \in \Kmin$. The
multiplicative identity of $\Kmin$ is $0$, and the additive identity is
$\infty$.

The \defn{max-plus semiring} $\Kmax$ is the set $\N \cup \{-\infty\}$ with
$\oplus = \max$ and $\otimes$ extending the usual addition on $\N$ so that
$x\otimes-\infty = -\infty\otimes x = -\infty$ for all $x \in \Kmax$. The
one of $\Kmax$ is $0$ and the zero is $-\infty$.

These semirings give rise to two infinite families of finite quotients. The
\defn{min-plus semiring with threshold $t$}, denoted $\Kmint$, is the set $\{0,
  1, \ldots, t, \infty\}$ with operations $\oplus = \min$ and $\otimes$ defined
by
\begin{align*}
  a \otimes b = \begin{cases}
    \min(t, a + b) \quad &\text{$a \neq \infty$ and $b \neq \infty$} \\
    \infty \quad &\text{$a = \infty$ or $b = \infty$}.
  \end{cases}
\end{align*}

The \defn{max-plus semiring with threshold $t$}, denoted $\Kmaxt$, is
constructed analogously; its elements are $\{-\infty, 0, 1, \ldots, t\}$,
addition is $\max$, and multiplication is defined by $a \otimes b = \min(t, a +
b)$ for all $a, b \in \Kmaxt$.
For arbitrary $t\in \N$, the semirings $\Kmint$ and $\Kmaxt$ with threshold $t$ can
also be defined as the quotient of the corresponding infinite semirings $\Kmint$
and $\Kmax$ by the congruence generated by $(t, t + 1)$.
The max-plus and min-plus tropical semirings are often defined in the literature
to be $\mathbb{R}\cup\{-\infty\}$ or $\mathbb{R}\cup\{\infty\}$, respectively.
The monoids $M$ of matrices over these semirings are uncountably infinite, and
so every generating set for such a monoid $M$ has cardinality $|M|$. This is one
rationale for considering the semirings $\Kmax$ and $\Kmin$ in this paper.
Another reason we consider this unorthodox definition is that the results in
this paper concerning monoids of matrices over $\Kmax$ and $\Kmin$ arose
initially from computational experiments in a reimplementation of the
Froidure-Pin Algorithm~\cite{Froidure1997aa, Semigroupe}
in~\cite{Libsemigroups}, and the original implementation by Jean-Eric Pin
in~\cite{Semigroupe} included support for the monoids we consider here.  Note
that under the standard definition, the min-plus and max-plus semirings are
isomorphic under the map $x \to -x$; this is not the case under our definition.
There is a significant amount of literature on matrices over the tropical
semirings; see for example \cite{Daviaud2018aa, Hollings2012aa, Izhakian2010aa,
  Johnson2013aa, Johnson2014aa, Johnson2019aa, Shitov2014aa, Simon1994aa}.
However, there is relatively little literature on the classical
semigroup-theoretic properties of the tropical matrix monoids like Green's
relations or generating sets; indeed descriptions of the Green's relations were
only published relatively recently (see~\cite{Hollings2012aa, Johnson2013aa}).
This may simply be because the monoids are rather complex and difficult to work
with.  

The final semiring that we are concerned with is $\Z / n\Z$, the ring of
integers modulo $n$. For brevity we will use $\Z_n$ to denote this ring.
The monoid of matrices over $\Z_n$ has received relatively little attention as a
semigroup. As an example of an \defn{exact semiring} as defined in
\cite{Wilding2013aa}, there is a characterisation of Green's relations on
$M_k(\Z_n)$ in terms of row and column spaces, but there appears to be little
more known about the structure of $M_k(\Z_n)$.

In \cref{sec:Preliminaries} we provide the well-known but necessary
preliminaries on Green's relations and matrix monoids; this is followed in
\cref{sec:MinimalGenSets} by some elementary lemmas on generating sets
and their minimality.

In \cref{sec:boolmat} we describe and compute minimal generating sets for
$\Bn$, $\Refln$, $\Halln$, and $\UTn$; the section is organised as follows. In
\cref{sec:BMatPrelim} we introduce the necessary background, which will
be used throughout \cref{sec:boolmat}. In \cref{sec:FullBoolMat},
we describe two methods for computing minimal generating sets of the form
described by Devadze and Konieczny, and apply these methods to compute
$\d(M_6(\B))$, $\d(M_7(\B))$, and $\d(M_8(\B))$. In
\cref{sec:RefBoolMat}, we describe the unique minimal generating set for
$\Refln$ and a method for computing it, and apply this method to compute
$\d(\Refln)$ for $n \leq 7$. In \cref{sec:HallBoolMat}, we describe how
minimal generating sets for $\Halln$ arise from minimal generating sets for
$\Bn$. Finally, in \cref{sec:TriBoolMat} we describe the unique minimal
generating sets for $\UTn$ and show that $\d(\UTn) = n(n+1)/2 + 1$, one greater
than the $n$th triangular number, for all $n \in \Np$.

In \cref{sec:Tropical} we show that the generating sets for the dimension
$2$ tropical matrix monoids given in~\cite{East2020aa} are minimal, and so, in
particular, $\d(M_2(\Kmint)) = t + 4$ and $\d(M_2(\Kmaxt)) = (t^2 + 3t + 8)/2$.

Finally, in \cref{sec:IntegersModN} we describe generating sets for
$M_k(\Z_n)$ relative to the group of units $GL_k(Z_n)$, and prove that these
generating sets are minimal.

\section{Preliminaries}
\label{sec:Preliminaries}
%%%%%%%%%%%%%%%%%%%%%%%%%%%%%%%%%%%%%%%%%%%%%%%%%%%%%%%%%%%%%%%%%%%%%%%%

\subsection{Green's relations}
On any semigroup $S$, there are some key equivalence relations. These are known
as Green's relations, and are defined in terms of principal ideals as follows.
Let $S$ be any semigroup and let $s, t \in S$. We denote by $S^1$ the monoid
obtained by adjoining an identity to $S$. Then
\begin{align*}
  s \L t &\text{ if and only if } S^1 s = S^1 t \\
  s \R t &\text{ if and only if } s S^1 = t S^1 \\
  s \J t &\text{ if and only if } S^1 s S^1 = S^1 t S^1.
\end{align*}
Finally, we define Green's $\H$-relation as the intersection of $\L$ and $\R$.
We write $X_s$ for the Green's $\mathcal{X}$-class of $s$, and $S/\mathcal{X}$
for the set of Green's $\mathcal{X}$-classes of $S$, where $\mathcal{X} \in
\{\L, \R, \J, \H\}$.
There is a natural partial order on certain Green's classes given by
\begin{align*}
  R_s \leq R_t &\text{ if and only if } sS^1 \subseteq tS^1 \\
  L_s \leq L_t &\text{ if and only if } S^1s \subseteq S^1t \\
  J_s \leq J_t &\text{ if and only if } S^1 s S^1 \subseteq S^1 t S^1.
\end{align*}
Further background on Green's relations can be found in~\cite{Howie1995aa}.

\subsection{Matrix semigroups}
\label{sec:matsemigp}
Given a semiring $\mathbb{S}$ and $m, n \in \Np$, we may study the set of $m
\times n$ matrices over $\mathbb{S}$; we will denote this by $M_{m,
  n}(\mathbb{S})$. In particular, we are interested in the multiplicative monoid
of square matrices of dimension $n \in \Np$ over $\mathbb{S}$, denoted by
$M_n(\mathbb{S})$.  There are a number of common features of such matrix
monoids. As usual, we let $0$ denote the additive identity of $\mathbb{S}$ and
$1$ denote the multiplicative identity. The following definitions are entirely
analogous to those from the standard linear algebra of vector spaces over
fields.

Let $A \in M_n(\mathbb{S})$. We write $A_{i*}$ to denote the $i$th row of $A$,
$A_{*i}$ to denote the $i$th column, and $A_{ij}$ to denote the $j$th entry of
the $i$th row. A \defn{linear combination} of rows is a sum of scalar multiples
of the rows, where both operations are defined componentwise in the usual way.
The \defn{row space} $\RowS(A)$ of $A$ is the set of all linear combinations of
the rows of $A$. An \defn{isomorphism} of row spaces is a linear bijection; that
is, a bijection $T: \RowS(A) \to \RowS(B)$ such that $T(\alpha x + \beta y) =
\alpha T(x) + \beta T(y)$ for all $x, y \in \RowS(A)$ and for all $\alpha, \beta
\in \mathbb{S}$.  A set of rows is \defn{linearly independent} if no row can be
written as a linear combination of other rows; this coincides with the usual
definition when $\mathbb{S}$ is a field. A \defn{spanning set} for a row space
is a set of rows which every element of the row space may be written as a linear
combination of.  A \defn{row basis} of $A$ is then a linearly independent
spanning set for the row space of $A$. \defn{Column spaces} $\ColS(A)$ and
\defn{column bases} $\ColB(A)$ are defined dually.\footnote{This notation for
  row and column spaces and bases arises from their application in acting
  semigroup algorithms; see~\cite{East2019aa} for some details although not in
  the particular context of matrix semigroups.}
The importance of row and column bases arises from the following well-known
results:

\begin{prop}
  \label{prop:rowsandideals}
  Let $A, B \in M_n(\mathbb{S})$. Then the following hold:
  \begin{enumerate}[label={\rm (\roman*)}]
    \item 
      $\RowS(AB) \subseteq \RowS(B)$ and $\ColS(AB) \subseteq \ColS(A)$.
    \item
      $L_A \leq L_B$ if and only if $\RowS(A) \subseteq \RowS(B)$ and $R_A \leq
      R_B$ if and only if $\ColS(A) \subseteq \ColS(B)$.
  \end{enumerate}
\end{prop}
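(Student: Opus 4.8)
The plan is to prove both parts by direct manipulation of linear combinations, exploiting that matrix multiplication over a semiring expresses rows of a product as linear combinations of rows of the right factor (and dually for columns).

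For part (i), I would observe that the $i$th row of $AB$ is $(AB)_{i*} = \sum_{k} A_{ik} B_{k*}$, which is manifestly a linear combination of the rows of $B$; hence every row of $AB$ lies in $\RowS(B)$, and since $\RowS(B)$ is closed under linear combinations, all of $\RowS(AB) \subseteq \RowS(B)$ follows. The statement $\ColS(AB) \subseteq \ColS(A)$ is the exact dual, using that the $j$th column of $AB$ is $\sum_k B_{kj} A_{*k}$, a linear combination of columns of $A$. One subtlety worth stating carefully is that over a general semiring we only have scalar multiplication on one side, so I should be explicit that $\RowS$ means left linear combinations $\sum \alpha_k (\text{row}_k)$ with $\alpha_k \in \mathbb{S}$ on the left, matching how the row-space/column-space conventions were set up in the preliminaries; everything then goes through formally just as over a field.

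For part (ii), consider the forward direction of the $\L$-statement: if $L_A \leq L_B$, then by definition $S^1 A \subseteq S^1 B$, so $A = CB$ for some $C \in M_n(\mathbb{S})^1$ (taking $C$ to be a matrix or the identity), and then part (i) gives $\RowS(A) = \RowS(CB) \subseteq \RowS(B)$. Conversely, if $\RowS(A) \subseteq \RowS(B)$, then each row $A_{i*}$ is a linear combination of rows of $B$, i.e. $A_{i*} = \sum_k C_{ik} B_{k*}$ for suitable scalars $C_{ik}$; assembling these scalars into a matrix $C \in M_n(\mathbb{S})$ yields $A = CB$, whence $S^1 A \subseteq S^1 B$ and $L_A \leq L_B$. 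The $\R$/column statement is again the dual argument, writing $A = BD$.

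The proof is essentially routine bookkeeping; the only mild obstacle is being careful about the one-sided nature of scalar multiplication over a semiring and about the role of $S^1$ versus $S$ (so that the ``multiplier'' $C$ may be the identity when $A$ is already a row-combination using only the identity coefficient pattern, though in the $n\times n$ case one can always take $C \in M_n(\mathbb{S})$ directly). I do not expect any genuine difficulty beyond making these conventions explicit, so I would keep the write-up short, proving the $\L$/row half in detail and invoking duality for the $\R$/column half.
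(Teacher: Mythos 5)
Your proposal is correct and follows essentially the same route as the paper: part (i) by writing the $i$th row of $AB$ as $\sum_k A_{ik}B_{k*}$, and part (ii) by combining (i) with the construction of an explicit multiplier matrix $C$ from the coefficients expressing each row of $A$ as a linear combination of rows of $B$. Your added remarks about one-sided scalar multiplication and about $S^1$ versus $S$ (which is moot here since $M_n(\mathbb{S})$ already contains the identity matrix) are sensible but do not change the argument.
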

\begin{proof}
  We will only prove the statements related to row spaces and $\L$-classes; the
  other statements may be proved in a dual fashion.
  \noindent \textbf{(i).}
  Suppose that $A = [\alpha_{ij}]$ and $B = [\beta_{ij}]$. If 
  $AB = [\gamma_{ij}]$, then 
  \[
  \gamma_{ij} = \alpha_{i1} \beta_{1j} + \alpha_{i2}\beta_{2j} 
                + \cdots + \alpha_{in}\beta_{nj}
              = \sum_{k = 1} ^ {n} \alpha_{ik}\beta_{kj}.
  \]
  It follows that the $i$th row of $AB$ is 
  \[
  (\gamma_{i1}, \gamma_{i2}, \ldots, \gamma_{in})
   =  \left (\sum_{k = 1} ^ {n} \alpha_{ik}\beta_{k1},
     \sum_{k = 1} ^ {n} \alpha_{ik}\beta_{k2},
     \ldots, 
     \sum_{k = 1} ^ {n} \alpha_{ik}\beta_{kn}\right) \\
   % & = & (\alpha_{i1}\beta_{11} + \alpha_{i2}\beta{21} + \cdots +
   %  \alpha_{in}\beta{n1},
   %  \alpha_{i1}\beta_{12} + \alpha_{i2}\beta{22} + \cdots +
   %  \alpha_{in}\beta{n2},
   %  \ldots,
   %  \alpha_{i1}\beta_{1n} + \alpha_{i2}\beta{2n} + \cdots +
   %  \alpha_{in}\beta{nn}) \\
   % & = & 
   % (\alpha_{i1}\beta_{11}, \alpha_{i1}\beta_{12}, 
   % \ldots, \alpha_{i1}\beta_{1n})
   % + 
   % (\alpha_{i2}\beta_{21}, \alpha_{i2}\beta_{22}, 
   % \ldots, \alpha_{i2}\beta_{2n})
   % + \cdots +
   % (\alpha_{in}\beta_{n1}, \alpha_{in}\beta_{n2}, 
   % \ldots, \alpha_{in}\beta_{nn}) \\
    =
    \sum_{k = 1} ^ {n} 
    \alpha_{ik}B_{k*}.
  \]
  Thus the $i$th row of $AB$ is a sum of rows of $B$, and so $\RowS(AB)
  \subseteq \RowS(B)$.

  \bigskip
  
  \noindent \textbf{(ii).}
  If $L_A \leq L_B$, then there exists $X \in M_n(\mathbb{S})$ such that $A =
  XB$; hence $\RowS(A) = \RowS(XB) \subseteq \RowS(B)$ by (i).  Conversely, if
  $\RowS(A) \subseteq \RowS(B)$, then every row of $A$ can be expressed as a
  linear combination of rows of $B$. For $1 \leq i \leq n$ of $A$, let $A_{i*} =
  \sum_{j = 1}^{n}x_{ij}B_{j*}$, and define $X = [x_{ij}]$. Then $A = XB$, and
  hence $L_A \leq L_B$. 
\end{proof}

The following corollary now follows immediately from \cref{prop:rowsandideals}.

\begin{cor} 
  \label{lem:GreensRowColumnSpaces}
  Let $A, B \in M_n(\mathbb{S})$. Then $A \L B$ if and only if $\RowS(A) =
  \RowS(B)$, and $A \R B$ if and only if $\ColS(A) = \ColS(B)$. 
\end{cor}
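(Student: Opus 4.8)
The plan is to derive \cref{lem:GreensRowColumnSpaces} directly from \cref{prop:rowsandideals}(ii) by recalling that $\L$ is the intersection of the two one-sided preorders and that the preorder $\leq$ on $\L$-classes is antisymmetric by construction. Concretely, $A \L B$ means $S^1 A = S^1 B$ where $S = M_n(\mathbb{S})$, which is equivalent to the conjunction $S^1 A \subseteq S^1 B$ and $S^1 B \subseteq S^1 A$, i.e. $L_A \leq L_B$ and $L_B \leq L_A$. So I would start by stating this equivalence between $A \L B$ and the two-sided inclusion of principal left ideals — this is immediate from the definitions of $\L$ and of $\leq$ on $L$-classes given in \cref{sec:Preliminaries}.

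Next I would apply \cref{prop:rowsandideals}(ii) to each of the two inclusions: $L_A \leq L_B$ iff $\RowS(A) \subseteq \RowS(B)$, and $L_B \leq L_A$ iff $\RowS(B) \subseteq \RowS(A)$. Conjoining these, $A \L B$ holds iff $\RowS(A) \subseteq \RowS(B)$ and $\RowS(B) \subseteq \RowS(A)$, which is precisely $\RowS(A) = \RowS(B)$. The statement about $\R$ and column spaces follows by the dual argument, using the column-space half of \cref{prop:rowsandideals} that was asserted (but not written out) there.

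There is essentially no obstacle here: the corollary is a formal consequence of the proposition, and the proof is a two-line unwinding of definitions. The only thing to be slightly careful about is the role of $S^1$ versus $S$ — one should note that $M_n(\mathbb{S})$ is already a monoid (it contains the identity matrix), so $S^1 = S$ and there is no subtlety in passing between "$A = XB$ for some $X \in M_n(\mathbb{S})$" and "$A \in S^1 B$"; this is exactly the reading already used inside the proof of \cref{prop:rowsandideals}(ii). Hence I would present the argument as a short paragraph rather than a displayed chain, since the paper itself flags that the corollary "follows immediately."
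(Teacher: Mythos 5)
Your argument is correct and is exactly the intended derivation: the paper states the corollary ``follows immediately'' from \cref{prop:rowsandideals}, and your unwinding of $A \L B$ into the two inclusions $L_A \leq L_B$ and $L_B \leq L_A$, each handled by part (ii), is precisely that immediate deduction. Nothing further is needed.
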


The following lemma provides a useful connection between the $\J$-relation and row
spaces; see \cref{thm:Zaretskii} for a strengthening in the case of $\Bn$.
The proof of this lemma is essentially the same as for boolean matrices,
as found in~\cite[Theorem 1.3.3, forward direction]{Kim1982aa}. Note that this
theorem and proof is in terms of Green's $\D$-relation (see \cite{Howie1995aa}),
but for the finite monoids $M_{n}(\mathbb{S})$, $\D = \J$.
\begin{prop}
  \label{prop:RowSpacesJRelation}
  Let $\mathbb{S}$ be a finite semiring. If two matrices in
  $M_{n}(\mathbb{S})$ are $\J$-related, then they have isomorphic row spaces.
\end{prop}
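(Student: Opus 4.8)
The plan is to mimic the classical boolean-matrix argument from Kim's book, translating it to an arbitrary finite semiring $\mathbb{S}$. Suppose $A, B \in M_n(\mathbb{S})$ are $\J$-related; since the monoid is finite we have $A \D B$, so there is a matrix $C$ with $A \R C$ and $C \L B$. By \cref{lem:GreensRowColumnSpaces}, $C \L B$ gives $\RowS(C) = \RowS(B)$, so it suffices to construct an isomorphism $\RowS(A) \to \RowS(C)$; equivalently, we may assume from the outset that $A \R B$, i.e.\ there exist $X, Y \in M_n(\mathbb{S})$ with $AX = B$ and $BY = A$.

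**First I would** record the two ``transport'' maps on row spaces. From \cref{prop:rowsandideals}(i), right multiplication by $X$ sends $\RowS(A)$ into $\RowS(AX) = \RowS(B)$, and this map $r_X \colon v \mapsto vX$ is additive and commutes with scalars (both operations on rows are componentwise, and $(\alpha v)X = \alpha(vX)$, $(v+w)X = vX + wX$), hence is a homomorphism of row spaces. Similarly $r_Y \colon \RowS(B) \to \RowS(A)$, $v \mapsto vY$, is a homomorphism. The key point is that these are mutually inverse on the respective row spaces: for any $v \in \RowS(A)$ we have $v = \sum_i c_i A_{i*}$ for some scalars $c_i$, so $vXY = \sum_i c_i (A_{i*}XY) = \sum_i c_i (AXY)_{i*}$, and since $AXY = BY = A$ this equals $\sum_i c_i A_{i*} = v$; dually $r_Y r_X$ is the identity on $\RowS(B)$. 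Therefore $r_X$ is a linear bijection $\RowS(A) \to \RowS(B)$, i.e.\ an isomorphism of row spaces, which is exactly the claim. (Running this together with the $C$ from the $\D$-decomposition, one gets $\RowS(A) \cong \RowS(C) = \RowS(B)$.)

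**The main obstacle** is making sure the existence of the ``witnesses'' $X$ and $Y$ is justified on the correct side: being $\R$-related via $sS^1 = tS^1$ gives $A \in BS^1$ and $B \in AS^1$, and we need the multipliers to be genuine elements of $M_n(\mathbb{S})$ rather than of $S^1$ — but $M_n(\mathbb{S})$ is already a monoid (it has the identity matrix), so $S^1 = S$ here and this is automatic; the same remark handles the $\L$-side for $C \L B$. A second point worth stating carefully is that a linear bijection automatically has linear inverse, so once $r_X$ is shown to be a bijection with $r_Y$ as two-sided inverse, the pair $(r_X, r_Y)$ certifies an isomorphism in the sense defined in \cref{sec:matsemigp}; finiteness of $\mathbb{S}$ is used only to guarantee $\D = \J$. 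I would present the column-space / $\ColS$ statement as following dually, exactly as in \cref{prop:rowsandideals}, and not repeat the argument.
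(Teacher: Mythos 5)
Your proof is correct and is precisely the argument the paper has in mind: it defers to the boolean-matrix proof in Kim's book (Theorem 1.3.3), using $\D = \J$ for the finite monoid $M_n(\mathbb{S})$, the equality of row spaces for $\L$-related matrices, and the mutually inverse transport maps $v \mapsto vX$, $v \mapsto vY$ on row spaces for the $\R$-related pair. Your write-up supplies exactly the details the paper leaves to the citation, so there is nothing to add.
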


The \defn{group of units} of $M_n(\mathbb{S})$ is the group of invertible
matrices (\defn{units}) in $M_n(\mathbb{S})$; the identity matrix is always a
unit. The group of units is always the maximal class in the $\J$-order of
$M_n(\mathbb{S})$.

For any semiring $\mathbb{S}$, the symmetric group embeds into the group of units of
$M_n(\mathbb{S})$ by the map 
\begin{align*}
  \phi:\: &\alpha \to [a_{ij}], \\
  &a_{ij} =
    \begin{cases}
      1 \quad & i\alpha = j \\ 
      0 \quad &\text{otherwise}.
    \end{cases}
\end{align*}
The image of this embedding will often simply be referred to as $S_n$ or the
symmetric group when context prevents ambiguity. Elements of this embedding
$S_n$ are called \defn{permutation matrices}; multiplying by a permutation
matrix on the left permutes rows of a matrix, and multiplying on the right
permutes columns. Similarly, we may define a \defn{transformation matrix} to be
one which contains a single $1$ in every row; these are the images of the
obvious extension of $\phi$ to the full transformation monoid $\mathcal{T}_n$.

Two matrices $A, B\in M_n(\mathbb{S})$ are \defn{similar} if each can be
obtained from the other by permuting the rows and/or columns. Note that similar
matrices are $\J$-related in $M_n(\mathbb{S})$, by multiplying on the left
and/or right by permutation matrices.

A non-unit matrix $A \in M_n(\mathbb{S})$ is \defn{prime} if $A = BC$ implies
$B$ or $C$ is a unit. The prime matrices of $M_n(\mathbb{S})$ are immediately
below the group of units in the $\J$-order. 

%%%%%%%%%%%%%%%%%%%%%%%%%%%%%%%%%%%%%%%%%%%%%%%%%%%%%%%%%%%%%%%%%%%%%%%%
\subsection{Minimal generating sets}
\label{sec:MinimalGenSets}

Given a semigroup $S$, we may ask what the minimum cardinality is of a
generating set for $S$. This is known as the \defn{rank} of $S$ and is denoted
by $\mathbf{d}(S)$. The same notion exists for monoid generating sets; the rank
of a monoid as a monoid is either one less than the rank as a semigroup (if
the group of units is trivial), or the same otherwise. We therefore consider
only the rank of monoids as semigroups in this paper. A generating
set $X$ for $S$ is \defn{irredundant} if no subset of $X$ generates $S$, and we
say that any irredundant generating set of cardinality $\mathbf{d}(S)$ is a
\defn{minimal generating set}. If $\mathbf{d}(S) \in \N$, then every generating
set of cardinality $\mathbf{d}(S)$ is irredundant and hence minimal; this does
not hold when $\mathbf{d}(S)$ is infinite. We will also say that a generator $x
\in X$ is irredundant if $X \setminus \{x\}$ does not generate $S$.

An element $x$ of a monoid $M$ with identity $e$ is \defn{decomposable} if $x$
may be written as a product of elements in $M\setminus\{e, x\}$, and
\defn{indecomposable} otherwise. Note that a different definition of
indecomposable elements exists in the literature. Under that definition, an
element $x$ of a semigroup $S$ is decomposable if $x \in S^2 = \set{st}{s, t \in
  S}$, and indecomposable otherwise. These definitions are closely related but
not equivalent, and our definition will be more useful for the purposes of this
paper.

Note that every indecomposable element of a monoid $M$ must be contained in
every generating set for $M$; hence the minimal generating set of a monoid is a
superset of the indecomposable elements.

We will use the following straightforward lemma repeatedly to show that certain
sets generate submonoids of $\Bn$.
\begin{lemma}
  \label{lem:GenSetDecomposition}
  Let $S$ be a finite semigroup and let $X$ be a subset of $S$, such that for
  every element $x \in X$ with $\J$-class $J_x$, we have $J_x \subseteq
  \genset{X}$. If every element $s \in S$ that is not $\J$-related to an element
  of $X$ can be written as a product of elements of $S$, none of which are
  $\J$-related to $s$ in $S$, then $X$ generates $S$. 
\end{lemma}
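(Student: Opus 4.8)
The plan is to prove \cref{lem:GenSetDecomposition} by a well-founded induction on the $\J$-order of $S$, showing that every $s \in S$ lies in $\genset{X}$. First I would fix $s \in S$ and split into two cases according to whether or not $s$ is $\J$-related to some element of $X$. If $s \J x$ for some $x \in X$, then by hypothesis $J_x \subseteq \genset{X}$, and since $J_s = J_x$ we immediately get $s \in \genset{X}$, with no induction needed. So the only real work is the case where $s$ is not $\J$-related to any element of $X$.

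In that case, the hypothesis gives a factorisation $s = s_1 s_2 \cdots s_k$ with each $s_i \in S$ and $s_i$ not $\J$-related to $s$. The key point is to argue that each $s_i$ satisfies $J_{s_i} > J_s$ strictly in the $\J$-order, so that the induction hypothesis applies to each factor and yields $s_i \in \genset{X}$, whence $s = s_1 \cdots s_k \in \genset{X}$. To see the strict inequality: from $s = s_1 \cdots s_k$ we have $S^1 s S^1 \subseteq S^1 s_i S^1$ for each $i$ (as $s_i$ appears as a factor of $s$, flanked by elements of $S^1$), so $J_s \leq J_{s_i}$; and since $s_i$ is not $\J$-related to $s$, this inequality is strict. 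Here I should be slightly careful about the degenerate case $k = 1$: if $s = s_1$ then $s$ is $\J$-related to $s_1$, contradicting the choice of the factorisation, so in fact $k \geq 2$ whenever this case arises; alternatively one simply notes that a length-one ``product'' is excluded by the statement ``a product of elements of $S$, none of which are $\J$-related to $s$''. The induction is well-founded because $S$ is finite, so the $\J$-order has no infinite ascending chains and we may induct downward from the maximal $\J$-classes.

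To make the induction formally clean, I would phrase it as: induct on $|{\uparrow}J_s|$, the number of $\J$-classes lying strictly above $J_s$, or equivalently do strong induction on the poset $S/\J$ starting from maximal elements. The base case is a $\J$-class $J_s$ maximal in $S/\J$: then $s$ cannot be written as a product of strictly $\J$-larger elements and cannot be not-$\J$-related to an element of $X$ unless $X$ contains no element of that maximal class; but if $s$ is not $\J$-related to any element of $X$, the factorisation hypothesis still forces a product of elements not $\J$-related to $s$, each necessarily in a strictly higher $\J$-class — which is impossible for a maximal class. Hence any maximal $\J$-class must contain an element of $X$ (and then the first case handles it). This shows the base case is vacuous or handled by the first case, and the inductive step is exactly the argument of the previous paragraph.

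I do not anticipate a genuine obstacle here: the lemma is essentially a formalisation of the standard ``peel off to higher $\J$-classes'' argument, and the only subtlety is bookkeeping — ensuring the factorisations genuinely produce strictly $\J$-larger elements and that the base case of the induction is correctly identified. The finiteness of $S$ is used only to guarantee the $\J$-order is well-founded (so the downward induction terminates); nothing deeper about the structure of $S$ is needed. The hypothesis that $J_x \subseteq \genset{X}$ for every $x \in X$ is what lets us conclude $s \in \genset{X}$ rather than merely $s \in \genset{X \cup (\text{higher classes})}$, and it is used precisely once, in the first case.
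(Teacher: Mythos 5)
Your proof is correct and follows essentially the same route as the paper's: both arguments rest on the observation that each factor in the given decomposition lies strictly above $s$ in the $\J$-order, and both use finiteness of $S/\J$ to terminate. You merely recast the paper's informal ``repeat until the process terminates'' argument as an explicit well-founded induction on the $\J$-order, which is a matter of presentation rather than substance.
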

\begin{proof}
  Let $s \in S$. Then either $s$ is in a $\J$-class of an element belonging to
  $X$, in which case we are done, or $s$ may be written as a product of elements
  which are not $\J$-related to $s$, and hence lie strictly above $s$ in the
  $\J$-order. This same argument applies to each of those elements. Since $S$
  has finitely many $\J$-classes, this process must eventually terminate.
  However, the process of decomposing elements may only terminate if each
  element to be decomposed lies in the $\J$-class of an element of $X$; hence $s
  \in \genset{X}$.
\end{proof}

We also have the following proposition, used to prove that certain generating sets are
minimal. 
\begin{prop}
  \label{prop:Wilf}
  Let $S$ be a finite semigroup. Suppose that $X$ is an irredundant generating
  subset of $S$ that contains at most one element from each $\J$-class of $S$.
  Then $X$ has minimum cardinality, i.e.\ $\mathbf{d}(S) = |X|$.
\end{prop}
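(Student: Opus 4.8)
The plan is to show that any generating set $Y$ for $S$ must contain at least one element from each $\J$-class that meets $X$, which immediately gives $|Y| \geq |X|$ since $X$ contains at most one element per $\J$-class. The key structural fact I would use is the standard observation about how products interact with the $\J$-order: if $s = ab$ in a finite semigroup, then $J_s \leq J_a$ and $J_s \leq J_b$; moreover, if $s$ is $\J$-related to $a$ (or to $b$) in a finite semigroup, then in fact $s\,\R\,a$ (respectively $s\,\L\,b$) — this is the classical stability of finite semigroups. The real content I need is slightly weaker: a maximal (with respect to the $\J$-order) element among those appearing in a given $\J$-class cannot be written as a product of strictly $\J$-smaller elements, so it must literally appear as a factor of any word expressing it.

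First I would fix an arbitrary generating set $Y$ of $S$ and an element $x \in X$, and consider the $\J$-class $J_x$. Since $Y$ generates $S$, we may write $x$ as a product $x = y_1 y_2 \cdots y_k$ of elements of $Y$. By \cref{prop:rowsandideals}-style reasoning (or just the elementary ideal inclusions defining the $\J$-order), each $y_i$ satisfies $J_x \leq J_{y_i}$. In a finite semigroup the $\J$-order is a genuine partial order on $\J$-classes, and one can show that at least one factor $y_i$ must satisfy $J_{y_i} = J_x$: otherwise every $y_i$ lies strictly above $x$, and then $x \in S^1 y_1 \cdot y_2 \cdots y_k S^1$ forces — via stability and an induction on the length of the product — that $x$ is $\J$-equivalent to some proper subproduct lying strictly above it, a contradiction. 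Thus each $J_x$ with $x \in X$ contains at least one element of $Y$.

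Next I would assemble these: the map sending each $x \in X$ to (a choice of) a $Y$-element in $J_x$ is well-defined, and it is injective because distinct $x, x' \in X$ lie in distinct $\J$-classes by hypothesis, hence their images lie in distinct $\J$-classes and are in particular distinct. This yields $|Y| \geq |X|$ for every generating set $Y$, so $\mathbf{d}(S) \geq |X|$; combined with the hypothesis that $X$ is itself a (finite, hence since $\mathbf{d}(S)$ is finite, minimal) generating set, we get $\mathbf{d}(S) = |X|$.

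The main obstacle is the inductive step showing that a word for $x$ over elements all strictly $\J$-above $x$ is impossible — i.e. that some factor must be $\J$-equivalent to $x$. The cleanest route is to invoke stability of finite semigroups directly: if $x = uv$ with $J_x = J_u$ then $x\,\R\,u$, and dually on the other side; iterating, one peels factors off a minimal-length product until a $\J$-equivalent factor is exposed. I would state this as a short preliminary observation (or cite it from~\cite{Howie1995aa}) rather than reprove stability, since it is entirely standard; the remainder of the argument is then the bookkeeping in the two paragraphs above.
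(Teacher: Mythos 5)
Your overall plan --- show that every generating set $Y$ must meet the $\J$-class $J_x$ of each $x \in X$, and then count --- is the same as the paper's, but the central step is not established by your argument, and in the form you state it, it is false. You claim that if $x = y_1\cdots y_k$ then some $y_i$ must be $\J$-equivalent to $x$, and you propose to derive this from stability alone. Consider the three-element nilpotent semigroup $S = \{a, a^2, 0\}$ with $a^3 = 0$: here $a^2 = a\cdot a$, yet $J_{a^2}$ lies strictly below $J_a$, so $a^2$ is a product of factors all strictly $\J$-above it. Stability only tells you that \emph{if} $x \mathrel{\J} u$ for a factorisation $x = uv$ then $x \mathrel{\R} u$ (and dually); it yields no contradiction in the case you actually need to exclude, namely every factor lying strictly above $x$. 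The tell-tale symptom is that your lower-bound argument never uses irredundancy of $X$, whereas the proposition fails without that hypothesis: in the same semigroup, $X = \{a, a^2\}$ generates $S$ and contains at most one element per $\J$-class, but $|X| = 2 > 1 = \d(S)$.

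The missing ingredient is precisely where irredundancy enters, and this is how the paper argues. For $x \in X$, let $U$ be the union of the $\J$-classes strictly above $J_x$. Every element of $U$ is a product of elements of $X$ not involving $x$ (any product involving $x$ lies $\J$-below $x$, hence outside $U$), so $U \subseteq \genset{X\setminus\{x\}}$; irredundancy gives $x \notin \genset{X\setminus\{x\}}$, hence $x \notin \genset{U}$, and therefore no factorisation of $x$ can have all its factors in $U$. Since all factors of any factorisation of $x$ lie in $J_x \cup U$, some factor lies in $J_x$. With this replacement your final counting paragraph goes through unchanged, and no appeal to stability is needed anywhere.
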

\begin{proof}
  Let $x \in X$.
  It suffices to show that any generating set for $S$ contains an
  element of $J_{x}$.

  Let $U$ be the union of the $\J$-classes of $S$ that lie strictly above
  $J_{x}$ in the $\J$-order.  If $x$ is written as a product of elements of $S$,
  then it is clear that those elements are contained in $J_{x} \cup U$.
  Furthermore, since $X$ generates $S$, which contains $U$, and since no element
  of $U$ can be written as a product involving $x$, it follows that $U \subseteq
  \genset{X \setminus \{x\}}$.  Therefore $\genset{U} \leq \genset{X \setminus
  \{x\}}$. The assumption that $X$ is irredundant implies that $x \not\in
  \genset{X \setminus \{x\}}$, and so $x \not\in \genset{U}$.  Therefore, if $x$
  is written as a product of elements of $S$, then at least one of those
  elements is contained in $J_{x}$.  In other words, any generating set for $S$
  contains an element of $J_{x}$.
\end{proof}

%%%%%%%%%%%%%%%%%%%%%%%%%%%%%%%%%%%%%%%%%%%%%%%%%%%%%%%%%%%%%%%%%%%%%%%%

\section{Boolean matrix monoids}
\label{sec:boolmat}

In this section, we compute minimal generating sets for the full boolean matrix
monoid $\Bn$ and some of its natural submonoids. In order to define these
submonoids, we must introduce the concept of matrix containment: given two
matrices $A, B \in \Bn$, we say that $A$ is \emph{contained} in $B$ if for all
$1 \leq i,j \leq n$, $A_{ij} = 1$ implies that $B_{ij} = 1$; that is, $B$ contains
a $1$ in every position in which $A$ does.

We consider the submonoids:
\begin{enumerate}[label={\rm (\roman*)}]
  \item $\Refln$, of matrices containing the identity (reflexive matrices),
  \item $\Halln$, of matrices containing a permutation matrix (Hall matrices),
  \item $\UTn$ and $\LTn$, of upper- and lower-triangular boolean matrices.
\end{enumerate}

As mentioned above, there is a natural isomorphism between binary relations on
$\{1, \ldots, n\}$ and $\Bn$.  The monoid $\Refln$ arises as the image of the
reflexive binary relations under this isomorphism, and this is the primary
reason for studying $\Refln$. However, it may also be viewed as the monoid of
matrices containing a certain pattern of $1$s - namely the identity matrix.
Similarly, $\Halln$ arises as those matrices describing a family of subsets
which satisfy Hall's marriage condition, but also as those matrices containing a
permutation matrix.

We compute the largest known ranks of these monoids, which are presented in
\cref{tab:BMatResults} along with whether that rank was previously known or
is practically computable by brute force. The rank of $\UTn$ is given by the
triangular numbers plus $1$; this may have already been known but we
were unable to find a reference in the literature. Note that since $\UTn$ and
$\LTn$ are anti-isomorphic via the transposition map, $\mathbf{d}(\UTn) =
\mathbf{d}(\LTn)$ for all $n$. None of the other ranks are known beyond $n = 8$. 

\begin{table}
  \centering
  \begin{tabular}{l|r|r|r|r|r}
    $n$ & $\mathbf{d}(\Bn)$ & $\mathbf{d}(\Refln)$ & $\mathbf{d}(\Halln)$ &
    $\mathbf{d}(\MTn)$ & $\mathbf{d}(\UTn)$ \\ 
    \hline
    1 & *2         & *1& *1& *1& *3\\
    2 & *3         & *2& *2& *3& *4\\
    3 & *5         & *9& *4& *5& *7\\
    4 & *7         & *39& *6& *7& *11\\
    5 & *13        & *1\ 415& *12& 13& *16\\
    6 & 68         & 482\ 430& 67& ?& *22\\
    7 & 2\ 142     & 1\ 034\ 972\ 230& 2\ 141& ?& 29\\
    8 & 459\ 153   & ?& 459 152& ?& 37\\
    9 & ?   & ?& ?& ? & 45
  \end{tabular}
  \vspace{1cm}

  \caption{The ranks of certain matrix monoids over $\B$; a * denotes that the
    rank was already known or is computable by brute force, and a ?\ indicates
    that the value is unknown.
    For $\MTn$, see the proposition below and the discussion preceding it.
    Note that the ranks given are to generate the monoids
    as semigroups. The values for $\mathbf{d}(\Bn)$ and $\mathbf{d}(\Refln)$ can be found in the OEIS:~\cite{OEISfullbool, OEISreflex}.}
  \label{tab:BMatResults}
\end{table}
There are several obvious relationships between columns in
\cref{tab:BMatResults}; we prove that these relationships hold for all $n
\in \Np$.

Another submonoid of $\Bn$, which has attracted recent attention, is the Gossip
monoid $G_n$; see~\cite{Brouwer2015aa, Fenner2018aa} for some recent results and
\cite{Baker1972aa, Hajnal1972aa} for the problem which inspired $G_n$. This is
typically defined as the monoid generated by the so-called \defn{phone-call}
matrices
$C[i, j]$, where 
\[C[i, j]_{kl} = 
    \begin{cases}
      1 \quad &\text{ if } k = l \text{ or } \{i, j\} = \{k, l\} \\ 
      0 \quad &\text{otherwise}
    \end{cases}
\]
and where $1 \leq i, j \leq n$.
It is straightforward to show that the phone-call matrices form a minimal
generating set for $G_n$: since every element of $G_n$ contains the identity
matrix and at least two more $1$s, the phone-call matrices are indecomposable,
and hence contained in any generating set.

There are a number of other submonoids of $\Bn$ that arise as generalisations of
the Hall monoid. If we define the \defn{containment closure} $\bar{S}$ of a
subsemigroup $S \leq \Bn$ to be the set of matrices $A \in \Bn$ containing some
element of $S$, then for any choice of subsemigroup $S$, $S \leq \bar{S} \leq
\Bn$. In particular, the submonoids $\Refln$ and $\Halln$ are of this
type. It remains an open problem to determine minimal generating sets for
arbitrary containment closures. This problem seems difficult; a more tractable
problem may be to restrict $S$ to subgroups of $S_n$ or submonoids of
transformation matrices. In particular, if we choose $S$ to be the set of
transformation matrices, we obtain the monoid $\MTn$ of matrices containing a
transformation matrix. The following is an unpublished result of Marcel Jackson.
\begin{prop*}[\cite{Jackson2021aa}]
  \label{conj:TransGenSet}
  For $n \geq 2$, every minimal generating set for $\MTn$ consists of a set of
  representatives of the $\J$-classes of $\Bn$ which contain a prime matrix,
  together with a minimal generating set for $S_n$, and two matrices similar to
  \begin{align*}
  \begin{pmatrix}
    0 & 1 & 0 & 0 & \cdots & 0 \\
    0 & 1 & 0 & 0 & \cdots & 0 \\
    0 & 0 & 1 & 0 & \cdots & 0 \\
    0 & 0 & 0 & 1 & \cdots & 0 \\
    \vdots  & \vdots & \vdots & \vdots & \ddots & \vdots\\
    0 & 0 & 0 & 0 & \cdots & 1
  \end{pmatrix}\text{ and } 
  \begin{pmatrix}
    1 & 1 & 0 & 0 & \cdots & 0 \\
    0 & 1 & 0 & 0 & \cdots & 0 \\
    0 & 0 & 1 & 0 & \cdots & 0 \\
    0 & 0 & 0 & 1 & \cdots & 0 \\
    \vdots  & \vdots & \vdots & \vdots & \ddots & \vdots\\
    0 & 0 & 0 & 0 & \cdots & 1
  \end{pmatrix}.
  \end{align*}
\end{prop*}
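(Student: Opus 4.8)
The plan is to use \cref{lem:GenSetDecomposition} to show that the proposed set $X$ generates $\MTn$, and then apply \cref{prop:Wilf} together with an irredundancy argument to conclude minimality. First I would set up the $\J$-class structure: recall that $\MTn$ is a submonoid of $\Bn$ containing $S_n$, and that $A \J B$ in $\MTn$ only if $A \J B$ in $\Bn$ (though the converse can fail). I would then identify the candidate indecomposables. The group of units of $\MTn$ is $S_n$ (a matrix containing a transformation matrix is a unit iff it is a permutation matrix), so a minimal generating set for $S_n$ is forced. The prime matrices of $\Bn$ lie immediately below $S_n$ in the $\J$-order of $\Bn$; one checks that each such matrix lies in $\MTn$ (a prime boolean matrix necessarily contains a transformation, since otherwise it would have a zero row and be too degenerate — this needs a short argument) and is indecomposable in $\MTn$, because any factorisation in $\Bn$ already forces a unit factor, and the same holds a fortiori in the submonoid. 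Finally, the two displayed matrices: the first is a transformation matrix of rank $n-1$ (the non-injective map collapsing $1,2$ onto $2$), the second is the reflexive matrix $I + E_{12}$; neither is prime in $\Bn$, so they are not captured by the prime-representative part, and I would show directly that each is indecomposable \emph{within} $\MTn$ — the key point being that although they factor in $\Bn$, every such factorisation uses a matrix outside $\MTn$.

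For the generating direction, I would verify the hypotheses of \cref{lem:GenSetDecomposition} with $S = \MTn$ and $X$ the proposed set. The first hypothesis, $J_x \subseteq \genset{X}$ for each $x \in X$: for $x \in S_n$ this is automatic once $S_n \subseteq \genset X$; for a prime representative $x$, every element similar to $x$ is obtained by pre- and post-multiplying by permutation matrices, and more generally the whole $\J$-class of $x$ in $\MTn$ is reached by the standard argument that within a $\J$-class one moves between $\L$- and $\R$-classes using elements of adjacent classes — here I would lean on the fact that each $\J$-class of $\Bn$ meeting the primes contributes, and that the displayed matrices let us also reach $\J$-classes below. The substantive second hypothesis is that every $A \in \MTn$ not $\J$-related to an element of $X$ factors in $\MTn$ as a product of elements strictly $\J$-above $A$. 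Here I would argue by the row/column space machinery of \cref{prop:rowsandideals} and \cref{lem:GreensRowColumnSpaces}: if $A \in \MTn$ is not a unit, not prime, and not $\J$-equivalent to one of the two displayed matrices, then I can write $A = BC$ with $B$ a transformation matrix of the appropriate rank pattern and $C \in \MTn$, exploiting that $A$ contains a transformation matrix $\tau$, that $\tau$ itself factors through products of the displayed generator of rank $n-1$ and permutations (the non-injective transformation monoid $\mathcal{T}_n$ is generated by $S_n$ together with one idempotent of rank $n-1$), and that $I + E_{12}$-type matrices let us "fill in" extra $1$s one position at a time above the support of $\tau$ while staying strictly higher in the $\J$-order. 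Chaining these gives the decomposition required by the lemma.

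For minimality, once $X$ is shown to be an irredundant generating set containing at most one element per $\J$-class of $\MTn$, \cref{prop:Wilf} gives $\d(\MTn) = |X|$. The at-most-one-per-class condition needs care: distinct prime representatives are chosen from distinct $\J$-classes of $\Bn$ by construction, but I must also check the two displayed matrices lie in $\J$-classes of $\MTn$ distinct from each other and from all prime classes and from $S_n$ — this follows from comparing ranks and row spaces. Irredundancy is where the indecomposability arguments pay off: removing any prime representative, or either displayed matrix, or any generator of $S_n$, loses an indecomposable element and hence fails to generate.

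The main obstacle I anticipate is proving that the two displayed matrices are indecomposable \emph{in $\MTn$} and that they genuinely suffice, jointly with $S_n$ and the prime representatives, to reconstruct every element of $\MTn$ — in other words, pinning down exactly why precisely these two matrices (and no fewer) are needed to bridge the gap between the prime $\J$-classes of $\Bn$ and the lower $\J$-classes that are not prime in $\Bn$ but still lie in $\MTn$. This amounts to a careful analysis of how a transformation matrix of rank $n-1$ and a single "overflow" $1$ generate, under multiplication, all matrices containing a transformation, and will likely require an induction on the number of $1$s (or on the $\J$-order) with the displayed matrices as the base-case building blocks.
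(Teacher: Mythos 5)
First, a point of reference: the paper does not prove this proposition. It is stated as an unpublished result of Marcel Jackson, so there is no proof in the paper to compare your proposal against. Judged on its own terms, your outline is the natural one --- it parallels the paper's proofs of \cref{thm:Devadzefull} for $\Bn$ and \cref{thm:HallGenSet} for $\Halln$ --- and several of your individual observations are sound: the group of units of $\MTn$ is $S_n$; the prime matrices of $\Bn$ do lie in $\MTn$ (they are in fact Hall by \cref{lem:PrimeAreHall}, which is stronger than what you need, since $A \in \MTn$ if and only if $A$ has no zero row); the two displayed matrices are, respectively, a rank-$(n-1)$ transformation matrix and an elementary matrix, neither prime; and \cref{prop:Wilf} is the right tool for minimality once you have an irredundant generating set meeting each $\J$-class of $\MTn$ at most once.

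That said, the proposal has genuine gaps at exactly the points you flag, and those are where essentially all of the work lies. First, the decomposition step --- that every element of $\MTn$ which is not a unit, not prime, not elementary, and not $\J$-related in $\MTn$ to the rank-$(n-1)$ transformation matrix factors \emph{inside $\MTn$} as a product of elements strictly above it in the $\J$-order --- is only gestured at (``fill in extra $1$s one position at a time''). The analogous step for $\Halln$ occupies \cref{lem:NonTrimDecomposable,lem:HallAssumptions,lem:DeficiencyReduction}: the obstruction is that the greedy multiplier of \cref{lem:GreedyMultipliers} need not lie in the submonoid, and repairing it without falling into $J_A$ is delicate. For $\MTn$ the membership condition (no zero rows) is weaker than the Hall condition, so the repair should be easier, but it still has to be carried out, and you have not done so. Second, the conclusion that \emph{every} minimal generating set contains matrices similar to the two displayed ones does not follow from indecomposability of those two particular matrices; what is needed is an argument in the style of the treatment of $F$ in the proof of \cref{thm:Devadzefull}: a product of Hall matrices is Hall, so any generating set for $\MTn$ must contain a non-Hall generator, and one must then show that among non-Hall elements of $\MTn$ the matrices similar to the first displayed matrix have maximal row spaces; the elementary generator is forced by the analogue of Konieczny's Lemma 4.5. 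Neither argument appears in your sketch, so what you have is a credible plan rather than a proof.
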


For $n > 2$, a minimal generating set for $S_n$ has size $2$ (for example, elements
corresponding to a tranposition and an $n$-cycle); for $n \leq 2$ one element is
sufficient to generate $S_n$.

%%%%%%%%%%%%%%%%%%%%%%%%%%%%%%%%%%%%%%%%%%%%%%%%%%%%%%%%%%%%%%%%%%%%%%%%%%%%%%%
%%%%%%%%%%%%%%%%%%%%%%%%%%%%%%%%%%%%%%%%%%%%%%%%%%%%%%%%%%%%%%%%%%%%%%%%
\subsection{Preliminaries}
\label{sec:BMatPrelim}

It is known that $\d(\Bn)$ grows super-exponentially with $n$; see
\cref{cor:ExponentialGenSets}. In contrast, the subsemigroup of $\Bn$ generated
by all the regular elements can be generated by the four matrices

\begin{align}
  \label{eq:RegularGens}
  \begin{split}
  T = \begin{pmatrix}
    0 & 1 & 0 & 0 & \cdots & 0 \\
    1 & 0 & 0 & 0 & \cdots & 0 \\
    0 & 0 & 1 & 0 & \cdots & 0 \\
    0 & 0 & 0 & 1 & \cdots & 0 \\
    \vdots  & \vdots & \vdots & \vdots & \ddots & \vdots\\
    0 & 0 & 0 & 0 & \cdots & 1 
  \end{pmatrix}\text{, }&
  U = \begin{pmatrix}
    0 & 1 & 0 & 0 & \cdots & 0 \\
    0 & 0 & 1 & 0 & \cdots & 0 \\
    0 & 0 & 0 & 1 & \cdots & 0 \\
    0 & 0 & 0 & 0 & \cdots & 0 \\
    \vdots  & \vdots & \vdots & \vdots & \ddots & \vdots\\
    1 & 0 & 0 & 0 & \cdots & 0 
  \end{pmatrix},\\
  E = \begin{pmatrix}
    1 & 0 & 0 & 0 & \cdots & 0 \\
    1 & 1 & 0 & 0 & \cdots & 0 \\
    0 & 0 & 1 & 0 & \cdots & 0 \\
    0 & 0 & 0 & 1 & \cdots & 0 \\
    \vdots  & \vdots & \vdots & \vdots & \ddots & \vdots\\
    0 & 0 & 0 & 0 & \cdots & 1 
  \end{pmatrix}\text{, }&
  F = \begin{pmatrix}
    0 & 0 & 0 & 0 & \cdots & 0 \\
    0 & 1 & 0 & 0 & \cdots & 0 \\
    0 & 0 & 1 & 0 & \cdots & 0 \\
    0 & 0 & 0 & 1 & \cdots & 0 \\
    \vdots  & \vdots & \vdots & \vdots & \ddots & \vdots\\
    0 & 0 & 0 & 0 & \cdots & 1
  \end{pmatrix};
\end{split}
\end{align}
see~\cite{Roush1977aa} for further details.
We will continue to use these names for these matrices throughout
\cref{sec:boolmat}. Note that together $T$ and $U$ minimally generate
$S_n$ for $n \geq 2$; they correspond to the transposition $(1~2)$ and the
$n$-cycle $(1~2~\ldots~n)$ respectively. All results which involve $T$ and $U$
in this paper continue to hold if $T$ and $U$ are replaced with any two elements
which together generate $S_n$.

We call any matrix similar to $E$ an \defn{elementary} matrix. In particular,
the elementary matrix which consists of the identity matrix with an additional
$1$ in position $j$ of the $i$th row will be denoted by $E^{i,j}$. If $J_E$ is
the $\J$-class of $E$ in $\Bn$, then it is easy to show that $J_E =
\set{E^{i,j}}{1 \leq i, j \leq n}$ and we call $J_E$ the \defn{elementary
  $\J$-class}. 
\begin{thm}[Devadze's Theorem~\cite{Konieczny2011aa}]
  \label{thm:DevadzeKon}
  For $n > 2$, the set $\{T, U, E, F\} \cup P$ is a generating set for $\Bn$ of
  minimum cardinality, where $P$ is any set containing one matrix from every
  $\J$-class of $\Bn$ which contains a prime matrix. 
\end{thm}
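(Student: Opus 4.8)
The plan is to prove the two assertions — that $\Gamma:=\{T,U,E,F\}\cup P$ generates $\Bn$, and that $|\Gamma|=\d(\Bn)$ — more or less separately, putting most of the effort into understanding the $\J$-order of $\Bn$ just below the group of units. As a bookkeeping remark, one first checks that $T,U,E,F$ are distinct, that $T,U$ lie in the top $\J$-class $S_n$, and that $E$ and $F$ lie in $\J$-classes which are neither the top class nor a prime $\J$-class, so that none of $T,U,E,F$ lies in $P$ and $|\Gamma|=|P|+4$. We assume $n>2$ throughout.

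For generation I would apply \cref{lem:GenSetDecomposition} with $X=\Gamma$. Its first hypothesis asks that $J_x\subseteq\genset\Gamma$ for each $x\in\Gamma$; this is immediate for $x\in\{T,U\}$, since $\genset{T,U}=S_n$, and for $x\in\{E,F\}$ or $x\in P$ it must be proved. The main tool here is the intermediate fact that $\genset{T,U,E,F}$ contains every transformation matrix and every transpose of a transformation matrix: the product $EF$ is a rank-$(n-1)$ idempotent transformation matrix, $S_n$ together with any one such idempotent generates the monoid of all transformation matrices (classical), and since $F$ is symmetric and $E^{T}$ is similar to $E$, transposing yields the dual family. Using this, together with the fact that every $E^{i,j}$ lies in $\genset{T,U,E}$, one shows that each element of $J_E$, each element of $J_F$, and each matrix $\J$-related to a prime can be expressed in terms of $\Gamma$ — for a prime class this last point starts from its representative in $P$ and uses the structure of Green's classes of $\Bn$ recorded in \cref{prop:RowSpacesJRelation} and \cref{thm:Zaretskii}. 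The second hypothesis of \cref{lem:GenSetDecomposition} is the crux: every $A\in\Bn$ which is not a unit, not prime, and not in $J_E\cup J_F$ must be written as a product of matrices each strictly $\J$-above $A$. Since $A$ is not prime, $A=BC$ with $B,C$ non-units; the job is to upgrade this to a factorisation whose factors strictly dominate $A$, and the standard device (due to Devadze) is to strip off reflexive factors: write $A=R\,A'\,C'$, where $R$ is a product of elementary matrices $E^{i,j}$ (hence reflexive and in $\genset{T,U,E}$), $C'$ is a product of their column-analogues, and $A'$ is obtained from $A$ by a reduction — deletion of a redundant row or column, or a collapse of the row or column space — which strictly raises the $\J$-class.

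For minimality I would use the argument behind \cref{prop:Wilf}. Any generating set $Y$ of $\Bn$ satisfies $\genset{Y\cap S_n}=S_n$, because the top $\J$-class is a subsemigroup and so any product of matrices equal to a permutation consists only of permutations; hence $|Y\cap S_n|\geq\d(S_n)=2$ for $n>2$. Next, if $J$ is a prime $\J$-class then $J$ is a coatom of the $\J$-order, so the classes strictly above $J$ comprise only $S_n$; as $\genset{S_n}=S_n$ is disjoint from $J$, the class $J$ is not generated by elements strictly above it, and therefore every generating set meets $J$. Finally, the same conclusion is needed for $J=J_E$ and $J=J_F$, which are not coatoms: here one must show directly that some element of each (say $E$, respectively $F$) is not a product of matrices all lying strictly $\J$-above the class — i.e.\ that $J_E$ and $J_F$ are not ``generated from above.'' Combining the three bounds with the disjointness noted at the start, every generating set has at least $2+1+1+|P|=|\Gamma|$ elements, so $\d(\Bn)=|\Gamma|$.

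The principal obstacle, in both halves, is the detailed structure of the $\J$-order of $\Bn$ near the top: one needs to know that apart from the single regular coatom $J_E$ (the elementary $\J$-class) every coatom is non-regular and hence consists of prime matrices, to locate the class $J_F$ below $J_E$ and show it is indecomposable ``from above,'' and, for the generation step, to establish the upward factorisation of every matrix outside these distinguished classes. All of this runs through the identification of $\J$-classes with isomorphism types of row spaces furnished by \cref{prop:RowSpacesJRelation} and its sharpening \cref{thm:Zaretskii}, together with a combinatorial study of which finite join-semilattices occur as row spaces of $n\times n$ boolean matrices and of the resulting divisibility order; that study is where essentially all the technical work lies.
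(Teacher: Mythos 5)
First, a point of context: the paper does not prove this statement itself --- it is quoted from Konieczny's 2011 paper, and the only related argument in the text is the proof of \cref{thm:Devadzefull}, which takes \cref{thm:DevadzeKon} together with two of Konieczny's lemmas as given. So your proposal is being measured against the literature rather than an in-paper proof. Your overall architecture (generation via \cref{lem:GenSetDecomposition}, minimality via a class-by-class lower bound in the spirit of \cref{prop:Wilf}) is the right one, and the minimality half is essentially sound once two things are filled in: for the prime classes and $J_E$ you correctly use that they are coatoms of the $\J$-order and that $S_n$ is closed under products; for $J_F$ the ``direct'' argument you defer is the observation, used in the paper's proof of \cref{thm:Devadzefull}, that a product of matrices with no zero row has no zero row, while matrices similar to $F$ have maximal row spaces among matrices with a zero row --- together these force every generating set to meet $J_F$.

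The generation half contains one concrete error and one genuine gap. The error: with the matrices of \eqref{eq:RegularGens} one computes $EF=F$ (row $1$ of $EF$ is the zero row), so $EF$ is not a transformation matrix, let alone the rank-$(n-1)$ idempotent. What you want is $E\cdot(TFT)$, where $TFT$ is the matrix similar to $F$ with its zero row in position $2$; this product is the idempotent transformation matrix collapsing $2\mapsto 1$, and the rest of your argument for generating all transformation matrices then goes through. This is repairable. The gap is the step you yourself flag as the crux: showing that every non-unit $A$ outside $J_E\cup J_F$ and the prime classes admits a factorisation into matrices \emph{strictly} above it in the $\J$-order. Knowing only that $A=BC$ with $B,C$ non-units does not give this (take $A$ idempotent), and the ``standard device'' of peeling off reflexive elementary factors is precisely where all of the work in Devadze's and Konieczny's arguments lives: one must exhibit, for each such $A$, a row- or column-reduction that strictly enlarges the row space up to embedding in the sense of \cref{thm:Zaretskii}, and verify that the process never gets stuck at a matrix outside the listed classes. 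As written, the proposal names the device but supplies no proof that it produces strictly larger $\J$-classes, so the generation statement --- and with it the theorem --- is not yet established.
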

By Devadze's Theorem, in order to determine the rank of $\Bn$ it is sufficient
to compute a set of representatives of the $\J$-classes of $\Bn$ containing a
prime matrix; see \cref{sec:FullBoolMat} for details of this computation.

As well as the description of a minimal generating set for $\Bn$, there are
certain additional concepts attached to $\Bn$ which do not apply to matrices
over arbitrary semirings. Many of these arise from the observation that we may
view a vector $v \in \B^n$ as the characteristic function of a subset $s(v)$ of
$\{1, \ldots, n\}$, where $i \in s(v)$ if and only if $v_i = 1$.  Then we define
the \defn{union} of two vectors $v, w$ to be $s^{-1}(s(v) \cup s(w))$. Note that
the union and sum of two vectors coincide in $\B^n$. In fact, since there is a
single non-zero element of $\B$, unions, sums, and linear combinations all
coincide. Given $v, w \in \B^n$, we also say that $v$ is \defn{contained} in
$w$, and write that $v \leq w$, if $s(v) \subseteq s(w)$.

It is straightforward to verify that if $A \in \Bn$, then there is a unique row
basis $\RowB(A)$, which is the unique minimal generating set for $\RowS(A)$
under union, consisting of the non-zero $\leq$-minimal rows. The dual statement
for column spaces also holds. This yields the following more practical
counterpart to \cref{lem:GreensRowColumnSpaces}, as an immediate consequence of
that corollary.

\begin{prop} 
  \label{lem:BMatGreensRowColumnBases}
  Let $A, B \in \Bn$. Then $A \L B$ in $\Bn$ if and only if $\RowB(A) =
  \RowB(B)$, and $A \R B$ in $\Bn$ if and only if $\ColB(A) = \ColB(B)$.
\end{prop}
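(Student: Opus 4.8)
The plan is to obtain this immediately from \cref{lem:GreensRowColumnSpaces}, which already gives that $A \L B$ in $\Bn$ if and only if $\RowS(A) = \RowS(B)$, and $A \R B$ if and only if $\ColS(A) = \ColS(B)$. So it suffices to show that, for $A, B \in \Bn$, one has $\RowS(A) = \RowS(B)$ if and only if $\RowB(A) = \RowB(B)$, together with the dual equivalence for column spaces and column bases.

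The key step is to make precise the description of $\RowB(A)$ from the paragraph preceding the statement: $\RowB(A)$ is exactly the set of non-zero $\leq$-minimal vectors of $\RowS(A)$, and this set depends only on $\RowS(A)$. Indeed, since linear combinations in $\B^n$ are just unions, a subset $X \subseteq \RowS(A)$ spans $\RowS(A)$ precisely when every vector of $\RowS(A)$ is a union of vectors from $X$; any such $X$ must contain every non-zero $\leq$-minimal $v \in \RowS(A)$, because expressing such a $v$ as a union of vectors of $\RowS(A)$ forces one of those vectors to be non-zero and $\leq v$, hence equal to $v$ by minimality. Conversely the set of all non-zero $\leq$-minimal vectors of $\RowS(A)$ itself spans $\RowS(A)$ — as $\B^n$ is finite, every non-zero vector of $\RowS(A)$ is the union of the minimal vectors below it — and is linearly independent, so it equals $\RowB(A)$. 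In particular $\RowB(A)$ is determined by $\RowS(A)$; conversely $\RowS(A)$ is the closure of $\RowB(A)$ under union, so it is determined by $\RowB(A)$.

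Combining these observations, $\RowS(A) = \RowS(B)$ if and only if $\RowB(A) = \RowB(B)$; chaining with \cref{lem:GreensRowColumnSpaces} yields $A \L B$ if and only if $\RowB(A) = \RowB(B)$. The dual statement $A \R B$ if and only if $\ColB(A) = \ColB(B)$ follows by the same argument applied to column spaces, or equivalently by applying the $\L$-statement to transposes and using that $A \mapsto A^{T}$ is an anti-isomorphism of $\Bn$ interchanging rows and columns. I do not anticipate any genuine obstacle here: the only mildly technical point is the verification that the non-zero $\leq$-minimal rows form the unique row basis, which is routine given the finiteness of $\B^n$ and the fact that union is the only non-trivial operation on $\B^n$.
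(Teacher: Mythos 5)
Your overall route is exactly the paper's: deduce the proposition from \cref{lem:GreensRowColumnSpaces} once one knows that $\RowB(A)$ and $\RowS(A)$ determine one another. However, your justification of that key fact contains a genuine error: the unique row basis is \emph{not} the set of non-zero $\leq$-minimal vectors of $\RowS(A)$, and your claim that ``every non-zero vector of $\RowS(A)$ is the union of the minimal vectors below it'' is false. Take $A = \tmat{0}{1}{1}{1}$, so that $\RowS(A) = \{(0~0),\,(0~1),\,(1~1)\}$. The only non-zero $\leq$-minimal vector of $\RowS(A)$ is $(0~1)$, yet it does not span $(1~1)$: the elements of $\RowS(A)$ strictly below $(1~1)$ are $(0~0)$ and $(0~1)$, whose union is $(0~1)$. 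The unique row basis here is $\{(0~1),(1~1)\}$, not $\{(0~1)\}$. Note also that if $\RowB$ really were the set of minimal non-zero vectors, the proposition itself would be false: $\tmat{0}{1}{1}{1}$ and $\tmat{0}{1}{0}{1}$ would have equal ``bases'' but have different row spaces, so they are not $\L$-related.

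The repair is to replace ``$\leq$-minimal'' by ``union-irreducible'': $\RowB(A)$ is the set of non-zero elements of $\RowS(A)$ that cannot be written as a union of strictly smaller elements of $\RowS(A)$ (equivalently, the rows of $A$ that are not unions of other rows of $A$). Every spanning set must contain each such vector, since any expression of it as a union of elements of $\RowS(A)$ must use the vector itself; and these vectors span $\RowS(A)$ by induction on the number of $1$s. This set is still manifestly determined by $\RowS(A)$, and $\RowS(A)$ is its closure under union, so the rest of your argument --- and the reduction to \cref{lem:GreensRowColumnSpaces}, which is all the paper itself does --- goes through unchanged. (In fairness, the paper's own phrase ``consisting of the non-zero $\leq$-minimal rows'' invites exactly this misreading, but the proposition is only true under the irreducible-element reading.)
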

There is a simple algorithm to compute the row and column bases of matrices in
$\Bn$ in time and space cubic in $n$, and hence the previous proposition gives
an efficient method for determining whether two matrices are $\L$- or
$\R$-related in $\Bn$. This, in combination with
\cref{lem:GreensRowColumnSpaces}, allows for the efficient computation of the
$\L$- and $\R$-structure of $\Bn$. However, the number of $\L$- and $\R$-classes
grows extremely rapidly with $n$, as shown in \cref{tab:NumberLRClasses},
so that it quickly becomes infeasible to compute the $\L$- and $\R$-structure of
$\Bn$. Note that transposition gives an anti-automorphism from $\Bn$ to itself
which exchanges $\L$- and $\R$-classes, and hence determining the $\L$- and
$\R$-structure only requires computation of one of the relations.

\begin{table}
  \centering
  \begin{tabular}{r|r|r|r|r|r|r|r}
    $n$ & 1 & 2 & 3 & 4 & 5 & 6 & 7 \\
    \hline
    $|\Bn / \L|$ & 2 & 7 & 55 & 1\ 324& 120\ 633& 42\ 299\ 663& ?
  \end{tabular}
\vspace{1cm}

\caption{The number of $\L$- or $\R$-classes in $\Bn$.} 
  \label{tab:NumberLRClasses}
\end{table}

Even determining the possible cardinalities of row spaces of matrices in $\Bn$
is a hard problem: see~\cite{Breen2001aa, Konieczny1992aa, Li1995aa,
  Shaofang1998aa, Zivkovic2006aa}. This is in stark contrast to the row-spaces
of matrices over fields, which have dimension a power of the cardinality of the
field. For example, the matrix \[\mat{0}{1}{1}{1} \in \Bn\] has row space
cardinality $3$.

It is significantly more difficult to determine the $\J$-relation in $\Bn$ than
the $\L$- or $\R$-relation. Indeed, the problem of determining whether two
matrices are $\J$-related in $\Bn$ is NP-hard; see~\cite[Theorem 2.7]{Fenner2018ab}
and ~\cite{Markowsky1992aa}.
The $\J$-relation on $\Bn$ is characterised by row space embeddings in the
following theorem; a function $f: \RowS(A) \to \RowS(B)$ is a \defn{row space
  embedding} if it respects containment and non-containment, i.e.\ $f(v) \leq
f(w)$ if and only if $v \leq w$ for all $v, w \in \RowS(A)$.
\begin{thm}[Zaretskii's Theorem~\cite{Zaretskii1963aa}]
  \label{thm:Zaretskii}
  Let $A, B \in \Bn$. Then $J_A \leq J_B$ if and only if there exists a row
  space embedding $f: \RowS(A) \to \RowS(B)$.
\end{thm}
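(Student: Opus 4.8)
The plan is to prove both implications constructively, using the dictionary between matrix products and row spaces supplied by \cref{prop:rowsandideals}, and the fact that in the monoid $\Bn$ the relation $J_A \le J_B$ just means $A = PBQ$ for some $P, Q \in \Bn$. For the forward direction, suppose $A = PBQ$ and set $C = PB$, so that $\RowS(C) \subseteq \RowS(B)$ by \cref{prop:rowsandideals}(i) and $A = CQ$. As in the proof of \cref{prop:rowsandideals}(i) we have $A_{i*} = C_{i*}Q$ for every $i$, so taking unions gives $\RowS(A) = \{\, vQ : v \in \RowS(C)\,\}$; thus right multiplication by $Q$ is a surjective, union-preserving, order-preserving map $R_Q \colon \RowS(C) \to \RowS(A)$ fixing $0$. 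I would then obtain the required embedding as a section of $R_Q$: define $f(u) = \bigcup\{\, v \in \RowS(C) : vQ \le u \,\}$, which lies in $\RowS(C) \subseteq \RowS(B)$. Union-preservation of $R_Q$ gives $f(u)Q \le u$, while surjectivity gives a preimage of $u$ lying in the defining set, so in fact $f(u)Q = u$. Hence $R_Q \circ f = \id$, so $f$ is injective, and a short order-chase (using that $R_Q$ preserves order) shows $f(u) \le f(u')$ if and only if $u \le u'$; so $f$ is a row space embedding.

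For the converse, let $f \colon \RowS(A) \to \RowS(B)$ be a row space embedding, write $a_i = A_{i*}$, and set $b_i = f(a_i) \in \RowS(B)$. Since each $b_i$ is a union of rows of $B$, I can choose $P \in \Bn$ so that row $i$ of $D := PB$ equals $b_i$. Then define $Q \in \Bn$ one row at a time: let $Q_{k*}$ be the componentwise intersection of those rows $a_j$ of $A$ for which the $k$th entry of $b_j$ is $1$ (and $Q_{k*} = 0$ if there are no such $j$). Row $i$ of $DQ$ is $b_i Q = \bigcup\{\, Q_{k*} : (b_i)_k = 1\,\}$, and each such $Q_{k*}$ is contained in $a_i$ by construction (since $i$ is one of the indices over which the intersection defining $Q_{k*}$ is taken), so $b_i Q \le a_i$. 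The proof will be complete once I establish the reverse inclusion $a_i \le b_i Q$ for all $i$, since then $DQ = A$, i.e.\ $PBQ = A$, and so $J_A \le J_B$.

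That reverse inclusion is the crux. Suppose toward a contradiction that there is an index $\ell$ with $(a_i)_\ell = 1$ but $(Q_{k*})_\ell = 0$ for every $k$ with $(b_i)_k = 1$. Unpacking the definition of $Q_{k*}$, for each such $k$ there is an index $h(k)$ with $(b_{h(k)})_k = 1$ and $(a_{h(k)})_\ell = 0$. Let $w \in \RowS(A)$ be the union of the rows $a_{h(k)}$ over all $k$ with $(b_i)_k = 1$. Then $(w)_\ell = 0$, so $a_i \not\le w$. On the other hand, since $f$ is order-preserving we have $f(w) \ge f(a_{h(k)}) = b_{h(k)}$ for each such $k$, so the $k$th entry of $f(w)$ is $1$ whenever $(b_i)_k = 1$; that is, $b_i \le f(w)$. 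Since $f$ also reflects order, $a_i \le w$, contradicting the previous line. Hence $a_i \le b_i Q$, as required.

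I expect the converse, and precisely this last inclusion, to be the main obstacle. The choice of $Q$ as the largest matrix compatible with the constraints $b_i Q \le a_i$ is essentially forced, and the work lies in showing that this $Q$ is not ``too small''; this is exactly where both halves of the row space embedding hypothesis (order preservation and order reflection) enter, and where the strengthening over \cref{prop:RowSpacesJRelation} is genuinely needed. The forward direction, by contrast, is a soft argument about sections of surjective semilattice morphisms and should be routine.
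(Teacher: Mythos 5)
The paper does not actually prove this statement: Zaretskii's Theorem is quoted from the literature (\cite{Zaretskii1963aa}) without proof, so there is nothing to compare against line by line. Your argument, however, is a correct and complete self-contained proof. In the forward direction, your $f(u)=\bigcup\set{v\in\RowS(C)}{vQ\le u}$ is exactly the upper adjoint (residual) of the surjective, union-preserving map $R_Q$, and the identity $R_Q\circ f=\id$ together with monotonicity of $R_Q$ does give that $f$ respects containment and non-containment; the only point worth making explicit is that $\RowS(C)$ is closed under unions and contains $0$, so the defining union is a well-defined element of $\RowS(C)\subseteq\RowS(B)$. In the converse, your $Q$ is the largest matrix with $DQ\le A$ (the residual $D\backslash A$, in the same spirit as the paper's greedy multipliers in \cref{lem:GreedyMultipliers}), and your contradiction argument for $a_i\le b_iQ$ is sound: the witness $w=\bigcup_k a_{h(k)}$ lies in $\RowS(A)$, order preservation gives $f(a_i)=b_i\le f(w)$, and order reflection then forces $a_i\le w$, contradicting $(w)_\ell=0$. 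The degenerate cases ($b_i=0$, or columns $k$ of the $b_j$'s that are identically zero) are handled automatically by the same argument. This correctly isolates where both halves of the embedding hypothesis are used, and it genuinely strengthens \cref{prop:RowSpacesJRelation} as the paper claims; I see no gap.
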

Zaretskii's Theorem reduces the problem of determining the $\J$-order of $\Bn$
to the problem of digraph embedding, but it should be noted that the size of
$\RowS(A)$ is bounded above by $2^n$, and equality is possible. Computing the
$\J$-structure of $\Bn$ is challenging; see~\cite{Breen1997aa}. The largest $n$
for which the number of $\J$-classes of $\Bn$ is known is $8$;
see~\cite{Breen2001aa}.

Considering rows as subsets of $\{1, \ldots, n\}$, it is natural to consider
when rows of a matrix $A \in \Bn$ are contained in other rows of $A$. We will
call $A$ \defn{row-trim} if no non-zero row of $A$ is contained in another row.
\defn{Column-trim} is defined dually. We say that $A$ is \defn{trim} if it is
both row-trim and column-trim.

Our interest in trim matrices is due to the following result.

\begin{lemma}[{\cite[Lemma 3.1]{Konieczny2011aa}}]
  \label{lem:PrimeMatricesAreTrim}
  Every prime matrix in $\Bn$ is trim.
\end{lemma}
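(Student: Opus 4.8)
The plan is to argue by contraposition: if a matrix $A \in \Bn$ is not trim, then $A$ is not prime. By the dual symmetry (transposition is an anti-automorphism of $\Bn$ exchanging rows and columns, and primeness is preserved), it suffices to treat the case where $A$ is not row-trim, i.e.\ some non-zero row $A_{i*}$ is contained in another row $A_{j*}$ with $i \neq j$. I want to exhibit a factorisation $A = BC$ in which neither $B$ nor $C$ is a unit (the units of $\Bn$ are exactly the permutation matrices).

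The natural choice is to take $C = A$ and $B$ to be the matrix obtained from the identity by adding a $1$ in position $(i,j)$ — that is, $B = E^{i,j}$ in the notation of the excerpt. Multiplying $A$ on the left by $E^{i,j}$ replaces row $i$ of $A$ by the union $A_{i*} \oplus A_{j*}$ and leaves all other rows unchanged; since $A_{i*} \leq A_{j*}$, we have $A_{i*} \oplus A_{j*} = A_{j*}$, but also $A_{i*} \oplus A_{j*} = A_{i*}$ only if $A_{i*} = A_{j*}$. So $E^{i,j}A$ agrees with $A$ except possibly in row $i$, where it has $A_{i*}\oplus A_{j*}$; I need this to equal $A_{i*}$, which forces $A_{i*} = A_{j*}$. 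This shows the first attempt only works cleanly when the two rows are equal, so the argument needs a small case split.

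The key steps, then, are: (1) reduce to $A$ not row-trim via transposition duality; (2) pick $i \neq j$ with $\emptyset \neq s(A_{i*}) \subseteq s(A_{j*})$; (3) if $A_{i*} = A_{j*}$, factor $A = E^{i,j} A$ with $E^{i,j}$ a non-unit (it is not a permutation matrix since $n \geq 1$ and it has a row with two $1$s, and $A$ itself is a non-unit because a unit has every row a distinct standard basis vector, hence is row-trim — contradicting our hypothesis); (4) if $A_{i*} \subsetneq A_{j*}$, instead delete the redundant row: let $A'$ be $A$ with row $i$ replaced by the zero row, and write $A = D A'$ where $D$ is the identity with its $(i,i)$ entry zeroed out and a $1$ placed in position $(i,j)$ — then row $i$ of $DA'$ is $A'_{j*} = A_{j*}$... but wait, we wanted $A_{i*}$, not $A_{j*}$. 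So $A' $ must instead be $A$ with some row $k$ (any row with $A_{k*} = A_{j*}$, or $A_{j*}$ regenerated from a basis row) — this is where care is needed. The cleanest fix: let $A'$ be the matrix whose row $\ell$ equals $A_{\ell*}$ for $\ell \neq i$ and whose row $i$ is $A_{i*}$ as well, but "rebuilt" — actually the honest route is to recall (from the discussion in the excerpt) that $\RowB(A)$, the set of $\leq$-minimal non-zero rows, spans $\RowS(A)$, so every row of $A$ is a union of basis rows, hence if $A_{i*}$ is non-minimal it is a union of strictly smaller rows, each of which appears (up to further decomposition) among the rows of $A$; one then writes $A = XA$ where $X$ is the identity with row $i$ replaced by the indicator of the indices of those smaller rows, and checks $X$ is a non-unit.

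The main obstacle I anticipate is exactly this bookkeeping in step (4): ensuring that the "rebuilding" matrix $X$ is genuinely a non-unit and that $XA = A$ really holds, which hinges on $A_{i*}$ being expressible as a union of other rows of $A$ (not merely of elements of the row space) — true because $A_{i*}$, being non-$\leq$-minimal among non-zero rows, dominates some other non-zero row, and one can take $X$ to simply add that dominated row's index into row $i$ while keeping row $i$'s own index, so that row $i$ of $XA$ is $A_{i*} \oplus (\text{something} \leq A_{i*}) = A_{i*}$. Then $X$ has a $1$ in an off-diagonal position, so it is not a permutation matrix, and it is not equal to $A$ (it is a reflexive elementary-type matrix whereas $A$ is not row-trim but $X$ is). Everything else is routine matrix-multiplication verification.
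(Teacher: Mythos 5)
Your final argument is correct and is essentially the paper's proof: the paper left-multiplies by the greedy multiplier $X$ with $X_{ij}=1$ iff $A_{j*}\leq A_{i*}$, which satisfies $XA=A$ and has a row with two $1$s, while you end up using the single elementary matrix $E^{j,i}$ (adding the dominated row's index $i$ into the dominating row $j$), which is the same idea pared down to one off-diagonal entry, with columns handled dually in both cases. The case split in your steps (3)--(4) and the detour through deleting rows are unnecessary --- $E^{j,i}A=A$ holds uniformly whenever $A_{i*}\leq A_{j*}$ --- but the argument you settle on is sound.
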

\begin{proof}
  Let $A \in \Bn$ be prime. Suppose some non-zero row indexed $k$ of $A$ is
  contained in a row indexed $l$ of $A$. Define $X \in \Bn$ to be the matrix
  such that $X_{ij} = 1$ precisely if $A_{j*} \leq A_{i*}$. Then $XA = A$, and
  since $|X_{l*}| \geq 2$, $X \not\in S_n$, a contradiction. A dual argument
  shows that no non-zero column of $A$ is contained in another column.
\end{proof}

It is difficult to enumerate prime matrices directly, but comparatively simple
to enumerate trim matrices. This is key to our strategy for computing a minimal
generating set for $\Bn$.

The technique used to define the matrix $X$ in the proof of
\cref{lem:PrimeMatricesAreTrim} will be useful throughout. Given two matrices
$A, B \in \Bn$, we say that the \defn{greedy left multiplier} of $(A, B)$ is the
matrix $C$ containing a $1$ in position $j$ of row $i$ if and only if $A_{j*}
\leq B_{i*}$. The \defn{greedy right multiplier} of $(A, B)$ is the matrix $D$
containing a $1$ in position $j$ of row $i$ if and only if $A_{*i} \leq B_{*j}$.
Observe that if $\RowB(A)$ is contained in $\RowS(B)$, then every row $v$ of $A$
may be written as the linear combination of those rows of $B$ which are
contained in $v$; hence $A = CB$ where $C$ is the greedy left multiplier of $(A,
B)$. Combining this observation with \cref{lem:GreensRowColumnSpaces} yields
the following lemma:
\begin{lemma}
  \label{lem:GreedyMultipliers}
  For any $A, B \in \Bn$ and $C$ the greedy left multiplier of $(A, B)$, the
  following are equivalent:
  \begin{enumerate}[label={\rm (\roman*)}]
    \item $A = CB$
    \item $L_A \leq L_B$
    \item $\RowS(A) \subseteq \RowS(B)$.
  \end{enumerate}
  The dual statement holds for greedy right multipliers, Greens $\R$-order, and
  column spaces.
\end{lemma}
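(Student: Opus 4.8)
The plan is to establish the three equivalences by quoting \cref{prop:rowsandideals} for the parts that do not involve the greedy multiplier and supplying the rest from the elementary observation recorded immediately before the lemma, and then to deduce the dual statement by transposition. First, $\text{(ii)}\Leftrightarrow\text{(iii)}$ is exactly \cref{prop:rowsandideals}(ii) specialised to $\mathbb{S}=\B$. Next, $\text{(i)}\Rightarrow\text{(iii)}$ is immediate: if $A=CB$, then $\RowS(A)=\RowS(CB)\subseteq\RowS(B)$ by \cref{prop:rowsandideals}(i), with no use of the particular form of $C$. Thus the only substantive step is $\text{(iii)}\Rightarrow\text{(i)}$, which closes the loop of implications.

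For $\text{(iii)}\Rightarrow\text{(i)}$, assume $\RowS(A)\subseteq\RowS(B)$ and fix a row index $i$. By construction of the greedy left multiplier $C$, the $i$th row of $CB$ is $\sum_{j}C_{ij}B_{j*}$, which in $\B^{n}$ is simply the union of those rows of $B$ that are contained in $A_{i*}$. I claim this union equals $A_{i*}$. On the one hand, each summand is contained in $A_{i*}$, so their union is contained in $A_{i*}$. On the other hand, $A_{i*}\in\RowS(A)\subseteq\RowS(B)$, and since $1$ is the only nonzero scalar of $\B$, every element of $\RowS(B)$ is a union of some rows of $B$; writing $A_{i*}$ in this way exhibits it as a union of rows of $B$ each of which is contained in $A_{i*}$, so each such row is among the summands above, and hence $A_{i*}$ is contained in the $i$th row of $CB$. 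The two containments give $(CB)_{i*}=A_{i*}$ for every $i$, i.e.\ $A=CB$.

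I expect this middle step to be the only one requiring any thought, and even there the difficulty is mild: after \cref{prop:rowsandideals} is invoked, what remains is the bookkeeping above, the delicate point being that the greedy multiplier must collect precisely the rows of $B$ lying \emph{below} the relevant row of $A$, and that this greedy choice is at once small enough to stay below $A_{i*}$ and large enough to reach it — which is exactly where the idempotency of union in $\B^{n}$, and the absence of any cancellation, are used, in contrast to the situation over a field. For the dual statement concerning greedy right multipliers, the $\R$-order, and column spaces, I would apply the left-hand version just proved to $A^{\mathsf{T}}$ and $B^{\mathsf{T}}$: transposition is an anti-automorphism of $\Bn$ that interchanges $\L$ with $\R$, identifies $\RowS(A^{\mathsf{T}})\subseteq\RowS(B^{\mathsf{T}})$ with $\ColS(A)\subseteq\ColS(B)$, sends the greedy right multiplier $D$ of $(A,B)$ to the greedy left multiplier $D^{\mathsf{T}}$ of $(A^{\mathsf{T}},B^{\mathsf{T}})$, and turns $A=BD$ into $A^{\mathsf{T}}=D^{\mathsf{T}}B^{\mathsf{T}}$; so the three dual conditions are the images under transposition of the three conditions of the lemma applied to $A^{\mathsf{T}}$ and $B^{\mathsf{T}}$, and the dual equivalence follows.
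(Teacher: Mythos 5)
Your proposal is correct and follows essentially the same route as the paper, which disposes of (i)$\Rightarrow$(iii) and (ii)$\Leftrightarrow$(iii) via \cref{prop:rowsandideals} and reduces everything to the observation, stated just before the lemma, that when $\RowS(A)\subseteq\RowS(B)$ each row $A_{i*}$ equals the union of the rows of $B$ contained in it, so that $A=CB$ for the greedy left multiplier $C$. Your write-up merely spells out the two containments behind that observation and makes the transposition argument for the dual explicit, both of which the paper leaves implicit.
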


A similar property to being trim is being \defn{reduced}. A matrix $A \in \Bmn$
is \defn{row-reduced} if no row of $A$ can be written as a union of other rows
of $A$. \defn{Column-reduced} is defined dually. We say that $A$ is
\defn{reduced} if it is both row-reduced and column-reduced. Since no row of a
trim matrix is contained in another row, it follows that no row can be expressed
as a union of other rows. A dual argument applies to columns, and hence we have
the following lemma.

\begin{lemma}
  \label{lem:TrimMatricesAreReduced}
  Every trim matrix is reduced.
\end{lemma}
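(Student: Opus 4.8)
The plan is to reduce the statement to a single direction and then unwind definitions. Since a matrix is trim exactly when it is both row-trim and column-trim, and reduced exactly when it is both row-reduced and column-reduced, it suffices to show separately that row-trim implies row-reduced and that column-trim implies column-reduced; and since transposition exchanges rows with columns while preserving both containment of vectors and the two "no row equals a union of the others" conditions, the column statement is literally the transpose of the row statement. So the whole lemma follows from: every row-trim matrix is row-reduced.

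For that, I would argue by contradiction. Suppose $A \in \Bmn$ is row-trim but not row-reduced, so some row $A_{i*}$ can be written as a union of other rows, say $A_{i*} = \bigcup_{j \in K} A_{j*}$ with $i \notin K$. Passing to the subset picture via the characteristic-function map $s$, this reads $s(A_{i*}) = \bigcup_{j \in K} s(A_{j*})$, so in particular $A_{j*} \leq A_{i*}$ for every $j \in K$. If $A_{i*} \neq 0$, then, because a union of zero vectors is the zero vector, at least one $j \in K$ has $A_{j*} \neq 0$; that $A_{j*}$ is then a non-zero row of $A$, distinct from $A_{i*}$ since $j \neq i$, and contained in $A_{i*}$, contradicting row-trimness. (In the degenerate case $A_{i*} = 0$ the rows indexed by $K$ are all zero, so row-trimness, which only constrains non-zero rows, is not violated; one may simply exclude this case from the definition of "written as a union of other rows", matching the convention implicit in the discussion preceding the lemma.) This contradiction establishes that a row-trim matrix is row-reduced, and the lemma follows.

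There is essentially no obstacle here: the content is just the elementary fact that a union of boolean vectors dominates each of its constituents, together with the observation that this forces a containment between two distinct rows. The only point requiring a little care is the bookkeeping around the zero row and the convention that "contained in another row" refers to a genuinely different row index; once these are pinned down, the argument is a one-line unwinding, which is why the paper's preceding sentence already sketches it.
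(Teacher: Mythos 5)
Your proof is correct and follows essentially the same route as the paper, which disposes of the lemma in the prose immediately preceding it: a row equal to a union of other rows would contain one of those rows, contradicting trimness, with the column case handled dually. Your extra care over the zero-row degeneracy is if anything slightly more precise than the paper's own one-line argument.
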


The following lemma describes the $\J$-relation on reduced matrices in $\Bn$.

\begin{lemma}[{\cite[Theorem 1.8]{Plemmons1970aa}}]
  \label{lem:PermutingReducedMatrices}
  Let $A, B \in \Bn$ be reduced. Then $A \J B$ if and only if $A$ and $B$ are
  similar.
\end{lemma}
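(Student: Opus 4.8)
The plan is to prove the nontrivial direction: if $A, B \in \Bn$ are reduced and $A \J B$, then $A$ and $B$ are similar. (The converse is already noted in \cref{sec:matsemigp}: similar matrices are $\J$-related by multiplying on the left and right by permutation matrices.) Since $A \J B$ in the finite monoid $\Bn$, we have $A \J B$ via a chain, but more usefully we can write $A = PBQ$ and $B = RAS$ for some $P, Q, R, S \in \Bn$ — actually the cleanest route is to use that $A \J B$ gives $A = XBY$ and $B = ZAW$ for suitable boolean matrices. First I would pass to row and column spaces: from $A = XBY$ and \cref{prop:rowsandideals}(i) we get $\RowS(A) \subseteq \RowS(BY) \subseteq \RowS(Y)$... that is not directly what we want, so instead I would use the standard trick of working $\L$- and $\R$-wise separately. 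Because $\Bn$ is finite, $A \J B$ implies there is a matrix $C$ with $A \R C$ and $C \L B$ (this is the usual $\D = \J$ fact for finite semigroups, cited in the excerpt). Then $A \R C$ gives $\ColS(A) = \ColS(C)$ by \cref{lem:GreensRowColumnSpaces}, and $C \L B$ gives $\RowS(C) = \RowS(B)$.

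The heart of the argument is then: if $A \L B$ in $\Bn$ with both $A, B$ reduced, then $B$ is obtained from $A$ by a permutation of rows; dually for $\R$ and columns. I would argue this as follows. From $A \L B$ we have $\RowS(A) = \RowS(B)$ by \cref{lem:GreensRowColumnSpaces}. Now I claim that for a row-reduced matrix $M$, the multiset of rows of $M$ is exactly the multiset of \emph{join-irreducible} elements of the lattice $(\RowS(M), \leq)$ (where join is union), together with possibly the zero row; more precisely, the non-zero rows of $M$ are precisely the join-irreducibles of $\RowS(M)$, and row-reducedness guarantees no repetitions are forced and no row is a redundant join. Actually the precise statement I want is: a non-zero vector $v$ is a row of a row-reduced matrix $M$ if and only if $v \in \RowS(M)$ and $v$ is not expressible as a union of strictly smaller elements of $\RowS(M)$ — equivalently $v$ is join-irreducible in $\RowS(M)$. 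Since $\RowS(A) = \RowS(B)$, the sets of join-irreducibles coincide, so $A$ and $B$ have the same set of non-zero rows; row-reducedness also forces each such row to occur exactly once (a repeated row is a union of "other" rows — namely a copy of itself — so reducedness in the sense defined, "no row is a union of other rows", must be interpreted to forbid this, and I will need to check that the definition does forbid duplicates, or handle zero rows and duplicates as a separate bookkeeping step). Hence the rows of $B$ are a permutation of the rows of $A$, i.e.\ $B = PA$ for a permutation matrix $P$. Dually, from $\ColS(A) = \ColS(C)$ and column-reducedness, $C = AQ$ for a permutation matrix $Q$.

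Combining: $B = PC$ for some permutation matrix $P$ (from $C \L B$, both reduced — note $C$ is reduced because it is $\R$-related to the reduced matrix $A$, and $\R$-related matrices have equal column spaces hence equal column bases, and one can check $C$ is then also row-reduced since... here is a subtlety) — so the step I expect to be the main obstacle is verifying that the intermediate matrix $C$ can be taken reduced, or circumventing this. The clean fix: choose $C$ in the $\R$-class of $A$ and $\L$-class of $B$; since $\ColS(C) = \ColS(A)$ we may as well take $C$ to have column basis equal to $\ColB(A)$, and since $\RowS(C) = \RowS(B)$ we may take its row basis to be $\RowB(B)$ — but we need a single matrix with both properties. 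Better: prove directly that $A \L B$ with $A,B$ reduced $\Rightarrow$ $B = PA$, and $A \R B$ with $A,B$ reduced $\Rightarrow$ $B = AQ$, then handle the general $\J$ case by noting $A \J B$ in $\Bn$ means (finite $\D=\J$) there is $C$ with $A \to C$ by right-multiplication (an $\R$-step) and $C \to B$ by left-multiplication (an $\L$-step); reducedness of $C$ follows because $C \R A$ forces $\ColB(C) = \ColB(A)$ so $C$ is column-reduced, and $C \L B$ forces $\RowB(C) = \RowB(B)$ so $C$ is row-reduced. That resolves the obstacle. Then $C = AQ$ and $B = PC = PAQ$ with $P, Q$ permutation matrices, so $A$ and $B$ are similar, completing the proof.
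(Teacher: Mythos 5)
The paper itself gives no proof of this lemma --- it is quoted directly from Plemmons --- so there is nothing internal to compare against; your attempt has to stand on its own. Its overall architecture is the standard and correct one: pass from $\J$ to $\D$ in the finite monoid $\Bn$ to get an intermediate $C$ with $A \R C \L B$; observe that the non-zero rows of a row-reduced matrix are precisely the join-irreducible elements of its row space, so that two row-reduced matrices with equal row spaces have the same multiset of rows and hence differ by a row permutation (dually for columns); and conclude $B = PAQ$. The zero-row and duplicate-row bookkeeping you flag is genuinely needed but routine: a reduced matrix has at most one zero row and one zero column, and the number of zero rows is determined by the row space, so the multisets still match.

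The gap is exactly where you suspected it: the justification that $C$ is reduced. You assert that $C \R A$ forces $\ColB(C) = \ColB(A)$ and hence that $C$ is column-reduced. The first implication is true, but the second is not: equality of column bases (equivalently, of column spaces) says nothing about whether $C$ itself carries a redundant column. For instance, a matrix with columns $e_1, e_2, 0$ and a matrix with columns $e_1, e_2, e_1 \cup e_2$ have the same column space and the same column basis $\{e_1, e_2\}$, yet only the first is column-reduced; the symmetric claim for rows fails for the same reason. The conclusion you want is nevertheless true, but for a \emph{crossed} reason: $\ColS(C) = \ColS(A)$ together with the \emph{row}-reducedness of $A$ forces $C$ to be row-reduced, and dually $\RowS(C) = \RowS(B)$ together with the \emph{column}-reducedness of $B$ forces $C$ to be column-reduced. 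Indeed, if $C_{i*} = \bigcup_{j \in S} C_{j*}$ for some non-empty $S$ with $i \notin S$, then every column $v$ of $C$ satisfies $v_i = \bigvee_{j \in S} v_j$; this relation is preserved by unions, so it holds for every element of $\ColS(C) = \ColS(A)$, in particular for every column of $A$, which says $A_{i*} = \bigcup_{j \in S} A_{j*}$ and contradicts the row-reducedness of $A$. With this replacement your argument closes: $C$ and $B$ are row-reduced with equal row spaces, so $B = PC$; $C$ and $A$ are column-reduced with equal column spaces, so $C = AQ$; hence $B = PAQ$ and $A$, $B$ are similar.
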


Due to the previous lemma, reduced matrices are particularly convenient to
compute with. A linear reduction to graph isomorphism, described in
\cref{sec:FullBoolMat}, shows that the problem of determining whether two
reduced matrices are $\J$-related has at most the same complexity as graph
isomorphism.  A recent paper of Babai claims that this complexity is at most
quasi-polynomial; see~\cite{Babai2016aa}.
The previous lemma also implies that the $\J$-class of any prime matrix $P$
consists of prime matrices similar to $P$, and we refer to such a $\J$-class as
a \defn{prime $\J$-class}. Note that a prime $\J$-class therefore contains at
most $(n!)^2$ elements. This observation, combined with Devadze's Theorem, has
the following corollary, which was previously known but does not appear to have
been published.

\begin{cor}
  \label{cor:ExponentialGenSets}
  The size $\mathbf{d}(\Bn)$ of a minimal generating set for $\Bn$ grows
  super-exponentially with $n$.
\end{cor}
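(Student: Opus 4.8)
The plan is to combine Devadze's Theorem (\cref{thm:DevadzeKon}) with a counting argument showing that the number of prime $\J$-classes of $\Bn$ grows super-exponentially in $n$. By Devadze's Theorem, $\mathbf{d}(\Bn) = 4 + |\{\text{prime }\J\text{-classes of }\Bn\}|$ for $n > 2$, so it suffices to exhibit a family of prime matrices, no two of which are $\J$-related, whose count grows faster than $c^n$ for every constant $c$. Since \cref{lem:PermutingReducedMatrices} tells us that two prime matrices are $\J$-related precisely when they are similar (i.e.\ related by row and column permutations), the task reduces to: construct many prime matrices, pairwise non-similar, and count them up to similarity.

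First I would look for a clean infinite family of prime matrices. A natural candidate is the circulant-type or ``staircase'' matrices: for instance matrices of the form $I_n + P$ where $P$ ranges over certain permutation matrices or small perturbations thereof, or the matrices whose row $i$ is the characteristic vector of an interval or a two-element set. One needs to check primality; here \cref{lem:PrimeMatricesAreTrim} only gives a necessary condition (trim), so primality itself must be verified directly — typically by showing that any factorization $A = BC$ forces $\RowB(A) \subseteq \RowS(C)$ and $\ColB(A) \subseteq \ColS(B)$ via \cref{lem:GreedyMultipliers}, and then that the greedy multiplier is forced to be a permutation matrix. Alternatively, and perhaps more simply, one can take a known large family: matrices associated to \emph{antichains} in the subset lattice, or to bipartite graphs with no nontrivial automorphism-respecting decomposition. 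The key point is that the number of such objects up to the similarity action of $S_n \times S_n$ must be shown to be super-exponential; since there are $2^{n^2}$ matrices total and only $(n!)^2 \le n^{2n}$ elements in each similarity class (\cref{lem:PermutingReducedMatrices} / the remark that a prime $\J$-class has at most $(n!)^2$ elements), even a modestly large family of prime matrices — say, of size $2^{cn}$ for a single constant $c>1$, or better $2^{n^{1+\epsilon}}$ — suffices once divided by $n^{2n}=2^{2n\log n}$.

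The main obstacle I anticipate is simultaneously (a) guaranteeing primality for a family large enough and (b) controlling the similarity classes. A slick route avoiding (b) entirely: choose prime matrices that are \emph{rigid} in the sense that each is similar only to itself among the family — e.g.\ by building in a distinguishing gadget. But the cleanest argument is probably a raw counting one: show there are at least $2^{n^2/4}$ (say) trim matrices that are prime — for example all matrices of a fixed block shape $\tmat{I}{B}{0}{I}$-like pattern with $B$ an arbitrary antichain-coded block, a standard source of indecomposables — and then divide by the crude bound $(n!)^2 = 2^{O(n\log n)}$ on the size of each similarity class to conclude the number of prime $\J$-classes is at least $2^{n^2/4 - O(n\log n)}$, which dwarfs $c^n$ for every $c$. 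I would finish by invoking Devadze's Theorem to transfer this lower bound to $\mathbf{d}(\Bn)$, and note that the growth is in fact doubly-exponential in $\log n$ — far beyond merely super-exponential — so the weaker stated conclusion follows a fortiori. The delicate step to get right in the write-up is the verification of primality for whichever explicit family is chosen; everything else is counting.
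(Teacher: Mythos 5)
Your overall strategy is exactly the paper's: by \cref{thm:DevadzeKon} it suffices to show that the number of prime $\J$-classes grows super-exponentially, and since prime matrices are trim, hence reduced, \cref{lem:PermutingReducedMatrices} bounds each prime $\J$-class by $(n!)^2 = 2^{O(n\log n)}$ elements, so a lower bound of the shape $2^{n^2/4 - O(n)}$ on the number of prime matrices finishes the argument. The gap is that this lower bound --- which is the entire mathematical content of the corollary --- is never established. You correctly identify it as ``the delicate step to get right'' and then defer it. The paper does not construct a family at all: it simply cites the known result that there are at least $2^{\frac{n^2}{4} - O(n)}$ prime matrices in $\Bn$ (\cite[Theorem~2.4.1]{Kim1982aa}) and divides by $(n!)^2$. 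Without that citation, or a construction whose primality is actually verified, your write-up is a plan rather than a proof.

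Moreover, the one semi-concrete family you float would fail. A matrix with the block pattern $\tmat{I}{B}{0}{I}$ and $B \neq 0$ is not row-trim: each row of the lower block is a standard basis vector $e_j$, and any upper row whose $B$-part has a $1$ in position $j$ contains $e_j$. By \cref{lem:PrimeMatricesAreTrim} such a matrix cannot be prime (for $n = 2$ it is the elementary matrix $E$, which is idempotent and hence not prime). Verifying primality of an explicit super-exponential family genuinely requires the maximal-row-space characterisation of \cref{thm:Zaretskii} and \cref{thm:MaximalRowSpaces} (or the counting results of Kim and of de~Caen and Gregory that the paper leans on); it is not a step that can be absorbed into ``a standard source of indecomposables.'' The rest of your counting --- including the observation that $2^{n^2/4 - O(n\log n)}$ dominates $c^n$ for every $c$ --- is correct and matches the paper once that ingredient is supplied.
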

\begin{proof}
  This follows from the fact that there at least $2^{\frac{n^2}{4} - O(n)}$
  prime boolean matrices in $\Bn$ (see~\cite[Theorem 2.4.1]{Kim1982aa}) and each
  prime $\J$-class contains at most $(n!)^2$ elements; hence there are
  super-exponentially many prime $\J$-classes.
\end{proof}

The prime matrices of $M_n(\mathbb{S})$ sit directly below the group of units in
the $\J$-order on $M_n(\mathbb{S})$ for any semiring $\mathbb{S}$. In the case
of $\Bn$, there is an additional $\J$-class immediately below $S_n$: the
$\J$-class $J_E$ of the elementary matrix $E$ from \eqref{eq:RegularGens}. The
next result shows that in fact these are all of the $\J$-classes immediately
below $S_n$.

Let $\beta_n$ denote the set $\set{\RowS(A)}{A\in \Bn\setminus S_n}$ of all
possible proper row subspaces of elements of $\Bn$, and let $\B^n$ denote
the space of all boolean vectors of length $n$. Note that the matrices in $\Bn$ with row
space equal to $\B^n$ are the permutation matrices.

\begin{thm}[cf.\ Theorem 5.1 in~\cite{Caen1981aa}]
  \label{thm:MaximalRowSpaces}
  Let $A \in \Bn\setminus S_n$. Then $\RowS(A)$ is maximal with respect to
  containment in $\beta_n$ if and only if $A$ is prime or elementary.  
\end{thm}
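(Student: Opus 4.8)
The plan is to prove both directions via contrapositive-style arguments using the row space embedding characterisation of the $\J$-order (Zaretskii's Theorem, \cref{thm:Zaretskii}) together with the combinatorial structure of $\Bn$ established in the preliminaries. First I would observe that $\RowS(A) \subseteq \RowS(B)$ combined with $\RowS(A) \neq \RowS(B)$ implies $J_A < J_B$ by \cref{lem:GreensRowColumnSpaces} and \cref{thm:Zaretskii}, so the statement is really about the $\J$-order: $\RowS(A)$ is maximal in $\beta_n$ precisely when $J_A$ is covered by $S_n$ (is a coatom of the $\J$-order). So the goal becomes: the coatoms of the $\J$-order of $\Bn$ are exactly the prime $\J$-classes together with $J_E$.

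For the ``if'' direction, I would handle the two cases separately. If $A$ is prime, then by definition of prime (every factorisation $A = BC$ forces $B$ or $C$ to be a unit), there is no non-unit $C$ with $A = CB$ for $B$ a non-unit strictly $\J$-above — more carefully, I would argue that if $\RowS(A) \subsetneq \RowS(B) \subsetneq \B^n$ then $A = CB$ with $C$ the greedy left multiplier (\cref{lem:GreedyMultipliers}), and $C, B$ both non-units, contradicting primeness; hence $\RowS(A)$ is maximal in $\beta_n$. If $A$ is elementary, i.e.\ similar to $E$, then $\RowS(A)$ consists of $\B^n$ minus one weight-one vector together with... actually $\RowS(E)$ contains all of $\B^n$ except the single standard basis vector $e_2$ (or whichever is ``absorbed''); I would check directly that adjoining any vector not already in $\RowS(A)$ forces the full space $\B^n$, so $\RowS(A)$ is maximal among proper subspaces. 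This is a short explicit computation.

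For the ``only if'' direction, suppose $A \in \Bn \setminus S_n$ is neither prime nor elementary; I must produce $B \in \Bn \setminus S_n$ with $\RowS(A) \subsetneq \RowS(B)$. The key reduction: by \cref{lem:GreedyMultipliers} and the structure of greedy multipliers, I may replace $A$ by a matrix $\L$-equivalent to it, and then I want to reduce to the trim/reduced case; if $A$ is not row-trim, some row is contained in another, and I can attempt to either trim it (which doesn't change the row space but changes the matrix) or, more to the point, directly enlarge the row space. The real content is: a trim non-prime non-elementary matrix must have $\RowS(A)$ contained in a strictly larger proper row space. I would follow the approach of \cite{Caen1981aa} Theorem 5.1 — given that $A$ is non-prime, $A = BC$ with neither a unit, so $J_A \le J_C < S_n$ (using $\RowS(A) = \RowS(BC) \subseteq \RowS(C)$ from \cref{prop:rowsandideals}); the only subtlety is ruling out $\RowS(A) = \RowS(C)$, i.e.\ that $A \J C$, which if it held for \emph{every} factorisation would essentially force $A$ to behave like a prime or elementary element. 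Here is where I expect the main obstacle: carefully showing that if $A$ is non-prime and $\RowS(A)$ is nonetheless maximal in $\beta_n$, then $A$ must be elementary. I would argue that maximality of $\RowS(A)$ forces any non-unit $C$ appearing in a factorisation $A = BC$ to satisfy $\RowS(C) = \RowS(A)$, so $A \L C$; dually on the column side; then analyse what such a matrix looks like — the row space is a coatom and the matrix decomposes only ``trivially'' — and match it against the explicit description of $\RowS(E)$ and its similars. Managing the interaction of the row-space condition with the column-space condition, and the bookkeeping of similarity, is the fiddly part; the rest is routine given \cref{lem:GreedyMultipliers}, \cref{lem:PermutingReducedMatrices}, and Zaretskii's Theorem.
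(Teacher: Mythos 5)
First, a point of reference: the paper does not actually prove \cref{thm:MaximalRowSpaces} --- it is quoted from~\cite{Caen1981aa} --- so your proposal has to be judged against what a complete proof requires rather than against an argument in the text. Your prime case of the ``if'' direction is correct and complete: if $\RowS(A)\subsetneq\RowS(B)$ with $B\notin S_n$, the greedy left multiplier $C$ gives $A=CB$, and $C$ cannot be a permutation matrix (else the two row spaces would coincide), contradicting primeness. The elementary case, however, rests on two errors. For $n\geq 3$ the row space of $E$ is not ``$\B^n$ minus one vector'': it is $\set{v\in\B^n}{v_2=1\Rightarrow v_1=1}$, which omits $2^{n-2}$ vectors. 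More seriously, your test ``adjoining any vector not in $\RowS(E)$ forces $\B^n$'' is false: adjoining $e_2+e_3$ yields a proper union-closed subset of $\B^n$. That subset has $n+1$ join-irreducible elements and therefore is not the row space of any $n\times n$ matrix; $\RowS(E)$ is maximal only \emph{inside} $\beta_n$, and the correct argument must exploit the fact that a member of $\beta_n$ has at most $n$ join-irreducibles, rather than test maximality among all union-closed sets.

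The ``only if'' direction, which is the substantive half, is not proved. You correctly locate the difficulty --- ruling out $\RowS(A)=\RowS(C)$ for every factorisation $A=BC$ into non-units --- and then write ``analyse what such a matrix looks like'', which is exactly the missing content. A completion along your lines would run: maximality forces the row basis of $A$ to have exactly $n$ elements and hence to equal the set of rows of $A$; then $\RowS(C)=\RowS(A)$ forces $C$ to be a row permutation of $A$, so $A=B'A$ for some non-unit $B'$; join-irreducibility of each row forces $B'$ to be reflexive, and an off-diagonal $1$ of $B'$ yields distinct rows with $A_{l*}<A_{k*}$; finally, replacing the generator $A_{k*}$ by $A_{k*}\setminus A_{l*}$ produces an element of $\beta_n$ strictly containing $\RowS(A)$, which by maximality must equal $\B^n$, pinning the generators down to $n-1$ atoms together with one weight-two row, i.e.\ $A$ is elementary. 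None of these steps is routine bookkeeping, and your sketch supplies none of them, so as written the proposal does not establish the theorem. (Your opening claim that maximality in $\beta_n$ is equivalent to $J_A$ being a coatom of the $\J$-order also needs justification in one direction, though nothing later depends on it.)
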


We may now prove a slightly stronger form of Devadze's Theorem.

\begin{thm}[Devadze's Theorem~\cite{Konieczny2011aa}]
  \label{thm:Devadzefull}
  For $n > 2$, any minimal generating set for $\Bn$ is given by $\{T', U', E',
    F'\} \cup P$, where $T'$ and $U'$ generate $S_n$, $E'$ is elementary, $F'$
  is a matrix similar to $F$, and $P$ is a set of representatives of the prime
  $\J$-classes of $\Bn$. Conversely, any such set generates $\Bn$ minimally.
\end{thm}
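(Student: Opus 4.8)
The plan is to combine the structural results established earlier — Devadze's Theorem in the form of \cref{thm:DevadzeKon}, the description of the elementary $\J$-class $J_E$, \cref{thm:MaximalRowSpaces}, and the results on reduced and prime matrices (\cref{lem:PermutingReducedMatrices}, \cref{lem:PrimeMatricesAreTrim}, \cref{lem:TrimMatricesAreReduced}) — with \cref{prop:Wilf} to pin down the structure of *every* minimal generating set, not just one example. First I would invoke \cref{thm:DevadzeKon} to know that $\mathbf{d}(\Bn) = |P| + 4$ where $P$ is a transversal of the prime $\J$-classes; this gives the target cardinality. Next I would show that any minimal generating set $X$ must contain at least one representative from each prime $\J$-class, from $J_E$, and from $J_F$ (the $\J$-class of $F$), plus two generators of $S_n$ — once these lower bounds are summed they already force $|X| \geq |P| + 4 = \mathbf{d}(\Bn)$, so in fact $X$ contains exactly one element from each of these $\J$-classes and nothing else outside $S_n$.

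For the lower-bound step I would argue $\J$-class by $\J$-class using the reasoning behind \cref{prop:Wilf}. The group of units $S_n$ is the top $\J$-class, and below it, by \cref{thm:MaximalRowSpaces} together with the dual statement for column spaces, the maximal proper $\J$-classes are exactly the prime $\J$-classes and $J_E$ (here one must check that $\RowS$ being $\leq$-maximal in $\beta_n$ on both the row and the column side, via Zaretskii's Theorem \cref{thm:Zaretskii}, identifies precisely these classes as the coatoms of the $\J$-order). Since no element of a coatom $\J$-class can be written as a product that involves an element strictly below it, each such class must meet $X$; and since a prime $\J$-class contains only prime matrices similar to one another (\cref{lem:PermutingReducedMatrices}), the element of $X$ lying in it is indeed a prime matrix, while the element in $J_E$ is elementary. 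One then handles $J_F$ and $S_n$: $S_n$ needs at least two generators for $n > 2$, and $F$ is indecomposable in $\Bn$ (a short direct check that $F = BC$ forces $B$ or $C$ to be a unit, or alternatively that $F$ lies in no product of strictly-larger elements), so a representative of $J_F$ must lie in $X$. Finally $T'$, $U'$, $E'$, $F'$ together with the prime representatives number $4 + |P|$, matching $\mathbf{d}(\Bn)$, so there is no room for anything else and $X \setminus S_n$ consists of exactly one matrix from each prime $\J$-class, one elementary matrix, and one matrix similar to $F$.

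For the converse — that any such set $\{T', U', E', F'\} \cup P$ generates $\Bn$ — I would apply \cref{lem:GenSetDecomposition}: the set contains, or generates via $S_n$-conjugation, every element of the $\J$-classes it meets (the prime $\J$-classes, $J_E$, $J_F$, and $S_n$ are each a single orbit under similarity, hence contained in $\genset{X}$ once one representative and $S_n$ are present), so it remains to show that every matrix not $\J$-related to a generator decomposes into strictly-$\J$-larger pieces. This is exactly the content of the original Devadze argument, which one may cite from \cref{thm:DevadzeKon}: the greedy-multiplier technique of \cref{lem:GreedyMultipliers} writes any non-prime, non-elementary matrix $A$ as a product of matrices lying strictly above it in the $\J$-order (after first passing to a reduced matrix similar to a suitable factor). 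The main obstacle, and the step deserving the most care, is the lower-bound argument that $J_E$ and $J_F$ are *forced* coatom (respectively indecomposable) classes distinct from all prime $\J$-classes: one must verify that $J_E$ is genuinely maximal among proper row spaces — this is precisely \cref{thm:MaximalRowSpaces} — and that no single clever choice of prime representatives could absorb the role of the elementary or $F$-type generators, which follows because $E$ is not prime (its greedy left multiplier is a non-unit, cf.\ the proof of \cref{lem:PrimeMatricesAreTrim} applied in reverse) and $F$ is neither prime nor elementary yet is indecomposable, so it sits in its own mandatory class.
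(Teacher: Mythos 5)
Your overall architecture matches the paper's: cite \cref{thm:DevadzeKon} for the converse and for the target cardinality, then show that any generating set must meet each prime $\J$-class, $J_E$, $J_F$, and contain two generators of $S_n$, and let the count $|P|+4=\mathbf{d}(\Bn)$ force equality. The steps for the prime classes and $J_E$ (coatoms of the $\J$-order via \cref{thm:MaximalRowSpaces}, so any factorisation of one of their elements must use an element of the same class) are sound and are essentially what the paper imports from Konieczny's Lemmas 4.2 and 4.5.

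The gap is in your treatment of $F$. The claim that ``$F = BC$ forces $B$ or $C$ to be a unit'' is false: $F$ is idempotent, so $F = F\cdot F$ with neither factor a unit (indeed $F$ is not prime). Worse, $F$ is \emph{decomposable} in the sense this paper uses: writing $F = Q^{-1}(QFQ^{-1})Q$ for a non-identity permutation matrix $Q$ expresses $F$ as a product of elements of $\Bn\setminus\{I,F\}$. So neither version of your parenthetical justification establishes that a generating set must meet $J_F$; the second alternative (``$F$ lies in no product of strictly-$\J$-larger elements'') is the correct statement but is exactly the point that needs proof, and it does not follow from indecomposability reasoning. The paper's argument is different and is the missing ingredient: a product of matrices none of which has a zero row again has no zero row, so since neither the prime representatives nor $E'$ nor the units have a zero row, $\genset{S_n\cup P\cup\{E'\}}$ cannot contain $F$; hence $X$ contains a matrix with a zero row, and because matrices similar to $F$ have the maximal row spaces among matrices with a zero row (any $A$ with a zero row satisfying $J_F\leq J_A$ has $|\RowS(A)|\leq 2^{n-1}$, forcing the Zaretskii embedding of $\RowS(F)$ to be onto and $A$ to be similar to $F$), the element of $X$ with a zero row that appears in a factorisation of $F$ must be similar to $F$. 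With that replacement your counting argument goes through; without it, the mandatory presence of $F'$ is unproved.
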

\begin{proof}
  The converse follows immediately from \cref{thm:DevadzeKon}, noting that $A
  \in \genset{A', T', U'}$ for any similar matrices $A, A' \in \Bn$.

  Let $X$ be a minimal generating set for $\Bn$. Since $\mathbf{d}(S_n) = 2$,
  and $X$ must contain generators of the group of units $S_n$, it follows that
  $X$ contains two elements which together generate $S_n$, which we denote by
  $T'$ and $U'$. By~\cite[Lemma 4.2]{Konieczny2011aa}, $X$ also contains a set
  $P$ of representatives of the prime $\J$-classes of $\Bn$. By~\cite[Lemma
  4.5]{Konieczny2011aa}, and the fact that the elementary $\J$-class lies
  immediately below the group of units, $X$ must also contain an elementary
  matrix, say $E'$. It only remains to show that $X$ must contain a matrix
  similar to $F$. Since none of the elements of $P$, nor $E'$, contain a
  zero row, $P \cup \{E'\}$ does not generate $F$, and $X$ must contain a matrix
  with at least one zero row. Since matrices similar to $F$ have the maximal row
  spaces amongst matrices containing zero rows, $X$ must contain a matrix
  similar to $F$.
\end{proof} 

%%%%%%%%%%%%%%%%%%%%%%%%%%%%%%%%%%%%%%%%%%%%%%%%%%%%%%%%%%%%%%%%%%%%%%%%%%%%%%%
\subsection{The full boolean matrix monoid}
\label{sec:FullBoolMat}
As mentioned above, in order to compute minimal generating sets for $\Bn$ it is
sufficient to compute sets of representatives of the prime $\J$-classes of
$\Bn$. In this section we describe how such a computation may be performed.

The \defn{kernel} of a function $f: X \to Y$ is the equivalence relation
containing a pair $(a, b)$ if and only if $f(a) = f(b)$.
We call a function $\phi: \Bn \to \Bn$ a \defn{canonical form} if
$\ker\phi = \J$. Given a canonical form $\phi$, the image $\im\phi$ is a
set of representatives of the $\J$-classes of $\Bn$, and the image
$P_\phi = \phi(\set{A \in \Bn}{\text{$A$ prime}})$ is a set of
representatives of the prime $\J$-classes of $\Bn$. 

We wish to enumerate $P_\phi$ for some canonical form $\phi$. Roughly speaking,
our strategy is to enumerate efficiently as small a superset $Q_\phi \subsetneq
\im\phi$ of $P_\phi$ as is practical, and then to filter $Q_\phi$ to remove the
non-prime matrices.
The full image $\im\phi$ is a set of representatives for the $\J$-classes of
$\Bn$; \cref{tab:BreenFormMatrices} demonstrates the growth of $\im\phi$ for
$n = 1, \ldots, 8$. The number of trim matrices in Breen form in $\Bn$ is
comparable to the number of $\J$-classes for $n \leq 8$.
Since it is infeasible to compute the image of each of the $2^{n^2}$ elements of
$\Bn$ for $n \geq 7$, we instead compute the images of a smaller set of matrices
of a particular form, which contains at least one matrix of every $\J$-class of
$\Bn$.

\begin{de}[{{\cite[Proposition 3.6]{Breen1997aa}}}]
  \label{de:BreenForm}
  We say that a matrix $A \in \Bmn$ is in \emph{Breen form} if it has all of
  the following properties:
  \begin{enumerate}[label={\rm (\roman*)}]
  \item{$A$ is reduced,}
  \item{all non-zero rows of $A$ are at the bottom,}
  \item{all non-zero columns of $A$ are at the right,}
  \item{the non-zero rows of $A$ as binary numbers are a strictly increasing
      sequence, as are the columns}
  \item{all ones in the first non-zero row of $A$ are on the right,}
  \item{all ones in the first non-zero column of $A$ are at the bottom,}
  \item{every non-zero row has at least as many ones as the first non-zero row.}
  \end{enumerate}
\end{de}

This definition appears as a proposition in~\cite{Breen1997aa}; Breen defines a
matrix in $\Bn$ to be in \emph{standard form} if the matrix has minimal value as
a binary number in its $\J$-class, and proves that such a matrix has the
properties of \cref{de:BreenForm}~\cite[Proposition 3.6]{Breen1997aa}. This leads
directly to the following proposition:

\begin{prop}
  \label{prop:BreenFormsExist}
  In every $\J$-class of $\Bn$, there exists a matrix in Breen form. 
\end{prop}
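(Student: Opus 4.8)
The plan is to derive this as an essentially immediate consequence of the cited result \cite[Proposition 3.6]{Breen1997aa}, which is restated in \cref{de:BreenForm}. First I would fix an arbitrary $\J$-class $C$ of $\Bn$. Each matrix $A \in \Bn$ can be read as a binary number $\num(A)$, say by concatenating its rows (the precise encoding does not matter, only that it gives a total order on the finite set $\Bn$). Since $C$ is a finite non-empty set, there is a (unique) matrix $A_0 \in C$ minimising $\num$ over $C$; this is Breen's \emph{standard form} representative of $C$. By \cite[Proposition 3.6]{Breen1997aa} — the content of \cref{de:BreenForm} — the matrix $A_0$, being of minimal binary value in its $\J$-class, satisfies all of properties (i)–(vii), i.e.\ $A_0$ is in Breen form. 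Hence $C$ contains a matrix in Breen form, and since $C$ was arbitrary, every $\J$-class of $\Bn$ contains a matrix in Breen form.

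The only thing one has to be slightly careful about is the passage between $\Bmn$ (the setting in which \cref{de:BreenForm} is phrased, allowing rectangular matrices) and $\Bn$ (square matrices), since properties (ii), (iii), (v), (vi) refer to zero rows and columns. For square $A \in \Bn$ this is unproblematic: the standard-form minimiser within the $\J$-class of $A$ in $\Bn$ is still a square matrix, and Breen's argument that the minimiser has properties (i)–(vii) applies verbatim — the permutations of rows and columns used to push zero rows to the top, zero columns to the left, and to sort rows/columns by binary value all lie in $S_n \times S_n$ and so keep us inside the same $\J$-class of $\Bn$. I would spell this out in one sentence, noting that similar matrices are $\J$-related (as recorded in \cref{sec:matsemigp}), so that all of Breen's normalising moves stay within $C$.

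There is essentially no obstacle here: the proposition is a restatement/corollary of the cited result, and the proof is a two-line appeal to finiteness plus \cite[Proposition 3.6]{Breen1997aa}. The only mild subtlety, as noted above, is confirming that Breen's normal-form operations are realised by left/right multiplication by permutation matrices and hence preserve the $\J$-class — but this is exactly the remark about similar matrices already made in the preliminaries, so no real work is required.
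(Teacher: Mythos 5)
Your proposal is correct and follows exactly the paper's (implicit) argument: take the matrix of minimal binary value in the given $\J$-class, which exists by finiteness, and invoke Breen's result that such a standard-form minimiser satisfies properties (i)--(vii) of \cref{de:BreenForm}. The only small caution is that the encoding is not arbitrary --- Breen's Proposition 3.6 is stated for his specific binary-value ordering, so the minimiser must be taken with respect to that encoding rather than ``any total order'' --- but since you immediately identify the minimiser as Breen's standard-form representative, the argument goes through as intended.
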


In contrast, being in Breen form is not enough to guarantee that a matrix is
minimal. Consequently there is not necessarily a unique matrix in Breen form
in any given $\J$-class of $\Bn$, as the following example demonstrates. 
\begin{ex}
  \label{ex:SimilarBreenMatrices}
Let $A, B \in \Bn$ be the matrices defined by
\begin{align*}
  A = \begin{pmatrix}
    0 & 0 & 0 & 1 & 1 \\
    0 & 0 & 1 & 0 & 1 \\
    0 & 1 & 1 & 0 & 0 \\
    1 & 0 & 0 & 1 & 0 \\
    1 & 1 & 0 & 0 & 1 
  \end{pmatrix}\quad \text{and} \quad
  B = \begin{pmatrix}
    0 & 0 & 0 & 1 & 1 \\
    0 & 0 & 1 & 0 & 1 \\
    0 & 1 & 0 & 1 & 0 \\
    1 & 0 & 1 & 0 & 0 \\
    1 & 1 & 0 & 0 & 1 
  \end{pmatrix}.&\\
\end{align*}
Then $A$ and $B$ are in the Breen form of
\cref{de:BreenForm}. Swapping rows $1$ and $2$ and columns $3$
and $4$ shows that $A$ and $B$ are similar, and hence are $\J$-related in $\Bn$.
\end{ex}

There are significantly fewer than $2^{n^2}$ trim matrices in Breen form in $\Bn$,
as shown in \cref{tab:BreenFormMatrices}, and so it is feasible to
enumerate the matrices in Breen form for several values of $n$ for which it
is not feasible to enumerate the matrices in $\Bn$.
Given the set $\mathcal{B}_n$ of matrices in Breen form in $\Bn$, and a canonical form
$\phi$, the image $Q_\phi = \phi(\mathcal{B}_n)$ contains $P_\phi$. Later in this section,
we will discuss several methods of filtering $Q_\phi$ to obtain the prime
matrices. 

\begin{table}
  \centering
  \begin{tabular}{l|r|r|r|r|r}
    $n$ & $|\Bn|$ & $|\mathcal{B}_n|$ & $|\mathcal{TB}_n|$ & $|\phi(\mathcal{TB}_n)|$ &
    $|\Bn / \J|$\\
      \hline
    $1$ & 2 & 2 & 2 & 2 & 2\\
    $2$ & 16 & 4 & 3 & 3 & 3\\
    $3$ & 512 & 13 & 5 & 5 & 11\\
    $4$ & 65\ 536 & 146 & 12 & 10 & 60\\
    $5$ & 33 554 432& 7\ 549 & 141& 32 & 877\\
    $6$ & 68\ 719\ 476\ 736& 1\ 660\ 301& 15\ 020 & 394 & 42\ 944\\
    $7$ & $5.6 \times 10^{14}$ & 1\ 396\ 234\ 450 & 7\ 876\ 125 & 34\ 014 & 7\ 339\ 704 \\
    $8$ & $1.8 \times 10^{19}$ & ? & 18\ 409\ 121\ 852 & 17\ 120\ 845 & 4\ 256\ 203\ 214
  
  \end{tabular}
  \vspace{1cm}

  \caption{The sizes of: the monoid $\Bn$, the set $\mathcal{B}_n$ of matrices
    in $\Bn$ in Breen form; the set $\mathcal{TB}_n$ of trim matrices in Breen
    form in $\Bn$; the image $\phi(\mathcal{TB}_n)$ of the trim matrices in Breen form
    under any canonical form $\phi$; and the set $\Bn / \J$ of $\J$-classes of
    $\Bn$ (see \cite{Breen2001aa, Breen1997aa} for details of the last).}
  \label{tab:BreenFormMatrices}
\end{table}

While theoretically any canonical form $\phi$ is sufficient, the complexity of
computing $\phi$ is important. We now describe how to obtain canonical forms
$\Phi_n$ that are practical to compute with, using a reduction to bipartite
graphs.

Given a matrix $A \in \Bn$, we may form the vertex-coloured bipartite graph
$\Gamma(A)$ with vertices $\{1, \ldots, 2n\}$, colours 
\[\mathbf{col}(v) = \begin{cases}
    0 \qquad &\text{if } 1 \leq v \leq n, \\
    1 \qquad &\text{if } n < v \leq 2n,
  \end{cases}
\]
and an edge from $i$ to $j+n$ if and only if $A_{ij} = 1$. The numbers $\{1,
  \ldots, n\}$ represent indices of rows and the numbers $\{n + 1, \ldots, 2n\}$
represents indices of columns in the matrix $A$.

It is easy to see that $\Gamma$ is a bijection from $\Bn$ to the set $\BGSet$ of
bipartite graphs with two parts of size $n$, where one part is coloured $0$ and
the other coloured $1$. We call a function $\psi_n: \BGSet \to \BGSet$ a
\defn{canonical form} for $\BGSet$ if the equivalence classes of $\ker\psi$ are
the graph-theoretical colour-preserving isomorphism classes of $\BGSet$.
Computing canonical forms for graphs is a well-studied problem; for a recent
article see~\cite{McKay2014aa} and the references within.
We use the software bliss\footnote{In fact, a slightly-modified version of bliss
  which avoids repeated memory allocation was used; this is available at
  \url{https://github.com/digraphs/bliss}}~\cite{Junttila2007aa, Bliss} to
compute such canonical forms $\psi_n$. Canonical forms for $\Bn$ may then be
obtained through the following lemma.
\begin{lemma}
  \label{lem:GraphCanonicalForms}
  The functions $\Phi_n = \Gamma^{-1}\psi_n\Gamma$ are canonical forms when
  restricted to reduced matrices. 
\end{lemma}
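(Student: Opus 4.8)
The plan is to chase the two bijections through the definitions and then invoke \cref{lem:PermutingReducedMatrices}. First I would note that, since $\Gamma^{-1}$ is a bijection, for any $A, B \in \Bn$ we have $\Phi_n(A) = \Phi_n(B)$ if and only if $\psi_n(\Gamma(A)) = \psi_n(\Gamma(B))$, which by the defining property of the canonical form $\psi_n$ for $\BGSet$ holds if and only if $\Gamma(A)$ and $\Gamma(B)$ are isomorphic via a colour-preserving graph isomorphism. Thus, with no restriction at all, $\ker\Phi_n$ is exactly the relation ``$\Gamma(A)$ and $\Gamma(B)$ are colour-preservingly isomorphic''.

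Next I would identify this relation with similarity of matrices. The two colour classes of $\Gamma(A)$ are $\{1,\dots,n\}$ (colour $0$, indexing rows) and $\{n+1,\dots,2n\}$ (colour $1$, indexing columns), so any colour-preserving isomorphism $\Gamma(A)\to\Gamma(B)$ restricts to a bijection of $\{1,\dots,n\}$ and a bijection of $\{n+1,\dots,2n\}$; recording these as permutations $\sigma,\tau\in S_n$ of the row- and column-indices, the edge condition ``$i$ adjacent to $j+n$ in $\Gamma(A)$ iff $\sigma(i)$ adjacent to $\tau(j)+n$ in $\Gamma(B)$'' unwinds to $A_{ij} = B_{\sigma(i)\tau(j)}$ for all $i,j$, i.e.\ $B$ is obtained from $A$ by permuting its rows and columns. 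Conversely, a pair of row/column permutations gives such an isomorphism. Hence $\ker\Phi_n$ is precisely the similarity relation on $\Bn$.

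Finally I would restrict to reduced matrices and apply \cref{lem:PermutingReducedMatrices}: for reduced $A,B\in\Bn$, $A$ and $B$ are similar if and only if $A\J B$. Combining this with the previous paragraph shows that the restriction of $\ker\Phi_n$ to the set of reduced matrices equals the restriction of $\J$, which is exactly the statement that $\Phi_n$ is a canonical form when restricted to reduced matrices. There is no serious obstacle here; the only steps needing care are the translation in the second paragraph (colour-preserving bipartite-graph isomorphisms versus independent row/column permutations versus matrix similarity) and the observation that the ``reduced'' hypothesis is exactly what \cref{lem:PermutingReducedMatrices} needs to upgrade similarity to $\J$-equivalence — without it, $\ker\Phi_n$ (similarity) is in general strictly finer than $\J$. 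I would also remark, though it is not needed for the statement, that $\psi_n$ returns a graph isomorphic to its input, so $\Phi_n(A)$ is always similar to $A$ and hence $\J$-related to it (and still reduced), so the image of $\Phi_n$ on reduced matrices really is a transversal of the corresponding $\J$-classes.
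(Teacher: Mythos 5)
Your proof is correct and follows essentially the same route as the paper: both identify colour-preserving isomorphisms of the bipartite graphs $\Gamma(A)$, $\Gamma(B)$ with pairs of row/column permutations (i.e.\ matrix similarity), and both then invoke \cref{lem:PermutingReducedMatrices} to equate similarity with $\J$-equivalence on reduced matrices. Your explicit remark that the reduced hypothesis is needed precisely because similarity is in general strictly finer than $\J$ matches the role that lemma plays in the paper's converse direction.
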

\begin{proof}
  We must show that $\ker\Phi_n = \J$, in other words $\Phi_n(A) = \Phi_n(B)$ if and
  only if $A\J B$. This is equivalent to showing $\Gamma(A)$ is isomorphic to
  $\Gamma(B)$ if and only if $A \J B$. Denote the vertices of $\Gamma(A)$ by
  $\{1, \ldots, 2 n\}$ and the vertices of $\Gamma(B)$ by $\{1', \ldots, 2n'\}$,
  as above.
  Suppose that there is a colour-preserving isomorphism $\Psi: \Gamma(A) \to
  \Gamma(B)$. Since $\Psi$ preserves colours, $\Psi$ maps $\{1, \ldots, n\} \to
  \{1', \ldots, n'\}$ and $\{n + 1, \ldots, 2n\} \to \{(n + 1)', \ldots, 2n'\}$.
  Define permutations $\alpha, \beta$ on $\{1,\ldots, n\}$ by 
  \begin{align*}
    \alpha(i) &= j \text{ if } \Psi(i) = j',\\
    \beta(i)  &= j \text{ if } \Psi(n + i) = (n + j)'.
  \end{align*}
  Then by permuting the rows and columns of $A$ by $\alpha$ and $\beta$
  respectively, $A$ is similar to $B$ and hence $A \J B$ in $\Bn$.

  Conversely, suppose that $A \J B$ in $\Bn$. Then by
  \cref{lem:PermutingReducedMatrices}, there exist $\alpha$ and $\beta$ such
  that $B$ is obtained by permuting the rows and columns of $A$ by $\alpha$ and $\beta$
  respectively. The map $\Psi$ defined by
  \[\Psi(i) = \begin{cases}
      j' \quad &\text{if } 1 \leq i \leq n \text{ and }\alpha(i) = j, \\
      (n + j)' \quad &\text{if } n < i \leq 2n \text{ and }\beta(i) = j
    \end{cases}
  \]
  is a colour-preserving isomorphism between $\Gamma(A)$ and $\Gamma(B)$.
\end{proof}

Given $v \in \B^n$, it will convenient to denote the number represented by $v$
in binary as $\num(v)$. We will also write $\vect$ for $\mathbf{num}^{-1}$.

We now describe how to backtrack through matrices in Breen form to find a
superset of prime matrices $Q_{\Phi_n}$. This may be seen as a depth-first
traversal of a (non-rooted) tree, with nodes $m\times n$ matrices ($m \leq
n$) and leaves $n \times n$ matrices.

\begin{alg}
  \label{alg:canonicalbacktrack}Backtracking for canonical forms.\\
  \textbf{Input}: A natural number $n$. \\
  \textbf{Output}: A set $Q_{\Phi_n}$, with $P_{\Phi_n} \subseteq Q_{\Phi_n}
  \subseteq \im\Phi_n$.
  \begin{enumerate}
    \item Assume that we are at a node $A$ of dimension $m \times n$, and the
      index of the first non-zero row of $A$ is $f \leq m$. 
    \item If $m = n$, the non-zero columns of $A$ form a strictly increasing
      sequence under $\num$, and $A$ is column-reduced, then add $\Phi_n(A)$ to
      $X$, the set of matrices to return.
    \item If $m < n$, then for each $x \in \{\num(A_{m*}) + 1,
        \ldots, 2^n - 1\}$, if:
      \begin{enumerate}[label={(\roman*)}]
        \item $\vect(x)$ does not contain $A_{l*}$ for any $1 \leq l \leq m$,
          and
        \item $\vect(x)$ has at least as many ones as $A_{f*}$,
          and
        \item for all column indices $1 \leq i < j \leq m$ such that $A_{*i} =
          A_{*j}$, if $\vect(x)_i = 1$ then $\vect(x)_j = 1$,
      \end{enumerate}
      then set the current node to be the matrix obtained from $A$ by adjoining
      $\vect(x)$ as the last row, and return to step 1.
    \item after every $x$ has been processed, return to the previous node (if
      any) and carry out step $3$ for the next $x$ at that node (if any).
  \end{enumerate}
  Having initialised step $1$ with each $m \times n$ matrix ($m \geq 1$)
  consisting of $m - 1$ zero rows followed by a row containing some non-zero
  number of $1$s on the right, return $X$.
\end{alg}

\begin{lemma}
  \label{lem:backtrackworks}
  The output of \cref{alg:canonicalbacktrack}, with input $n$, is a subset of
  $\im\Phi_n$ containing $P_{\Phi_n}$. Moreover, \cref{alg:canonicalbacktrack}
  does not compute $\Phi_n(A)$ of any matrix $A$ not in Breen form.
\end{lemma}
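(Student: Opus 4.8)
The plan is to show the two claims of Lemma~\ref{lem:backtrackworks} separately: first, that \cref{alg:canonicalbacktrack} only ever applies $\Phi_n$ to matrices already in Breen form (so that $\Phi_n$, which is only guaranteed to be a canonical form on reduced matrices by \cref{lem:GraphCanonicalForms}, behaves correctly); and second, that the output set $X$ satisfies $P_{\Phi_n} \subseteq X \subseteq \im\Phi_n$. I would begin by tracking, as a loop invariant down the backtracking tree, which of the seven conditions of \cref{de:BreenForm} are already enforced on the current node $A$ of dimension $m \times n$. The initialisation (all-zero rows except a bottom row with ones on the right) secures conditions (ii), (v), and the bottom part of (vi); conditions~(iii)(a), (iii)(b), (iii)(c) of the algorithm enforce, respectively, row-reducedness in the ``no row contained in another'' sense needed for (i), condition~(vii), and a partial analogue of~(vi) for the columns seen so far; and the requirement $x > \num(A_{m*})$ at each step enforces the row half of~(iv). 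When $m = n$ the test in step~2 adds the remaining requirements: the column half of~(iv) and column-reducedness, which together with the row conditions already in force give full reducedness~(i), plus (since all non-zero columns form a strictly increasing sequence starting from a column whose ones are at the bottom) conditions~(iii) and~(vi). So every matrix reaching step~2 and passing its test is in Breen form, and in particular is reduced; hence $\Phi_n$ is applied only to reduced matrices, which proves the ``moreover'' clause and simultaneously shows $\Phi_n(A) \in \im\Phi_n$, giving $X \subseteq \im\Phi_n$.

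For the inclusion $P_{\Phi_n} \subseteq X$, let $M$ be any prime matrix in $\Bn$; I must exhibit a matrix $A$ that the algorithm adds to $X$ with $\Phi_n(A) = \Phi_n(M)$, i.e.\ $A \J M$. By \cref{prop:BreenFormsExist} the $\J$-class of $M$ contains a matrix $A$ in Breen form, and by \cref{lem:PrimeMatricesAreTrim} (together with \cref{lem:PermutingReducedMatrices}, which says the whole $\J$-class consists of matrices similar to $M$) every matrix in that $\J$-class, including $A$, is trim, hence has no zero rows or columns. I would then verify that this particular $A$ is visited as a leaf of the backtracking tree: reading the rows of $A$ from top to bottom, each row $A_{i*}$ is a vector strictly larger (as a binary number, by condition~(iv) of Breen form) than $A_{(i-1)*}$, does not contain any previous row (since $A$ is row-trim and hence no row is contained in another), has at least as many ones as $A_{f*} = A_{1*}$ (since $A$ is trim it has no zero rows, so $f = 1$, and condition~(vii) applies), and respects the column-equality constraint (iii)(c) because in a trim matrix no two columns are equal, so that constraint is vacuous. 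Thus $A_{i*}$ is always among the candidate values $x$ offered at step~3, and the path spelling out $A$ is followed; at the leaf, $A$ is $n \times n$, its non-zero columns (all of them) are strictly increasing under $\num$ by condition~(iv), and $A$ is column-reduced by condition~(i), so step~2 adds $\Phi_n(A) = \Phi_n(M)$ to $X$. Therefore every prime $\J$-class representative $\Phi_n(M)$ lies in $X$.

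The main obstacle I anticipate is the bookkeeping in the first part: carefully matching each of the algorithm's numeric tests and its initialisation to the correct subset of the seven Breen-form conditions, and confirming that nothing is added to $X$ before \emph{all} of (i)--(vii) are in force. In particular one must be careful that conditions~(iii)--(vi), which concern columns and the first non-zero row/column, are only fully discharged at the $m = n$ leaf test and not assumed prematurely at interior nodes; the interior-node constraints (iii)(a)--(iii)(c) are deliberately weaker (forward-looking partial versions), and the argument must show they are exactly strong enough that some completion to a full Breen-form matrix — in fact, every prime $\J$-class representative — survives, while still being enforced enough that $\Phi_n$ is never called off-domain. The second part is comparatively routine once one invokes that prime matrices are trim: trimness makes the nontrivial-looking constraints (no containment, enough ones, column-equality) either automatic or vacuous along the path spelling out the Breen-form representative.
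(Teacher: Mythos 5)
Your overall strategy is the same as the paper's: an invariant on interior nodes (the paper packages this as a ``quasi-Breen form'', namely all of the conditions of \cref{de:BreenForm} except column-reducedness and the strict increase of columns, with the leaf test in step~2 supplying the rest), followed by the observation that every prime $\J$-class contains a Breen-form representative which is trim (via \cref{lem:PrimeMatricesAreTrim} and \cref{lem:PermutingReducedMatrices}) and whose construction path satisfies the step-3 tests. Your bookkeeping of which algorithmic test yields which Breen condition is slightly off in places --- condition (iii)(c) of step~3 is what forces the non-zero columns to the right and the columns into non-decreasing order (Breen conditions (iii) and the column half of (iv)), whereas Breen condition (vi) falls out of the rows being strictly increasing, since the first non-zero column carries the most significant bit of each row --- but you flag this matching as the delicate part, and it is repairable.

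There is, however, one step that is genuinely wrong as written. In verifying that the Breen-form representative $A$ of a prime $\J$-class is visited, you dismiss condition (iii)(c) as ``vacuous because in a trim matrix no two columns are equal.'' Condition (iii)(c) does not compare columns of the full matrix $A$: it compares columns of the current \emph{node}, i.e.\ the truncations of the columns to the first $m$ rows, and these truncated columns can certainly coincide even when the full columns of $A$ are distinct (indeed they all coincide before the first non-zero row). So the constraint is not vacuous and must be checked. It does hold for row $m+1$ of a Breen-form matrix, but for a different reason: if columns $i<j$ agree on the first $m$ rows and $A_{m+1,i}=1$ while $A_{m+1,j}=0$, then either column $j$ is zero --- impossible with column $i$ non-zero and $i<j$, by Breen condition (iii) --- or $\num(A_{*i})>\num(A_{*j})$, contradicting the strict increase of the non-zero columns in Breen condition (iv). Replacing your vacuity claim with this argument closes the gap; the rest of the proposal goes through along the same lines as the paper's proof.
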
 
\begin{proof}
  We will prove that the $A$s of step $4$ for which $\Phi_n(A)$ are calculated
  are precisely the set of trim matrices in Breen form; since every prime
  matrix is trim (\cref{lem:PrimeMatricesAreTrim}) and every $\J$-class
  contains a matrix in Breen form, this implies that the output contains
  $P_{\Phi_n}$. We will first prove that each such $A$ is in Breen form.

  Define a matrix to be in \emph{quasi-Breen form} if it satisfies each
  property of \cref{de:BreenForm} other than being column reduced and the
  non-zero columns forming a strictly-increasing sequence.
  We will prove that each node $A$ visited in the algorithm is trim and in
  quasi-Breen form. Note that the matrices that the algorithm is initialised
  with are all trim and in quasi-Breen form, so we must simply prove that
  passing from a node $A$ which is trim and in quasi-Breen form to a node
  $A'$ by adding a row $\vect(x)$ in step $3$ preserves these properties.

  Trimness is preserved due to condition (i) of step $3$. Since all trim
  matrices are reduced, $A'$ is also reduced. The conditions on non-zero rows
  being at the bottom and forming a strictly increasing sequence of binary
  numbers are satisfied by choosing $x$ from the range $\{\num(A_{m*}) + 1,
    \ldots, 2^n - 1\}$.  The conditions on non-zero columns being on the right
  follows from requirement (iii) of step $3$; note that this requirement also
  forces the columns to appear in (not-necessarily-strictly) increasing order.
  The first non-zero column contains the most significant digit of the rows as
  binary numbers, and since the rows of $A'$ are increasing the set of rows with
  that digit equal to $1$ must be contiguous and at the end of $A'$. Hence all
  of the ones in the first non-zero column of $A'$ are at the bottom.

  Since each leaf $A$ visited is trim and in quasi-Breen form, the two
  conditions of step 2, that the non-zero columns form a strictly increasing
  sequence and that $A$ is column-reduced, guarantee that $A$ is in standard
  form. Hence, we only calculate the canonical form $\Phi_n(A)$ if $A$ is trim
  and in Breen form.

  It remains to prove that every trim matrix $A \in \Bn$ in Breen form is
  visited as a node in the enumeration. First, note that the first $m$ rows of
  any trim, standard-form matrix $A \in \Bn$ form an $m \times n$ trim matrix in
  quasi-Breen form. Also, the zero-rows of $A$ together with the first
  non-zero row form one of the matrices with which step $1$ is initialised. It
  simply remains to show that from each $m \times n$ node $X$ consisting of the
  first $m$ rows of $A$, we visit the $(m + 1)\times n$ node $X'$ consisting of
  the first $m + 1$ rows of $A$. It is easy to verify that each of the three
  conditions in step $3$ is satisfied by row $m + 1$ of $A$, and hence $X'$ is
  visited.
\end{proof}

Given $\Phi_n$ and $Q_{\Phi_n}$, the final step is to detect when an element of
$Q_{\Phi_n}$ is prime. We present two algorithms for doing so, in
\cref{alg:filter1} and \cref{alg:filter2}.

\begin{alg}
  \label{alg:filter1}Filtering canonical forms by row spaces.\\
  \textbf{Input}: A set $Q_{\Phi_n}$, containing the images $P_{\Phi_n}$ of the
  prime matrices of $\Bn$ under $\Phi_n$, and not containing any permutation
  matrices. \\
  \textbf{Output}: The set $P_{\Phi_n}$.
  \begin{enumerate}
  \item 
    Compute $X = \set{\RowS(A\alpha)}{A \in Q_{\Phi_n} \cup \{E\}, \alpha \in
      S_n}$
  \item
    For every $A \in Q_{\Phi_n}$, and for every $\RowS(B\beta) \in X$, if $A
    \neq B$ and $\RowS(A) \subsetneq \RowS(B\beta)$ then discard
    $\RowS(A\alpha)$ from $X$ for all $\alpha \in S_n$.
  \item
    Output the set of non-elementary elements $A$ such that $\RowS(A)$ remains
    in $X$ after the previous step.
  \end{enumerate}
\end{alg}

\begin{lemma}
  The output of \cref{alg:filter1} is a set of representatives of prime
  $\J$-classes.
\end{lemma}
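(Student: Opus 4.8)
The plan is to show that after Algorithm~\ref{alg:filter1} terminates, the row spaces remaining in $X$ are exactly the maximal elements of $\beta_n$ that are non-elementary, which by Theorem~\ref{thm:MaximalRowSpaces} are precisely the row spaces of prime matrices; the correspondence between such row spaces and prime $\J$-classes then follows from Corollary~\ref{lem:GreensRowColumnSpaces} together with Lemma~\ref{lem:PermutingReducedMatrices}. First I would establish the loop invariant for step~2: at every stage, $X$ contains $\RowS(A\alpha)$ for all $\alpha \in S_n$ whenever it contains $\RowS(A)$, so that $X$ is a union of ``$S_n$-orbits'' of row spaces; this is immediate from how $X$ is initialised in step~1 and from the fact that step~2 discards a whole orbit at a time. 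Next, since $Q_{\Phi_n}$ contains $P_{\Phi_n}$ and the image of $E$, and no permutation matrices, and since $\RowS(A\alpha) = \RowS(A)$ exactly when $A$ and $A\alpha$ are $\L$-related, the set $X$ after step~1 consists of all row spaces in $\beta_n$ that are similar to the row space of some element of $Q_{\Phi_n} \cup \{E\}$ — in particular it contains every maximal element of $\beta_n$, because every prime or elementary matrix is $\J$-related (hence, up to similarity, $\L$-related after a column permutation) to some representative produced by the canonical form.

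The key step is to verify that step~2 removes exactly the non-maximal row spaces. If $\RowS(A)$ is not maximal in $\beta_n$, then there is a proper containment $\RowS(A) \subsetneq V$ with $V \in \beta_n$; by the previous paragraph $V = \RowS(B\beta)$ for some $B \in Q_{\Phi_n} \cup \{E\}$ and some $\beta \in S_n$, and we may assume $A \neq B$ since $\RowS(A) \subsetneq \RowS(B\beta)$ forces $A$ and $B$ to lie in different $\J$-classes (a proper row-space containment cannot hold between $\L$-related, or even similar, matrices). Hence the condition in step~2 fires and the orbit of $\RowS(A)$ is discarded. Conversely, if $\RowS(A)$ is maximal in $\beta_n$, no $\RowS(B\beta) \in X \subseteq \beta_n$ can properly contain it, so the orbit of $\RowS(A)$ survives. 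Therefore after step~2 the surviving row spaces are precisely the maximal elements of $\beta_n$, which by Theorem~\ref{thm:MaximalRowSpaces} are the row spaces of the prime and elementary matrices.

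Finally, step~3 discards the elementary ones, leaving a set $Y$ of non-elementary matrices whose row spaces are exactly the row spaces of prime matrices; by Theorem~\ref{thm:MaximalRowSpaces} again, every element of $Y$ is itself prime. It remains to check that $Y$ contains exactly one representative of each prime $\J$-class: two elements $A, A' \in Q_{\Phi_n}$ with $\RowS(A) = \RowS(A')$ are $\L$-related by Corollary~\ref{lem:GreensRowColumnSpaces}, hence $\J$-related, and since $\Phi_n$ is a canonical form (Lemma~\ref{lem:GraphCanonicalForms}) we have $A = A'$; and two elements with similar but distinct row spaces correspond to the same prime $\J$-class by Lemma~\ref{lem:PermutingReducedMatrices}, but step~2 keeps only one row space per $S_n$-orbit — so $Y$ has exactly one matrix per prime $\J$-class. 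I expect the main obstacle to be the bookkeeping in the second paragraph: one must argue carefully that $X$ after step~1 really does contain a similar copy of every maximal row space (using that the prime and elementary matrices, being reduced, are determined up to the $S_n \times S_n$-action by their canonical forms, and that post-multiplying by all of $S_n$ recovers the whole similarity class of row spaces), and that $\beta_n$ is closed under the relevant operations so that every witnessing $V$ in the non-maximality argument is actually available in $X$.
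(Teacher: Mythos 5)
Your proof is correct and follows essentially the same route as the paper's: both arguments show that $X$ contains every maximal element of $\beta_n$, that step~2 discards exactly the non-maximal row spaces, and then conclude via \cref{thm:MaximalRowSpaces}. The only imprecision is your closing remark that step~2 ``keeps only one row space per $S_n$-orbit'' --- it keeps or discards whole orbits at once --- but the one-representative-per-class property you want there follows instead from $Q_{\Phi_n} \subseteq \im\Phi_n$ together with $\ker\Phi_n = \J$, which you already invoke for the equal-row-space case.
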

\begin{proof}
  Since the $\J$-class of a prime matrix consists only of similar matrices, the
  set $\set{\RowS(A\alpha)}{A \in P_{\Phi_n}, \alpha \in S_n} \subset X$
  contains all row spaces of prime matrices in $\Bn$, and hence so does $X$.
  Similarly, $X$ contains the row space of every elementary matrix. Hence, the
  elements that are maximal in $X$ are precisely the maximal elements of
  $\beta_n = \set{\RowS(A)}{A\in \Bn\setminus S_n}$ and thus by
  \cref{thm:MaximalRowSpaces} correspond to primes and elementary matrices.
  Since $\RowS(A)$ remains in $X$ after step 2 precisely when $\RowS(A)$ (and
  $\RowS(A\alpha)$, for all $\alpha \in S_n$) is maximal in $X$, the output is
  $P_{\Phi_n}$.
\end{proof} 

Note that step 2 of \cref{alg:filter1} requires $O(|Q_{\Phi_n}||X|)$ comparisons.  The
size of $X$ grows extremely rapidly with $n$ as shown in
\cref{tab:filter1numbers}, and so this algorithm is only suitable for small $n$.
\begin{table}
  \centering
  \begin{tabular}{l|r|r}
    $n$ & $|Q_{\Phi_n}|$ & $|X|$ \\
    \hline
    3 & 6 & 91 \\ 
    4 & 11 & 588 \\
    5 & 33 & 8194 \\
    6 & 395 & 570\ 636 \\ 
    7 & 34\ 015 & 342\ 915\ 296 \\
    8 & 17\ 120\ 845 & ? 
  \end{tabular}
\vspace{1cm}

\caption{The number of row spaces generated during \cref{alg:filter1} when
  given input $Q_{\Phi_n}$, the output of \cref{alg:canonicalbacktrack}}. 
  \label{tab:filter1numbers}
\end{table}

Using \cref{alg:canonicalbacktrack} and \cref{alg:filter1}, minimal
generating sets for $n \leq 7$ may be obtained.

For $n=8$ \cref{alg:filter1} is no longer sufficient, and it is necessary to use
heuristics to reduce the size of the input set $Q_{\Phi_n}$. The simplest way to
do this is to select a small subset $Y \subset Q_{\Phi_n}$ of matrices with
large row spaces, generate the row spaces $Z = \set{\RowS(A\alpha)}{A \in Y,
  \alpha \in S_n} \subset X$, and check whether $\RowS(B)$ is properly contained
in any element of $Z$ for each element $B \in Q_{\Phi_n}$.  It is also
worthwhile to filter $Q_{\Phi_n}$ by checking containment in the row spaces of
all the column permutations of a known set of prime matrices, such as those
described in the following lemma.

\begin{lemma}
  \label{lem:ExtendingPrimeMatrices}
  Let $A$ be a prime matrix in $M_{n-1}(\B)$. Extend $A$ to a matrix $B \in \Bn$
  by adding a row of zeros at the bottom and a column of zeros on the right,
  then setting $B_{n,n} = 1$. Then $B$ is prime in $\Bn$.
\end{lemma}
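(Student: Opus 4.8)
The plan is to show that $B$ is prime by contradiction: suppose $B = CD$ with neither $C$ nor $D$ a unit, and derive a contradiction with the primality of $A$ in $M_{n-1}(\B)$. The key observation is that the last column $B_{*n}$ has a single $1$, in position $(n,n)$, and the last row $B_{n*}$ has a single $1$, in position $(n,n)$ as well; moreover the $(n,n)$ entry is the unique $1$ in both its row and its column. First I would analyse the structure forced on $C$ and $D$ by the equation $B = CD$ around this isolated $1$. Using \cref{prop:rowsandideals}(i), $\RowS(B) \subseteq \RowS(D)$ and $\ColS(B) \subseteq \ColS(C)$, so $D$ must have some row containing the vector $e_n$ (the standard basis vector with a $1$ only in position $n$), and dually $C$ must have a column containing $e_n$; but since $B_{n*} = e_n$, the greedy-multiplier reasoning from \cref{lem:GreedyMultipliers} shows row $n$ of $C$ must select exactly those rows of $D$ contained in $e_n$.

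The main step is then to extract an $(n-1)\times(n-1)$ factorisation of $A$. I would argue that, after noting $B$ is trim (it is prime-candidate shaped: the first $n-1$ rows and columns form the trim matrix $A$, and adding an isolated $1$ keeps it trim, using \cref{lem:PrimeMatricesAreTrim} applied to $A$), one can normalise $C$ and $D$ so that their last rows/columns are also ``clean'': replacing $C$ by $C' = C$ with row $n$ set to $e_n$ and $D$ by $D'$ with column $n$ set to $e_n$ still gives $C'D' = B$ if done carefully, because the only way to produce $B_{n,n}=1$ and zeros elsewhere in row $n$ and column $n$ is via this isolated interaction. Concretely, I expect to show that $C$ has the block form $\left(\begin{smallmatrix} C_0 & * \\ 0 & 1 \end{smallmatrix}\right)$ and $D$ the block form $\left(\begin{smallmatrix} D_0 & 0 \\ * & 1 \end{smallmatrix}\right)$ — or can be modified to such forms without changing the product — where $C_0, D_0 \in M_{n-1}(\B)$. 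Then the top-left $(n-1)\times(n-1)$ block of $CD$ is $C_0 D_0$ (the cross terms vanish because of the zero blocks), so $A = C_0 D_0$. Since $A$ is prime, $C_0$ or $D_0$ is a unit in $M_{n-1}(\B)$, i.e.\ a permutation matrix; say $C_0$ is a permutation matrix, whence $C$ is a permutation matrix (a block-diagonal permutation with a $1$ in the $(n,n)$ slot is a unit in $\Bn$), contradicting that $C$ is a non-unit.

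The hard part will be justifying the block structure of $C$ and $D$ — in the boolean semiring factorisations are far from unique, so it is not immediate that $C$ and $D$ can be put in the clean block-triangular form. I would handle this by a careful case analysis on which rows of $D$ are $\leq e_n$: the rows of $D$ contained in $e_n$ are either $0$ or $e_n$ itself; row $n$ of $C$ must pick out at least one row equal to $e_n$ (to get $B_{n,n}=1$) and must pick out no row with a $1$ outside position $n$ (to keep $B_{n*}=e_n$), so every row of $D$ selected by row $n$ of $C$ equals $0$ or $e_n$. A symmetric analysis on columns pins down column $n$ of $C$. The remaining subtlety is ensuring the interaction between the first $n-1$ rows of $C$ and the last column of $D$ (and dually) contributes nothing to the top-left block, which follows since $B_{i,n} = 0$ for $i < n$ forces those entries of $C$ and $D$ to be ``orthogonal'' in the relevant sense; I expect this to reduce to another short greedy-multiplier argument via \cref{lem:GreedyMultipliers}. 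Once the block form is in hand, the reduction to primality of $A$ and the conclusion are routine.
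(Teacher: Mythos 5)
Your route is genuinely different from the paper's. The paper never analyses a factorisation of $B$: it takes a matrix $C$ whose row space is maximal in $\beta_n$ and contains $\RowS(B)$, argues that the unique row basis of $B$ (the rows of $A$ padded with a final $0$, together with the vector having a single $1$ in position $n$) must appear in the row basis of $C$, deduces that $\RowS(B)$ is itself maximal in $\beta_n$, and then concludes from \cref{thm:MaximalRowSpaces} that $B$ is prime or elementary, the latter being impossible. Your direct attack on a hypothetical factorisation $B = CD$ is more elementary and avoids \cref{thm:MaximalRowSpaces} entirely, and it can be made to work, but as written it has a gap at the final step.

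The gap is in ``say $C_0$ is a permutation matrix, whence $C$ is a permutation matrix''. The constraints $B_{n*} = B_{*n} = e_n$ do not yield your block forms; what they force is a single inner index $k_0$ with $C_{*k_0} = e_n$ and $D_{k_0*} = e_n$, and after inserting $PP^{-1}$ for the transposition $P = (k_0\ n)$ one obtains $C = \tmat{C_0}{0}{c}{1}$ and $D = \tmat{D_0}{d}{0}{1}$ --- the nontrivial off-diagonal blocks sit in the corners opposite to the ones you chose. The block computation then gives $A = C_0D_0$, $cD_0 = 0$ and $C_0d = 0$, so $c$ may be supported on any zero rows of $D_0$ and $d$ on any zero columns of $C_0$; if $C_0$ is a permutation but $c \neq 0$, then $C$ is not a unit and you have no contradiction. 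You cannot escape this by zeroing out entries of row $n$ of $C$ to reach your block form, because the contradiction must concern the original factors $C$ and $D$, not modified ones. The missing ingredient is that a prime matrix has no zero row or column (if $A_{i*} = 0$ then $A = F_iA$, where $F_i$, the identity with the $1$ in position $(i,i)$ deleted, is a non-unit; alternatively this follows from \cref{lem:PrimeAreHall}). Hence $D_0 = C_0^{-1}A$ has no zero row, which forces $c = 0$ and $C$ to be a unit; dually for $d$ and $D$. With that observation added, and the reindexing by $k_0$ made explicit, your argument closes.
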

\begin{proof}
  Let $C$ be a matrix with row space maximal in $\beta_n$, such that $\RowS(B)
  \subseteq \RowS(B)$.  The last row of $B$ must also be in $C$ since it is a
  minimal in $\B^{n}$. Now $\RowS(A)$ has a unique basis of $n-1$ rows which
  must be contained in both $B$ and $C$, so we have $\RowB(B) = \RowB(C)$; hence
  $\RowS(B)$ is maximal in $\beta_n$. Since $B$ is not elementary, it is prime.
\end{proof}

For $n=8$ prefiltering by some of the large row spaces and extended prime
matrices is enough to obtain a minimal generating set, although the computation
is extremely lengthy; see \cref{tab:runtimestats} for some details. A
significant improvement can be obtained by using Zaretskii's Theorem.

\begin{table}
  \centering
\begin{adjustbox}{width=1\textwidth}
  \begin{tabular}{l|r|r|r|r|r|r}
    $n$ & \cref{alg:canonicalbacktrack} & \cref{alg:filter1} &
    \cref{alg:filter2} & prefiltering & \thead{\cref{alg:filter1} with \\
      prefiltering}  & \thead{\cref{alg:filter2} with \\ prefiltering} \\
    \hline
    3 &5.4ms  & 12ms & 6.5s & 23ms & 11ms & 6.5s \\
    4 &5.6ms  & 13ms & 7.0s & 25ms & 11ms & 6.7s \\
    5 &9.2ms  & 16ms & 7.2s & 33ms & 16ms & 7.3s \\
    6 &150ms  & 216ms & 8.6s & 80ms & 171ms & 8.7s \\
    7 &55s    & 1h\ 20m & 1m\ 11s & 28s & 1h\ 9m & 1m\ 8s \\
    8 &30h\ 52m & - & - & 5h\ 30m & approx. 30 days & 3.9 days
  \end{tabular}
\end{adjustbox}

\caption{Runtimes of the algorithms described in this section, for $3 \leq n
  \leq 8$. Runtimes for $n = 1,2$ were excluded as they are essentially
  immediate after accounting for system factors. Matrices were prefiltered by
  generators of $\Bm{n - 1}$ extended as in
  \cref{lem:ExtendingPrimeMatrices}; in particular a good balance was found
  to be taking those extended matrices with the 13 largest row spaces amongst
  the extended row spaces. Each algorithm was executed on a cluster of 60 2.3GHz
  AMD Opteron 6376 cores with 512GB of ram. \cref{alg:filter2} involves
  launching GAP multiple times, which took approximately 6s in each run. For
  $n=8$, \cref{alg:filter1} with prefiltering was run on a machine with 32 AMD
  Opteron 6276 cores and 192GB of RAM}
\label{tab:runtimestats}
\end{table}

Let $A \in \Bn$. The \defn{graph} of $\RowS(A)$ is the directed graph with
vertices $\RowS(A)$ and an edge from $v$ to $w$ if and only if $v \leq w$.

It is an immediate corollary of Zaretskii's Theorem that $J_A \leq J_B$ if and
only if there exists a homomorphic embedding of the graph of $\RowS(A)$ into the
graph of $\RowS(B)$ which respects non-edges. An efficient and optimised search
for such embeddings is implemented in~\cite{Digraphs2020aa}.

For practical computational purposes, it is useful to add extra structure to the
row space graphs to guide searches for embeddings. The \emph{augmented graph of
  $\RowS(A)$} is the disjoint union of the graph of $\RowS(A)$ with the empty
graph on the vertices $C = \{c_i \: : \: 1 \leq i \leq n\}$, with an edge from
$v \in \RowS(A)$ to $c_i$ if and only if $v_i = 1$.

\begin{lemma}
  \label{lem:EmbeddingGraphs}
  Let $Q$ be a superset of a canonical set of prime matrices $P$ which does not
  contain a permutation matrix, and let $A \in
  Q$. Then $A$ is not prime or elementary if and only if for some $B \in
  (Q\cup\{E\})\setminus\{A\}$ there exists a digraph embedding $\phi$ from the
  augmented graph of $\RowS(A)$ into the augmented graph of $\RowS(B)$ which
  permutes $\{ c_i : 1 \leq i \leq n \}$ and respects non-adjacency.
\end{lemma}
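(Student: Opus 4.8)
The plan is to reduce this statement to \cref{thm:MaximalRowSpaces} (characterising maximal row spaces in $\beta_n$ as those of prime or elementary matrices) together with Zaretskii's Theorem (\cref{thm:Zaretskii}), in the form of the corollary already noted after that theorem: $J_A \leq J_B$ in $\Bn$ if and only if there is a homomorphic embedding of the graph of $\RowS(A)$ into the graph of $\RowS(B)$ respecting non-edges. The first thing I would establish is the translation between row-space embeddings and embeddings of the \emph{augmented} graphs. An edge from $v \in \RowS(A)$ to $c_i$ encodes the bit $v_i = 1$; so a digraph embedding $\phi$ of the augmented graph of $\RowS(A)$ into that of $\RowS(B)$ which permutes the $c_i$'s by some $\pi \in S_n$ and respects non-adjacency restricts to a row-space embedding $f \colon \RowS(A) \to \RowS(B)$ with the extra property that $w \leq f(v)$ in $\B^n$ exactly when $\pi^{-1}(w') \leq v$ — in other words $f$ is, up to the column permutation $\pi$, an \emph{inclusion} of row spaces, not merely an abstract order embedding. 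Conversely, if $\RowS(A) \subseteq \RowS(B\beta)$ for some $\beta \in S_n$, i.e.\ $\RowS(A\beta^{-1}) \subseteq \RowS(B)$ after relabelling, then the inclusion map together with $\pi = \beta^{-1}$ on the $c_i$ is such an augmented-graph embedding. So the condition ``some augmented-graph embedding into $\RowS(B)$ exists'' is equivalent to ``$\RowS(A) \subseteq \RowS(B\beta)$ for some column permutation $\beta$.''

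Granting that translation, the lemma becomes almost immediate. Suppose first that $A \in Q$ is neither prime nor elementary. Then $\RowS(A) \in \beta_n$ (since $A$ is not a permutation matrix, as $Q$ contains none), and by \cref{thm:MaximalRowSpaces} $\RowS(A)$ is \emph{not} maximal in $\beta_n$; hence $\RowS(A) \subsetneq \RowS(C)$ for some $C$ with $\RowS(C)$ maximal in $\beta_n$, which by the same theorem means $C$ is prime or elementary. If $C$ is elementary then $\RowS(A) \subsetneq \RowS(E\beta)$ for some $\beta$ (all elementary matrices are column-permutations of each other and of row-permutations thereof, but row permutations do not change the row space), so we may take $B = E$; note $A \ne E$ since $A$ is not elementary. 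If $C$ is prime, then since $Q \supseteq P$ is a superset of a canonical set of prime matrices, $C$ is $\J$-related — hence similar, by \cref{lem:PermutingReducedMatrices} — to some $B \in P \subseteq Q$, and $\RowS(C)$ is a column-permutation of $\RowS(B)$ (row permutations again being irrelevant), so $\RowS(A) \subsetneq \RowS(B\beta)$ for some $\beta$; and $A \ne B$ because $\RowS(A) \subsetneq \RowS(B\beta)$ is strict while $\RowS(B\beta)$ has the same cardinality as $\RowS(B)$, so $A$ and $B$ are not even $\L$-related after the permutation. Either way we obtain the required $B$ and embedding.

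For the converse, suppose such a $B \in (Q \cup \{E\}) \setminus \{A\}$ and embedding exist. By the translation, $\RowS(A) \subseteq \RowS(B\beta)$ for some $\beta \in S_n$. If the containment is strict, then $\RowS(A)$ is not maximal in $\beta_n$ (note $B\beta$, like $B$, is not a permutation matrix, so $\RowS(B\beta) \in \beta_n$), so by \cref{thm:MaximalRowSpaces} $A$ is not prime or elementary, as desired. The only remaining case is $\RowS(A) = \RowS(B\beta)$, i.e.\ $A \L B\beta$; here I would argue that since $A$ and $B$ lie in a canonical superset of primes together with possibly $E$, and $A \ne B$, this forces $A$ and $B$ to be \emph{similar} primes (or $A = E\beta'$ elementary) — in the prime-or-elementary case $A$ would still be prime or elementary, contradicting nothing, so in fact this case shows the biconditional is only interesting in the strict direction; more carefully, if $A$ were prime with $\RowS(A) = \RowS(B\beta)$ and $B$ canonical, then $A$ and $B$ are similar, but then their canonical forms coincide, whence $A = B$, a contradiction. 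Thus equality is impossible unless $A$ is non-prime non-elementary, so in every case the existence of the embedding yields that $A$ is not prime or elementary.

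\medskip

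\noindent The step I expect to be the main obstacle is the careful handling of the \emph{equality} case $\RowS(A) = \RowS(B\beta)$ in the converse: one must rule it out (or absorb it) using the fact that $Q$ is a \emph{canonical} set of representatives, so that two similar primes in $Q$ must literally coincide, and that $A \ne B$ is assumed. Getting the bookkeeping right — that column permutations are exactly what the permuted $c_i$'s provide, that row permutations are invisible to row spaces, and that $A=E$ is excluded because $A$ is not elementary — is where the proof needs to be written with some care, but none of it is deep.
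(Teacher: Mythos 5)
Your proposal is correct and follows essentially the same route as the paper: translate an augmented-graph embedding that permutes the $c_i$'s into a containment $\RowS(A) \subseteq \RowS(B\beta)$ for some column permutation $\beta$ (and conversely), then invoke \cref{thm:MaximalRowSpaces}. In fact you are more careful than the paper's two-sentence proof on one point — the paper silently treats the containment in the converse direction as strict, whereas you explicitly dispose of the equality case $\RowS(A) = \RowS(B\beta)$ using the fact that distinct canonical representatives cannot be $\J$-related.
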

\begin{proof}
  Let $A$ not be prime or elementary; then $\RowS(A)$ is contained in the row
  space of some column permutation $\alpha$ of some $B \in Q \setminus \{A\}$.
  Then the embedding that extends the map $c_i \to c_{\alpha^{-1}i}$ has the
  properties required. Conversely, if such a map $\phi$ exists, the permutation
  $\alpha$ induced by the restriction to $C$ has the property that
  $\RowS(B\alpha^{-1})$ contains $\RowS(A)$, and hence by
  \cref{thm:MaximalRowSpaces} $A$ is not prime or elementary. 
\end{proof}

Note that such an embedding $\phi$ must also map a vector containing $i$ ones to
another containing $i$ ones in order to preserve adjacency and non-adjacency
with the set $C$.

We can therefore use the following improved algorithm to filter canonical
supersets of prime matrices.

\begin{alg}
  \label{alg:filter2} Filtering canonical forms by digraph embeddings.\\
  \textbf{Input}: A set $Q_{\Phi_n}$, containing the images $P_{\Phi_n}$ of the
  prime matrices of $\Bn$ under $\Phi_n$, and not containing any permutation
  matrices. \\
  \textbf{Output}: The set $P_{\Phi_n}$.
  \begin{enumerate}
  \item
    Generate the set $G$ of augmented graphs of row spaces of matrices in $Q$.
  \item 
    For every $K, L \in G$, if there exists an embedding of $K$ into $L$ as in
    \cref{lem:EmbeddingGraphs}, then discard $K$ from $G$.
  \item
    Output $X\subset Q$, the set of non-elementary elements $A$ with
    corresponding graphs remaining in $G$ after the previous step.
\end{enumerate}
\end{alg}

Note that this is in effect the same computation as in \cref{alg:filter1}; it
replaces a brute-force search through all permutations of columns with a guided
search for an appropriate permutation. It is also superior in that no more data
has to be computed, unlike \cref{alg:filter1} where new row spaces must be
produced for each column permutation. Additionally, information about the graphs
can be reused (in particular their automorphism group). However, this is not as
useful as it might seem, since the automorphism group of prime row spaces
appears to almost always be trivial.

Using this method of filtration, a minimal generating sets for $\Bn$, $6 \leq n
\leq 8$ have been computed; the size of such generating sets is contained in
\cref{tab:BMatResults}. It seems unlikely that these methods can produce
minimal generating sets for $n > 8$. The generating sets obtained from this
algorithm are obtainable at~\cite{Results2020aa}, along with code to produce
them.

%%%%%%%%%%%%%%%%%%%%%%%%%%%%%%%%%%%%%%%%%%%%%%%%%%%%%%%%%%%%%%%%%%%%%%%%%%%%%%%
\subsection{Reflexive boolean matrices}
\label{sec:RefBoolMat}
An interesting submonoid of $\Bn$ is the monoid $\Refln$ of reflexive boolean
matrices, that is, boolean matrices with an all-$1$ main diagonal. Minimal
generating sets for $\Refln$ are significantly larger than those of $\Bn$.
\begin{thm}
  \label{thm:ReflexiveGenSet}
  The unique minimal monoid generating set for $\Refln$ consists of the set of
  elementary matrices in $\Refln$ together with the set of indecomposable trim
  matrices in $\Refln$.
\end{thm}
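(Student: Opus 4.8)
The plan is to establish two inclusions: first, that every matrix in the proposed set $X$ (the elementary matrices in $\Refln$ together with the indecomposable trim matrices in $\Refln$) is indecomposable, hence lies in every generating set of $\Refln$; and second, that $X$ itself generates $\Refln$. Since every generating set then contains $X$ while $X$ generates, $X$ is automatically the unique minimal generating set (and in particular irredundant).

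The one elementary fact I would use throughout is that if a product of matrices in $\Refln$ equals $A$, then every factor is contained in $A$: writing $A = B\,g\,C$ with $B,C\in\Refln$ (allowing $B$ or $C$ to be the identity), reflexivity gives $A_{pq}=\sum_{r,s}B_{pr}g_{rs}C_{sq}\ge B_{pp}g_{pq}C_{qq}=g_{pq}$, so $g\subseteq A$. For the first inclusion, indecomposable trim matrices are indecomposable by definition, so only the elementary matrices need an argument: any matrix in $\Refln$ contained in $E^{i,j}$ is either $I$ or $E^{i,j}$, so in any expression $E^{i,j}=g_1\cdots g_k$ with $g_p\in\Refln$ each $g_p\in\{I,E^{i,j}\}$, and hence $E^{i,j}$ cannot be written as a product of matrices in $\Refln\setminus\{I,E^{i,j}\}$. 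Thus $X$ is contained in every generating set of $\Refln$.

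For the second inclusion the key step is the lemma: \emph{a matrix in $\Refln$ that is neither trim nor elementary is decomposable.} Transposition preserves $\Refln$, preserves the property of being elementary, and interchanges "row-trim" with "column-trim", so I may assume $A$ is not row-trim, say $A_{k*}\subseteq A_{l*}$ with $k\neq l$. The idea is to split off an elementary factor on the left: let $M\in\Refln$ be obtained from $A$ by replacing row $l$ with $(A_{l*}\setminus A_{k*})\cup\{l\}$ (viewing rows as subsets of $\{1,\dots,n\}$); then $M$ is still reflexive and $E^{l,k}M=A$, since left multiplication by $E^{l,k}$ alters only row $l$, replacing $M_{l*}$ by $M_{l*}\cup M_{k*}=A_{l*}$. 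One then checks the factors are genuine: $E^{l,k}\neq I$ is clear, and $E^{l,k}=A$ would make $A$ elementary; $M=I$ would give $A=E^{l,k}$, again elementary; and $M=A$ would force $A_{k*}\subseteq\{l\}$, impossible since $k\in A_{k*}$ and $k\neq l$. (As a consistency check, such an $A$ must have at least $n+2$ ones, since the matrices in $\Refln$ with $n$ or $n+1$ ones are exactly $I$ and the elementary matrices.) Granting the lemma, every indecomposable element of $\Refln$ is trim or elementary, hence lies in $X$, and then induction on the number of ones shows $\genset{X}=\Refln$: the base case $A=I$ holds since $I$ is indecomposable and trim, and if $A\neq I$ is decomposable, writing $A=g_1\cdots g_k$ with each $g_p\in\Refln\setminus\{I,A\}$, the containment fact gives $g_p\subseteq A$ for each $p$, so each $g_p$ has strictly fewer ones than $A$ and hence lies in $\genset{X}$ by induction, whence $A\in\genset{X}$.

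I do not expect a substantial obstacle: the argument is short once the construction of $M$ in the key lemma is found. The only care required is the bookkeeping around the degenerate factorizations — this is exactly where the hypothesis "not elementary" does its work, via a count of how many ones beyond the diagonal $A$ carries — and being mindful that decomposability in the monoid sense allows products of more than two factors, which is harmless here because every factor of a product of reflexive matrices is contained in that product.
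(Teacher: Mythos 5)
Your proof is correct and follows essentially the same route as the paper: the same key lemma (a reflexive matrix that is neither trim nor elementary splits off an elementary left factor, obtained by deleting the redundant part of the dominating row) and the same two-inclusion structure. The only differences are cosmetic: you prove indecomposability of the elementary matrices via the containment fact rather than by counting ones, and you replace the paper's appeal to the $\J$-triviality of $\Refln$ together with its general termination lemma on $\J$-classes by a direct induction on the number of ones, which amounts to the same thing since every factor of a product in $\Refln$ is contained in that product.
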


In order to prove this theorem, we must understand the  $\J$-relation on
$\Refln$. Let $A, B$ be two matrices belonging to $\Refln$. Observe that since $A,
B$ both contain the identity matrix, the product $AB$ must contain both every
row of $A$ and every column of $B$; hence $A \leq AB$ and $B \leq AB$. This
leads to the following well-known lemma:

\begin{lemma}
  \label{lem:ReflexiveJRelation}
  $\Refln$ is $\J$-trivial. 
\end{lemma}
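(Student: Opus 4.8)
The proof will build directly on the observation recorded just before the statement. The key fact is that boolean matrix multiplication is monotone with respect to entrywise containment, and that a matrix of $\Refln$, having $1$s on the diagonal, acts as a kind of ``superidentity'': if $M, N \in \Refln$ then $(MN)_{ij} = \bigvee_{k} M_{ik}N_{kj} \geq M_{ii}N_{ij} = N_{ij}$ since $M_{ii} = 1$, and symmetrically $(MN)_{ij} \geq M_{ij}N_{jj} = M_{ij}$. Thus $MN \geq N$ and $MN \geq M$ entrywise; in other words, left- or right-multiplying a matrix of $\Refln$ by a matrix of $\Refln$ can only turn $0$s into $1$s, never the reverse.

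With that in hand, the argument is short. First I would note that $\Refln$ is a monoid, so $\Refln^1 = \Refln$, and hence $A \J B$ in $\Refln$ means precisely that there exist $X, Y, U, V \in \Refln$ with $A = XBY$ and $B = UAV$. Applying the observation twice — first $XB \geq B$, then $(XB)Y \geq XB$ — yields $A = XBY \geq B$, and symmetrically $B = UAV \geq A$. Since entrywise containment is a partial order on $\Bn$, antisymmetry forces $A = B$. Therefore every $\J$-class of $\Refln$ is a singleton, i.e.\ $\Refln$ is $\J$-trivial.

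There is essentially no obstacle here: the only points that warrant care are that boolean matrix multiplication preserves the entrywise order, and that reflexivity is exactly what makes a product dominate both of its factors. I would remark in passing that the same computation shows $A \L B$ (resp.\ $A \R B$) already implies $A = B$, so $\Refln$ is even $\L$- and $\R$-trivial, but $\J$-triviality is the form of the statement needed in the proof of \cref{thm:ReflexiveGenSet}.
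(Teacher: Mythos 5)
Your proof is correct and follows essentially the same route as the paper: both rest on the observation that reflexivity forces $AB \geq A$ and $AB \geq B$ entrywise, whence $A \J B$ gives mutual containment and antisymmetry yields $A = B$. You simply spell out the entrywise computation and the $\Refln^1 = \Refln$ point more explicitly than the paper does.
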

\begin{proof}
  If $A \J B$, then $A \leq B$ and $B \leq A$ with respect to containment. Hence
  $A = B$.    
\end{proof}
Note that it also follows that $AB$ has at least as many $1$s as the maximum
number of $1$s in $A$ or $B$. It also follows from the lemma that any
decomposable element $A$ is decomposable into a product of elements not
$\J$-related to $A$.

We now prove several results relating to decomposability of elements in $\Refln$.

\begin{lemma}
  \label{lem:ReflexiveNonTrimDecomposable}
  Every matrix in $\Refln$ that is neither trim nor elementary is decomposable in
  $\Refln$.
\end{lemma}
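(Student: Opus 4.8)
The plan is to produce, for each such $A$, an explicit two-term factorisation $A = E^{l,k} A'$ with both factors lying in $\Refln \setminus \{\id, A\}$. First I would reduce to the case where $A$ fails to be \emph{row}-trim: since transposition is an anti-automorphism of $\Refln$ that fixes $\id$ and permutes the elementary matrices among themselves (indeed $(E^{i,j})^{T} = E^{j,i}$), a matrix $A$ that fails to be column-trim and is non-elementary has $A^{T}$ failing to be row-trim and non-elementary, and any factorisation $A^{T} = BC$ of the required kind transposes to a factorisation $A = C^{T}B^{T}$ of the required kind.

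So assume $A \in \Refln$ is not row-trim and not elementary. Every row of a reflexive matrix is non-zero, so there are indices $k \neq l$ with $A_{k*} \leq A_{l*}$; reflexivity gives $A_{kk} = 1$, hence $A_{lk} = 1$. Let $A'$ be the matrix obtained from $A$ by switching the single entry in position $(l,k)$ from $1$ to $0$. Because $k \neq l$ this changes only an off-diagonal entry, so $A' \in \Refln$, and $A' \neq A$ since they differ in position $(l,k)$. Also $A' \neq \id$: if it were, then $A$ would be $\id$ together with one extra $1$ in position $(l,k)$, i.e. $A = E^{l,k}$, contradicting non-elementarity. It then remains to check $E^{l,k} A' = A$: left-multiplying by $E^{l,k}$ (with $k \neq l$, so $E^{l,k} \neq \id$) leaves every row unchanged except row $l$, which it replaces by the union $A'_{l*} \vee A'_{k*} = (A_{l*} \setminus \{k\}) \vee A_{k*}$; in column $k$ this union is $1$ (from $A_{kk} = 1$), matching $A_{lk} = 1$, and in any column $j \neq k$ it equals $A_{lj} \vee A_{kj} = A_{lj}$ by $A_{k*} \leq A_{l*}$. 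Hence $E^{l,k} A' = A$, and since $A$ is non-elementary we also have $E^{l,k} \neq A$, so both factors avoid $\{\id, A\}$ and $A$ is decomposable.

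I do not anticipate a real obstacle; the delicate point is simply keeping the two hypotheses in play at once. "Not row-trim" is exactly what supplies a row $A_{k*}$ lying below another row $A_{l*}$, hence an off-diagonal $1$ of $A$ that can be shaved off to form $A'$ without leaving $\Refln$; "not elementary" is precisely what prevents the candidate factors $E^{l,k}$ and $A'$ from degenerating to $\id$ or to $A$ itself. The only other thing to verify carefully is that the transpose reduction really preserves both hypotheses, which reduces to the trivial facts that $\Refln$ is closed under transposition and $(E^{i,j})^{T} = E^{j,i}$.
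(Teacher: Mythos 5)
Your proof is correct and is essentially the paper's own argument: reduce to the non-row-trim case via transposition, zero out the off-diagonal $1$ witnessing the containment $A_{k*}\leq A_{l*}$, and recover $A$ by left-multiplying by the elementary matrix $E^{l,k}$. You are in fact slightly more explicit than the paper in verifying that neither factor is the identity or $A$ itself (the paper handles the latter via $\J$-triviality and leaves the former implicit in the non-elementarity hypothesis).
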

\begin{proof}
  Let $A \in \Refln$ be neither trim nor elementary.
  Since $A$ is not trim, it is either not row-trim or not column-trim (or both).
  Suppose that $A$ is not row-trim; then there exist some distinct $1 \leq i, j
  \leq n$ such that the $i$th row $A_{i*}$ is contained in the $j$th row
  $A_{j*}$.
  Let $B$ be the matrix obtained by setting entry $i$ of row $j$ of $A$ to be
  equal to $0$, that is
  \[B_{kl} = \begin{cases} 
                0 \quad& k = j \text{ and } l = i \\ 
                A_{kl} \quad& \text{otherwise.} 
              \end{cases} \]
  Since $i \neq j$ and $A$ is reflexive, so too is $B$. Now $B_{j*} \cup A_{i*}
  = A_{j*}$, and so $A = E^{j,i}B$. By \cref{lem:ReflexiveJRelation} neither
  $E^{j,i}$ nor $B$ is $\J$-related to $A$.
  
  If, instead, $A$ is column trim, then the same argument applied to the
  transpose $A^T$ demonstrates that $A^T$ may decomposed in such a way, and thus
  $A$ may also.
\end{proof}

\begin{cor}
  \label{cor:reflexiveindecomposable}
  An indecomposable element of $\Refln$ is either trim or elementary.
\end{cor}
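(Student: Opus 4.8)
The plan is to obtain this corollary as the immediate contrapositive of \cref{lem:ReflexiveNonTrimDecomposable}. That lemma says that every matrix in $\Refln$ which is neither trim nor elementary is decomposable in $\Refln$. Taking the contrapositive: if a matrix $A \in \Refln$ is \emph{in}decomposable, then it is not the case that $A$ is neither trim nor elementary, i.e.\ $A$ is trim or $A$ is elementary. So the whole argument is a single logical transposition, and I expect essentially no obstacle: the work has already been done in the proof of \cref{lem:ReflexiveNonTrimDecomposable}.

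The one point I would take a moment to check is that the notion of ``decomposable'' appearing in \cref{lem:ReflexiveNonTrimDecomposable} coincides with the monoid-theoretic definition of \emph{indecomposable element} from \cref{sec:MinimalGenSets}, so that the negation transfers cleanly. Concretely, the proof of \cref{lem:ReflexiveNonTrimDecomposable} exhibits a factorisation $A = E^{j,i}B$ (or, dually, a factorisation of $A^T$), where by \cref{lem:ReflexiveJRelation} neither factor is $\J$-related to $A$; since $\Refln$ is $\J$-trivial, this forces both factors to be distinct from $A$, and since the identity matrix is $\J$-related only to itself, both factors are also distinct from the identity. Thus the factorisation is genuinely a decomposition in the sense that $A$ is a product of elements of $\Refln \setminus \{I, A\}$. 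Hence an indecomposable element of $\Refln$ cannot be ``neither trim nor elementary'', which is exactly the claim. No case analysis beyond citing the lemma is required, and I would expect the proof to be one line.
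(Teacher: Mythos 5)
Your proof is correct and matches the paper, which states this corollary without further argument precisely because it is the contrapositive of \cref{lem:ReflexiveNonTrimDecomposable}. Your extra check that the factorisation produced there really avoids both $I$ and $A$ (so that ``decomposable'' matches the definition in \cref{sec:MinimalGenSets}) is a sensible verification but does not change the argument.
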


\begin{lemma}
  \label{lem:reflexiveelementaryindecomposable}
  Elementary matrices are indecomposable in $\Refln$.
\end{lemma}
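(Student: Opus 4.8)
The plan is to use the elementary observation recorded just before \cref{lem:ReflexiveJRelation}: for any $A, B \in \Refln$ one has $A \leq AB$ and $B \leq AB$ with respect to containment. Since $\Refln$ is a submonoid of $\Bn$, any product of two or more elements of $\Refln \setminus \{I, E^{i,j}\}$ (where $I$ denotes the identity matrix) can be written as $BC$ with $B, C \in \Refln$ and $B \notin \{I, E^{i,j}\}$ — simply take $B$ to be the first factor and $C$ the product of the remaining factors, which again lies in $\Refln$.

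With this reduction in hand, fix an elementary matrix $E^{i,j} \in \Refln$ (with $i \neq j$), and suppose towards a contradiction that $E^{i,j} = BC$ with $B, C \in \Refln$. By the observation, $B \leq BC = E^{i,j}$, and since $B$ is reflexive we also have $I \leq B$. Now $E^{i,j}$ is the identity matrix with a single additional $1$ in position $(i,j)$, so the only boolean matrices $F$ with $I \leq F \leq E^{i,j}$ are $F = I$ and $F = E^{i,j}$. Hence $B \in \{I, E^{i,j}\}$, contradicting the choice of $B$. Therefore $E^{i,j}$ admits no decomposition into elements of $\Refln \setminus \{I, E^{i,j}\}$; since this holds for every elementary matrix in $\Refln$, the lemma follows.

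There is essentially no obstacle in this argument: it is a one-line consequence of the fact that multiplication in $\Refln$ only enlarges matrices with respect to containment. The only minor point requiring care is the reduction of an arbitrary product to a two-factor product, which is immediate from associativity together with the closure of $\Refln$ under multiplication, so that the "tail" of the product is again an element of $\Refln$.
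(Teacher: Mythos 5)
Your proof is correct and rests on the same key observation as the paper's --- that for reflexive $B$, $C$ the product $BC$ contains both factors --- though you apply it more directly: the paper counts the number of $1$s to force both factors to be elementary and then inspects products of elementary matrices, whereas you note that $I \leq B \leq BC = E^{i,j}$ already pins $B$ down to $I$ or $E^{i,j}$. This is a slight streamlining of the published argument, and your reduction of an arbitrary decomposition to a two-factor product is handled correctly.
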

\begin{proof}
  Elementary matrices are precisely those matrices in $\Refln$ containing $n + 1$
  ones. As noted above, the product of two matrices in $\Refln$ contains at least
  as many $1$s as the maximum number of $1$s in a factor. It follows that if an
  elementary matrix is decomposable it is decomposable into other elementary
  matrices. However, it is routine to verify that the number of $1$s in a
  product of any two distinct elementary matrices is at least $n + 2$, and that
  each reflexive elementary matrix is idempotent. Hence elementary matrices are
  indecomposable.
\end{proof}

\begin{proof}[Proof of \cref{thm:ReflexiveGenSet}]
  Let $T$ denote the set of indecomposable trim matrices, and $\mathcal{E}$
  denote the set of reflexive elementary matrices. By
  \cref{lem:ReflexiveJRelation} and \cref{cor:reflexiveindecomposable}, it follows
  from \cref{lem:GenSetDecomposition} that $\genset{T \cup \mathcal{E}} =
  \Refln$. Since each element of $T \cup \mathcal{E}$ is indecomposable
  by definition or by \cref{lem:reflexiveelementaryindecomposable},
  $T \cup \mathcal{E}$ must be contained in any generating set for $\Refln$, and hence is
  minimal.
\end{proof}

We now discuss how we may compute minimal generating sets for $\Refln$. Since it
is easy to enumerate the reflexive elementary matrices, the problem is
determining the set of indecomposable trim matrices in $\Refln$. The following
lemma gives a method for testing indecomposability.

\begin{lemma}
  \label{lem:DecomposeIntersection}
  A trim matrix $A \in \Refln$ is decomposable if and only if it may be written
  as a product $A = BC$ of matrices $B, C \in \Refln\setminus\{I, A\}$ where the
  rows of $C$ are intersections of rows of $A$, and $B$ is the greedy left
  multiplier of $(C, A)$.
\end{lemma}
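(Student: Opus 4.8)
The plan is to prove both directions, with the reverse direction being essentially trivial and the forward direction carrying all the content. For the reverse direction: if $A = BC$ with $B, C \in \Refln \setminus \{I, A\}$, then by \cref{lem:ReflexiveJRelation} (since $\Refln$ is $\J$-trivial and $A \notin \{I\}$ is a non-identity element, hence neither factor is a unit and neither equals $A$) the matrix $A$ is decomposable by definition. So the real work is to show that whenever $A$ is a decomposable trim reflexive matrix, we can choose the witnessing factorisation to have the special ``greedy/intersection'' form described.

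For the forward direction, I would start from an arbitrary decomposition $A = BC$ with $B, C \in \Refln \setminus \{I, A\}$ (which exists since $A$ is decomposable; note that by the remark following \cref{lem:ReflexiveJRelation} the factors are automatically not $\J$-related to $A$, and in $\Refln$ the only unit is $I$). The key structural facts I would invoke are: first, since $C \leq BC = A$ (as both $B,C$ contain the identity), $\RowS(C) \subseteq \RowS(A)$; second, by \cref{prop:rowsandideals}(i), $\RowS(A) = \RowS(BC) \subseteq \RowS(C)$, so in fact $\RowS(A) = \RowS(C)$ and $A \L C$ by \cref{lem:GreensRowColumnSpaces}. Now the idea is to replace $C$ by the canonical ``intersection-closed'' representative of its row space: let $C'$ be the matrix whose $i$th row is the intersection (that is, the $\leq$-meet, corresponding to set intersection of the supports) of all rows $A_{k*}$ of $A$ that contain $A_{i*}$. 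One checks $C'$ is reflexive (each diagonal entry survives since $A_{i*} \leq A_{i*}$), that each row of $C'$ lies in $\RowS(A)$ using the fact that $A$ is row-trim so that row spaces behave well, and that $\RowS(C') = \RowS(A)$; I would then set $B'$ to be the greedy left multiplier of $(C', A)$, so that $B'C' = A$ by \cref{lem:GreedyMultipliers} (applicable since $\RowS(A) \subseteq \RowS(C') = \RowS(A)$), and $B'$ is reflexive as a greedy left multiplier of a pair of reflexive matrices whose row spaces coincide.

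The main obstacle — and the step I would spend the most care on — is verifying that this replacement does not collapse the factorisation: i.e.\ that $C' \neq I$ and $C' \neq A$, and likewise $B' \neq I$, $B' \neq A$. That $C' \neq A$ should follow because $C'$'s rows are intersections which are (weakly) smaller than $A$'s rows, and if $C' = A$ then $B' = I$ (as $A$ is row-trim, its own greedy left multiplier is $I$), contradicting $B' \neq I$; so it suffices to rule out $C' = I$ and $B' = I$ simultaneously. Here I would argue that since the original $C \neq I$ has a row with $\geq 2$ ones, and $\RowS(C') = \RowS(C)$ with $C'$ consisting of $\leq$-meets of rows of $A$, either $C'$ already has a non-identity row (done), or the extra ones of $C$ are ``generated'' in a way forcing a non-identity intersection row of $A$ to reappear in $C'$; the trimness of $A$ is exactly what prevents the degenerate case where all of $A$'s structure is forced into $B$. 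Finally $B' \neq A$: if $B' = A$ then $A = AC'$ forces $C' = I$ by a support-counting / $\J$-triviality argument (right-multiplying a matrix by $C'$ can only add ones unless $C' = I$ relative to $A$'s support pattern), which we have excluded. Assembling these, $A = B'C'$ is the desired factorisation with $C'$ having rows that are intersections of rows of $A$ and $B'$ the greedy left multiplier of $(C', A)$.
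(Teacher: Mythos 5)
Your reverse direction is fine, and your instinct that the work lies in replacing an arbitrary factorisation $A=YZ$ by one of the special form is correct, but the forward direction as written contains two genuine errors. First, the claim that $C\leq A$ entrywise forces $\RowS(C)\subseteq\RowS(A)$ is false: entrywise containment of matrices does not give containment of row spaces (e.g.\ $I\leq A$ but $\RowS(I)=\B^n$), and indeed for the relevant factorisations $\RowS(C)\subseteq\RowS(A)$ genuinely fails --- in the paper's own example of a decomposable matrix in $\Refl{5}$, the right-hand factor has singleton rows in its row space that cannot lie in the row space of the trim matrix being factorised. So the assertion $\RowS(C')=\RowS(A)$ and $A\L C$ is not available. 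Second, and fatally, your definition of $C'$ --- the $i$th row is the intersection of all rows of $A$ containing $A_{i*}$ --- collapses precisely because $A$ is row-trim: no row of $A$ is contained in any other, so the only row of $A$ containing $A_{i*}$ is $A_{i*}$ itself, giving $C'=A$ and $B'=I$, the trivial factorisation. The non-degeneracy step you flag as the ``main obstacle'' is not merely unproved; with your construction it is false.

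The missing idea is that the intersections must be steered by the witnessing factorisation rather than by $A$ alone. The paper sets $K_i$ to be the support of the $i$th \emph{column of the left factor} $Y$ and defines $C_{i*}=\bigcap_{j\in K_i}A_{j*}$. Then $A=YZ$ forces $Z_{i*}\leq A_{j*}$ for every $j\in K_i$, hence $Z_{i*}\leq C_{i*}\leq A_{j*}$; summing over the rows selected by $Y$ gives $A=YZ\leq YC\leq A$, so $A=YC$, and replacing $Y$ by the greedy left multiplier $B$ of $(C,A)$ gives $A=BC$. The containments $Z\leq C$ and $Y\leq B$ immediately yield $C\neq I$ and $B\neq I$ (this is where your ``either/or'' hand-wave is replaced by an actual argument), and trimness of $A$ rules out $B=A$ or $C=A$ by a $1$-counting argument. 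If you rework your proof around this $Y$-dependent choice of intersections, the rest of your outline goes through.
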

\begin{proof}
  The reverse direction is immediate. For the forward direction, suppose that
  $A$ is decomposable; that is, there exist matrices $Y, Z \in
  \Refln\setminus\{I, A\}$ such that $A = YZ$. For $1 \leq i \leq n$, define
  $K_i$ to be the set of positions of $1$s in the $i$th column of $Y$, and
  define $C$ to be the matrix with rows $C_{i*} = \cap_{j \in K_i} A_{j*}$.
  Now since $A = YZ$, for all $j \in K_i$ we have $Z_{i*} \leq A_{j*}$. It
  follows that $Z_{i*} \leq C_{i*} \leq A_{j*}$ for all $j \in K_i$. The row
  $(YC)_{i*}$ consists of the union of those rows $C_{j*}$ such that $Y_{ij} =
  1$. The condition that $Y_{ij} = 1$ is equivalent to $i \in K_j$; note the
  change in indices from previous uses of $K$. For each such $j$, we have
  $C_{j*} \leq A_{i*}$ by the inequality above. It follows that $YC \leq A$. But
  from the same inequality above, we then have $A = YZ \leq YC \leq A$; hence $A
  = YC$. By \cref{lem:GreedyMultipliers}, $A = BC$ where $B$ is the greedy left
  multiplier of $(C, A)$. It remains to show that $B, C \in \Refln\setminus\{I,
    A\}$. Since the greedy left multiplier $B$ is the maximal (by containment)
  matrix whose product with $C$ is $A$, we have $Y \leq B$ and hence $B$ is not
  the identity matrix. Similarly, $Z \leq C$ and so $C$ is not the identity
  matrix. Since $A$ is trim, if either $B$ or $C$ were equal to $A$ then the
  product $BC$ would have more $1$s than $A$ does, but $A = BC$.
\end{proof}

In order to determine whether a trim matrix $A$ is decomposable it therefore
suffices to:
\begin{enumerate}
  \item generate all matrices $C$ whose rows are intersections of rows of $A$,
    and for each $C$
  \item test whether the product $BC$ of $C$ with the greedy left multiplier
    $B$ of $(A, C)$ is equal to $A$, and $B, C \not\in \{I, A\}$.
\end{enumerate}
If no such matrices $B, C$ are found, then $A$ is indecomposable. This method
avoids computing unnecessary products, but still quickly becomes infeasible to
use for all trim matrices in $\Refln$.\\

In order to reduce the time spent checking matrices using
\cref{lem:DecomposeIntersection}, we would like to only check the smallest
necessary set of representatives of some equivalence relation. The most obvious
choice is to take canonical representatives in $\Bn$. However, the following
example illustrates that two reduced reflexive matrices can belong to the same
$\J$-class of $\Bn$ while only one of them is decomposable in $\Refln$; hence we
can not use the same canonical forms $\Phi_n$ as were used in the algorithms in
\cref{sec:FullBoolMat}.
\begin{ex}
  Let
\begin{align*}
  A = \begin{pmatrix}
    1 & 0 & 0 & 0 & 1 \\
    0 & 1 & 0 & 1 & 0 \\
    0 & 0 & 1 & 1 & 1 \\
    1 & 0 & 1 & 1 & 0 \\
    0 & 1 & 1 & 0 & 1 
  \end{pmatrix}\qquad\text{and}\qquad
  B = \begin{pmatrix}
    1 & 0 & 1 & 0 & 0 \\
    0 & 1 & 0 & 1 & 0 \\
    0 & 0 & 1 & 1 & 1 \\
    1 & 0 & 0 & 1 & 1 \\
    0 & 1 & 1 & 0 & 1 
  \end{pmatrix}.&\\
\end{align*}
Then $A$ and $B$ belong to $\Refl{5}$, and $B$ may be obtained by
exchanging columns $3$ and $5$ of $A$. However, it can be shown that $A$ is
indecomposable whilst
\begin{align*}
  B = \begin{pmatrix}
    1 & 0 & 1 & 0 & 0 \\
    0 & 1 & 0 & 0 & 0 \\
    0 & 0 & 1 & 1 & 0 \\
    1 & 0 & 0 & 1 & 0 \\
    0 & 0 & 0 & 0 & 1 
  \end{pmatrix}
  \begin{pmatrix}
    1 & 0 & 0 & 0 & 0 \\
    0 & 1 & 0 & 1 & 0 \\
    0 & 0 & 1 & 0 & 0 \\
    0 & 0 & 0 & 1 & 1 \\
    0 & 1 & 1 & 0 & 1 
  \end{pmatrix}.&\\
\end{align*}
\end{ex} 

While the canonical forms $\Phi$ from \cref{sec:FullBoolMat} cannot be
used, the following lemma shows that it is still possible to reduce the space of
matrices that must be checked.

\begin{lemma}
  \label{lem:reflexivecanonical}
  Let $A, B \in \Refln$ be such that $A = P^{-1} B P$ for some permutation matrix
  $P \in S_n$ (i.e. $A$ is obtained by permuting the rows and columns of $B$ by
  the same permutation). Then $A$ is decomposable in $\Refln$ if and only if $B$
  is decomposable in $\Refln$.
\end{lemma}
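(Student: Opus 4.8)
The plan is to show that conjugation by a permutation matrix $P$ is a monoid automorphism of $\Refln$, from which the statement follows immediately. First I would observe that the map $\sigma_P : M_n(\B) \to M_n(\B)$ defined by $\sigma_P(X) = P^{-1} X P$ is a monoid automorphism of the full boolean matrix monoid $M_n(\B)$: it is clearly bijective with inverse $\sigma_{P^{-1}} = \sigma_P^{-1}$, and it is multiplicative since $\sigma_P(XY) = P^{-1}XYP = (P^{-1}XP)(P^{-1}YP) = \sigma_P(X)\sigma_P(Y)$, using $PP^{-1} = I$. It also fixes the identity matrix.

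Next I would check that $\sigma_P$ restricts to a bijection of $\Refln$. Conjugating by $P$ simply permutes the rows and columns of $X$ simultaneously by the permutation corresponding to $P$; in particular the main diagonal of $P^{-1}XP$ is a permutation of the main diagonal of $X$. Hence if $X$ contains the identity matrix (i.e.\ has an all-$1$ diagonal), so does $\sigma_P(X)$, and conversely since $\sigma_P^{-1} = \sigma_{P^{-1}}$ is of the same form. Therefore $\sigma_P|_{\Refln}$ is a monoid automorphism of $\Refln$.

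Finally, automorphisms preserve decomposability: if $B$ is decomposable in $\Refln$, write $B = YZ$ with $Y, Z \in \Refln \setminus \{I, B\}$; then $A = \sigma_P(B) = \sigma_P(Y)\sigma_P(Z)$ with $\sigma_P(Y), \sigma_P(Z) \in \Refln$, and since $\sigma_P$ is a bijection fixing $I$ and sending $B$ to $A$, neither factor equals $I$ or $A$, so $A$ is decomposable. The reverse implication follows by applying the same argument to $\sigma_P^{-1} = \sigma_{P^{-1}}$, which sends $A$ to $B$. This completes the proof.

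I do not anticipate a serious obstacle here: the only mild subtlety is being careful that the factors $\sigma_P(Y), \sigma_P(Z)$ genuinely avoid both $I$ and $A$ (which is where one uses that $\sigma_P$ is injective and fixes $I$), and that the restriction of $\sigma_P$ to $\Refln$ is well-defined in both directions. Both are routine once one notes that conjugation by a permutation matrix is exactly simultaneous row-and-column permutation.
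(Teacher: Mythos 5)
Your proposal is correct and is essentially the same argument as the paper's: the paper's proof conjugates the factors of a decomposition $B = XY$ by $P$ to get $A = (P^{-1}XP)(P^{-1}YP)$ and notes the factors remain in $\Refln\setminus\{I,A\}$, which is exactly your observation that conjugation by a permutation matrix is an automorphism of $\Refln$ preserving decomposability. Your write-up just makes the routine verifications more explicit.
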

\begin{proof}
  Suppose that $B = XY$ for $X, Y \in \Refln\setminus \{I, B\}$. Then $P^{-1}XP,
  P^{-1}YP \in \Refln\setminus\{I, A\}$ and $P^{-1}XPP^{-1}YP = A$. The proof of
  the other direction is dual, since $PAP^{-1} = B$.
  % TODO is it 'dual' or actually 'symmetric'?
\end{proof}

Hence it is sufficient to consider representatives of the equivalence
which relates any two matrices which are similar under the same row and column
permutation. In order to compute these representatives, we modify the
construction of the bipartite graphs $\Gamma$ of \cref{sec:FullBoolMat}.

Given a matrix $A \in \Refln$, we form the vertex-coloured tripartite graph
$\Gamma_\text{id}(A)$ with vertices $\{1, \ldots, 3n\}$, colours 
\[\mathbf{col}(v) = \begin{cases}
    0 \qquad &\text{if } 1 \leq v \leq n, \\
    1 \qquad &\text{if } n < v \leq 2n, \\
    2 \qquad &\text{if } 2n < v \leq 3n,
  \end{cases}
\]
an edge from $i$ to $j+n$ if and only if $A_{ij} = 1$ for $1 \leq i \leq n$,
and an edge from $i + 2n$ to $i$ and $i + n$ for $1 \leq i \leq n$. The numbers $\{1,
  \ldots, n\}$ represent indices of rows, and the numbers $\{n + 1, \ldots, 2n\}$
represents indices of columns in the matrix $A$. The additional vertices $\{2n +
  1, \ldots, 3n\}$, adjacent to both the corresponding row and column nodes,
force an isomorphism of $\Gamma_\text{id}(A)$ to induce the same permutation on rows and
columns of $A$ in the same way that a permutation was induced in
\cref{lem:GraphCanonicalForms}. As before, we may obtain canonical forms
$\Psi_n$ for the graphs $\Gamma_{\text{id}}(A)$ via bliss. Since
$\Gamma_\text{id}$ is clearly injective, we may compute the functions
$\Xi_n = \Gamma_\text{id}\Psi_n\Gamma_\text{id}^{-1}$. It is easy to show that
the equivalence classes $\ker\Xi$ are precisely the classes of matrices which
are similar under permuting rows and columns by the same permutation.

As in \cref{sec:FullBoolMat}, we wish only to enumerate matrices of a
certain form. A similar argument to~\cite[Proposition~3.6]{Breen1997aa} shows
that by permuting the rows and columns of matrices by the same permutation,
matrices in $\Refln$ can be put in the following \defn{reflexive Breen form}:

\begin{enumerate}[label={\rm (\roman*)}]
  \item{all $1$s in the first row of $A$ are on the left,}
  \item{no row has fewer ones than the first row,}
  \item{for each row $A_{i*}$, if $A_{ij} = 1$ then for each $l \in \{1, \ldots,
        j\}$ there exists $k \in \{1, \ldots, i\}$ such that $A_{kl} = 1$ .}
\end{enumerate}

As in \cref{ex:SimilarBreenMatrices}, it is possible for two distinct matrices
in this form to have the same value under $\Xi_n$.

Now, given $\Xi_n$, a similar backtrack search to \cref{alg:canonicalbacktrack}
allows appropriate representatives of matrices to be enumerated.

We then have the following algorithm for finding a minimal generating set for
$\Refln$:
\begin{alg}
  \label{alg:ReflexiveGenSet}
  Computing the minimal generating set for $\Refln$ \\
  \textbf{Input}: A natural number $n$.\\
  \textbf{Output}: A minimal generating set for $\Refln$.
\begin{enumerate}
  \item
    Enumerate the trim reflexive boolean matrices in reflexive Breen form using
    the analogue of \cref{alg:canonicalbacktrack}, storing canonical
    representatives under a row and column permutation in a set $S$.
  \item 
    Filter out the decomposable matrices in $S$ using
    \cref{lem:DecomposeIntersection}, leaving a set $T$ of trim matrices that
    are not decomposable.
  \item
    Return $T$ together with the reflexive elementary matrices.
\end{enumerate}
\end{alg}
Using this algorithm, we can calculate the sizes of minimal generating sets up
to $n = 6$; these are contained in \cref{tab:BMatResults}.

There are a small number of matrices in $\Refl{7}$ for which the approach based on
\cref{lem:DecomposeIntersection} is too inefficient; we chose to use a
different method to test $12$ matrices in total. Observe that a matrix $A$ is
decomposable into a product of generators $X_1 X_2 \ldots X_k$ from some minimal
generating set if and only if $\alpha A \alpha^{-1} = \alpha (X_1 X_2 \ldots
X_{k-1})\alpha^{-1}\alpha X_k \alpha^{-1}$ for all permutations $\alpha$. Since
the set $S$ of \cref{alg:ReflexiveGenSet} contains $\alpha X_k
\alpha^{-1}$ for some $\alpha \in S_n$, we can detect if $A$ is decomposable by
testing whether $\alpha A \alpha^{-1} = CB$ for any $\alpha \in S_n$, $B \in S$
and with $C$ the greedy left multiplier of $(\alpha A \alpha^{-1}, B)$. A brute
force approach based on this observation is sufficient to filter the $12$
difficult matrices for $n = 7$. Although this suggests that the brute-force
method is superior to that of \cref{lem:DecomposeIntersection}, this is not in
practice the case; for most matrices the approach based on
\cref{lem:DecomposeIntersection} is far more efficient. The $12$ matrices that
are particularly non-susceptible to this method for $n = 7$ have many more
combinations of intersections of rows than the other matrices (on average,
roughly $10\ 000$ times more).

% The single matrix that is kept is:
%1000010
%0100101
%1010101
%1001101
%0110110
%0111010
%0111001

%%%%%%%%%%%%%%%%%%%%%%%%%%%%%%%%%%%%%%%%%%%%%%%%%%%%%%%%%%%%%%%%%%%%%%%%%%%%%%%
\subsection{Hall matrices}
\label{sec:HallBoolMat}

The \emph{Hall monoid} is the submonoid of $\Bn$ consisting of matrices which
contain a permutation matrix. These matrices correspond to instances of the Hall
marriage problem that have a solution, and are thus referred to as \emph{Hall
  matrices}; see~\cite{Schwarz1973aa, Butler1974aa, Tan2000aa, Cho1993ab} for
further reading.
We will denote the Hall monoid by $\Halln$. For convenience, we shall often
simply say that a Hall matrix contains a permutation when it contains the
corresponding permutation matrix.

The main result of this section is the following theorem. Unlike the monoid of
reflexive boolean matrices $\Refln$, the Hall monoid $\Halln$ has minimal
generating sets that are strongly related to the minimal generating sets for
$\Bn$.
\begin{thm}
  \label{thm:HallGenSet}
  Every minimal generating set for $\Halln$ is obtained by removing a
  matrix similar to $F$ from a minimal generating set for $\Bn$. That is, every
  minimal generating set for $\Halln$ consists of a set of representatives $P$
  of the prime $\J$-classes of $\Bn$ together with a minimal set of generators
  for the group of units and an elementary matrix.
\end{thm}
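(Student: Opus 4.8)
The plan is to exploit the close relationship between $\Bn$ and $\Halln$: the Hall matrices are exactly the elements of $\Bn$ lying in $\J$-classes at or above the elementary $\J$-class together with the group of units, in the sense that a matrix is Hall if and only if it contains a permutation matrix, and a product of Hall matrices is Hall. The key structural facts are Devadze's Theorem in the strong form \cref{thm:Devadzefull} — every minimal generating set for $\Bn$ is $\{T', U', E', F'\} \cup P$ with $T', U'$ generating $S_n$, $E'$ elementary, $F'$ similar to $F$, and $P$ a set of representatives of the prime $\J$-classes — together with the fact that $F$ (having a zero row) is not a Hall matrix whereas every prime matrix, every elementary matrix and every permutation matrix is Hall. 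So the candidate generating set for $\Halln$ is $\{T', U', E'\} \cup P$, and one direction is to show this generates $\Halln$, the other to show any generating set for $\Halln$ must contain (up to similarity/$\J$-class) these pieces.

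First I would show $X := \{T', U', E'\} \cup P$ generates $\Halln$. Since $\genset{X} \subseteq \genset{\{T',U',E',F'\}\cup P} = \Bn$ and every element of $X$ is a Hall matrix, $\genset{X} \subseteq \Halln$. For the reverse inclusion I would invoke \cref{lem:GenSetDecomposition} with $S = \Halln$: one checks that for each $x \in X$ the whole $\J$-class $J_x$ (taken in $\Halln$ — but note the $\J$-classes of prime and elementary matrices consist of similar matrices, generated from a representative using $T', U'$) lies in $\genset{X}$, and then that every Hall matrix not $\J$-related (in $\Halln$) to an element of $X$ decomposes as a product of Hall matrices strictly above it in the $\J$-order. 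The latter is essentially the content of Devadze's argument restricted to Hall matrices: the only obstruction in the $\Bn$ case to decomposing a non-prime, non-elementary matrix into strictly larger pieces involved the matrices similar to $F$ and matrices with zero rows, none of which are Hall; and inside $\Halln$, a matrix that is not prime, not elementary, and not a unit lies strictly below either a prime $\J$-class or the elementary $\J$-class by \cref{thm:MaximalRowSpaces}, so it can be written using a greedy multiplier (\cref{lem:GreedyMultipliers}) as a product of matrices with strictly larger row or column space, all of which remain Hall since enlarging the row/column space of a Hall matrix keeps it Hall. Care is needed here: I must verify that the greedy multipliers constructed actually land in $\Halln$, which should follow because they contain the identity pattern needed — this is the step I expect to require the most attention.

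Next, the minimality/uniqueness-of-shape direction. Let $Y$ be any minimal generating set for $\Halln$. Since the group of units of $\Halln$ is $S_n$ (a permutation matrix is invertible in $\Bn$ and its inverse is again a permutation, hence Hall), and $\mathbf{d}(S_n) = 2$ for $n > 2$, $Y$ must contain two elements generating $S_n$. By \cref{prop:Wilf}-type reasoning — or by directly adapting \cite[Lemma 4.2]{Konieczny2011aa} and \cite[Lemma 4.5]{Konieczny2011aa} to $\Halln$ — $Y$ must contain a representative of each prime $\J$-class (these are the $\J$-classes immediately below $S_n$ in $\Halln$ coming from prime matrices, and prime matrices cannot be written as products of Hall matrices other than units, since in $\Bn$ they cannot be written as products of non-units at all) and a representative of the elementary $\J$-class (the only other $\J$-class immediately below $S_n$ in $\Halln$, by \cref{thm:MaximalRowSpaces}, and elementary matrices are indecomposable even in $\Bn$ as shown in \cref{lem:reflexiveelementaryindecomposable}'s style argument, or because they have $n+1$ ones and no product of two smaller Hall matrices yields an elementary). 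Thus $|Y| \geq |P| + 3$, and since $X$ is such a set with $|X| = |P| + 3$, Devadze's Theorem (or \cref{prop:Wilf}, noting $X$ meets each $\J$-class of $\Halln$ at most once) gives $\mathbf{d}(\Halln) = |P| + 3 = \mathbf{d}(\Bn) - 1$ and every minimal generating set has the stated form. Finally, to phrase the conclusion as "remove a matrix similar to $F$ from a minimal generating set for $\Bn$": given minimal $Y$ for $\Halln$, adjoining any $F'$ similar to $F$ yields a set of size $\mathbf{d}(\Bn)$ of the shape classified by \cref{thm:Devadzefull}, hence a minimal generating set for $\Bn$; conversely deleting the unique $F'$-component from any minimal generating set for $\Bn$ leaves a set of the form $\{T',U',E'\}\cup P$, which we showed generates $\Halln$ minimally. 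The main obstacle, as noted, is the careful verification in the generation direction that the decomposition of an arbitrary Hall matrix into strictly-$\J$-larger pieces can be carried out entirely within $\Halln$; everything else is bookkeeping on top of \cref{thm:Devadzefull}.
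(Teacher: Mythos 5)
Your overall architecture matches the paper's: the candidate set $\{T',U',E'\}\cup P$, generation via \cref{lem:GenSetDecomposition}, and minimality by observing that primes, elementary matrices, and generators of $S_n$ are forced into any generating set of $\Halln$ because they are forced into any generating set of $\Bn$ and all lie in $\Halln$. The minimality half of your argument is sound and is essentially what the paper does.

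The gap is exactly at the step you flag and then wave away. You claim that a Hall matrix $A$ which is not prime, elementary, or a unit can be written, via \cref{thm:MaximalRowSpaces} and a greedy multiplier, as a product of strictly $\J$-larger matrices \emph{both of which are Hall}, and that the greedy multiplier is Hall ``because it contains the identity pattern needed.'' This is false as stated. Hall matrices need not contain the identity (only some permutation), and even after normalising so that $A$ contains the identity, the greedy left multiplier $C$ of $(A,B)$ --- defined by $C_{ij}=1$ iff $B_{j*}\le A_{i*}$ --- has no reason to contain a permutation: a column $j$ of $C$ is entirely zero whenever no row of $A$ contains $B_{j*}$, and even when every column of $C$ is nonzero, $C$ can fail Hall's condition. (Your auxiliary claim that enlarging the row space of a Hall matrix preserves the Hall property is also not something you can take for granted; the paper sidesteps it because the maximal $B$ is prime or elementary, hence Hall by \cref{lem:PrimeAreHall}.) This is precisely why the paper does not stop at \cref{lem:HallAssumptions}, which only delivers $A=CB$ with $B\in\Halln$ and $C$ having a $1$ in every column but $C$ possibly non-Hall. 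The remaining work --- the notions of the core $A^{\circ}$, $k$-deficiency, \cref{MaximalViolatorsCore}, and the iterative repair of $C$ in \cref{lem:DeficiencyReduction} (replacing rows of $B$ by unit vectors along a permutation contained in $B$ and recomputing greedy multipliers until the deficiency drops to $0$) --- is the technical heart of the theorem, and your proposal contains no substitute for it. Until you supply an argument that the second factor can actually be taken in $\Halln$, the generation direction is not proved.
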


In order to prove this theorem, we will need the following classical result,
restated in our context.

\begin{thm}[Hall's Marriage Theorem~{\cite[Theorem 1]{Hall1935aa}}]
  \label{thm:HallMarriage}
  Let $A \in \Bn$. Then $A$ is a Hall matrix if and only if every union of $k$
  rows contains at least $k$ ones, for $1 \leq k \leq n$.
\end{thm}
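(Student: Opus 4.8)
The plan is to prove the two implications separately, dispatching the easy direction in a sentence and handling the substantive direction by induction on $n$. The forward implication is immediate: if $A$ contains a permutation matrix realising some $\sigma \in S_n$, so that $A_{i, i\sigma} = 1$ for every $i$, then for any rows $A_{i_1*}, \ldots, A_{i_k*}$ the columns $i_1\sigma, \ldots, i_k\sigma$ are distinct and each contributes a $1$ to their union; hence the union contains at least $k$ ones.

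For the converse I would induct on $n$, with inductive claim that Hall's condition on $A \in \Bn$ forces $A$ to contain a permutation. The base case $n = 1$ is simply the statement that the single row is non-zero, which is Hall's condition for $k = 1$. For the inductive step I would split into two cases according to whether Hall's inequalities are ever tight for a proper set of rows. In the first case I assume \emph{strict surplus}: for every $k$ with $1 \le k \le n-1$, any $k$ rows have a union with strictly more than $k$ ones. Here I would pick any $j$ with $A_{nj} = 1$, delete row $n$ and column $j$ to obtain $A' \in \Bm{n-1}$, and observe that any $k$ rows of $A'$ had at least $k+1$ ones in their union in $A$, of which deleting one column removes at most one; so $A'$ still satisfies Hall's condition, and induction supplies a permutation in $A'$ which extends via the entry $A_{nj}$ to one in $A$.

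In the second case I assume some set $S$ of $k$ rows, with $1 \le k \le n-1$, is \emph{tight}, i.e.\ its union is a set $C$ of exactly $k$ columns. I would apply the inductive hypothesis twice: once to the $k \times k$ submatrix on rows $S$ and columns $C$ (which satisfies Hall's condition, because the ones of any sub-collection of its rows are confined to $C$, yet by Hall's condition in $A$ they number at least the size of the collection), and once to the complementary $(n-k) \times (n-k)$ submatrix on the remaining rows $\bar S$ and columns $\bar C$. Pasting the two resulting permutations together produces a permutation contained in $A$, completing the induction.

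The main obstacle is checking that this complementary submatrix inherits Hall's condition. For $T \subseteq \bar S$ with $|T| = m$, the union of $S \cup T$ has at least $k+m$ ones, while the union of $S$ is exactly $C$ of size $k$; hence the ones of $T$ lying outside $C$, that is within $\bar C$, number at least $m$, which is precisely Hall's condition for the complementary submatrix. This is the one place where tightness of $S$ — the absence of any surplus to absorb the neighbours of $T$ — is genuinely used, and getting the counting and the index bookkeeping right here is the crux; everything else is routine verification that row and column deletion preserve the hypothesis.
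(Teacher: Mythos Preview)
Your argument is the classical inductive proof of Hall's Marriage Theorem and is correct in every detail; the forward direction is immediate, and in the converse the case split on whether some proper row-set is tight, together with the counting check that the complementary submatrix inherits Hall's condition, is exactly right.

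Note, however, that the paper does not supply its own proof of this theorem: it is quoted as a classical result with a citation to Hall's 1935 paper, and is then used as a tool. So there is nothing to compare against here beyond observing that your argument is the standard one (essentially Halmos--Vaughan's streamlining of Hall's original), and would serve perfectly well were a proof to be included.
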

We shall say that a subset $X$ of the rows of a matrix \emph{satisfies the Hall
  condition} if the union of the rows in $X$ contains at least $|X|$ ones, and
that a matrix satisfies the Hall condition if every subset of the rows satisfies
the Hall condition.

Similarly to the case for reflexive matrices, the fact that Hall matrices contain
permutations gives useful information on products of matrices. Given two
matrices $A, B \in \Halln$, both $A$ and $B$ contain a permutation matrix; hence
$AB$ contains a row-permuted copy of $B$ and a column-permuted copy of $A$. It
follows that $AB$ contains at least as many $1$s as the maximum number of $1$s
in $A$ or $B$.

The $\J$-relation for $\Halln$ is easily described by the following lemma.
\begin{lemma}[{\cite[Theorem 2]{Butler1974aa}}]
  \label{lem:HallJRelation}
  Two matrices $A, B \in \Halln$ are $\J$-related in $\Halln$ if and only if
  they are similar.
\end{lemma}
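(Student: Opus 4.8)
The plan is to prove the two implications separately, using the observation recorded immediately before the lemma: a product of Hall matrices contains (as a boolean matrix) a row-permuted copy of its right factor and a column-permuted copy of its left factor, and hence has at least as many $1$s as either factor.

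For the direction that similarity implies $\J$-equivalence, I would simply note that if $A$ is obtained from $B$ by permuting rows and columns, then $A = PBQ$ for permutation matrices $P$ and $Q$; these lie in $\Halln$ and are units there, so $A \J B$ in $\Halln$. This is just the general fact from \cref{sec:matsemigp} that similar matrices are $\J$-related, applied inside the submonoid $\Halln$.

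For the converse, suppose $A \J B$ in $\Halln$. Since $\Halln$ is a monoid, there exist $X, Y, W, Z \in \Halln$ with $A = XBY$ and $B = WAZ$. As $X$ and $Y$ are Hall matrices, they contain permutation matrices, say those of permutations $\pi$ and $\sigma$ respectively; by monotonicity of boolean matrix multiplication, $A = XBY$ then contains the matrix obtained from $B$ by permuting its rows by $\pi$ and its columns by $\sigma$. That matrix is similar to $B$, so it has exactly as many $1$s as $B$; hence $A$ has at least as many $1$s as $B$. The symmetric argument applied to $B = WAZ$ gives the reverse inequality, so $A$ and $B$ have the same number of $1$s. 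Since $A$ contains a similarity-image of $B$ that has the same number of $1$s as $A$, and a boolean matrix containing another of equal weight must equal it, $A$ is itself similar to $B$.

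I do not anticipate a genuine obstacle here: the only points needing care are that $\Halln$ is closed under multiplication --- immediate, since a product of matrices each containing a permutation matrix again contains one, so that $\Halln^1 = \Halln$ and the factorisations above are legitimate --- and the elementary fact that for boolean matrices $M$ containing $N$ with equally many $1$s one has $M = N$.
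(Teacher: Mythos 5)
Your proof is correct and takes essentially the same route as the paper's: from $A = XBY$ and $B = WAZ$ with Hall factors, each of $A$ and $B$ contains a row- and column-permuted copy of the other, forcing similarity. You merely make explicit the $1$-counting step that the paper leaves implicit in its ``it follows that $A$ is a row- and column-permutation of $B$''.
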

\begin{proof}
  The reverse direction is clear. For the forward direction, suppose that $A =
  SBT$ and $B = UAV$ for matrices $S, T, U, V \in \Halln$. Then $BT$ contains a
  column-permuted copy of $B$, and hence $A$ contains a row- and column-permuted
  copy of $B$. Since, similarly, $B$ contains a row- and column-permuted copy of
  $A$, it follows that $A$ is a row- and column-permutation of $B$.
\end{proof}

We will prove \cref{thm:HallGenSet} through a series of lemmas. The
first thing to prove is that every element specified actually belongs to
$\Halln$; this is clear for elements of the group of units and elementary
matrices but less clear for prime matrices.

\begin{lemma}
  \label{lem:PrimeAreHall}
Every prime matrix is Hall.
\end{lemma}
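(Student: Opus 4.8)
The statement to prove is that every prime matrix in $\Bn$ is a Hall matrix, i.e.\ contains a permutation matrix. The plan is to argue by contraposition: if $A$ is \emph{not} Hall, then by Hall's Marriage Theorem (\cref{thm:HallMarriage}) some union of $k$ rows of $A$ contains at most $k-1$ ones, and I will use this deficiency to factorise $A$ as a product of two non-units, showing $A$ is not prime. So suppose $A \in \Bn$ fails the Hall condition, and let $X$ be a set of $k$ rows whose union $v = \bigcup_{i \in X} A_{i*}$ has at most $k-1$ ones; pick such an $X$ with $k$ minimal.

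First I would record the basic consequences of minimality: because $k$ is the smallest size of a Hall-violating set, every proper subset of $X$ satisfies the Hall condition, and in fact $v$ has \emph{exactly} $k-1$ ones (a union of $k-1$ of the rows already contains $\geq k-1$ ones, and it is contained in $v$ which has $\leq k-1$ ones). The rows indexed by $X$ all lie inside the support of $v$, which is a set $S$ of size $k-1$. The idea now is to collapse these $k$ rows onto $k-1$ "slots". Concretely, I would build a matrix $C$ that has the same rows as $A$ except that the rows indexed by elements of $X$ are replaced in a controlled way, and a matrix $B$ (a transformation-like matrix, in particular a non-unit) so that $A = BC$. The cleanest route: let $C$ be obtained from $A$ by, say, keeping all rows outside $X$ and, among the $X$-rows, selecting a spanning subset — since the $|X| = k$ rows of $A$ restricted to $S$ span a space whose rows all sit in $\B^{S}$ with $|S| = k-1$, two of them must satisfy a containment or union relation after we exploit reducedness. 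Actually the faster argument uses \cref{lem:PrimeMatricesAreTrim}: a prime matrix is trim, so no row is contained in another, hence among the $k$ rows indexed by $X$, all having support inside the $(k-1)$-element set $S$, one of them must be a union of two or more of the others (a trim/reduced configuration of $k$ subsets of a $(k-1)$-set cannot have all $k$ of them incomparable and none a union of others — this is where the pigeonhole bites). Wait: trim forbids containment, and reduced (\cref{lem:TrimMatricesAreReduced}) forbids being a union of others; but $k$ pairwise-incomparable, union-free subsets of a $(k-1)$-set is impossible since such a family is an antichain that is also union-closed-free, and one checks the maximum size of such a family of subsets of a $(k-1)$-set is at most $k-1$. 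That contradiction already shows a prime (hence reduced) matrix \emph{is} Hall, giving a very short proof — but I should double-check the combinatorial bound rather than the factorisation route.

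So the structure I would actually write is: (1) assume $A$ prime; by \cref{lem:PrimeMatricesAreTrim} and \cref{lem:TrimMatricesAreReduced}, $A$ is reduced; (2) suppose for contradiction $A$ is not Hall, take a minimal violating set $X$ of $k$ rows with union $v$ having exactly $k-1$ ones on support $S$; (3) the $k$ rows of $A$ indexed by $X$ are $k$ distinct nonzero subsets of the $(k-1)$-element set $S$, no one contained in another (row-trim) and no one the union of others (row-reduced); (4) derive a contradiction from the combinatorial fact that an antichain of subsets of a finite set that is also "union-free" in this sense cannot have more elements than the ground set — equivalently, a direct counting argument: order the $X$-rows and show the $j$-th one must introduce a new element of $S$ not covered by the union of the previous ones (else it is a union of previous rows or contained in it), forcing $|S| \geq k$. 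The main obstacle is getting step (4) exactly right — it is tempting to think every reduced family of $k$ subsets needs a ground set of size $k$, but that is false in general (e.g.\ $\{1\},\{2\},\{1,2\}$ is an antichain on $2$ elements only if we drop $\{1,2\}$, and with $\{1,2\}$ it fails reducedness anyway); I need the combined antichain-plus-union-free hypothesis, and I should present the incremental "new element" argument carefully, possibly choosing a minimal spanning subfamily first, to make the contradiction airtight. If that combinatorial lemma proves awkward to state cleanly, the fallback is the explicit factorisation $A = BC$ sketched above, where $C$ merges the $X$-rows appropriately and $B$ is a non-unit transformation matrix, directly contradicting primality.
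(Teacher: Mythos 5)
There is a genuine gap, and it is the one you half-suspected: the combinatorial claim in your step (4) is false, and no repair is possible along these lines because the hypothesis you are actually using (trimness) is strictly weaker than primality. First, the ``union-free'' condition adds nothing to the antichain condition: if one set in an antichain of distinct sets were the union of two or more others, it would properly contain one of them. So your step (4) reduces to the claim that $k$ pairwise incomparable non-empty subsets of a $(k-1)$-element set cannot exist, and by Sperner's theorem this fails as soon as $\binom{k-1}{\lfloor (k-1)/2\rfloor} \geq k$, i.e.\ for all $k \geq 5$. Concretely, the five $2$-element subsets $\{1,2\},\{1,3\},\{1,4\},\{2,3\},\{2,4\}$ of $\{1,2,3,4\}$ form an antichain of size $5$ on a ground set of size $4$. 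Worse, the $5\times 5$ boolean matrix with exactly these rows is trim (its non-zero columns $\{1,2,3\},\{1,4,5\},\{2,4\},\{3,5\}$ are also pairwise incomparable) but is not Hall, since the union of all five rows has only four ones. This shows that \emph{no} argument using only \cref{lem:PrimeMatricesAreTrim} and \cref{lem:TrimMatricesAreReduced} can prove the lemma: trim non-Hall matrices exist, so the step from ``not Hall'' to ``not trim/reduced'' that your contradiction requires is simply unavailable. Your proposed incremental argument (``the $j$-th row must introduce a new element of $S$'') fails for the same reason: a row can lie inside the union of the previous rows without being contained in any one of them and without being their union.

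The paper's proof takes a different and essentially unavoidable route: it invokes a structural result of de Caen and Gregory~\cite{Caen1981aa} stating that the rows of a prime matrix containing more than a single $1$ admit no $k \times (n-k)$ all-zero submatrix, which (after accounting for the single-$1$ rows) rules out the $k \times (n-k+1)$ all-zero submatrix that a Hall violation would produce. This uses the full strength of primality, i.e.\ the maximality of the row space in $\beta_n$ (\cref{thm:MaximalRowSpaces}), not merely the necessary condition of being trim. Your fallback plan --- exhibiting an explicit factorisation $A = BC$ of a non-Hall matrix into two non-units --- is in principle a viable alternative (the lemma is equivalent to every non-Hall matrix being decomposable), but the merging construction is not written down and is not routine for trim non-Hall matrices such as the example above; as it stands the proposal does not contain a proof.
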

\begin{proof}
  Let $A \in \Bn$ be prime. By \cref{thm:HallMarriage}, it suffices to show
  that there is no subset of rows of $A$ of size $k$ such that the union of
  these rows contains fewer than $k$ ones. This is equivalent to not containing
  any $k \times (n - k + 1)$ submatrix containing only $0$. As a consequence of
  the discussion after~\cite[Definition 2.4]{Caen1981aa}, the set of rows of $A$
  containing more than a single $1$ do not contain any $k \times (n - k)$
  submatrix containing only $0$; hence $A$ does not contain a $k \times (n - k +
  1)$ such submatrix, and $A \in \Halln$.
\end{proof}

In order to apply \cref{lem:GenSetDecomposition}, we must be able to decompose
certain elements of $\Halln$ into products of elements that lie above the given
element in the $\J$-order. The following two lemmas are the first steps in
finding such a decomposition.

\begin{lemma}
  \label{lem:NonTrimDecomposable}
  Let $A$ be a non-trim matrix belonging to $\Halln$. Then there exist $B, C \in
  \Halln$ such that $A = CB$, and neither $B$ nor $C$ is $\J$-related to $A$.
\end{lemma}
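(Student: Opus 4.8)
The plan is to adapt the argument used for reflexive matrices in \cref{lem:ReflexiveNonTrimDecomposable}: exploit a containment between two rows of $A$ to peel off an elementary factor, the delicate point being that the cofactor must stay inside $\Halln$. Since the transpose of a Hall matrix is Hall and $\J$ on $\Halln$ coincides with similarity (\cref{lem:HallJRelation}), a decomposition of $A^{T}$ transposes back to one of $A$; so I may assume that $A$ is not row-trim, and fix $i \neq j$ with $A_{i*}$ nonzero and $A_{i*} \subseteq A_{j*}$ (the Hall condition applied to rows $i$ and $j$ already forces $|A_{j*}| \geq 2$).

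For the construction, choose a column $l$ with $A_{il} = 1$ (so also $A_{jl} = 1$), let $B$ be $A$ with this single entry deleted (that is, $B_{jl} = 0$ and $B$ agrees with $A$ everywhere else), and set $C = E^{j,i}$, which is Hall because it contains the identity. Then $CB = A$: left multiplication by $E^{j,i}$ leaves every row but row $j$ unchanged and replaces row $j$ of $B$ by $B_{j*} \cup B_{i*} = (A_{j*} \setminus \{l\}) \cup A_{i*}$, which equals $A_{j*}$ since $l \in A_{i*} \subseteq A_{j*}$.

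The heart of the proof, and the step I expect to be the main obstacle, is showing $B \in \Halln$ via \cref{thm:HallMarriage}: for a set $S$ of $k$ rows I must bound below the number of $1$s in the union of the rows of $B$ indexed by $S$. If $j \notin S$, or if some row of $S$ other than row $j$ has a $1$ in column $l$, then this union agrees with the corresponding union in $A$ and the Hall condition for $A$ finishes it. Otherwise $j \in S$, no other row of $S$ meets column $l$, and hence $i \notin S$ (since $A_{il} = 1$); then $S \cup \{i\}$ has $k + 1$ rows whose union in $A$ coincides with that of $S$ (as $A_{i*} \subseteq A_{j*}$), so the Hall condition for $A$ yields at least $k + 1$ ones there, and deleting the single column $l$ to pass to $B$ still leaves at least $k$. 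This re-insertion of the ``redundant'' row $i$ is exactly where the hypothesis that $A$ is Hall is used, and it is the only genuinely non-routine point.

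It remains to see that neither $B$ nor $C$ is $\J$-related to $A$; by \cref{lem:HallJRelation} this means neither is similar to $A$, and since similar matrices have the same number of $1$s it suffices to count. Here $|B| = |A| - 1 < |A|$, so $B \not\sim A$, and $|C| = n + 1$, so $C \not\sim A$ whenever $A$ has more than $n + 1$ ones. The sole exceptional case, $|A| = n + 1$, forces $A$ to be a permutation matrix together with one extra $1$, which is always similar to an elementary matrix; this falls outside the situations in which the lemma is applied, since an elementary matrix already shares its $\J$-class with the elementary generator of $\Halln$. (Equivalently, one may simply add ``non-elementary'' to the hypothesis, after which the construction above goes through unconditionally.)
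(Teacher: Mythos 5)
Your proof is correct, and it diverges from the paper's at the one step that matters. The paper first normalises $A$ so that it contains the identity permutation (decomposability transfers across similarity) and then reuses the argument of \cref{lem:ReflexiveNonTrimDecomposable} verbatim: the deleted entry sits at the off-diagonal position $(j,i)$, so the cofactor $B$ still contains the identity and is trivially Hall. You skip the normalisation, delete instead an entry $(j,l)$ with $l$ in the support of the contained row $A_{i*}$, and verify Hall's condition for $B$ directly via \cref{thm:HallMarriage}, re-inserting the redundant row $i$ to recover the one unit of deficiency that the deletion could cost. Both arguments are sound; the paper's is shorter, while yours is self-contained and makes explicit exactly where the Hall hypothesis is used. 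You also correctly flag something the paper glosses over: for $A$ elementary (the only non-trim Hall matrices with exactly $n+1$ ones) the conclusion genuinely fails, since in any factorisation $A = CB$ with $B, C \in \Halln$ each factor has at most $n+1$ ones, they cannot both be permutation matrices, and a Hall matrix with $n+1$ ones is similar to $A$, hence $\J$-related to it by \cref{lem:HallJRelation}; the paper's own construction would return $C = A$ in that case. As you observe, this does not affect \cref{cor:HallDecomposition}, whose hypotheses exclude elementary matrices, but strictly speaking the lemma should carry a ``non-elementary'' hypothesis.
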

\begin{proof}
  We may assume that $A$ contains the identity permutation, since $A$ is
  decomposable into such matrices $B$ and $C$ if and only if every matrix
  similar to $A$ is decomposable into matrices similar to $B$ and $C$. The lemma
  now follows using the same proof as in
  \cref{lem:ReflexiveNonTrimDecomposable}.
\end{proof}

It will be convenient in the following proofs to define $e_i$ to be the boolean
vector of length $n$ with a single $1$ in position $i$, for $1 \leq i \leq n$.

\begin{lemma}
  \label{lem:HallAssumptions}
  Let $A$ be a trim matrix belonging to $\Halln$ which is not prime, elementary, or a
  permutation matrix. Then there exist $B \in \Halln$ and $C \in \Bn$ such that
  $A = CB$, neither $B$ nor $C$ is $\J$-related to $A$ in $\Halln$, and $C$
  contains a $1$ in every column.
\end{lemma}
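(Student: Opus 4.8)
The plan is to factor $A$ as $A = CB$ with $B \in \Halln$ having a row space strictly larger than $\RowS(A)$, but chosen so carefully that every row of $B$ still lies below some row of $A$; the greedy left multiplier $C$ will then automatically have a $1$ in every column, and the strictness of $\RowS(A) \subsetneq \RowS(B)$ will separate $B$ (and, with a little more work, $C$) from $A$ in the $\J$-order of $\Halln$.

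First I would obtain a prime matrix $B_0$ with $\RowS(A) \subsetneq \RowS(B_0)$. Since $A$ is neither prime nor elementary, \cref{thm:MaximalRowSpaces} shows $\RowS(A)$ is not maximal in $\beta_n$; passing up to a maximal element of $\beta_n$ above it gives, again by \cref{thm:MaximalRowSpaces}, a prime or elementary $B_0$ with $\RowS(A) \subsetneq \RowS(B_0)$. The elementary case is excluded because $A$ is column-trim and, being Hall, has no zero column, so no column of $A$ is contained in another, which is exactly what would be needed for $\RowS(A)$ to sit inside an elementary row space. Thus $B_0$ is prime, hence $B_0 \in \Halln$ by \cref{lem:PrimeAreHall}.

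Next I would repair the ``oversized'' rows of $B_0$. Fix $\sigma \in S_n$ whose permutation matrix is contained in $B_0$, let $J = \{\, j : (B_0)_{j*} \le A_{i*}\text{ for some }i\,\}$, and let $B$ be obtained from $B_0$ by replacing, for each $j \notin J$, the $j$th row with $e_{\sigma(j)}$. Then: $B$ still contains the permutation matrix of $\sigma$, so $B \in \Halln$; every row of $B$ lies below some row of $A$ (rows indexed by $J$ by definition, rows $e_{\sigma(j)}$ because $A$ has a $1$ in column $\sigma(j)$); and, since the $n$ distinct rows of the trim Hall matrix $A$ are precisely the minimal nonzero vectors of $\RowS(A)$, each $A_{i*}$ is the union of the rows of $B_0$ that lie below it, all of which are indexed by $J$, giving $\RowS(A) \subseteq \RowS(B)$. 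A short argument — again using that the rows of $A$ are the minimal nonzero vectors of $\RowS(A)$, and that $A$ has no zero column — shows that this inclusion is strict and that $B$ is not a permutation matrix (so $\RowS(B) \subsetneq \B^n$). By \cref{lem:HallJRelation}, similar matrices have equal row-space cardinalities, so $\RowS(A) \subsetneq \RowS(B)$ forces $B \not\J A$ in $\Halln$.

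Finally, let $C$ be the greedy left multiplier of $(A, B)$: then $A = CB$ by \cref{lem:GreedyMultipliers}, and since every row of $B$ lies below some row of $A$ each column of $C$ is nonzero, i.e.\ $C$ has a $1$ in every column. It remains to prove $C \not\J A$ in $\Halln$. If $C \notin \Halln$ this is automatic, so suppose $C \in \Halln$; by \cref{lem:HallJRelation} it suffices to show $C$ is not similar to $A$. Here I would work from the greedy description $C_{ij} = 1 \iff B_{j*} \le A_{i*}$ together with $A_{i*} = \bigvee\{\,B_{j*} : B_{j*} \le A_{i*}\,\}$ to conclude that the rows of $C$ form an antichain whereas $\ColS(A) \subsetneq \ColS(C)$ (or, alternatively, that $C$ is not column-trim); since $A$ is trim, $C$ then cannot be similar to it. The hardest part is this last step, together with arranging the choices of $B_0$ and $\sigma$ so that $C$ genuinely lies strictly above $J_A$ — a careless choice would let $B$ degenerate to a permutation matrix, making $C$ similar to $A$. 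I expect the decisive ingredient to be the mismatch between the ``disjunctive'' sets $\{i : A_{ik} = 1\}$ spanning $\ColS(A)$ and the ``conjunctive'' sets $\{i : B_{j*} \le A_{i*}\}$ that appear as the columns of $C$.
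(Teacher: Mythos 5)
Your construction of $B$ and $C$ is essentially the paper's (the paper takes a maximal non-permutation row space over $\RowS(A)$, forms the greedy left multiplier, and then patches exactly the rows of $B$ indexed by your complement of $J$ with $e_{\alpha(c_i)}$), and the pieces you do carry out are sound: the exclusion of the elementary case via column-trimness, the containment $\RowS(A)\subseteq\RowS(B)$, $B\in\Halln$, and the fact that every column of $C$ is nonzero. The two facts you defer to ``a short argument'' (strictness of $\RowS(A)\subsetneq\RowS(B)$ and $B$ not being a permutation matrix) are true and can be filled in: if some $j\notin J$ had $e_{\sigma(j)}$ forced into $\RowS(B_0)$ it would have to be a row of $B_0$, contradicting row-trimness of $B_0$ together with $e_{\sigma(j)}\leq (B_0)_{j*}$; and if all rows of $B_0$ indexed by $J$ were singletons then either $A$ would have a zero column or $B_0$ would be a permutation matrix.

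The genuine gap is the last step, which you explicitly leave open: showing that $C$ is not $\J$-related to $A$ in $\Halln$. Neither of the routes you sketch closes it. The rows of $C$ do form an antichain, but so do the rows of the trim matrix $A$, so that distinguishes nothing; and ``$C$ is not column-trim'' is simply false in general --- when $J=\{1,\ldots,n\}$ you have $B=B_0$ prime, and the greedy left multiplier of $(A,B_0)$ can perfectly well be trim. Likewise $\ColS(A)\subseteq\ColS(C)$ follows from $A=CB$, but there is no reason offered for strictness, and assuming $C$ similar to $A$ only gives $\ColS(A)=\ColS(C)$, which is not yet a contradiction. The paper closes this step with a counting argument you should adopt: if $C\J A$ in $\Halln$ then by \cref{lem:HallJRelation} $C$ is similar to $A$, hence trim and with the same number of $1$s as $A$; but since $B\in\Halln$ is not a permutation matrix and every column of $C$ contains a $1$, the product $CB$ contains strictly more $1$s than $C$ (a product of Hall-type factors contains a column-permuted copy of $C$ plus at least one further $1$), contradicting $A=CB$. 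Without this (or some replacement), the lemma's conclusion about $C$ is unproven.
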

\begin{proof}
  As in \cref{lem:NonTrimDecomposable}, we may assume that $A$ contains the
  identity permutation. Since $A$ is not prime, elementary, or a permutation
  matrix, it has non-maximal row space in
  $\beta_n = \set{\RowS(A)}{A\in \Bn\setminus S_n}$.
  Let $B$ be a maximal
  non-permutation matrix in $\Bn$ whose row space contains the row space of $A$.

  Letting $C \in \Bn$ be the greedy left multiplier of $(A, B)$, we have $A =
  CB$ by \cref{lem:GreedyMultipliers}. Suppose that $C$ is $\J$-related to $A$;
  then by \cref{lem:HallJRelation} $C$ is similar to $A$. Then since $A$ is
  trim, so too is $C$. Since $B$ was chosen to be a non-permutation matrix, the
  product $CB$ contains more $1$s than $C$. But $A = CB$, and $C$ is similar to
  $A$, a contradiction. Hence $C$ is not $\J$-related to $A$. Since $\RowS(A)
  \subsetneq \RowS(B)$, $B$ is not $\J$-related to $A$ in $\Bn$ and thus is not
  $\J$-related to $A$ in $\Halln$.

  If $C$ contains a $1$ in every column, we are done. However, there is no
  reason that this must be the case.

  Suppose that the columns of $C$ indexed by $X = \{c_1, c_2, \ldots, c_k\}$ do
  not contain a $1$, and that $B$ contains the permutation $\alpha$. For $1 \leq
  i \leq k$, define $B'_{c_i*} = e_{\alpha(c_i)}$. Define the remaining rows of
  $B'$ to be the corresponding rows of $B$. Then $B'$ contains $\alpha$. Let
  $C'$ be the greedy left multiplier of $(A, B')$; then for all $1 \leq i \leq
  k$, $C'_{\alpha(c_i)c_i} = 1$ since $A_{\alpha(c_i)\alpha(c_i)} = 1$. Note
  that since the columns indexed by $X$ of $C$ did not contain a $1$, $\RowS(A)$
  is in fact a subset of the subspace of $\RowS(B)$ generated by the rows
  indexed by $\{1, \ldots, n\}\setminus X$; hence $\RowS(A) \subsetneq
  \RowS(B')$ and thus by \cref{lem:GreedyMultipliers}, $A = C'B'$. Hence we
  have found $B', C' \in \Bn$ such that $A = C'B'$, $B' \in \Halln$, and every
  column of $C'$ contains a $1$. We must finally prove that $B'$ and $C'$ are
  not $\J$-related to $A$ in $\Halln$. 
  
  Suppose that $A \J B'$; then $B'$ is similar to $A$, and hence $B'$
  is trim. Since $C$ contains a $1$ in every row, and $C'$ contains all $1$s of
  $C$ together with at least one more, $C'$ is not the identity matrix.
  As above, it follows that $C'B'$ contains more $1$s than $B'$ does; this is a
  contradiction. Hence $A$ is not $\J$-related to $B'$. Now suppose that $A\J
  C'$. Then $C'$ is similar to $A$, and thus trim.
  In order to apply the previous argument to $C'$, we must prove that $B'$ is
  not a permutation matrix. Since $B$ was not a permutation matrix, and $B'$ was
  obtained by replacing some rows of $B$ by rows $e_{\alpha(c_i)}$ containing a
  single $1$, the only way that $B'$ may be a permutation matrix is if the
  non-replaced rows of $B$ - indexed by $\{1, 2, \ldots, n\}\setminus X$ - all
  contained a single $1$.  But since $C$ contains $1$s only in those columns,
  the product $A = CB$ could then only contain $1$s in at most $n - |X|$
  columns, and thus would not be a Hall matrix.  Hence, $B'$ is not a
  permutation matrix and a similar argument to that which showed that $A$ is not
  $\J$-related to $B'$ applies to $A$ and $C'$, to show that $A$ is not
  $\J$-related to $C'$ in $\Halln$.
\end{proof}

While \cref{lem:HallAssumptions} does not provide the decomposition we
require in order to use \cref{lem:GenSetDecomposition}, it provides a
decomposition into two matrices that are close to having the correct properties.
This motivates the following definition. Given a row-trim matrix $A \in \Bn$,
define the \defn{core} $A^\circ$ of $A$ to be the submatrix of $A$ consisting of
those rows containing at least two $1$s. We say that a subset of rows of $A$
violating the Hall condition is a \defn{maximal violator of $A$} if it has
largest cardinality amongst such subsets, and that a matrix is
\defn{$k$-deficient} if $k$ is the cardinality of a maximal violator of
$A^\circ$. The following lemma shows that we are justified in considering
maximal violators of $A^\circ$ rather than $A$. 

\begin{lemma}
  \label{MaximalViolatorsCore}
  If $A$ is a row-trim matrix belonging to $\Bn$, then $A$ has a subset of rows
  violating the Hall condition if and only if $A^\circ$ has a subset of rows
  violating the Hall condition.
\end{lemma}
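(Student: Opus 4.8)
The plan is to prove both directions of the equivalence by passing between a violating subset of rows of $A$ and a violating subset of rows of $A^\circ$. The easy direction is ``if'': if $X$ is a subset of rows of $A^\circ$ violating the Hall condition, then since every row of $A^\circ$ is literally a row of $A$, the same set $X$, viewed as a subset of rows of $A$, has the same union and the same cardinality, and hence still violates the Hall condition in $A$. So nothing needs to be done there beyond this one-line observation.

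For the ``only if'' direction, suppose $X$ is a subset of the rows of $A$ violating the Hall condition, so the union $u$ of the rows in $X$ has fewer than $|X|$ ones. I would split $X = X_1 \cup X_2$, where $X_1$ consists of those rows of $X$ lying in $A^\circ$ (i.e.\ containing at least two $1$s) and $X_2$ consists of those rows containing at most one $1$. Since $A$ is row-trim, no non-zero row of $A$ is contained in another; in particular a row containing a single $1$ cannot be contained in any other non-zero row, so each row $e_i \in X_2$ contributes a coordinate $i$ to the union $u$ that is contributed by no other non-zero row of $A$ whatsoever — and a zero row in $X_2$ contributes nothing and can only make the Hall condition easier to violate. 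The key counting step is then: removing the rows of $X_2$ one at a time decreases $|X|$ by one each time, and decreases the number of ones in the union by exactly one for each single-$1$ row removed (by the row-trim disjointness just noted) and by zero for each zero row. Hence the deficit $|X| - |u|$ does not decrease, so $X_1$ still violates the Hall condition; and $X_1 \subseteq A^\circ$, as required. (If $X_1$ is empty the deficit would force $|X_2| > |u|$; but $|u| \geq |X_2| - (\text{number of zero rows in }X_2)$ is false in general, so one should be slightly careful — in fact if $X_1 = \varnothing$ then $u$ is the union of $\leq |X_2|$ rows each with $\leq 1$ one, which has $\leq |X_2|$ ones, contradicting a strict violation only if there is at least one zero row; but a subset consisting solely of a zero row trivially violates the Hall condition with $k=1$, and then the single non-zero coordinate count is $0 < 1$, meaning $A$ has a zero row, meaning $A$ is not Hall — consistent with $A^\circ$ also having the violating empty-union subset. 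I would phrase this edge case cleanly rather than case-bash.)

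The main obstacle — really the only subtlety — is making the bookkeeping around single-$1$ and zero rows airtight, i.e.\ showing that the passage from $X$ to $X_1$ never destroys the violation. The crucial input is row-trimness: it guarantees that distinct single-$1$ rows occupy distinct coordinates and that those coordinates are not already ``used up'' by the other rows in a way that would let the union shrink more slowly than $|X|$ does. Once that disjointness is in hand the argument is a one-inequality count, so I expect the write-up to be short. I would state it as: each removal of a single-$1$ row drops $|u|$ by exactly $1$ and $|X|$ by exactly $1$, each removal of a zero row drops $|u|$ by $0$ and $|X|$ by $1$, so after all removals $|X_1| - |\text{union of }X_1| \geq |X| - |u| \geq 1$, establishing the violation for $A^\circ$.
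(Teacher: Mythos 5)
Your core argument is correct and is essentially the paper's: the paper permutes rows and columns so that the single-$1$ rows form an identity block occupying columns touched by no other row, then notes that adjoining or deleting those rows changes the cardinality of a subset and the size of its union by the same amount; your one-row-at-a-time removal is the same count. For rows with exactly one $1$ the bookkeeping is airtight for exactly the reason you give: row-trimness forces each such row to sit in a column where no other non-zero row has a $1$, so deleting it from a violator drops $|X|$ and $|u|$ by exactly $1$ each and the deficit $|X|-|u|$ is preserved. The ``if'' direction is the one-line observation you make.

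The zero-row bookkeeping, however, is wrong. Deleting a zero row drops $|X|$ by $1$ and $|u|$ by $0$, so the deficit \emph{decreases}; your claim that it ``does not decrease'' and your final inequality $|X_1|-|u_1|\ge |X|-|u|$ both fail as soon as $X$ contains a zero row. The parenthetical does not repair this: the empty subset never violates the Hall condition (its union has $0\ge 0$ ones), so ``$A^\circ$ also having the violating empty-union subset'' is not available. Indeed no repair along these lines is possible, because the statement as literally written is false for row-trim matrices with a zero row: $A=\left(\begin{smallmatrix}1&1\\0&0\end{smallmatrix}\right)$ is row-trim, its zero row is a one-element violator, yet $A^\circ=(1\ 1)$ satisfies the Hall condition. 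The paper's own proof has the same blind spot --- it tacitly assumes every row is non-zero when it asserts that $A^\circ$ consists of all rows other than the single-$1$ rows --- and the assumption is harmless where the lemma is applied, since the matrices arising there have no zero rows. So: make the no-zero-row hypothesis explicit, delete the zero-row case from your argument, and the rest of your proof stands and coincides with the paper's.
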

\begin{proof}
  Suppose $A$ has rows $X = \{r_1, r_2, \ldots, r_k\}$ containing a single $1$.
  Since $A$ is row-trim, no other row may contain a $1$ in the columns in which
  the rows in $X$ have a $1$; hence we may permute the rows and columns of $A$ so
  that the rows containing a single $1$ form a identity matrix $I_k$ as the $k
  \times k$ bottom-right block. Now $A^\circ$ forms the first $n-k$ rows, and it
  is clear that the union of a maximal violator of $A^\circ$ with the last $k$
  rows forms a maximal violator of $A$, and that restricting a maximal violator
  of $A$ to the first $n-k$ rows induces a maximal violator of $A^\circ$.
\end{proof}

We will demonstrate that we can iteratively reduce the $k$-deficiency of the
matrix $C$ in the statement of \cref{lem:HallAssumptions} until it is
$0$-deficient and therefore belongs to $\Halln$.

\begin{lemma}
  \label{lem:DeficiencyReduction}
  Suppose that $A$ is a trim matrix belonging to $\Halln$ containing the
  identity permutation, and that $A = CB$ where $B$ belongs to $\Halln$, $C
  \in \Bn$ is a $k$-deficient matrix containing a $1$ in every column, and
  neither $B$ nor $C$ are $\J$-related to $A$ in $\Halln$. Then there exist $T
  \in \Halln$ and $S \in \Bn$ such that $A = ST$, $S$ contains a $1$ in
  every column, neither $S$ nor $T$ are $\J$-related to $A$ in $\Halln$, and
  $S$ is at most $(k-1)$-deficient.
\end{lemma}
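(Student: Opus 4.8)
The proof should follow the template of the proof of \cref{lem:HallAssumptions}: starting from $A=CB$, perform a single controlled edit — replace a carefully chosen row of $B$ by a one‑$1$ row coming from a permutation that $B$ contains, and recompute $C$ as a greedy left multiplier — so that the product is preserved, the new left factor still meets every column, the new factors are still not $\J$‑related to $A$, and one maximum‑cardinality violator of the core has been destroyed. First I would make two harmless reductions. If $C$ is already at most $(k-1)$‑deficient we may take $S=C$, $T=B$, so assume $C$ is exactly $k$‑deficient with $k\ge 1$; and since replacing $C$ by the greedy left multiplier of $(A,B)$ only enlarges $C$ (so it still has a $1$ in every column) and, having more $1$s, can only shrink its violators, I may assume $C$ is this greedy multiplier, so $C_{ij}=1$ iff $B_{j*}\le A_{i*}$ and $A=CB$ by \cref{lem:GreedyMultipliers}. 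Note that $A$ being trim (hence row‑trim) forces $C$ to be row‑trim (if $C_{i*}\le C_{i'*}$ then every row of $B$ below $A_{i*}$ lies below $A_{i'*}$, and taking a union gives $A_{i*}\le A_{i'*}$), so by \cref{MaximalViolatorsCore} the matrix $C$ is Hall iff $C^\circ$ is; since $k\ge 1$, $C\notin\Halln$.

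Next I would locate the obstruction and the target of the edit. Let $X$ be a maximum‑cardinality violator of $C^\circ$ and let $Y=\bigcup_{i\in X}s(C_{i*})$ be the set of columns it meets, so $|Y|\le |X|-1=k-1$. A useful structural fact here is that the deficiency function $W\mapsto |W|-|N_C(W)|$ on subsets of rows of $C^\circ$ is supermodular, so its maximisers form a sublattice; a short computation ($X\cup Z$ is again a violator of size at least $|X|$ whenever $Z$ maximises the deficiency function and $X$ is a maximum‑cardinality violator) shows that the unique largest maximiser $Z$, which is nonempty since $C^\circ$ has a violator, is contained in \emph{every} maximum‑cardinality violator of $C^\circ$. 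The idea is then to pick an index that "belongs to $Z$" and rewrite $C$ so that the corresponding rows leave the core: then every violator of the new core is a violator of $C^\circ$ avoiding that index, hence has cardinality at most $k-1$. Concretely I would choose a row $j_0$ of $B$ not met by $X$ (i.e. $j_0\notin Y$), together with a permutation $\beta$ contained in $B$, replace $B_{j_0*}$ by $e_{\beta(j_0)}$ to obtain $T$, chosen so that $\beta(j_0)$ is a column in which some row of $A$ indexed in $X$ has a $1$ — this makes the recomputed greedy left multiplier $S$ of $(A,T)$ satisfy $S_{i\,j_0}=1$ for such $i$, enlarging $N_S(X)$ past $|Y|$ and thereby un‑violating $X$ (and, via $Z\subseteq$ every maximum violator, every maximum‑cardinality violator simultaneously). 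As in \cref{lem:HallAssumptions}, one checks $\RowS(A)\subseteq\RowS(T)$ (so $A=ST$ by \cref{lem:GreedyMultipliers}) by verifying that the replaced row of $B$ was redundant for $\RowS(A)$, and that the replacement reintroduces $1$s of $S$ in exactly the columns that might otherwise be emptied. Finally $T\in\Halln$ since it still contains $\beta$; $T\not\J A$ and $S\not\J A$ in $\Halln$ follow from \cref{lem:HallJRelation} together with a count of $1$s (since $T\le B$ and, from $A=ST\ge S P$ for a permutation matrix $P\le T$, the number of $1$s of $A$ dominates that of $S$), exactly as in the $\J$‑class arguments of \cref{lem:HallAssumptions}.

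\textbf{Main obstacle.} The routine parts are the two reductions and the $\J$‑non‑relatedness, which are essentially transcriptions of \cref{lem:HallAssumptions}. The crux is showing that a \emph{single} edit of $B$ genuinely lowers the maximum violator cardinality rather than merely relocating the deficiency elsewhere, and that the same edit does not create a new column of $S$ without a $1$; getting these two requirements to hold simultaneously is where the supermodularity of the deficiency function (pinning down the canonical set $Z$ contained in all maximum‑cardinality violators, which must be the thing attacked) really earns its keep, and it is the step I expect to require the most care.
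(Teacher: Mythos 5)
There is a genuine gap, and it lies exactly where you predicted: the ``single controlled edit'' cannot, in general, lower the deficiency. Recall that in this paper $k$-deficiency means that $k$ is the \emph{cardinality} of a largest violating set of rows of the core, not that the defect $|X|-|N(X)|$ equals one. A maximum-cardinality violator $X$ of $C^\circ$ may satisfy $|N_C(X)|=|Y|\le k-2$ (e.g.\ several core rows all supported on the same two columns). Your edit replaces one row of $B$ by a smaller row, which only changes column $j_0$ of the greedy left multiplier, so $N_S(X)\subseteq Y\cup\{j_0\}$ and $|N_S(X)|\le |Y|+1$; moreover $S\ge C$ entrywise on the rows of $X$, so those rows stay in the core of $S$. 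Hence whenever $|Y|<k-1$ the set $X$ remains a violator of cardinality $k$ in $S^\circ$ and $S$ is still $k$-deficient. The supermodularity/lattice observation about a canonical set $Z$ contained in every maximum-cardinality violator is correct but does not help, since each such violator still gains at most one new neighbour. There is also a secondary problem: $j_0\notin Y$ only says that rows of $A$ indexed by $X$ do not use $B_{j_0*}$ in their greedy expression; rows of $A$ outside $X$ may genuinely need it, so shrinking $B_{j_0*}$ to $e_{\beta(j_0)}$ can destroy $\RowS(A)\subseteq\RowS(T)$ and hence $A=ST$. (Your preliminary reduction to the greedy multiplier also needs care: deficiency is not monotone under entrywise enlargement, because single-$1$ rows can enter the core and assemble into new, larger violators.)

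The paper's proof is structurally different and is worth contrasting. Rather than enlarging the neighbourhood of the violator $W$ by one column, it removes \emph{everything outside $W$} from the core of the left factor: using Hall's theorem on the complementary submatrix $D$ (rows $W^C$, columns $\{x_m,\dots,x_k\}$) it extracts a system of distinct representatives $f$, sets $S_{i*}=e_{f(i)}$ for all $i\in W^C$, and transplants the corresponding rows of $A$ wholesale into $T$ as $T_{f(i)*}=A_{i*}$, so that $(ST)_{i*}=A_{i*}$ holds by construction and no row-space containment needs to be preserved for those rows. The remaining rows of $T$ are filled in by chasing cycles of a permutation contained in $B$ so that $T$ is still Hall, and the rows $W$ of $S$ are then read off a greedy multiplier against the new $T$ in such a way that their union acquires $(n-l)+m=k$ ones. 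The deficiency bound then falls out for free: the core of $S$ is contained in $W$, and $W$ itself satisfies the Hall condition, so every violator of $S^\circ$ is a proper subset of $W$ and has cardinality at most $k-1$. If you want to salvage your approach, you would have to replace the single edit by this kind of global reconstruction; a local perturbation of one row cannot bridge a defect larger than one.
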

\begin{proof}
  Since $C$ is $k$-deficient, there is some maximal violator of $C^\circ$,
  indexed in $C$ by $W  = \{w_1, w_2, \ldots, w_k\} \subset \{1, \ldots, n\}$.
  Since $C$ contains a $1$ in every column, $k < n$. We will show how we can
  construct new matrices from $C$ and $B$ so that the rows indexed by $W$
  satisfy the Hall condition. It will be useful in this proof to denote the
  complement of a set $Z \subseteq \{1, 2, \ldots, n\}$ in $\{1, 2, \ldots, n\}$
  by $Z^C$. For the sake of brevity, we will also not distinguish between
  indices of rows and the rows themselves when the distinction is not important;
  the same is true of columns.  Thus we may talk about the rows $W$ rather than
  the rows indexed by $W$.

  Let $X = \{x_1, \ldots, x_l\} \subset \{1, \ldots, n\}$ denote the indices of
  those columns of $C$ that do not contain any $1$s in the rows $W$.
  By multiplying $C$ by an appropriate permutation matrix on the right, and $B$
  by the inverse of this permutation matrix on the left, we may assume that $X^C
  \subset W$, and that $X \cap W = \{x_1, \ldots, x_{m - 1}\}$, where $m = k + l
  - n + 1$. That is, the $1$s that occur in rows $W$ occur in a subset of the
  columns $W$, and the complement of this subset in $W$ is labelled by $\{x_1,
    \ldots, x_{m - 1}\}$. Consider the remaining $n - k$ column indices $\{x_m,
    \ldots, x_k\}$. 
    
  Let $D$ be the $(n-k) \times (n-k)$ submatrix of $C$ consisting of rows $W^C$
  of $C$ and columns $\{x_m, \ldots, x_k\}$. Suppose that $D$ contains maximal
  violator $V$. The rows in $V$ cannot all be rows of $C$ which contained a
  single $1$: since $A$ is trim, $C$ is row trim and hence each row of $C$
  containing a single $1$ contains a distinct $1$.  It follows that $V$ contains
  at least one row of $C^\circ$. But then the union of $W$ with the rows of
  $C^\circ$ corresponding to rows of $V$ is a maximal violator of $C^\circ$ with
  larger cardinality than $W$, a contradiction. Hence $D$ satisfies the Hall
  condition, and therefore $D$ contains a permutation matrix. This defines a
  bijection $f: W^C \to \{x_m, \ldots, x_k\}$ such that for all $i \in W^C$,
  $f(i)$ is the column containing a $1$ in row $i$ of the permutation matrix.
  For all $i \in W^C$, we define row $i$ of $S$ to be $e_{f(i)}$, and row $f(i)$
  of $T$ to be $A_{i*}$; then $(ST)_{i*} = A_{i*}$. Observe that since
  $C_{i,f(i)} = 1$, row $B_{f(i)*}$ is contained in row $A_{i*}$ and hence also
  contained in row $T_{i*}$.

  For all $j \in X^C$, we define $T_{j*} = B_{j*}$; then for all $i \in W$,
  $(CT)_{i*} = A_{i*}$. It remains to define rows $W$ of $S$ and rows $\{x_1,
    \ldots, x_{m - 1}\}$ of $T$. Since $B$ is a Hall matrix, it contains a
  permutation matrix $U$ with corresponding permutation $u$. The rows $X^C
  \cup \{x_m, \ldots, x_k\}$ of $T$ contain the corresponding rows of $U$. For
  all $j \in \{x_1 \ldots, x_k\}$, consider the cycle in the permutation $u$
  starting at $j$. If $u(j) \in W$, then define $T_{j*} = e_{u(j)}$. Otherwise,
  $u(j) \not\in W$, and we may construct the tuple $(u^1(j), u^2(j), \ldots,
  u^{q}(j), iu^{q+1}(j))$ where $u^{i}(j) \not\in W^C$ for $1 \leq i \leq q$,
  and $u^{q + 1}(j) \in W$. Then we define $T_{j*} = e_{u^{q + 1}(j)}$. By the
  assumptions on $X$, for $1 \leq i \leq q$ we have $u^{i}(j) \in \{x_m, \ldots,
    x_l\}$, and by the definitions of rows $\{x_m, \ldots, x_l\}$ of $T$, we
  have that $T_{u^{i}(q),u^{i}(q)} = 1$. For each such tuple, we modify $U$ by
  defining $U_{j*} = e_{u^{q + 1}(j)}$, and $U_{u^{i}(q)*} = e_{u^{q + 1}(j)}$
  for $1 \leq i \leq q$. Note that this modification is well-defined, and in
  particular does not depend on the order in which the modification is carried
  out, since $u$ is a permutation and thus decomposes into disjoint cycles.
  After this modification, $T$ is fully defined and contains the permutation
  matrix $U$.

  For each $j \in \{x_1, \ldots, x_{m - 1}\}$, we have $T_{j*} = e_w$ for some
  $w \in W$, and hence there is a $1$ in position $(w, j)$ of the greedy left
  multiplier of $(A, T)$. Define the rows $W$ of $S$ to be the corresponding
  rows of the greedy left multiplier of $(A, T)$. Then the union $y$ of the rows
  $W$ of $S$ contains ones in columns $\{x_1, \ldots, x_{m - 1}\}$. Note that
    since the rows of $T$ indexed by $X^C$, the set of columns that had a $1$ in
    some row of $W$, are equal to the corresponding rows of $B$, we have $S_{w*}
    \geq C_{w*}$ for all $w \in W$. Hence $y$ contains $(n - l) + m = k$ ones,
    and so the rows $W$ satisfy the Hall condition. Since every other row of $S$
    contains a single distinct $1$, it follows that $S$ is at most $(k -
    1)$-deficient.

  As noted above, $(ST)_{i*} = A_{i*}$ for all $i \in W^C$. For $i \in W$, we
  have that $S_{i*} \geq C_{i*}$; since $(CT)_{i*} = A_{i*}$ we must have
  $(ST)_{i*} \geq A_{i*}$. By the definition of the greedy left multiplier,
  $(ST)_{i*} \leq A_{i*}$ for all $i \in W$. Hence $A = ST$.
    
  We wish to now show that $S$ contains a $1$ in every column, and that $A$ is
  not $\J$-related to $S$ or to $T$. That $S$ contains a $1$ in every column
  follows from the union $y$ of the rows $W$ containing $1$s in each of the columns
  $W$, and the other rows being defined via the bijection $f$. That
  $T$ is not $\J$-related to $A$ follows from the same argument as above; since
  $A$ is trim so would be $B$, but $S$ contains at least one row with more than
  one $1$, a contradiction. In order to prove that $S$ is not $\J$-related to
  $A$, we must as above prove that $T$ is not a permutation matrix. Similarly to
  the argument above, we note that if $T$ is a permutation matrix then each row
  of $T$ belonging to $X^C$ would contain a single one, but these are equal to the
  corresponding rows of $B$. Hence the union of rows $W$ of $CB$ would contain
  precisely $n - l < k$ ones, and thus $A$ would not be Hall. It follows that
  $T$ is not a permutation matrix, and the argument proceeds as above. 
\end{proof}

\begin{cor}
  \label{cor:HallDecomposition}
  Let $A$ be a matrix belonging to $\Halln$ which is not prime, elementary,
  or a permutation matrix. Then $A$ can be decomposed as a product $A = CB$
  of matrices $B, C \in \Halln$, such that neither $B$ nor $C$ is $\J$-related
  to $A$ in $\Halln$.
\end{cor}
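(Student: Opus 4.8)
The plan is to split on whether $A$ is trim and to assemble the preceding lemmas. If $A$ is not trim, then \cref{lem:NonTrimDecomposable} already produces the required decomposition $A = CB$ with $B, C \in \Halln$, neither $\J$-related to $A$, and nothing more is needed. So I would assume from now on that $A$ is trim. Being prime, elementary, or a permutation matrix is invariant under similarity, and so is the conclusion: if $A' = C'B'$ is a valid decomposition and $A = PA'Q$ for permutation matrices $P, Q$, then $A = (PC')(B'Q)$ with $PC', B'Q \in \Halln$ similar to $C', B'$ and hence, since $A \J A'$, not $\J$-related to $A$. As every trim Hall matrix is similar to one containing the identity permutation, this reduces the problem to the case where $A$ contains the identity permutation.

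Next I would apply \cref{lem:HallAssumptions} to get $B \in \Halln$ and $C_0 \in \Bn$ with $A = C_0 B$, where $C_0$ has a $1$ in every column and neither $B$ nor $C_0$ is $\J$-related to $A$ in $\Halln$. Let $k_0$ be such that $C_0$ is $k_0$-deficient. If $k_0 > 0$, I would run \cref{lem:DeficiencyReduction} iteratively: each application converts a decomposition $A = C_i B_i$ with $B_i \in \Halln$ and $C_i \in \Bn$ a $k_i$-deficient matrix having a $1$ in every column, with neither factor $\J$-related to $A$, into a decomposition $A = C_{i+1} B_{i+1}$ of exactly the same shape with $C_{i+1}$ at most $(k_i - 1)$-deficient. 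Because the deficiencies strictly decrease, after at most $k_0$ steps I reach $A = C_N B_N$ with $C_N$ $0$-deficient, $C_N \in \Bn$ having a $1$ in every column, $B_N \in \Halln$, and neither factor $\J$-related to $A$.

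Finally I would check that $C_N \in \Halln$. Since $C_N$ is $0$-deficient, its core $C_N^{\circ}$ satisfies the Hall condition; as $C_N$ is row-trim — its non-core rows are distinct standard basis vectors by the explicit constructions in \cref{lem:HallAssumptions} and \cref{lem:DeficiencyReduction} — \cref{MaximalViolatorsCore} gives that $C_N$ itself satisfies the Hall condition, hence $C_N \in \Halln$ by \cref{thm:HallMarriage}. Taking $C := C_N$ and $B := B_N$ then finishes the proof (and the case $k_0 = 0$ is just $N = 0$). The real content is entirely in \cref{lem:HallAssumptions} and \cref{lem:DeficiencyReduction}, so I do not expect a genuine obstacle here; the only points that need care are the legitimacy of the similarity reduction to "$A$ contains the identity permutation", which is routine, and confirming that the intermediate matrices $C_i$ remain row-trim so that \cref{MaximalViolatorsCore} applies at the last step.
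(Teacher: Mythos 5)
Your proposal is correct and follows essentially the same route as the paper's proof: handle the non-trim case with \cref{lem:NonTrimDecomposable}, reduce by similarity to a matrix containing the identity permutation, apply \cref{lem:HallAssumptions}, and iterate \cref{lem:DeficiencyReduction} until the left factor is $0$-deficient and hence Hall via \cref{MaximalViolatorsCore}. Your extra attention to the row-trimness of the intermediate matrices (needed for the core, and hence $k$-deficiency, to be well defined) is a point the paper leaves implicit, but it is not a different argument.
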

\begin{proof}
  We may assume that $A$ contains the identity permutation, since the lemma
  holds for $A$ if and only if it holds for all matrices similar to $A$. If $A$
  is not trim then this follows directly from \cref{lem:NonTrimDecomposable}.
  Otherwise, $A$ is trim and by \cref{lem:HallAssumptions} there exist $B \in
  \Halln$ and $C \in \Bn$ such that $A = CB$, neither $B$ nor $C$ is
  $\J$-related to $A$ in $\Halln$, and $C$ contains a $1$ in every column. If
  $C$ also belongs to $\Halln$, then we are done; otherwise $C$ is $k$-deficient
  for some $1 \leq k \leq n$. By iteratively replacing $B$ and $C$ with the
  matrices constructed in \cref{lem:DeficiencyReduction}, we must eventually
  construct $B$ and $C$ with the properties above and where $C$ is
  $0$-deficient, and thus Hall by \cref{MaximalViolatorsCore}.
\end{proof}

We may now prove \cref{thm:HallGenSet}.

\begin{proof}[Proof of \cref{thm:HallGenSet}]
  Let $G \subset \Halln$ consist of a set of representatives $P$ of the prime
  $\J$-classes of $\Bn$ together with two matrices that generate $S_n$ and an
  elementary matrix. By \cref{lem:HallJRelation}, for any element $x \in G$ with
  $\J$-class $J_x$ in $\Halln$, we have $J_x \subsetneq \genset{G}$. It
  follows from \cref{cor:HallDecomposition} and \cref{lem:GenSetDecomposition}
  that $G$ generates $\Halln$. Since any generating set for $\Bn$ requires a
  matrix from every prime $\J$-class, two generators for $S_n$, and an
  elementary matrix, and each of these lie in $\Halln$, we must require these
  matrices in any generating set for $\Halln$; hence $G$ is minimal. The same
  argument shows that every minimal generating set is obtained in this way.
\end{proof}

%%%%%%%%%%%%%%%%%%%%%%%%%%%%%%%%%%%%%%%%%%%%%%%%%%%%%%%%%%%%%%%%%%%%%%%%%%%%%%%
\subsection{Triangular boolean matrices}
\label{sec:TriBoolMat}
We now turn our attention to the monoids of upper and lower triangular boolean
matrices, denoted by $\UTn$ and $\LTn$ respectively. These matrices have a
particularly simple minimal generating set:

\begin{lemma}
  \label{lem:mingeneratingsetfortriangular}
  The unique minimal generating set $G_U$ for $\UTn$ consists of those
  elementary matrices and matrices similar to $F$ which are upper triangular,
  together with the identity matrix. The dual statement holds for $\LTn$.
\end{lemma}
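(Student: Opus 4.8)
The plan is to identify the indecomposable elements of $\UTn$ and then show that the set of indecomposable elements already generates the monoid, so that it is automatically the unique minimal generating set. First I would compute the group of units of $\UTn$: an upper triangular boolean matrix is invertible if and only if it is the identity matrix (since any upper triangular matrix containing a permutation must contain the identity permutation, and then multiplying on the left by an elementary upper-triangular matrix strictly increases the number of $1$s, exactly as in the proof of \cref{lem:reflexiveelementaryindecomposable}). Thus the group of units is trivial and the identity matrix $I$ must lie in every monoid generating set. Next I would observe that every upper triangular boolean matrix $A$ either equals $I$ or properly contains $I$; in the latter case exactly the same decomposition trick used in \cref{lem:ReflexiveNonTrimDecomposable} applies inside $\UTn$, because the matrices $E^{j,i}$ with $j < i$ are upper triangular, and deleting the entry $(j,i)$ from $A$ keeps $A$ upper triangular. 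Hence every $A \in \UTn$ that is neither $I$, elementary, nor similar-to-$F$-and-upper-triangular is decomposable as a product of two strictly smaller (in the containment order) upper triangular matrices.

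The heart of the argument is therefore to pin down which elementary matrices and which matrices similar to $F$ are indecomposable in $\UTn$, and to show nothing else is. The elementary upper triangular matrices $E^{i,j}$ with $i < j$ have exactly $n+1$ ones; since any product of two non-identity upper triangular matrices has strictly more ones than either factor (again by the counting observation, which holds in $\UTn$ because every such matrix contains $I$), an elementary matrix can only decompose into elementary factors, and a short check shows a product of two distinct elementary upper triangular matrices has at least $n+2$ ones — so these are indecomposable, mirroring \cref{lem:reflexiveelementaryindecomposable}. For matrices similar to $F$: being upper triangular forces the zero row of such a matrix to be the last row, so there is a single upper triangular matrix $F' = F$ in $\UTn$ of this form (it has a zero in the bottom-right corner and $1$s elsewhere on the diagonal). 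This $F'$ is indecomposable because any element of $\UTn$ with a zero row must have that zero row as row $n$, and $F'$ has the maximal row space among such matrices (exactly as in the proof of \cref{thm:Devadzefull}); moreover $F'$ contains a zero row, and no product of non-identity upper triangular matrices can create a zero row from matrices that do not already have one, so in any decomposition one factor must have a zero row, forcing it to have the same or larger row space, contradicting properness.

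Putting these together, the set $G_U$ consisting of $I$, the upper triangular elementary matrices, and $F'$ is precisely the set of indecomposable elements of $\UTn$. Since every element of $\UTn$ that is not indecomposable is decomposable into a product of elements lying strictly above it in the containment order, and $\UTn$ is finite with containment a partial order (here I would invoke $\J$-triviality of $\UTn$, proved just as in \cref{lem:ReflexiveJRelation}, together with \cref{lem:GenSetDecomposition}), the set $G_U$ generates $\UTn$. As every generating set must contain all indecomposable elements, $G_U$ is contained in every generating set, and being itself a generating set it is the unique minimal one. The dual statement for $\LTn$ follows from the anti-isomorphism given by transposition. The main obstacle I anticipate is the bookkeeping needed to verify cleanly that the decomposition constructed for a non-$I$, non-elementary, non-$F'$ upper triangular matrix never accidentally produces the matrix itself as a factor and never leaves the class of upper triangular matrices; this is routine but must be stated carefully, and it is also where one must be careful that "similar to $F$" is correctly specialized to the single upper triangular representative.
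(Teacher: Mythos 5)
There is a genuine error in your identification of the upper triangular matrices similar to $F$, and it is fatal to the generation argument. You claim that being upper triangular forces the zero row of such a matrix to be the last row, so that $F' = \operatorname{diag}(1,\ldots,1,0)$ is the only one needed. This is false: $F$ itself is upper triangular with its zero row in position $1$, and more generally every diagonal matrix with exactly one $0$ on the diagonal is upper triangular and similar to $F$; all $n$ of them belong to the unique minimal generating set. Concretely, for $n=2$ your proposed set is $\bigl\{I,\ \tmat{1}{1}{0}{1},\ \tmat{1}{0}{0}{0}\bigr\}$, and since the $(1,1)$ entry of a product of upper triangular matrices is the product of the $(1,1)$ entries, every product of these has a $1$ in position $(1,1)$; hence $\tmat{0}{0}{0}{1}$ is never generated. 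Your set has $T_{n-1}+2$ elements rather than the correct $T_n+1$, so it cannot be the minimal generating set. The paper's proof is a direct construction that makes the role of all $n$ of these matrices visible: starting from the identity, it left-multiplies by elementary matrices $E^{x,y}$ to insert each $1$ of the target matrix $A$, and then left-multiplies by the diagonal matrix with a $0$ in position $z$ for each zero row $z$ of $A$ --- one such generator is needed for each possible position of a zero row.

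A secondary problem is your assertion that every upper triangular boolean matrix either equals $I$ or properly contains $I$. This is also false ($F$, the zero matrix, and $\tmat{0}{1}{0}{0}$ are counterexamples), so the counting argument ``a product of two non-identity upper triangular matrices has strictly more ones than either factor'' and the $\J$-triviality argument you import from \cref{lem:ReflexiveJRelation} do not transfer to $\UTn$ as stated; those arguments genuinely rely on both factors containing the identity. The indecomposability of the upper triangular elementary matrices therefore needs a separate (if routine) verification, as does the indecomposability of each of the $n$ matrices $I$ minus a diagonal entry, and the decomposition of the remaining elements must handle matrices that do not contain the identity (for instance via the explicit construction in the paper's proof rather than the reflexive-case containment trick).
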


\begin{proof}
  We will prove the lemma for $\UTn$; the result for $\LTn$ follows via the
  anti-isomorphism given by transposition.
  We will first construct any matrix $A \in \UTn$ as a product of matrices in
  $G_U$. We iteratively define a product $A(i)$, $0 \leq i \leq n$ where $A(0)$
  is the identity and $n$ is the number of $1$s in $A$. For the $i$th $1$
  contained in $A$ (ordered by row then column), in row number $x_i$ and column
  number $y_i$, we obtain $A(i)$ from $A(i - 1)$ by left-multiplying by $E^{x_i,
    y_i}$. Let the zero rows of $A$ have indices $\{z_1, \ldots, z_{k}\}$. Note
  that the matrices in $\UTn$ similar to $F$ are precisely those matrices
  obtained by deleting a single $1$ from an identity matrix. We define $A(n +
  j)$ to be the matrix obtained by left multiplying $A(n + j - 1)$ by the
  element of $X$ with a zero row in position $z_j$. Then $A(n + k) = A$.
  
  It is routine to show that for each matrix $A \in G_U$, if $A$ is written as
  a product $A = BC$ in $\UTn$ then one of $B$ or $C$ must be equal to $A$; it
  follows that any generating set for $\UTn$ must contain all matrices in $G_U$.
  Hence $G_U$ is the unique minimal generating set for $\UTn$.
\end{proof}

\begin{cor}
  \label{cor:RankTriBoolMat}
  For $n \geq 2$, the ranks of $\UTn$ and $\LTn$ are $T_n + 1$, where $T_n$ is
  the $n$th triangular number $T_n = n(n+1)/2$.
\end{cor}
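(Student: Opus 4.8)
The plan is to obtain the corollary directly from \cref{lem:mingeneratingsetfortriangular} by counting the elements of the unique minimal generating set $G_U$ of $\UTn$. Since $G_U$ is finite and is \emph{the} minimal generating set, $\d(\UTn) = |G_U|$, so the whole proof reduces to evaluating $|G_U|$. By the lemma, $G_U$ splits into three parts: the upper-triangular elementary matrices, the upper-triangular matrices similar to $F$, and the identity matrix.

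First I would count the upper-triangular elementary matrices. An elementary matrix is the identity with a single extra $1$ in an off-diagonal position $(i,j)$ with $i\neq j$, and such a matrix is upper triangular exactly when $i<j$; hence there are $\binom{n}{2}=n(n-1)/2$ of them. Next, by the description in the proof of \cref{lem:mingeneratingsetfortriangular}, the upper-triangular matrices similar to $F$ are precisely the $n$ matrices obtained by deleting one $1$ from the diagonal of the identity. Together with the identity matrix this gives three families, which I would check are pairwise disjoint (for instance by the number of ones they contain: $n+1$, $n-1$, and $n$ respectively), so that
\[
  \d(\UTn)=|G_U| = \binom{n}{2} + n + 1 = \frac{n(n-1)}{2}+n+1 = \frac{n(n+1)}{2}+1 = T_n+1.
\]

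For $\LTn$ I would invoke the anti-isomorphism $A\mapsto A^{T}$ between $\UTn$ and $\LTn$ noted earlier in the section: an anti-isomorphism sends generating sets to generating sets and preserves their cardinality, so $\d(\LTn)=\d(\UTn)=T_n+1$. Alternatively, one repeats the count verbatim using the dual minimal generating set supplied by \cref{lem:mingeneratingsetfortriangular}.

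I do not anticipate a genuine obstacle: once \cref{lem:mingeneratingsetfortriangular} is available the argument is pure bookkeeping. The only point demanding a little attention is the second family — identifying which matrices similar to $F$ actually lie in $G_U$, i.e.\ are upper triangular — but this is exactly what the lemma records, so in the corollary it can simply be quoted. The restriction $n\geq 2$ is needed only because for $n=1$ there are no elementary matrices and the count degenerates.
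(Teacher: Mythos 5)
Your proposal is correct and follows the paper's proof essentially verbatim: count the $n(n-1)/2 = T_{n-1}$ upper-triangular elementary matrices, the $n$ upper-triangular matrices similar to $F$, and the identity, then transfer to $\LTn$ via transposition. The disjointness check via the number of ones is a small extra precaution not spelled out in the paper, but the argument is the same.
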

\begin{proof}
  Elementary matrices in $\UTn$ are precisely those in the form of an identity
  matrix together with an additional $1$ in some position above the main
  diagonal; hence there are $T_{n - 1}$ elementary matrices in $\UTn$. There are
  $n$ matrices similar to $F$ in $\UTn$, and hence $T_{n - 1} + n =  T_n$
  non-identity elements of the minimal generating set.
\end{proof}

%%%%%%%%%%%%%%%%%%%%%%%%%%%%%%%%%%%%%%%%%%%%%%%%%%%%%%%%%%%%%%%%%%%%%%%%%%%%%%%

\section{Tropical matrices}
\label{sec:Tropical}
Recall that $\Kmin$ denotes the min-plus semiring where addition is $\min$ and
multiplication is $+$, and the min-plus semiring with threshold $\Kmint$ is the
quotient of $\Kmin$ by the least congruence containing $(t, t+ 1)$. Similarly,
$\Kmax$ denotes the max-plus semiring and $\Kmaxt$ is the quotient of $\Kmax$ by
the least congruence containing $(t, t + 1)$. Generating
sets for $M_2(\Kmin)$, $M_2(\Kmint)$, $M_2(\Kmax)$ and $M_2(\Kmaxt)$ were found
in~\cite{East2020aa}; see also \cref{thm:MinPlusGenSet} and
\cref{cor:FiniteMinPlusGenSet}. In this section, we show that the generating
sets from~\cite{East2020aa} are, in fact, minimal. Minimal generating sets are
not known for such matrices of arbitrary dimension, and results in this
direction seem unlikely. For instance, if $t = 0$, then $M_n(\Kmint)$ is
isomorphic to the full boolean matrix monoid $\Bn$ via the semiring isomorphism
$\phi: \Kmint \to \B$ which maps $\phi(\infty) = 0$ and $\phi(0) = 1$. Finding
minimal generating sets for $\Bn$ is the subject of
\cref{sec:FullBoolMat}, and it seems reasonable to say that this is
somewhat difficult. In particular, any general theorem on minimal generating
sets for $M_n(\Kmint)$ must include Devadze's Theorem (\cref{thm:Devadzefull})
as a special case, and so it seems likely that any such theorem would not
provide explicit generators but rather would require the computation of those
$\J$-classes immediately below the group of units as in
\cref{sec:FullBoolMat}.

\subsection{Min-plus matrices}

\begin{thm}[{\cite[Theorem 1.1]{East2020aa}}]\label{thm:MinPlusGenSet}
  The monoid $M_{2}(\Kmin)$ of $2 \times 2$ min-plus matrices is
  generated by the matrices:
  \begin{equation*}
          A(i) = \mat{i}{0}{0}{\infty},
    \quad B     = \mat{1}{\infty}{\infty}{0},
    \quad \text{and}
    \quad C     =  \mat{\infty}{\infty}{\infty}{0}
  \end{equation*}
  where $i \in \N \cup \{\infty\}$.
\end{thm}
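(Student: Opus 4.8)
The plan is to exhibit a manageable collection of matrices that visibly lie in the submonoid $N = \genset{A(i)\ (i \in \N \cup \{\infty\}),\ B,\ C}$ of $M_2(\Kmin)$, and then to prove that every $M \in M_2(\Kmin)$ is a product of these. Throughout, write $\mathrm{diag}(p, q)$ for the matrix $\tmat{p}{\infty}{\infty}{q}$, noting that $C = \mathrm{diag}(\infty, 0)$ is already of this shape. Observe also that $A(\infty) = \tmat{\infty}{0}{0}{\infty}$ is the unique non-identity unit of $M_2(\Kmin)$, with $A(\infty)^2 = I$; left multiplication by $A(\infty)$ interchanges the two rows of a matrix and right multiplication interchanges the two columns. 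In particular $I \in N$, and $N$ is closed under interchanging rows and/or columns of its members.

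The first step is to assemble a toolkit by direct $2 \times 2$ computations: $A(i) A(j) = \tmat{0}{i}{j}{0}$ and $A(i) A(j) A(k) = \tmat{\min(i,k)}{0}{0}{j}$ for all $i, j, k \in \N \cup \{\infty\}$; $B^k = \mathrm{diag}(k, 0)$, hence $A(\infty) B^k A(\infty) = \mathrm{diag}(0, k)$ and $B^{p}\, A(\infty) B^{q} A(\infty) = \mathrm{diag}(p, q)$ for all $p, q \in \N$; and, for the scalings involving $\infty$, $C\, \mathrm{diag}(0, q) = \mathrm{diag}(\infty, q)$, $\mathrm{diag}(p, 0)\, A(\infty) C A(\infty) = \mathrm{diag}(p, \infty)$, and $C A(\infty) C = \mathrm{diag}(\infty, \infty)$, the zero matrix. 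Thus $\mathrm{diag}(p, q) \in N$ for all $p, q \in \Kmin$, and $N$ also contains the handful of matrices with an all-$\infty$ row or column that will be needed below, namely $C$, $C A(\infty)$, $A(\infty) C$, $C A(\infty) C$, and $C\, \tmat{0}{0}{0}{0}$, together with their row and column permutations.

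The heart of the argument is a tropical Gaussian elimination. Given $M$, let $r_i$ be the least finite entry in row $i$, with $r_i = \infty$ if row $i$ is identically $\infty$. Because $r_i$ is the minimum over row $i$, one has $M = \mathrm{diag}(r_1, r_2)\, M'$ for a matrix $M' \in M_2(\Kmin)$ (so the subtraction never leaves $\N \cup \{\infty\}$) in which every not-identically-$\infty$ row contains a $0$. Performing the dual step on the columns of $M'$ gives $M' = M''\, \mathrm{diag}(s_1, s_2)$ with $M''$ \emph{reduced}: every not-identically-$\infty$ row and column of $M''$ contains a $0$. A finite case analysis on the positions of the zeros and infinities then shows that, after interchanging its rows and/or columns if necessary, a reduced $2 \times 2$ matrix over $\Kmin$ is either a product of at most three of the matrices $A(i)$ (for instance $\tmat{0}{i}{j}{0} = A(i)A(j)$, $\tmat{\infty}{0}{0}{d} = A(\infty)A(d)A(\infty)$, $\tmat{0}{0}{0}{0} = A(0)^2$), or one of the finitely many matrices listed in the previous paragraph. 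Either way $M'' \in N$, whence $M = \mathrm{diag}(r_1, r_2)\, M''\, \mathrm{diag}(s_1, s_2) \in N$ after inserting copies of $A(\infty)$ to realise the row and column interchanges. Since $M$ was arbitrary, $N = M_2(\Kmin)$.

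I expect the bulk of the work to be the last step: one must verify that both peeling multiplications are legitimate (the only subtleties being that one subtracts exactly a row, resp. column, minimum, so no negative entry arises, and that each auxiliary matrix $\mathrm{diag}(r_i, s_j)$, including when some $r_i$ or $s_j$ equals $\infty$, belongs to $N$), and then check both that the list of reduced $2 \times 2$ matrices above is exhaustive and that each such matrix really factors over the given generators. The argument is not deep, but keeping track of the cases with $\infty$ entries and the permutation adjustments is where the care is needed. A cleaner bookkeeping is available by first inducting on the number of $\infty$ entries of $M$: if $M$ has an all-$\infty$ row or column, one recovers $M$ by left- or right-multiplying a matrix with strictly fewer $\infty$ entries by $C$ (or a row/column permutation of $C$); otherwise the $\infty$ entries of $M$ form a partial permutation, leaving only the all-finite case and the partial-permutation patterns for the Gaussian-elimination argument to handle directly.
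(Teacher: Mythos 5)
The paper does not prove this statement at all: it is imported verbatim from \cite[Theorem 1.1]{East2020aa}, and only the \emph{minimality} of the generating set is established here (\cref{thm:MinPlusMinimality}). Your argument is therefore a self-contained proof where the paper offers none, and it is sound. I checked the toolkit identities ($A(i)A(j) = \tmat{0}{i}{j}{0}$, $A(i)A(j)A(k) = \tmat{\min(i,k)}{0}{0}{j}$, $B^k = \tmat{k}{\infty}{\infty}{0}$, and the $C$-products giving every tropical diagonal matrix $\tmat{p}{\infty}{\infty}{q}$ with $p, q \in \Kmin$), and the two-sided peeling $M = \tmat{r_1}{\infty}{\infty}{r_2}\, M''\, \tmat{s_1}{\infty}{\infty}{s_2}$ is legitimate because exactly the row and column minima are subtracted, so no entry leaves $\N \cup \{\infty\}$ and the zeros created by the row step survive the column step. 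The residual case analysis does close up: if a reduced matrix has no all-$\infty$ row or column, its zeros must cover a transversal, so it is $\tmat{0}{a}{b}{0} = A(a)A(b)$ or $\tmat{c}{0}{0}{d} = A(c)A(d)A(\infty)$; the remaining patterns are, up to row and column swaps, the $C$-products you list. One small correction: the pattern $\tmat{\infty}{0}{\infty}{0}$ (an all-$\infty$ column beside an all-zero column), which arises as $M''$ for instance from $M = \tmat{\infty}{a}{\infty}{b}$ with $a, b$ finite, is the \emph{transpose} of $C\,\tmat{0}{0}{0}{0}$ rather than a row or column permutation of it, so it is not covered by the matrices you name; it is, however, equal to $A(0)^2\,C$, so nothing breaks. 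Your closing remark --- induct on the number of $\infty$ entries, stripping off $C$ or a permuted copy of $C$ whenever there is an all-$\infty$ line --- would dispose of all of these boundary patterns uniformly and is probably the cleanest way to write the argument up in full.
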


\begin{cor}[{\cite[Corollary 1.2]{East2020aa}}]\label{cor:FiniteMinPlusGenSet}
  Let $t \in \Np$ be arbitrary. Then the finite monoid $M_{2}(\Kmint)$ of
  $2 \times 2$ min-plus matrices is generated by the $t + 4$ matrices:
  \begin{equation*}
          A(i) = \mat{i}{0}{0}{\infty},
    \quad B     = \mat{1}{\infty}{\infty}{0},
    \quad \text{and}
    \quad C     =  \mat{\infty}{\infty}{\infty}{0}
  \end{equation*}
  where $i \in \Kmint$.
\end{cor}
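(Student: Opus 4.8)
The plan is to deduce the corollary from \cref{thm:MinPlusGenSet} by pushing a generating set of the infinite monoid through the quotient homomorphism that defines $\Kmint$. Recall that $\Kmint$ is by definition the quotient of $\Kmin$ by the congruence $\theta$ generated by $(t, t+1)$; write $\pi\colon \Kmin \to \Kmint$ for the associated surjective semiring homomorphism. The first step is to pin down $\pi$ explicitly. Since $t \mathbin{\theta} (t+1)$ and $\theta$ is a congruence, multiplying (that is, adding) by $1 \in \N$ repeatedly yields $i \mathbin{\theta} (i+1)$ for every integer $i \geq t$, so all integers $\geq t$ lie in a single $\theta$-class, while $\infty$ and each integer $< t$ form singleton classes. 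Hence $\pi(i) = \min(i, t)$ for $i \in \N$ and $\pi(\infty) = \infty$, and $\Kmint = \{0, 1, \dots, t, \infty\}$ as claimed. This is the only part of the argument that requires a (short) computation.

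Next, applying $\pi$ to each entry of a $2 \times 2$ matrix defines a map $\Pi\colon M_2(\Kmin) \to M_2(\Kmint)$. Because $\pi$ preserves $\oplus$ and $\otimes$, the map $\Pi$ preserves matrix multiplication and the identity matrix, so $\Pi$ is a monoid homomorphism; and it is surjective, since $\pi$ is surjective and a matrix over $\Kmint$ can be lifted entrywise. The image of a generating set under a surjective monoid homomorphism is again a generating set, so by \cref{thm:MinPlusGenSet} the set $\Pi\bigl(\{A(i) : i \in \N \cup \{\infty\}\} \cup \{B, C\}\bigr)$ generates $M_2(\Kmint)$.

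Finally I would identify this image. Entrywise, $\Pi(A(i))$ is the matrix $\mat{\pi(i)}{\pi(0)}{\pi(0)}{\pi(\infty)} = \mat{\pi(i)}{0}{0}{\infty}$, which is $A(\pi(i))$ regarded as a matrix over $\Kmint$; as $i$ ranges over $\N \cup \{\infty\}$, the value $\pi(i)$ ranges over all of $\Kmint$, so $\{\Pi(A(i)) : i \in \N\cup\{\infty\}\} = \{A(j) : j \in \Kmint\}$. Since $t \geq 1$, the entries $0$, $1$, $\infty$ appearing in $B$ and $C$ are fixed by $\pi$, so $\Pi(B) = B$ and $\Pi(C) = C$. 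Therefore $M_2(\Kmint)$ is generated by $\{A(j) : j \in \Kmint\} \cup \{B, C\}$, and since $|\Kmint| = t+2$, counting these $t+2$ matrices $A(j)$ together with $B$ and $C$ gives exactly the $t+4$ generators in the statement. I do not expect a genuine obstacle here: the argument is entirely formal once \cref{thm:MinPlusGenSet} is granted, and the only point needing care is the explicit description $\pi(i) = \min(i,t)$ of the congruence, from which it is also immediate that no two of $A(0), \dots, A(t), A(\infty), B, C$ are collapsed by $\Pi$.
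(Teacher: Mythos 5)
Your proposal is correct. Note, however, that the paper does not prove this corollary at all: both \cref{thm:MinPlusGenSet} and \cref{cor:FiniteMinPlusGenSet} are imported verbatim from \cite{East2020aa} (Theorem 1.1 and Corollary 1.2 respectively), so there is no in-paper argument to compare against. What you have supplied is a clean, self-contained derivation of the corollary from the theorem: you identify the quotient map $\pi(i)=\min(i,t)$, $\pi(\infty)=\infty$ induced by the congruence generated by $(t,t+1)$, observe that entrywise application gives a surjective monoid homomorphism $M_2(\Kmin)\to M_2(\Kmint)$, and push the generating set of \cref{thm:MinPlusGenSet} through it. All the steps check out, including the count $|\Kmint|+2 = t+4$ and the fact that $B$ and $C$ have entries fixed by $\pi$ because $t\geq 1$. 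The one place you gloss slightly is the claim that the congruence generated by $(t,t+1)$ identifies \emph{only} the integers $\geq t$; this needs the (routine) verification that the partition $\{0\},\ldots,\{t-1\},\{t,t+1,\ldots\},\{\infty\}$ is itself compatible with $\min$ and $+$, i.e.\ that $\ker\pi$ really is a congruence containing $(t,t+1)$ and hence equals the generated one. You flag this as a short computation, which is fair; it is not a gap.
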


\begin{thm}\label{thm:MinPlusMinimality}
  The generating sets of \cref{thm:MinPlusGenSet} and
  \cref{cor:FiniteMinPlusGenSet} are irredundant and minimal.
\end{thm}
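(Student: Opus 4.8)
The plan is to handle the infinite monoid $M_2(\Kmin)$ and its finite quotients $M_2(\Kmint)$ in parallel, since every step goes through identically up to the truncation of $\otimes$ in $\Kmint$, and to split the statement into two parts: that the generating set $X$ (the $A(i)$ together with $B$ and $C$) is irredundant, and that it has minimum cardinality. The only tools used are the description of $L_A\le L_B$ by row‑space containment (\cref{prop:rowsandideals}, \cref{lem:GreensRowColumnSpaces}) and explicit $2\times2$ min‑plus arithmetic.

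For irredundancy, three kinds of argument are needed. First, $C$ is the unique generator with a zero column, and hence the unique one mapping a non‑zero column vector to the zero vector: $C\,(0,\infty)^{\mathrm T}$ equals the first column of $C$, which is $0$, while $(0,\infty)^{\mathrm T}\neq0$; since a product of matrices each of which maps no non‑zero vector to $0$ again has that property, $C$ cannot be a product of the other generators. Second, $A(\infty)$ is the unique unit among the generators (every other generator has row space strictly smaller than the whole space, hence is non‑invertible), and as the group of units is the maximal $\J$‑class, no product of non‑units is a unit, so $A(\infty)$ is irredundant. Third, for each $A(i)$ and for $B$ I would argue by \emph{peeling}: if such a matrix $D$ equals $g_1\cdots g_k$ with all $g_\ell\in X\setminus\{D\}$ and $k$ minimal, then $\RowS(D)\subseteq\RowS(g_k)$ by \cref{prop:rowsandideals}, and computing $\RowS(D)$ shows this forces $g_k=A(\infty)$ or, in the case $D=A(i)$, $g_k=A(j)$ for some $j>i$ (in $M_2(\Kmint)$ with $i=t$ only $A(\infty)$ survives). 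A direct computation then pins $g_1\cdots g_{k-1}$ down to a \emph{single} matrix --- $\tmat{0}{i}{\infty}{0}$ when $D=A(i)$, and $BA(\infty)$ when $D=B$ --- whose row space in turn is contained only in $\RowS(A(\infty))$, so $g_{k-1}=A(\infty)$; peeling this factor off (using $A(\infty)^2=I$) re‑expresses $D$ by a strictly shorter product, a contradiction, the cases $k\le2$ being impossible because $\tmat{0}{i}{\infty}{0}$ and $BA(\infty)$ are not generators.

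For minimality in the finite monoid $M_2(\Kmint)$ I would invoke \cref{prop:Wilf}: since $X$ is irredundant, it suffices to check that $X$ meets each $\J$‑class at most once. The only unit in $X$ is $A(\infty)$, and by \cref{prop:RowSpacesJRelation} the other generators lie in distinct $\J$‑classes provided their row spaces are pairwise non‑isomorphic, which is read off from the computations of the previous step: $\RowS(C)$ is generated by one row whereas the others need two, the spaces $\RowS(A(i))$ strictly increase with $i$, and $\RowS(B)$ contains a vector whose first coordinate is $\infty$ while no $\RowS(A(i))$ does. Hence $\d(M_2(\Kmint))=|X|=t+4$.

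Since \cref{prop:Wilf} is stated only for finite semigroups, for the infinite monoid $M_2(\Kmin)$ I must additionally show it has no finite generating set; then $\d(M_2(\Kmin))=\aleph_0=|X|$ and the irredundant set $X$ is minimal. Assuming $Y$ were a finite generating set, write each $A(i)$, $i\in\N$, as a shortest product of elements of $Y\setminus\{I\}$; its last factor has row space containing $\RowS(A(i))=\{(x,y):x\le y+i\}$, so by the pigeonhole principle one fixed $h\in Y$ is the last factor for infinitely many $i$, whence $\RowS(h)\supseteq\bigcup_i\RowS(A(i))$ forces $\RowS(h)$ to be the whole space and $h=A(\infty)$. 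Peeling it off leaves $\tmat{0}{i}{\infty}{0}$ as a product of members of $Y$ for infinitely many $i$; as these are pairwise distinct matrices they cannot all be generators, so infinitely often this product has length $\ge2$, and repeating the argument forces its last factor also to be $A(\infty)$, giving two consecutive factors $A(\infty)$ and contradicting minimal length. I expect the main obstacle to be precisely the row‑space bookkeeping in the peeling arguments: one must compute the row spaces of $A(i)$, $B$, $\tmat{0}{i}{\infty}{0}$, $BA(\infty)$ and their analogues over every $\Kmint$, and verify in each instance exactly which generators can serve as a last factor with large enough row space; the logical skeleton is short, but this case analysis must be carried out carefully and uniformly across the infinite semiring and all of its truncations.
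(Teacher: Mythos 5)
Your overall architecture matches the paper's (irredundancy of each generator, then \cref{prop:Wilf} in the finite case and non-finite-generation in the infinite case), and most of your individual arguments, while genuinely different from the paper's, do work. The peeling argument for the irredundancy of $A(i)$ and $B$ is a valid substitute for the paper's \cref{lem:MinPlusSimilarGenerators}, which proves the stronger fact that \emph{any} two-factor factorisation of $A(i)$ or $B$ in $M_2(\S)$ has a factor similar to it; your version only analyses factorisations into generators, which suffices for irredundancy, and your separate pigeonhole argument then recovers the non-finite-generation of $M_2(\Kmin)$ that the paper gets for free from that lemma. Your kernel argument for $C$ is an acceptable variant of the paper's ``row with two occurrences of $\infty$'' argument, and distinguishing $\RowS(A(i))$ from $\RowS(A(j))$ by strict containment is legitimate in the finite case because it forces distinct cardinalities and row-space isomorphisms are bijections.

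The genuine gap is in separating $J_B$ from the $J_{A(i)}$. \cref{prop:RowSpacesJRelation} only says that $\J$-related matrices have \emph{isomorphic} row spaces, so you must exhibit an isomorphism invariant that differs; ``contains a (non-zero) vector whose first coordinate is $\infty$'' is a coordinate-dependent property of $\RowS(B)$ as a subset of $\S^2$ and is not preserved by abstract linear bijections. At best it shows $\RowS(B)\neq\RowS(A(i))$, i.e.\ that $B$ and $A(i)$ are not $\L$-related, which is not enough to invoke \cref{prop:Wilf}. Cardinality does not rescue this step either: in $\K^{\infty}_2$ one computes $|\RowS(B)|=3\cdot 4=12=|\RowS(A(1))|$, so the two row spaces are equinumerous and some finer invariant is unavoidable. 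The paper closes exactly this point by showing that each of these row spaces has a \emph{unique} minimal spanning set (so any isomorphism must carry one onto the other) and then comparing the numbers of distinct scalar multiples of the spanning elements: the two basis rows of $A(i)$ have equally many scalar multiples, whereas those of $B$ do not, because $a\otimes 1$ never takes the value $0$. You need this, or some comparable invariant, to complete the finite-case minimality argument.
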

We will prove in the following sequence of lemmas that the given generating sets
are irredundant. In the infinite case, the cardinality of the given generating
set then guarantees that it is minimal. In the finite case, it suffices, by
\cref{prop:Wilf}, to show that the generating set contains at most one element
from each $\J$-class of the monoid. 

\begin{lemma}\label{lem:MinPlusSimilarGenerators}
  Let $\S \in \{\Kmin\} \cup \set{\Kmint}{t \in \Np}$. Then any generating set
  for $M_{2}(\S)$ contains a matrix similar to $B$ and a matrix similar to
  $A(i)$, for each $i \in \S$.
\end{lemma}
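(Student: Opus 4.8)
The plan is to deduce the lemma from the following claim, proved separately for $M=B$ and for $M=A(i)$: \emph{if $M=XY$ with $X,Y\in M_2(\S)$, then $X$ or $Y$ is similar to $M$} (``$M$ is indecomposable up to similarity''). First I would observe that this property is inherited by similar matrices: if $N=QMR$ with $Q,R$ permutation matrices and $N=XY$, then $M=(Q^{-1}X)(YR^{-1})$, so one of $Q^{-1}X,\ YR^{-1}$ is similar to $M$, hence one of $X,Y$ is similar to $N$. Granting the claim, a short induction on the length of a factorization finishes the argument: if $\Gamma$ generates $M_2(\S)$ and $M=g_1\cdots g_k$ with $g_j\in\Gamma$, then applying the claim to $M=g_1\cdot(g_2\cdots g_k)$ gives either $g_1\sim M$ (so $\Gamma$ contains a matrix similar to $M$) or $g_2\cdots g_k\sim M$, in which case $g_2\cdots g_k$ is also indecomposable up to similarity and we recurse on the shorter product; since $M\neq\tmat{0}{\infty}{\infty}{0}$ when $M=B$ or $M=A(i)$ with $i\neq\infty$, such a factorization has $k\ge 1$ and the process terminates. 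The remaining case $M=A(\infty)=\tmat{\infty}{0}{0}{\infty}$ is handled by hand: a direct computation shows $XY=\tmat{0}{\infty}{\infty}{0}$ forces $X\in\{\tmat{0}{\infty}{\infty}{0},\tmat{\infty}{0}{0}{\infty}\}$, and since the identity of $M_2(\S)$ must itself be a nonempty product of generators, every generating set contains one of these two matrices, each of which is similar to $A(\infty)$.

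To prove indecomposability up to similarity I would write $X=\tmat{a}{b}{c}{d}$, $Y=\tmat{p}{q}{r}{s}$ and read off the four scalar equations coming from $M=XY$. The key point is the rigidity of the entries of $M$ equal to $0$ or $\infty$. For $M=A(i)$ with $i$ finite: $(XY)_{22}=\infty$ forces one of $c,q$ and one of $d,s$ to be $\infty$, while $(XY)_{12}=(XY)_{21}=0$ force several pairs of entries to vanish; chasing these (with one short case split on which of $a,q$ is $0$) pins $X$ and $Y$ each into the shape of a matrix similar to some $A(\cdot)$, after which the only surviving freedom is the equation $(XY)_{11}=i$, which reduces to $\min(\alpha,\beta)=i$ in two of the entries and hence forces $\alpha=i$ or $\beta=i$; in the first case $Y=A(i)$, in the second $X$ is the column swap of $A(i)$. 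For $M=B=\tmat{1}{\infty}{\infty}{0}$ the analysis is dual: $(XY)_{12}=(XY)_{21}=\infty$ and $(XY)_{22}=0$ force $X,Y$ into the shape $\tmat{*}{\infty}{\infty}{*}$ up to row/column swaps, and $(XY)_{11}=1$ then forces one of the two surviving finite entries to equal $1$, making that factor similar to $B$ and the other factor a permutation matrix. The finite quotients $\S=\Kmint$ are covered by the same computations: the multiplication cap $a\otimes b=\min(t,a+b)$ never affects the forced $0$- and $\infty$-entries, and only mildly complicates the one ``surviving'' equation when $i=t$ (or $t=1$ in the case of $B$), where one checks directly that the conclusion still holds.

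The bulk of the work, and the main obstacle, is this entry-by-entry case analysis together with the bookkeeping needed to treat $\Kmin$ and all the $\Kmint$ uniformly and to dispose of the permutation-matrix degeneracies. A more conceptual route, which I would keep in reserve, is to use the known description of Green's relations on $2\times2$ tropical matrix monoids (from the references cited in the introduction) to show that $B$ and each $A(i)$ with $i$ finite lie in $\J$-classes immediately below the group of units and that within each such $\J$-class the $\J$-class equals the similarity class; indecomposability up to similarity is then automatic, since in any factorization $M=XY$ with $J_M$ just below the units one factor must be $\J$-related, hence similar, to $M$. On balance I expect the direct computation to be quicker to write out in full than setting up this machinery, and it keeps the section self-contained. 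Together with the analogous treatment of $C$ in the subsequent lemma, \cref{lem:MinPlusSimilarGenerators} yields irredundancy of the generating sets of \cref{thm:MinPlusGenSet} and \cref{cor:FiniteMinPlusGenSet}, and minimality in the finite case then follows from \cref{prop:Wilf} once one checks that those sets meet each $\J$-class at most once.
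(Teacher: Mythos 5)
Your proposal is correct and follows essentially the same route as the paper: reduce the lemma to showing that $B$ and each $A(i)$ are indecomposable up to similarity (a property inherited by similar matrices, and propagated through a product of generators by induction on its length), and establish that property by an entry-by-entry analysis of the four scalar equations arising from $M=XY$, with the conclusion that one factor must be $A(i)$ or its column swap (respectively a row/column swap of $B$). The paper leaves the induction and the $B$-case implicit and does not single out $A(\infty)$ or the threshold $i=t$; your extra care on those points is harmless and consistent with the paper's argument.
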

\begin{proof}
It is easy to show that if a matrix $X \in M_n(\S)$ satisfies the property that
when written as a product $X = YZ$ for $Y, Z \in M_n(S)$, one of $Y$ or $Z$ must
be similar to $X$, then the same property holds for all matrices similar to $X$.
It follows that we may prove the lemma by establishing this property for $B$ and
$A(i)$, since any product of matrices equal to one of $A(i)$ or $B$ must contain
a matrix similar to that generator.

Fix $i \in \N\cup\infty$, and suppose that $A(i)$ is written as the product of
two matrices in $M_{2}(R)$. That is, suppose that 
\[\mat{i}{0}{0}{\infty} = \mat{a}{b}{c}{d} \mat{u}{v}{w}{x}\] 
where $a,b,c,d,u,v,w,x \in R$. It follows by
the definitions of matrix multiplication, $\oplus$, and $\otimes$ that
\begin{enumerate}[label=(\theenumi)]
  \item
    ($a + u = i$ and $b + w \geq i$) or ($b + w = i$ and $a + u \geq i$),
  \item
    $a = v = 0$ or $b = x = 0$,
  \item
    $c = u = 0$ or $d = w = 0$, and
  \item
    ($c = \infty$ or $v = \infty$) and ($d = \infty$ or $x = \infty$).
\end{enumerate}
Suppose that $(2)$ is satisfied by $a = v = 0$. Then $c = \infty$ by $(4)$.  It
follows that $d = w = 0$ by $(3)$, and so $x = \infty$ by $(4)$.  Thus by $(1)$,
either $u = i$ and $b \geq i$, or $b = i$ and $u \geq i$, that is,\
$$\mat{u}{v}{w}{x} = \mat{i}{0}{0}{\infty} \quad \text{or} \quad
\mat{a}{b}{c}{d} = \mat{0}{i}{\infty}{0}.$$
Instead, if we suppose that $(2)$ is satisfied by $b = x = 0$, then either
$$\mat{u}{v}{w}{x} = \mat{0}{\infty}{i}{0} \quad \text{or} \quad
\mat{a}{b}{c}{d} = \mat{i}{0}{0}{\infty}.$$
In particular, if $A(i)$ is written as the product of two matrices in
$M_{2}(\S)$, then one of those matrices is similar to $A(i)$.

A similar argument based on writing $B$ as the product of two elements completes
the proof.
\end{proof}

Since every generating set for $M_2(\Kmin)$ contains a matrix similar to $A(i)$
for every $i$, $M_2(\Kmin)$ is not finitely generated.

\begin{cor}\label{cor:MinPlusIrredundant}
  The generating sets given in \cref{thm:MinPlusGenSet} and
  \cref{cor:FiniteMinPlusGenSet} are irredundant.
\end{cor}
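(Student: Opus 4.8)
The plan is to show that each generator in the given set $G$ --- the matrices $A(i)$ for $i \in \S$, together with $B$ and $C$ --- is irredundant, i.e.\ that $G\setminus\{g\}$ fails to generate $M_{2}(\S)$ for every $g \in G$. Since every proper subset of $G$ is contained in some $G\setminus\{g\}$, and a subset of a non-generating set is non-generating, this is exactly what is needed. I would treat two cases, and the whole argument will be uniform in $\S$, so it covers the generating sets of both \cref{thm:MinPlusGenSet} and \cref{cor:FiniteMinPlusGenSet} at once.

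For $g = A(i)$ (any $i \in \S$) or $g = B$ I would invoke \cref{lem:MinPlusSimilarGenerators}: every generating set for $M_{2}(\S)$ contains a matrix similar to $A(i)$ for each $i$, and a matrix similar to $B$. It then remains to observe that $A(i)$ is the unique element of $G$ lying in its own similarity class, and likewise for $B$. This is a short check via entry multisets, which are invariant under similarity: $A(i)$ has entry multiset $\{i,0,0,\infty\}$, $B$ has $\{1,0,\infty,\infty\}$, and $C$ has $\{0,\infty,\infty,\infty\}$, and these are pairwise distinct, while $\{i,0,0,\infty\} = \{j,0,0,\infty\}$ forces $i = j$. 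Hence no two members of $G$ are similar, so removing $A(i)$ or $B$ leaves a similarity class demanded by \cref{lem:MinPlusSimilarGenerators} unrepresented, and $G\setminus\{g\}$ cannot generate.

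For $g = C$ a separate idea is required, since $C$ need not appear in every generating set (indeed $C$ can be written as a product of matrices similar to none of the generators). The point is that every $A(i)$ and the matrix $B$ has no zero (all-$\infty$) row, and that the matrices in $M_{2}(\S)$ with no zero row form a submonoid: if $X$ and $Y$ have no zero row, then for each row index $k$ choose $j_{0}$ with $X_{k j_{0}}$ finite and then $l_{0}$ with $Y_{j_{0} l_{0}}$ finite, so that $(XY)_{k l_{0}} = \bigoplus_{j} X_{k j}\otimes Y_{j l_{0}}$ is at most the finite value $X_{k j_{0}}\otimes Y_{j_{0} l_{0}}$, and row $k$ of $XY$ is nonzero. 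Therefore every element of $\genset{G\setminus\{C\}} = \genset{\{A(i):i\in\S\}\cup\{B\}}$ has no zero row, whereas $C$ has the zero row $(\infty,\infty)$; hence $C \notin \genset{G\setminus\{C\}}$, which completes the second case.

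I do not anticipate a serious obstacle: the argument is essentially bookkeeping on top of \cref{lem:MinPlusSimilarGenerators} plus the elementary ``no zero row'' closure property. The one step deserving a little care is confirming that the similarity classes of the $A(i)$, $B$, and $C$ are genuinely pairwise distinct --- which, conveniently, is also precisely the hypothesis \cref{prop:Wilf} will need in the finite case to upgrade this irredundancy statement to the minimality claim of \cref{thm:MinPlusMinimality}.
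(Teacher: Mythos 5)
Your proposal is correct and follows essentially the same route as the paper: irredundancy of the $A(i)$ and of $B$ via \cref{lem:MinPlusSimilarGenerators} together with the observation that no two listed generators are similar, and irredundancy of $C$ via the closure of the set of matrices with no all-$\infty$ row under multiplication (the paper states the contrapositive of your closure claim, but it is the same argument). The explicit entry-multiset check that the similarity classes of the generators are pairwise distinct is a small detail the paper leaves implicit, and is a reasonable addition.
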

\begin{proof}
By \cref{lem:MinPlusSimilarGenerators}, any generating set for $M_{2}(\S)$,
where $\S \in \{\Kmin\} \cup \set{\Kmint}{t \in \Np}$, contains a matrix similar
to $A(i)$ for each $i \in \S$. Since the only such element in the given
generating set is $A(i)$ itself, it follows that the generators $A(i)$ are
irredundant. The same argument applies to $B$.

It is straightforward to show that if a matrix with $\infty$ occurring twice in
one row is expressed as a product of two matrices, then one of those matrices
has a row where $\infty$ occurs twice. Therefore, in order to  generate such
matrices, a generator with a row containing two occurrences of $\infty$ is
required. In the generating sets of \cref{thm:MinPlusGenSet} and
\cref{cor:FiniteMinPlusGenSet}, $C$ is the only such generator, so it is
irredundant.
\end{proof}

Since $M_2(\Kmin)$ is not finitely generated, it follows from the corollary that
the generating set for $M_2(\Kmin)$ given in \cref{thm:MinPlusGenSet} is
minimal. The proof of \cref{thm:MinPlusMinimality} is therefore complete in the
case of $\Kmin$.

\begin{lemma}\label{lem:MinPlusGensNonRelated}
  If $t \in \Np$ is arbitrary, then the generating set for $M_2(\Kmint)$ given
  in \cref{cor:FiniteMinPlusGenSet} consists of
  $\J$-non-equivalent elements.
\end{lemma}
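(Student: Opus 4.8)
The plan is to separate the $t+4$ generators of \cref{cor:FiniteMinPlusGenSet} using a single $\J$-invariant for almost all pairs, and a more delicate argument for one exceptional pair. By \cref{prop:RowSpacesJRelation}, $\J$-related matrices of $M_2(\Kmint)$ have isomorphic row spaces, in particular row spaces of equal cardinality; so $|\RowS(\cdot)|$ is a $\J$-invariant, and I would begin by computing it for each generator. The generator $A(\infty)=\tmat{\infty}{0}{0}{\infty}$ is the non-identity permutation matrix, and a direct computation gives $\RowS(A(\infty))=\Kmint\times\Kmint$, of size $(t+2)^2$; since any matrix that is not a permutation matrix has a proper row subspace of $\Kmint^2$, this is strictly the largest possible cardinality, so $A(\infty)$ is $\J$-related to no other generator.

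For the remaining generators I would compute the row spaces explicitly as subsets of $\Kmint^2$. One obtains $\RowS(C)=\{\infty\}\times\Kmint$, of size $t+2$; $\RowS(B)=\{1,\dots,t,\infty\}\times\Kmint$, of size $(t+1)(t+2)$; and, for $0\le i\le t$, $\RowS(A(i))=\bigl(\Kmint\times\{\infty\}\bigr)\cup\{(x,y): 0\le y\le t,\ 0\le x\le\min(t,y+i)\}$, so that $|\RowS(A(i))|=(t+2)+\sum_{y=0}^{t}\bigl(\min(t,y+i)+1\bigr)$. The sum is strictly increasing in $i$ over $\{0,\dots,t\}$ (already the $y=0$ term $\min(t,i)+1$ strictly increases), hence $|\RowS(A(0))|<\dots<|\RowS(A(t))|$, each value lying strictly between $t+2=|\RowS(C)|$ and $(t+2)^2=|\RowS(A(\infty))|$. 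Finally, $|\RowS(A(i))|=|\RowS(B)|$ forces $\sum_{y=0}^{t}(\min(t,y+i)+1)=t(t+2)$, which by the strict monotonicity holds only for $i=t-1$. Thus row-space cardinality separates every pair of distinct generators except the single pair $\{B,\,A(t-1)\}$.

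The main obstacle is therefore to show $B\not\J A(t-1)$ although their row spaces have the same cardinality. Here I would use that a linear bijection of row spaces over $\Kmint$ has a linear inverse (a short diagram chase), and hence is an order isomorphism for the natural order $v\preceq w\iff v\oplus w=v$, which on $\Kmint^2$ is just the componentwise order; so it suffices to distinguish the posets $(\RowS(B),\preceq)$ and $(\RowS(A(t-1)),\preceq)$. Both have greatest element $(\infty,\infty)$. Since $\RowS(B)=\{1,\dots,t,\infty\}\times\Kmint$ is a product of two chains, deleting its greatest element leaves a poset with two maximal elements, namely $(t,\infty)$ and $(\infty,t)$. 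By contrast, from the description above $\RowS(A(t-1))$ contains no vector of the form $(\infty,y)$ with $y$ finite, so every vector in it other than $(\infty,\infty)$ has first coordinate at most $t$ and is therefore $\preceq(t,\infty)$; hence deleting $(\infty,\infty)$ leaves the \emph{unique} maximal element $(t,\infty)$. The two posets are thus non-isomorphic, so by \cref{prop:RowSpacesJRelation} $B\not\J A(t-1)$, and the proof is complete. The only genuine computations are the two explicit row-space descriptions together with the verification that $\RowS(A(t-1))$ has a unique coatom; everything else is bookkeeping.
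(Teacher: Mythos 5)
Your proof is correct, and I verified the computations: $|\RowS(C)|=t+2$, $|\RowS(B)|=(t+1)(t+2)$, $|\RowS(A(\infty))|=(t+2)^2$, and $|\RowS(A(i))|=(t+2)+\sum_{y=0}^{t}\bigl(\min(t,y+i)+1\bigr)$ for finite $i$, which is strictly increasing in $i$ and collides with $|\RowS(B)|$ exactly at $i=t-1$ (e.g.\ for $t=1$ both $\RowS(B)$ and $\RowS(A(0))$ have $6$ elements). Your resolution of that exceptional pair is also sound: a linear bijection has a linear inverse, hence preserves the order $v\preceq w\iff v\oplus w=v$, and $\RowS(B)\setminus\{(\infty,\infty)\}$ has the two maximal elements $(t,\infty)$ and $(\infty,t)$ while $\RowS(A(t-1))\setminus\{(\infty,\infty)\}$ has the single maximal element $(t,\infty)$, since $\RowS(A(t-1))$ contains no vector $(\infty,y)$ with $y$ finite.

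The route is genuinely different from the paper's. The paper never enumerates the row spaces: it shows each generator's row space has a unique minimal spanning set (its rows), so any isomorphism must match these spanning sets, and then separates the generators by invariants attached to the spanning elements --- the number of spanning elements (isolating $C$), the number of distinct scalar multiples of each (separating $B$ from the $A(i)$), and the number of rows $y$ admitting a scalar $a$ with $ay$ equal to a given spanning row (separating $A(i)$ from $A(j)$). You instead let the crudest invariant, cardinality, do all but one pair, and finish with a poset invariant. What your approach buys is concreteness and independence from the unique-basis machinery; what it costs is the explicit enumeration of every row space and the obligation to notice the coincidence $|\RowS(B)|=|\RowS(A(t-1))|$ --- a real trap that a cardinality-only argument would fall into, and which you correctly identify and handle. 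The paper's spanning-set technique has the advantage of transferring almost verbatim to the max-plus case (\cref{lem:MaxPlusUniqueRowBasis}, \cref{lem:MaxPlusGensNonRelated}); your order-theoretic observation that linear bijections respect the $\oplus$-induced order is, in effect, the reason the paper's ``indecomposable row'' arguments work, so the two proofs are closer in spirit than in execution.
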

\begin{proof}
  By \cref{prop:RowSpacesJRelation}, it is sufficient to
  demonstrate that the row spaces of the generators are non-isomorphic. In order
  to do so, we first observe several facts about spanning sets of these row
  spaces.

  It is routine to verify that the rows $(\infty~0)$, $(1~\infty)$, and
  $(0~\infty)$ are indecomposable, in the sense that every linear combination of
  rows equal to one of these rows must contain a non-zero scalar multiple of
  that row. It follows that $(\infty~0)$ is contained in any spanning set for
  $\RowS(C)$ and so $\{(\infty~0)\}$ is the unique minimal spanning set for
  $\RowS(C)$.  Similarly, $(\infty~0)$ and $(1~\infty)$ are contained in any
  spanning set for $\RowS(B)$, making $\{(\infty~0), (1~\infty)\}$ the unique
  minimal spanning set for $\RowS(B)$.

  To determine that $\RowS(A(i))$ has a unique minimal spanning set, we note
  that if $(x~0) \in \RowS(A(i))$, then we can write \[(x~0) = a(i~0) \oplus
    b(0~\infty) \] for some $a, b \in \Kmint$. From the second column, we have
  $a = 0$; hence $x \leq i$. If $(i~0)$ is written as a sum of rows in
  $\RowS(A(i))$, \[(i~0) = (c, d) \oplus (e~f),\] then one of $d$ or $f$ is $0$;
  without loss of generality we can assume $d = 0$.  By the previous
  observation, we have $c \leq i$, and by assumption $\min(c, e) = i$; hence $c
  = i$. It follows that $(i~0)$ is contained in any spanning set for
  $\RowS(A(i))$; hence the rows of $A$ form the unique minimal spanning set of
  $\RowS(A(i))$.  Since $\RowS(C)$ is spanned by the single row $(\infty~0)$
  while $\RowS(A(i))$ and $\RowS(B)$ cannot be spanned by a single element,
  $\RowS(C)$ is not isomorphic to $\RowS(A(i))$ or $\RowS(B)$. Any isomorphism
  from $\RowS(A(i))$ to $\RowS(B)$ must map the rows of $A(i)$ to the rows of
  $B$ as these form unique minimal spanning sets, but since the sets
  \[\set{a(i~0)}{a \in \Kmint} \COMMA
    \set{a(0~\infty)}{a \in \Kmint}
  \]
  both contain $t$ unique rows, while the sets 
  \[
    \set{a(1~\infty)}{a \in \Kmint}\COMMA
    \set{a(\infty~0)}{a \in \Kmint}
  \]
  contain $t - 1$ and $t$ rows respectively, no such isomorphism exists.

  It remains to show that $\RowS(A(i))$ and $\RowS(A(j))$ are non-isomorphic for
  $i \neq j$. Since \[(a~0) = 0(i~0) + a(0~\infty)\] for $a \in \Kmint$ such
  that $a \leq i$, together with the observation above we have $(a, 0) \in
  \RowS(A(i))$ if and only if $a \leq i$. It follows that there are precisely $i
  + 1$ rows $y \in \RowS(A(i))$ such that there exists $a \in \Kmint$ with $ay =
  (i~0)$. Similarly there is precisely one row $z \in \RowS(A(i))$ such that
  there exists $b \in \Kmint$ with $bz = (0~\infty)$.  Since any isomorphism
  from $\RowS(A(i))$ to $\RowS(A(j))$ must map the rows of $A(i)$ to the rows of
  $A(j)$, but there are $j + 1$ such rows for one of the rows of $A(j)$, there
  is no such isomorphism.
\end{proof}
This completes the proof of \cref{cor:FiniteMinPlusGenSet}.

For any $t \in \Np$, there is a quotient map $\phi_t: \Kmin \to \Kmint$ under
which $\phi_t(x) = \min(x, t)$ for all $x \in \Kmin\setminus\{\infty\}$ and
$\phi_t(\infty) = \infty$; this is a semiring homomorphism. It is routine to
verify that the map $\psi_t: M_n(\Kmin) \to M_n(\Kmint)$ defined by
entrywise application of $\phi_t$ is a semigroup homomorphism.
Let $A, B$ be two elements in the generating set of \cref{thm:MinPlusGenSet}.
Consider a finite quotient $\Kmint$ of $\Kmin$ to which all entries of
both $A$ and $B$ belong. The previous lemma shows that $A$ is not
$\J$-equivalent to $B$ in $M_n(\Kmint)$. But $\psi_t(A) = A$ and $\psi_t(B) = B$
and so $\psi_t(A)$ is not $\J$-equivalent to $\psi_t(B)$ in $M_n(\Kmint)$. It
follows that $A$ and $B$ are not $\J$-equivalent in $M_n(\Kmin)$. This
proves the following corollary.

\begin{cor}
  \label{cor:InfiniteMinPlusGensNonRelated}
  The generating set given in \cref{thm:MinPlusGenSet} consists of
  $\J$-non-equivalent elements.
\end{cor}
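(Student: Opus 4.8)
The plan is to deduce the result from its finite counterpart, \cref{lem:MinPlusGensNonRelated}, by transporting non-$\J$-equivalence backwards along the threshold quotient maps $\psi_t$. The one general fact I would isolate first is that Green's $\J$-relation is never ``split'' by a semigroup homomorphism: if $\theta\colon S\to T$ is a homomorphism and $x\,\J\,y$ in $S$, then writing $x=ayb$ and $y=cxd$ with $a,b,c,d\in S^1$ and applying $\theta$ shows $\theta(x)\,\J\,\theta(y)$ in $\theta(S)$, and since $\theta(S)$ is a subsemigroup of $T$ the same identities place $\theta(x)$ and $\theta(y)$ in a common $\J$-class of $T$. Contrapositively, if $\theta(x)$ and $\theta(y)$ are $\J$-inequivalent in $T$, then $x$ and $y$ are $\J$-inequivalent in $S$.

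Next I would check that $\phi_t\colon\Kmin\to\Kmint$ given by $\phi_t(x)=\min(x,t)$ and $\phi_t(\infty)=\infty$ is a semiring homomorphism — a short case analysis verifies it respects $\oplus=\min$ and the capped addition $\otimes$ — so that its entrywise extension $\psi_t\colon M_2(\Kmin)\to M_2(\Kmint)$ is a semigroup homomorphism. The crucial observation is that $\psi_t$ acts as the identity on any matrix all of whose finite entries are at most $t$. Since the entries occurring among the generators in \cref{thm:MinPlusGenSet} are $0$, $1$, $\infty$, and the single scalar $i$ appearing in $A(i)$, for any two of these generators $X$ and $Y$ one can pick a threshold $t$ (any $t\ge 1$, additionally $t\ge i$ if some $A(i)$ is among $X,Y$) with $\psi_t(X)=X$ and $\psi_t(Y)=Y$, now viewed as elements of $M_2(\Kmint)$.

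The argument then closes immediately: given distinct generators $X,Y$ from \cref{thm:MinPlusGenSet}, choose $t$ as above; by \cref{lem:MinPlusGensNonRelated} the elements $\psi_t(X)=X$ and $\psi_t(Y)=Y$ lie in different $\J$-classes of $M_2(\Kmint)$, hence by the homomorphism principle $X$ and $Y$ lie in different $\J$-classes of $M_2(\Kmin)$. I do not anticipate a genuine obstacle. The only two points that need care are getting the direction of the homomorphism principle right — images can merge $\J$-classes but cannot separate points that were already $\J$-related, which is precisely the benign direction we need — and confirming that $\psi_t$ literally fixes the two chosen generators (so that \cref{lem:MinPlusGensNonRelated} applies to them verbatim) rather than merely relating them to their images up to $\J$-equivalence.
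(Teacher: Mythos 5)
Your proposal is correct and follows essentially the same route as the paper: verify that the entrywise threshold map $\psi_t$ is a semigroup homomorphism fixing any two chosen generators for suitably large $t$, apply \cref{lem:MinPlusGensNonRelated} in $M_2(\Kmint)$, and pull back non-$\J$-equivalence along the homomorphism. The only cosmetic point is that your side condition ``$t\ge i$'' should be read as applying only when $i$ is finite, since $\phi_t(\infty)=\infty$ already fixes $A(\infty)$ for every $t$.
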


%%%%%%%%%%%%%%%%%%%%%%%%%%%%%%%%%%%%%%%%%%%%%%%%%%%%%%%%%%%%%%%%%%%%%%%%%%%%%%%

\subsection{Max-plus matrices}

The semirings $\Kmin$ and $\Kmax$ are in certain ways similar to each other; it
is only in the interaction of their respective additions and multiplication
operations that they display significantly different behaviour. The result of
this is that the process of proving that certain generating sets for
$M_2(\Kmax)$ and $M_2(\Kmaxt)$ are minimal is essentially the same as in the case
of $M_2(\Kmin)$ and $M_2(\Kmint)$; it only differs in some details.

\begin{thm}[\cite{East2020aa}]\label{thm:MaxPlusGenSet}
  The monoid $M_2(\Kmax)$ of $2 \times 2$ max-plus matrices is
  generated by the matrices 
  \[
    X(i) = \mat i00{-\infty} \COMMA
    Y = \mat 1{-\infty}{-\infty}0 \COMMA
    Z = \mat {-\infty}{-\infty}{-\infty}0 \COMMA
    W(j,k) = \mat 0jk0 ,
  \]
  where $i\in \Kmax$ and $j, k\in\N$ such that $1 \leq j \leq k$.
\end{thm}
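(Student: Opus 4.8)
The plan is to verify directly that the submonoid $\langle X\rangle$ generated by the listed matrices is all of $M_2(\Kmax)$, by reducing an arbitrary matrix $M$ to a short list of normal forms according to the pattern of its $-\infty$ entries. Two preliminary observations do most of the organisational work. First, $X(-\infty) = \tmat{-\infty}{0}{0}{-\infty}$ is the transposition matrix $P$, which swaps rows when applied on the left and columns when applied on the right, and $Y^{a}(PY^{b}P) = \tmat{a}{-\infty}{-\infty}{b}$ realises every diagonal matrix with entries in $\N$, so every monomial matrix (at most one finite entry in each row and column) lies in $\langle X\rangle$. Second, $PZP = \tmat{0}{-\infty}{-\infty}{-\infty}$, and combining this with $P$, the powers of $Y$, and the diagonal matrices above yields every matrix having exactly one entry different from $-\infty$, as well as the zero matrix $Z\cdot PZP$. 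Consequently we may freely permute the rows and columns of $M$ and add any constant in $\N$ to any single row or column, which collapses the case analysis to a small number of shapes.

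Next I would dispose of the matrices having at least one $-\infty$ entry. Here $Z$ and its $P$-conjugates act by deleting one row (or column) while retaining the other, and combining these with the monomial and one-finite-entry matrices already in hand, one walks through the finitely many $-\infty$-support patterns (a $-\infty$ row or column, a $-\infty$ diagonal or antidiagonal pair, a single $-\infty$), reducing each — after permuting and rescaling — to a representative that is visibly a product of the building blocks; for instance $\tmat{a}{0}{0}{-\infty} = X(a)$, while $\tmat{0}{b}{0}{-\infty} = X(0)\,P\,Y^{b}\,P$ after a column swap, and so on.

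The substantive case is $M = \tmat{a}{b}{c}{d}$ with all four entries in $\N$. If both rows have minimum at least $1$, peel powers of $Y$ and of $PYP$ off the left to subtract the row minima; this writes $M = \tmat{\alpha}{-\infty}{-\infty}{\gamma}M'$ with the (generated) diagonal factor carrying the subtracted constants and $M'$ having a $0$ in every row. Up to a column swap, $M'$ is either $\tmat{0}{p}{q}{0}$ or $\tmat{0}{p}{0}{q}$ with $p,q\in\N$. In the first shape, when $p,q\geq 1$ the matrix $M'$ is $W(\min(p,q),\max(p,q))$ up to conjugation by $P$, and when $p$ or $q$ is $0$ it is a $P$-translate of $\tmat{r}{0}{0}{0} = X(0)\,P\,Y^{r}\,P\,X(0)$ (with $r=0$ giving the all-zero matrix $X(0)^{2}$). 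In the second shape, assuming $p\leq q$ after a possible row swap, one has $\tmat{0}{p}{0}{q} = \tmat{0}{0}{0}{q-p}\cdot\tmat{0}{-\infty}{-\infty}{p}$, whose right factor is a generated diagonal matrix and whose left factor is of the degenerate type $\tmat{r}{0}{0}{0}$ already handled. Assembling these yields $M \in \langle X\rangle$, completing the proof.

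The main obstacle is exactly this last block of all-finite matrices: once each row contains a $0$, the "subtract the row minimum" move stalls, and the degenerate shapes — a row or column of repeated entries and the family $\tmat{0}{p}{0}{q}$ — must be factored into the generators by the sort of non-obvious products exhibited above, so finding these factorisations and checking their correctness is the bulk of the work; one must also verify that the list of shapes surviving the reduction is genuinely exhaustive. A more mundane difficulty is the bookkeeping of the many $P$-conjugates, since overlooking a single support pattern would leave a gap in the argument.
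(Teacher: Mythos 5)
This theorem is quoted from~\cite{East2020aa}; the present paper contains no proof of it (it only establishes minimality of the generating set, in \cref{thm:MaxPlusMinimality}), so there is no in-paper argument to compare yours against, and I have assessed your proposal on its own terms: it is correct. I checked the identities it rests on. $X(-\infty)$ is indeed the transposition matrix $P$; $Y^{a}PY^{b}P=\tmat{a}{-\infty}{-\infty}{b}$ gives all diagonal matrices over $\N$; $PZP=\tmat{0}{-\infty}{-\infty}{-\infty}$ deletes the second row under left multiplication (and its transposed and $P$-conjugated variants delete the other row or either column), which reduces every matrix containing a $-\infty$ row or column to one with strictly fewer $-\infty$ entries; $X(0)PY^{r}PX(0)=\tmat{r}{0}{0}{0}$; and $\tmat{0}{0}{0}{q-p}\tmat{0}{-\infty}{-\infty}{p}=\tmat{0}{p}{0}{q}$ for $p\leq q$. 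The one case you only gesture at --- exactly one $-\infty$ entry --- is closed by the general identity $\tmat{a}{b}{c}{-\infty}=\tmat{0}{-\infty}{-\infty}{c}\,X(a)\,\tmat{0}{-\infty}{-\infty}{b}$, valid for all $a,b,c\in\N$ because the required row and column scalings are nonnegative; with that made explicit your enumeration of $-\infty$-support patterns is exhaustive, and the all-finite case is handled completely by the reduction to the two shapes $\tmat{0}{p}{q}{0}$ and $\tmat{0}{p}{0}{q}$. (A trivial slip: $X(0)PY^{b}P$ equals $\tmat{0}{b}{0}{-\infty}$ on the nose, with no column swap needed.) One point your argument correctly respects but that is easy to trip over: since entries lie in $\N$ rather than $\Z$, only nonnegative constants may be added to a row or column, so ``subtracting the row minima'' must be realised, exactly as you do, by factoring a diagonal matrix out on the left rather than by multiplying by an inverse.
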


\begin{cor}[\cite{East2020aa}]\label{cor:FiniteMaxPlusGenSet}
  Let $t\in \Np$ be arbitrary. Then the finite monoid $M_2(\Kmaxt)$ of $2
  \times 2$ max-plus matrices is generated by the $(t^2+3t+8)/2$ matrices 
  \[
    X(i) = \mat i00{-\infty} \COMMA
  Y = \mat 1{-\infty}{-\infty}0 \COMMA
  Z = \mat {-\infty}{-\infty}{-\infty}0 \COMMA
  W(j,k) = \mat 0jk0 ,
  \]
  where $i\in\Kmaxt$ and $j,k\in\{1,\ldots,t\}$ with $j\leq k$.
\end{cor}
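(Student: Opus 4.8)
The plan is to deduce this corollary directly from the infinite case, \cref{thm:MaxPlusGenSet}, via the threshold quotient map, exactly as \cref{cor:FiniteMinPlusGenSet} was obtained from \cref{thm:MinPlusGenSet}. First I would introduce the map $\phi_t \colon \Kmax \to \Kmaxt$ given by $\phi_t(x) = \min(x,t)$ for $x \neq -\infty$ and $\phi_t(-\infty) = -\infty$, and check that it is a surjective semiring homomorphism: compatibility with $\oplus = \max$ reduces to the identity $\min(\max(x,y),t) = \max(\min(x,t),\min(y,t))$, and compatibility with the truncated product reduces to $\min(x+y,t) = \min(t,\min(x,t)+\min(y,t))$ for $x,y \in \N$, with the $-\infty$ cases immediate. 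Applying $\phi_t$ to each entry then yields a surjective semigroup homomorphism $\psi_t \colon M_2(\Kmax) \to M_2(\Kmaxt)$, mirroring the min-plus discussion preceding \cref{cor:InfiniteMinPlusGensNonRelated}.

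The key point is that a surjective homomorphism carries any generating set to a generating set of its image. Writing $G$ for the generating set of $M_2(\Kmax)$ furnished by \cref{thm:MaxPlusGenSet}---namely all $X(i)$ with $i \in \Kmax$, together with $Y$, $Z$, and all $W(j,k)$ with $1 \le j \le k$---it follows at once that $\psi_t(G)$ generates $M_2(\Kmaxt)$. It then remains to identify $\psi_t(G)$ with the finite set in the statement. I would compute the images family by family: $\psi_t(X(i)) = X(\min(i,t))$, so as $i$ ranges over $\Kmax$ these images run through exactly the $X(i)$ with $i \in \Kmaxt$; $\psi_t(Y) = Y$ and $\psi_t(Z) = Z$, where preservation of the entry $1$ in $Y$ uses $t \ge 1$; and $\psi_t(W(j,k)) = W(\min(j,t),\min(k,t))$, so these run through exactly the $W(j,k)$ with $1 \le j \le k \le t$---every such pair is attained by a pair already bounded by $t$, and none outside this range can occur. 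Hence $\psi_t(G)$ is precisely the asserted generating set.

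Finally I would verify the cardinality $(t^2+3t+8)/2$: the elements $X(i)$ number $|\Kmaxt| = t+2$, the matrices $Y$ and $Z$ contribute $2$, and the pairs $1 \le j \le k \le t$ number $\binom{t+1}{2} = t(t+1)/2$, giving $t+4+t(t+1)/2 = (t^2+3t+8)/2$ in total. No step is a genuine obstacle; the only points requiring care are confirming that $\psi_t$ restricted to the $W$-family surjects onto exactly the pairs with $1 \le j \le k \le t$, and that the hypothesis $t \in \Np$ (so $t \ge 1$) is used precisely to ensure that $Y$ is not collapsed by the threshold.
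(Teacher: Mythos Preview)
Your argument is correct. The paper does not actually supply its own proof of this corollary: it is stated as a result quoted from \cite{East2020aa}, with no accompanying argument. So there is nothing in the paper to compare your proof against directly.

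That said, your route via the entrywise threshold quotient $\psi_t$ is entirely in keeping with the paper's methods: the paper introduces exactly this kind of quotient map in the min-plus setting (in the paragraph preceding \cref{cor:InfiniteMinPlusGensNonRelated}) and uses it to transfer information between $M_2(\Kmin)$ and $M_2(\Kmint)$. Your deduction of the finite generating set from the infinite one by pushing the generators of \cref{thm:MaxPlusGenSet} forward along $\psi_t$ is the natural analogue, and all the verifications you outline (that $\phi_t$ is a semiring homomorphism, that the images of the four families are as claimed, and the cardinality count) go through without difficulty.
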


\begin{thm}\label{thm:MaxPlusMinimality}
  The generating sets of \cref{thm:MaxPlusGenSet} and
  \cref{cor:FiniteMaxPlusGenSet} are irredundant and minimal.
\end{thm}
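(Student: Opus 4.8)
The plan is to mirror the proof of \cref{thm:MinPlusMinimality}, the extra family $W(j,k)$ being responsible for the only genuinely new work. First I would show that the given generating sets are irredundant; minimality then follows in the infinite case from the fact that the generating set of \cref{thm:MaxPlusGenSet} is infinite (there is one generator $X(i)$ for every $i\in\N$), and in the finite case from \cref{prop:Wilf}, for which it suffices to check that the generating set of \cref{cor:FiniteMaxPlusGenSet} contains at most one element from each $\J$-class of $M_2(\Kmaxt)$.

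For irredundancy, the key observation is that each of $X(i)$, $Y$, and $W(j,k)$ has the property that whenever it is written as a product of two matrices over the relevant semiring, one of the two factors is similar to it. Granting this, the argument of \cref{lem:MinPlusSimilarGenerators} shows that every generating set contains a matrix similar to each of $X(i)$, $Y$, $W(j,k)$, and since among the listed generators these are the only matrices similar to themselves, each is irredundant. For $X(i)$ with $i\in\N$ and for $Y$ the property is proved by the same finite case analysis of the matrix equation as was carried out for $A(i)$ and $B$ in \cref{lem:MinPlusSimilarGenerators}, with $\oplus$ and $\otimes$ reinterpreted; for $i=-\infty$ one notes that $X(-\infty)$ is the transposition permutation matrix, and since the only units of $M_2(\S)$ are $I$ and $X(-\infty)$ and a unit can only be written as a product of units, $X(-\infty)$ is forced to occur in any generating set. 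The new case is $W(j,k)=\mat 0jk0$: since it has no $-\infty$ entries there are a priori more factorisations to exclude, but the constraint that the $(1,1)$ and $(2,2)$ entries of a product equal $0$ forces, in each factor, either a $-\infty$ or a $0$-over-$0$ pattern in the corresponding positions, and running through the resulting cases shows that one factor is similar to $W(j,k)$. Finally $Z=\mat{-\infty}{-\infty}{-\infty}0$ is irredundant by the dual of the argument used for $C$: any product equal to a matrix with a row of two $-\infty$'s must have a factor with such a row, and $Z$ is the unique generator of this kind.

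For the finite case I would use \cref{prop:Wilf} together with \cref{prop:RowSpacesJRelation}: it is enough to show that the row spaces $\RowS(X(i))$, $\RowS(Y)$, $\RowS(Z)$, and $\RowS(W(j,k))$ are pairwise non-isomorphic. As in \cref{lem:MinPlusGensNonRelated}, one first notes that a row-space isomorphism is an order isomorphism (since $x\le y$ componentwise iff $x\oplus y=y$) that commutes with the scalar action, so that the partial order, the set of atoms, and counts of scalar multiples are isomorphism invariants. The unique minimal spanning set (the set of indecomposable elements) has size $1$ for $\RowS(Z)$ and size $2$ for the others, which separates $Z$ off. The row space $\RowS(W(j,k))$ has exactly two atoms, namely the rows $(0,j)$ and $(k,0)$, and for an atom $\alpha$ the number of row-space elements that are $\ge\alpha$ but not $\ge$ the other atom equals $k$ when $\alpha=(0,j)$ and $j$ when $\alpha=(k,0)$; hence the unordered pair $\{j,k\}$ is an isomorphism invariant of $\RowS(W(j,k))$. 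Since the generating set contains exactly one $W(j,k)$ per unordered pair (as $W(j,k)$ is similar to its transpose), this distinguishes the $W(j,k)$ from one another; computing the analogous invariant for $\RowS(Y)$, which yields $\{t,t+1\}$, and for $\RowS(X(i))$, whose row space has a single atom when $i$ is finite and is a unit when $i=-\infty$, separates these from the $W(j,k)$, and the $X(i)$ are distinguished from $Y$ and from one another exactly as in \cref{lem:MinPlusGensNonRelated}.

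The main obstacle is the finite-case structural analysis of $\RowS(W(j,k))$: one must understand the lattice of max-plus linear combinations $a\otimes(0,j)\oplus b\otimes(k,0)$ over $\Kmaxt$, including the saturation effects at the threshold $t$, well enough to justify that the atoms are precisely $(0,j)$ and $(k,0)$ and that the count described above recovers $\{j,k\}$; the boundary cases such as $j=k$, or $j=t$ or $k=t$ (where a ray meets the global maximum early), will need separate checking. The factorisation case analysis for $W(j,k)$ in the irredundancy step is also more laborious than its min-plus counterparts, although it is routine once set up.
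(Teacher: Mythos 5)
Your proposal is correct and follows essentially the same route as the paper: irredundancy via the ``one factor must be similar'' property for every generator except $Z$ (with $Z$ handled by the two-$(-\infty)$s-in-a-row argument), and minimality in the finite case via \cref{prop:Wilf} after separating the generators' row spaces by isomorphism invariants. The only deviations are cosmetic --- you treat $X(-\infty)$ as a unit rather than running the factorisation case analysis, and you use atom counts and ``elements above one atom but not the other'' where the paper counts scalar multiples of basis rows and the least $a$ with $ax_1\oplus x_2=ax_1$; both sets of invariants recover $\{j,k\}$ and distinguish the generators correctly.
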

The proof of this theorem follows the same pattern as the proof of
\cref{thm:MinPlusMinimality}.

\begin{lemma}\label{lem:MaxPlusSimilarGenerators}
  Let $\S \in \{\Kmax\} \cup \set{\Kmaxt}{t \in \Np}$. Then any generating
  set for $M_{2}(\S)$ contains matrices similar to each matrix other than $Z$
  in the appropriate generating set from \cref{thm:MaxPlusGenSet} or \cref{cor:FiniteMaxPlusGenSet}.
\end{lemma}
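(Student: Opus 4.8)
The plan is to mirror the proof of \cref{lem:MinPlusSimilarGenerators}. For each generator $M$ other than $Z$ — that is, for $M = X(i)$ with $i \in \S$, for $M = Y$, and for $M = W(j,k)$ with $1 \le j \le k$ — I will show that whenever $M = PQ$ with $P, Q \in M_2(\S)$, one of $P$, $Q$ is similar to $M$. As observed in the proof of \cref{lem:MinPlusSimilarGenerators}, this property is inherited by every matrix similar to $M$, and a short induction on the length of a factorisation into generators then forces any generating set to contain a matrix similar to $M$, as required.

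The cases $M = X(i)$ and $M = Y$ run parallel to the cases $A(i)$ and $B$ of \cref{lem:MinPlusSimilarGenerators}: writing $M = \mat abcd\mat uvwx$ and expanding the four entries yields a small system of equations and inequalities, the only adjustment being that facts about $\min$ over $\N\cup\{\infty\}$ are replaced by the corresponding facts about $\max$ over $\N\cup\{-\infty\}$ — for instance $\max(p,q)=-\infty$ forces $p=q=-\infty$, while $\max(p,q)=0$ forces $p,q\le 0$ and hence, finite entries being non-negative, each of $p,q$ to be $0$ or $-\infty$. Solving the system shows one of the two factors is similar to $M$.

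The genuinely new work is the case $M = W(j,k) = \mat 0jk0$ with $1 \le j \le k$. Setting $W(j,k) = \mat abcd\mat uvwx$, the diagonal entries give $\max(a+u,b+w)=0$ and $\max(c+v,d+x)=0$, so at least one of $\{a=u=0\}$, $\{b=w=0\}$ holds and at least one of $\{c=v=0\}$, $\{d=x=0\}$ holds; the off-diagonal entries give $\max(a+v,b+x)=j\ge 1$ and $\max(c+u,d+w)=k\ge 1$, which force the relevant summands to be finite and positive. I will split into the four resulting cases and track which summand realises each maximum. Two of the cases collapse immediately, the diagonal $0$s contradicting $j,k\ge 1$. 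In each of the remaining two cases one is forced to introduce enough $-\infty$ entries that one of $P$, $Q$ is either $W(j,k)$ itself, a row- or column-swap of it, or a $2\times 2$ permutation matrix; in the permutation-matrix sub-cases the equation $W(j,k)=PQ$ then pins the other factor down as a matrix similar to $W(j,k)$.

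The main obstacle is precisely the bookkeeping in this $W(j,k)$ analysis: one must keep "similar to $W(j,k)$" understood as including all of $\mat 0jk0$, $\mat 0kj0$, $\mat j00k$, and $\mat k00j$, and handle the degenerate sub-cases — a factor equal to a permutation matrix, or to the identity — so that it is the \emph{other} factor that is recognised as similar to $W(j,k)$. Once irredundancy is established this way, minimality is immediate in the infinite case by cardinality, and in the finite case it will follow from \cref{prop:Wilf} together with the separate verification, carried out in the lemmas to follow, that the listed generators lie in pairwise distinct $\J$-classes.
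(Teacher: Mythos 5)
Your proposal takes essentially the same route as the paper: reduce to showing that each generator other than $Z$ has the property that any two-factor factorisation contains a factor similar to it, treat $X(i)$ and $Y$ by the min-plus template, and settle $W(j,k)$ by an entry-by-entry analysis of which summands realise the four maxima. Your four-way case split (two cases contradictory, two forcing a factor equal to $W(j,k)$, a row- or column-swap of it, or paired with a permutation-matrix factor) checks out and agrees in substance with the paper's nested case analysis.
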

\begin{proof}
  As in the proof of \cref{lem:MinPlusSimilarGenerators}, it suffices to show
  that if any of the generators described is expressed as a product of two
  matrices, then one of those factors must be similar to the generator. We will
  only prove this for $W(j, k)$ as an illustrative example; the other proofs are
  similar.

  Writing 
  \[W(j, k) = \mat{0}{j}{k}{0} = \mat{a}{b}{c}{d} \mat{u}{v}{w}{x},\]
  for some $a,b,c,d,u,v,w,x \in \S$, we obtain the conditions
  \begin{enumerate}
    \item
      ($a = u = 0$ and $b + w \leq 0$) or ($b = w = 0$ and $a + u \leq 0$),
    \item
      ($a + v = j$ and $b + x \leq j$) or ($b + x = j$ and $a + v \leq j$),
    \item
      ($c + u = k$ and $d + w \leq k$) or ($d + w = k$ and $c + u \leq k$),
    \item
      ($c = v = 0$ and $d + x \leq 0$) or ($d = x = 0$ and $c + v \leq 0$).
  \end{enumerate}
  First assume that in $(1)$, we have $a = u = 0$. If we also assume that in
  $(4)$ we have $c = v = 0$, then from $(2)$ we have $b + x = j$ and hence both
  $b$ and $x$ are not equal to $-\infty$. It follows from $(1)$ that $b = w = 0$
  or $w = -\infty$. In the latter case, $(3)$ cannot be satisfied. In the
  former, it follows from $(3)$ that $d = k$. However, then $(4)$ cannot be
  satisfied as $d + x > 0$ unless $x = -\infty$, which we noted above was not
  the case. Hence in $(4)$, we must have $d = x = 0$. We now consider three
  cases:
  \begin{enumerate}
    \item
      If $w > 0$, then by $(1)$ we have $b = -\infty$. Hence, by $(2)$, $v = j$
      since $a = 0$. It follows from $(4)$ that $c = -\infty$ and thus by $(3)$
      $w = k$, and the second factor is similar to $W(j, k)$.
    \item
      If $w = -\infty$, then from $(3)$ we have $c = k$, since $u = 0$. It
      follows that $v = -\infty$ (by $(4)$), and thus that $b = j$ (by $(2)$).
      The first factor is then similar to $W(j, k)$.
    \item
      If $w = 0$, then by $(1)$ we have $b \leq 0$. It follows from $(2)$ that
      $v = j$, and hence from $(4)$ that $c = -\infty$. However, it is then
      impossible to satisfy $(3)$, and so this case cannot arise.
  \end{enumerate}
  In each case that may arise, one factor is similar to $W(i, j)$. It now
  remains to consider the case of $b = w = 0$ in $(1)$; the proof of this case
  is dual to argument above.
\end{proof}

Since every generating set for $M_2(\Kmax)$ contains a matrix similar to $X(i)$
for every $i$, $M_2(\Kmax)$ is not finitely generated.

Note that the previous lemma fails for $Z$, since 
\[Z = \mat{-\infty}{-\infty}{-\infty}{0} = \mat{-\infty}{-\infty}{0}{0}
  \mat{-\infty}{0}{-\infty}{0}.\]
In order to show that $Z$ is an irredundant generator we use the following
lemma, the proof of which is elementary.

\begin{lemma}\label{lem:MaxPlusTwoInfsInRow}
  Let $\S \in \{\Kmax\} \cup \set{\Kmaxt}{t \in \Np}$. If a matrix $A \in M_2(\S)$
  which has a row in which both entries are $-\infty$ is written as a
  product $A = BC$ of two matrices $B, C \in M_2(R)$, then one of $B$ or $C$ has
  a row in which both two entries are $-\infty$.
\end{lemma}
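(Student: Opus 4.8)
The plan is to exploit the fact that the semirings in play have no zero divisors. First I would record the elementary arithmetic fact that, for $\S \in \{\Kmax\} \cup \set{\Kmaxt}{t \in \Np}$ and $x, y \in \S$, the product $x \otimes y$ equals the zero element $-\infty$ if and only if $x = -\infty$ or $y = -\infty$: if $x$ and $y$ are both finite (hence nonnegative) then $x \otimes y$ equals either $x + y$ or $\min(t, x + y)$, and in either case this is a finite element, not $-\infty$. Combined with $\oplus = \max$, so that $u \oplus v = -\infty$ forces $u = v = -\infty$, this is essentially all the semiring arithmetic the proof needs.

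Next, suppose $A = BC$ in $M_2(\S)$ and that row $i$ of $A$ consists entirely of $-\infty$. Expanding the two entries of that row, $(BC)_{i1} = (B_{i1} \otimes C_{11}) \oplus (B_{i2} \otimes C_{21}) = -\infty$ and $(BC)_{i2} = (B_{i1} \otimes C_{12}) \oplus (B_{i2} \otimes C_{22}) = -\infty$, the observation above yields, for each $k \in \{1, 2\}$, that $B_{i1} \otimes C_{1k} = -\infty$ and $B_{i2} \otimes C_{2k} = -\infty$, and hence $B_{i1} = -\infty$ or $C_{1k} = -\infty$, and $B_{i2} = -\infty$ or $C_{2k} = -\infty$.

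Finally I would split into cases according to the $i$th row of $B$. If $B_{i1} = B_{i2} = -\infty$, then row $i$ of $B$ is all $-\infty$ and we are done. Otherwise, if $B_{i1} \neq -\infty$, the implications force $C_{11} = C_{12} = -\infty$, so the first row of $C$ is all $-\infty$; symmetrically, if $B_{i2} \neq -\infty$, then the second row of $C$ is all $-\infty$. In every case one of $B$, $C$ has a row in which both entries are $-\infty$, as required. There is no genuine obstacle here; the only point that deserves a moment's care is the no-zero-divisors claim in the threshold case $\Kmaxt$, where one must check that the truncated sum $\min(t, x + y)$ of finite elements is still finite — which it is, since $t \geq 1$ and $x + y \geq 0$.
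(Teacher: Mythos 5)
Your proof is correct and complete; the paper itself omits the argument, remarking only that it is ``elementary,'' and the no-zero-divisors observation combined with the case split on the $i$th row of $B$ is exactly the intended elementary argument. The one point needing care in the threshold case (that $\min(t, x+y)$ of finite entries stays finite) is handled correctly.
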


\begin{cor}\label{cor:MaxPlusIrredundant}
  The generating sets given in \cref{thm:MaxPlusGenSet} and
  \cref{cor:FiniteMaxPlusGenSet} are irredundant.
\end{cor}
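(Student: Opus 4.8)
The plan is to mirror the proof of \cref{cor:MinPlusIrredundant} almost verbatim, splitting the generators into two groups. For the generators other than $Z$, I would invoke \cref{lem:MaxPlusSimilarGenerators}: any generating set for $M_2(\S)$, where $\S \in \{\Kmax\} \cup \set{\Kmaxt}{t \in \Np}$, must contain a matrix similar to $X(i)$ for each admissible $i$, a matrix similar to $Y$, and a matrix similar to $W(j,k)$ for each admissible pair $j \leq k$. Hence to show each such generator is irredundant it suffices to check that, within the relevant generating set from \cref{thm:MaxPlusGenSet} or \cref{cor:FiniteMaxPlusGenSet}, the similarity class of that generator contains no other generator; deleting it then destroys the only representative of that class, and the remaining set cannot generate $M_2(\S)$.

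The second step is therefore a short bookkeeping check that the similarity classes of $X(i)$, $Y$, and $W(j,k)$ are pairwise distinct and each contains exactly one generator. Counting $-\infty$ entries already separates most cases: $W(j,k)$ has no $-\infty$ entry (as $j,k \geq 1$), whereas $X(i)$ and $Y$ each have one or two, and a matrix similar to $Z$ has three; moreover, for $i = -\infty$ the matrix $X(-\infty)$ has its $-\infty$ entries on the diagonal while $Y$ has them off the diagonal, so $Y$ is not similar to any $X(i)$. Distinct $X(i)$ lie in distinct similarity classes since similarity preserves the multiset of entries, and likewise the matrices $W(j,k)$ with $j \leq k$ lie in distinct similarity classes — the class of $W(j,k)$ also contains $W(k,j)$, which the convention $j \leq k$ excludes from the generating set. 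This bookkeeping is the only step requiring any care, and it presents no real obstacle.

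For $Z$, \cref{lem:MaxPlusSimilarGenerators} does not apply (and indeed fails, as noted before the statement), so instead I would use \cref{lem:MaxPlusTwoInfsInRow}: any factorisation of a matrix having a row with both entries equal to $-\infty$ must involve a factor with such a row. Since $Z$ is the unique generator in either generating set with a row both of whose entries are $-\infty$ — $X(i)$ and $Y$ have at most one $-\infty$ per row, and $W(j,k)$ has none — it follows that $Z$ must belong to every generating set, hence is irredundant. Combining the two cases shows that every generator in each set is irredundant, so both generating sets are irredundant.
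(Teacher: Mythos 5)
Your proposal is correct and follows essentially the same route as the paper: the paper's proof of this corollary is a one-line appeal to \cref{lem:MaxPlusSimilarGenerators} for every generator except $Z$ (each such generator being the unique member of its similarity class in the generating set) and to \cref{lem:MaxPlusTwoInfsInRow} for $Z$, exactly as you do; your bookkeeping of the similarity classes is the content the paper leaves implicit.

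One small slip in that bookkeeping: your justification that $Y$ is not similar to $X(-\infty)$ because its $-\infty$ entries are off-diagonal does not work, since the paper's notion of similarity permutes rows and columns \emph{independently} --- swapping only the two rows of $X(-\infty)$ already moves its $-\infty$ entries off the diagonal. The conclusion still holds, but for the reason you give in the next clause: similarity preserves the multiset of entries, and $Y$ has entries $\{1,0,-\infty,-\infty\}$ whereas $X(-\infty)$ has $\{0,0,-\infty,-\infty\}$. With that justification substituted, the argument is complete.
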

\begin{proof}
  This follows from \cref{lem:MaxPlusSimilarGenerators} and
  \cref{lem:MaxPlusTwoInfsInRow} in the same way as in
  \cref{cor:MinPlusIrredundant}.
\end{proof}

As in the previous section, it follows from the corollary that
the generating set for $M_2(\Kmax)$ given in \cref{thm:MaxPlusGenSet} is
minimal.

In order to demonstrate that the generating sets of
\cref{thm:MaxPlusGenSet} consist of $\J$-non-equivalent elements, we establish
the following strong result on row bases of max-plus matrices.
\begin{lemma}
  \label{lem:MaxPlusUniqueRowBasis}
  For any $\S \in \{\Kmax\} \cup \set{\Kmaxt}{t \in \Np}$ and matrix $A \in
  M_n(\S)$, $A$ has a unique row-basis consisting of the set of those rows
  $A_{i*}$ not expressible as a linear combination of rows distinct from
  $A_{i*}$.
\end{lemma}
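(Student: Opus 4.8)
The plan is to set $R$ equal to the set of those rows of $A$ that cannot be written as a linear combination of the rows of $A$ distinct from them (so $R$ is a set of vectors and $\mathbf{0}\notin R$), and to prove that $R$ is a row-basis of $A$ and is the unique one. That $R$ is linearly independent is immediate: an element of $R\setminus\{v\}$ is a row distinct from $v$, so if $v$ were a linear combination of $R\setminus\{v\}$ it would be a linear combination of rows distinct from itself, contradicting $v\in R$. The two substantive points are that $R$ spans $\RowS(A)$ and that every row-basis of $A$ equals $R$.

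For spanning, the engine is the following monotonicity fact, which holds uniformly over $\Kmax$ and every $\Kmaxt$ because the only property of scalars used is that $u\leq\alpha\otimes u$ for $\alpha\in\N$ and that $\alpha\otimes(-)$ preserves supports for finite $\alpha$: if $u,v$ are nonzero vectors, $\alpha\in\N$, $\alpha\otimes u\leq v$ coordinatewise and $u\neq v$, then $(\lvert\operatorname{supp}u\rvert,\sigma(u))<(\lvert\operatorname{supp}v\rvert,\sigma(v))$ lexicographically, where $\operatorname{supp}$ is the set of finite coordinates and $\sigma$ sums the finite entries. (Indeed $\operatorname{supp}u\subseteq\operatorname{supp}v$; if these agree then $u_c\leq(\alpha\otimes u)_c\leq v_c$ on the support, with strict inequality somewhere as $u\neq v$.) Granting this, let $\mathcal C$ be the set of rows of $A$ not in $\RowS(R)$; each such row is redundant (non-redundant rows lie in $R\subseteq\RowS(R)$) and nonzero (as $\mathbf{0}\in\RowS(R)$). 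If $\mathcal C\neq\emptyset$, pick $v\in\mathcal C$ minimising $(\lvert\operatorname{supp}v\rvert,\sigma(v))$ and write $v$ as a linear combination of rows distinct from $v$ with finite scalars (discarding zero rows). Were every row occurring here in $\RowS(R)$, then $v\in\RowS(R)$; so some such row $r$ lies in $\mathcal C$, and the monotonicity fact makes its pair strictly smaller than that of $v$ --- contradicting minimality. Hence $\mathcal C=\emptyset$ and $\RowS(R)=\RowS(A)$.

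For uniqueness I would work with canonical representations. For a spanning set $B\subseteq\RowS(A)$ (one checks $\mathbf{0}\notin B$), $w\in B$ and $x\in\RowS(A)$, set $\alpha^{*}(w,x)=\max\{\alpha\in\S:\alpha\otimes w\leq x\}$; this maximum exists when $w\neq\mathbf{0}$ (the set is downward closed and bounded above), and since every representation $x=\bigoplus_{w}\beta_w w$ satisfies $\beta_w\leq\alpha^{*}(w,x)$, the equality $x=\bigoplus_{w\in B}\alpha^{*}(w,x)\otimes w$ holds and is the coordinatewise-largest representation. Now let $B$ be any row-basis; I claim $R\subseteq B$. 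Take $v\in R$ and suppose $v\notin B$. Expanding $v$ canonically in $B$ and then each $w\in B$ canonically in $R$ (legitimate as $R$ spans), and regrouping, gives $v=\bigoplus_{u\in R}\delta_u u$ with $\delta_u=\bigoplus_{w\in B}\alpha^{*}(w,v)\otimes\alpha^{*}(u,w)$; also $\delta_u\leq\alpha^{*}(u,v)$ for every $u$. Since $v\in R$, it is not a combination of $R\setminus\{v\}$, so $\bigoplus_{u\in R\setminus\{v\}}\alpha^{*}(u,v)\otimes u$ is strictly below $v$, say at a coordinate $c_0\in\operatorname{supp}v$. Comparing the two sides of $v=\bigoplus_u\delta_u u$ at $c_0$: the terms with $u\neq v$ are bounded above by $(\bigoplus_{u\neq v}\alpha^{*}(u,v)\otimes u)_{c_0}<v_{c_0}$, so the $v$-term must equal $v_{c_0}$, which forces $\delta_v\neq-\infty$. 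Then some term $\alpha^{*}(w_0,v)\otimes\alpha^{*}(v,w_0)$ is finite, hence both $\alpha^{*}(w_0,v)$ and $\alpha^{*}(v,w_0)$ are finite, giving $w_0\leq v$ and $v\leq w_0$, i.e.\ $v=w_0\in B$ --- a contradiction. So $R\subseteq B$. Finally, if some $w\in B$ were not in $R$, then $w\in\RowS(A)=\RowS(R)$ expresses $w$ as a linear combination of elements of $R\subseteq B$ all distinct from $w$, contradicting linear independence of $B$; hence $B\subseteq R$ and $B=R$.

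The main obstacle I anticipate is the uniqueness half, and within it the passage that derives $\delta_v\neq-\infty$ from a single well-chosen coordinate and then extracts a basis vector $w_0$ with $w_0\leq v\leq w_0$: this is where the order structure of $\S^n$ and the scalar action must be combined just so, and where working with $\delta_v\neq-\infty$ (rather than a sharper coordinatewise bound) is what makes the argument go through uniformly, including in the threshold semirings $\Kmaxt$, where a vector can be fixed by all nonzero scalars so that $\max\{\alpha:\alpha\otimes v\leq v\}$ need not be $0$.
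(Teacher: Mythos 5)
Your proof is correct, and although it runs on the same engine as the paper's --- in max-plus arithmetic every term $\alpha\otimes u$ of a linear combination is dominated coordinatewise by the result, because $\alpha\geq 0$ and $\oplus=\max$ --- both halves are organised quite differently. For spanning, the paper discards redundant rows one at a time and must prove the slightly fiddly pairwise fact that removing one redundant row leaves the remaining redundant rows redundant (by deciding which of the two rows dominates the other and substituting accordingly); your minimal-counterexample argument over the well-founded measure $\bigl(\lvert\operatorname{supp}u\rvert,\sigma(u)\bigr)$ avoids that bookkeeping entirely and is arguably cleaner. For uniqueness, the paper is shorter and proves something stronger: if $v=A_{i*}$ is not a combination of the rows distinct from it, with span $\genset{W}$, then every $x\in\RowS(A)\setminus\genset{W}$ satisfies $x\geq v$, so $v$ cannot appear as a nontrivial combination of elements of $\RowS(A)\setminus\{v\}$ at all, and hence lies in \emph{every spanning set}, not merely every row-basis. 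Your route through the canonical maximal representations $\alpha^{*}(w,x)$ and the double expansion of $v$ through $B$ and then through $R$ reaches the same conclusion for bases with more machinery; the machinery is sound, the regrouping is legitimate by distributivity, and your decision to extract only $\delta_v\neq-\infty$ (yielding $w_0\leq v\leq w_0$ from finiteness of both $\alpha^{*}(w_0,v)$ and $\alpha^{*}(v,w_0)$) rather than a sharper scalar value is exactly the right precaution for the saturated semirings $\Kmaxt$. In short: your spanning argument is a genuine simplification, your uniqueness argument is correct but could be collapsed to a few lines using the paper's observation that non-redundant rows are dominated by everything outside $\genset{W}$.
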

\begin{proof}
We first note that $\oplus$ and $\otimes$ by an element in
$R\setminus\{-\infty\}$ preserves or increases the order of any element in $\S$.
It follows that if $aA_{i*}$ is a term in some linear combination equal to a row
$x$, for some $a \in \S\setminus\{-\infty\}$, then $x_j \geq A_{ij}$ for $1 \leq
j \leq n$. Let $W$ be those rows of $A$ distinct from $A_{i*}$, and let
$\genset{W}$ denote the row space spanned by these rows. Suppose that $A_{i*}
\not\in \genset{W}$. Then for any row $x \in \RowS(A)\setminus \genset{W}$, $x$
can be expressed as a linear combination with a non-trivial term involving
$A_{i*}$; hence $A_{ij} \leq x_j$ for $1 \leq j \leq n$. It follows that
$A_{i*}$ cannot be written as a linear combination involving $x$ in a
non-trivial way unless $x = A_{i*}$. Thus $A_{i*} \not\in \genset{\RowS(A)
  \setminus \{ A_{i*} \}}$. It follows that any row $A_{i*}$ which cannot be
written as a linear combination of distinct other rows of $A$ must be included
in any spanning set for $\RowS(A)$. We must also show that such rows do span
$\RowS(A)$.

If $X$ is any set of rows of $A$ and $A_{x*} \in X$ is such that $A_{x*} \in
\genset{X \setminus\{A_{x*}\}}$, then as usual $\genset{X} = \genset{X \setminus
  \{A_{x*}\}}$. We must establish that if we also have $A_{y*} \in X\setminus\{A_{x*}\}$
such that $A_{y*} \in \genset{X\setminus\{A_{y*}\}}$, then $A_{y*} \in
X\setminus\{A_{x*}, A_{y*}\}$; the result then follows by repeatedly removing
such rows from $A$. 

Suppose that some term $aA_{y*}$ appears in a linear combination equal
to $A_{x*}$, where $a \in \S\setminus\{-\infty\}$. This implies that $A_{y*} \leq
A_{x*}$ in every component, and for some component the inequality is strict since
$A_{y*} \neq A_{x*}$. It follows that $A_{x*}$ cannot be involved in a
non-trivial way in any linear combination equal to $A_{y*}$, and hence $A_{y*}
\in \genset{X \setminus\{A_{x*}, A_{y*}\}}$. Suppose instead that no such term
$aA_{y*}$ exists; that is, $A_{x*} \in \genset{X \setminus\{A_{x*}, A_{y*}\}}$.
Then the same linear combination of elements of $X\setminus\{A_{y*}\}$ that is
equal to $A_{y*}$, possibly with $A_{x*}$ replaced by the linear
combination of elements in $X \setminus\{A_{x*}, A_{y*}\}$ that is equal to
$A_{x*}$, witnesses that $A_{y*} \in X\setminus\{A_{x*}, A_{y*}\}$.
\end{proof}

\begin{lemma}\label{lem:MaxPlusGensNonRelated}
  The generating sets given in \cref{cor:FiniteMaxPlusGenSet} consist of
  $\J$-non-equivalent elements.
\end{lemma}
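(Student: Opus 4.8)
The plan is to obtain \cref{lem:MaxPlusGensNonRelated} from \cref{prop:RowSpacesJRelation}: since $\J$-related matrices have isomorphic row spaces, it suffices to check that the generators listed in \cref{cor:FiniteMaxPlusGenSet} have pairwise non-isomorphic row spaces. The first step is to record, using \cref{lem:MaxPlusUniqueRowBasis}, the unique row basis of each generator — namely $\{(-\infty~0)\}$ for $Z$, and the two-element set of rows for each of $X(i)$, $Y$, and $W(j,k)$ — and to note that \cref{lem:MaxPlusUniqueRowBasis} in fact characterises the row basis \emph{intrinsically}: it is $\set{v \in \RowS(A)}{v \notin \genset{\RowS(A) \setminus \{v\}}}$. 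This set, the scalar action $a \otimes (-)$, and the order $v \leq w \iff v \oplus w = w$ are all preserved by any isomorphism $T \colon \RowS(A) \to \RowS(B)$ of row spaces (and by $T^{-1}$), so $T$ carries the unique row basis of one space bijectively onto that of the other and commutes with scalar multiplication and with $\leq$. In particular the cardinality of the row basis is an isomorphism invariant, and this immediately separates $Z$ — the only generator with a singleton row basis — from all the others.

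For the remaining generators, all of which have a two-element row basis $\{r_1, r_2\}$, I would introduce three isomorphism invariants. The principal one is the \textbf{minimal-scalar multiset} $\{\mu(r_1,r_2), \mu(r_2,r_1)\}$, where $\mu(r,r') = \min\set{a \in \S}{a \otimes r \geq r'}$, recorded as ``undefined'' when this set is empty; since $\set{a}{a \otimes r \geq r'}$ is carried by $T$ onto $\set{a}{a \otimes T(r) \geq T(r')}$, this multiset is invariant. A short computation (unwinding the definitions of $\oplus$, $\otimes$, and the thresholding at $t$) gives: for $W(j,k)$ it equals $\{j,k\}$ — taking $r = (0~j)$, $r' = (k~0)$ one reads off $\mu((0~j),(k~0)) = k$ and $\mu((k~0),(0~j)) = j$; for $X(i)$ with $i$ finite it equals $\{0, \text{undefined}\}$; and for $X(-\infty)$ and for $Y$ it equals $\{\text{undefined}, \text{undefined}\}$. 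Hence this single invariant distinguishes all the $W(j,k)$ from one another (as $1 \le j \le k \le t$), distinguishes every $W(j,k)$ from every $X(i)$ and from $Y$ (no undefined entry), and distinguishes the finite $X(i)$ collectively from $X(-\infty)$ and $Y$. The two remaining collisions are handled by auxiliary invariants: (a) $Y$ versus $X(-\infty)$ is resolved by the multiset of \textbf{line sizes} $|\set{a \otimes r_s}{a \in \S}|$, which is $\{t+1, t+2\}$ for $Y$ (because $(t-1)\otimes(1~-\infty) = t\otimes(1~-\infty)$) but $\{t+2, t+2\}$ for $X(-\infty)$; and (b) two finite $X(i)$, $X(i')$ with $i \neq i'$ are resolved by a size count — for instance the principal ideal $\set{v \in \RowS(X(i))}{v \le (i~0)}$ has size $i+3$, or equivalently $|\RowS(X(i))| = \binom{t-i+2}{2} + i + t + 2$ is strictly decreasing in $i$. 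Together these cover every pair of distinct generators.

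The genuinely laborious part is the bookkeeping: each of these invariant values has to be read off an explicit description of the relevant row space — e.g.\ $\RowS(W(j,k))$ is the sub-semilattice of $\{0,\dots,t\}^2 \cup \{(-\infty~-\infty)\}$ generated under componentwise maximum by the two ``staircases'' $\set{(a~\min(t,a+j))}{a \in \S}$ and $\set{(\min(t,b+k)~b)}{b \in \S}$ — and the thresholding at $t$ has to be tracked carefully at every step. I expect the main obstacle to be simply verifying without error that $\mu$ really does evaluate to $\{j,k\}$ for $W(j,k)$ and that it collapses to a pattern containing ``undefined'' for every $X(i)$ and for $Y$; the line-size and ideal-size computations are then routine. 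Once these are in hand, no two distinct generators share all the invariants above, so their row spaces are non-isomorphic, and \cref{lem:MaxPlusGensNonRelated} follows. Combined with \cref{cor:MaxPlusIrredundant} and \cref{prop:Wilf}, this completes the proof of the finite case of \cref{thm:MaxPlusMinimality}.
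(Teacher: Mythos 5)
Your proposal is correct and takes essentially the same route as the paper: reduce to non-isomorphism of row spaces via \cref{prop:RowSpacesJRelation}, exploit the unique row bases from \cref{lem:MaxPlusUniqueRowBasis} (which any row-space isomorphism must preserve), and separate the generators by isomorphism invariants attached to the basis rows — your scalar-multiple counts coincide with the paper's, and your minimal-scalar multiset $\{\mu(r_1,r_2),\mu(r_2,r_1)\}$ with $\mu(r,r')=\min\set{a}{a\otimes r\geq r'}$ is exactly the paper's ``least $a$ such that $ax_1\oplus x_2=ax_1$'' invariant for the $W(j,k)$. The only cosmetic divergence is that you distinguish $\RowS(X(i))$ from $\RowS(X(i'))$ by down-set or cardinality counts where the paper uses the largest $a$ with $ag_1\oplus g_2=g_2$; these amount to the same computation.
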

\begin{proof}
  Fix $t \in \Np$. By \cref{prop:RowSpacesJRelation}, it suffices to show that the
generating sets consist of elements with non-isomorphic row spaces. By
\cref{lem:MaxPlusUniqueRowBasis}, each row space has a unique basis; it
follows that any isomorphism between these row spaces must map one unique basis
to another. We calculate the number of distinct scalar multiples of each basis
element, which is an isomorphism invariant:

\begin{itemize}
  \item 
    $\RowS(Y)$ has unique basis $\{(1~\infty), (-\infty~0)\}$, with $t + 1$ and
    $t + 2$ distinct scalar multiples (including themselves) respectively;
  \item
    $\RowS(Z)$ has unique basis $\{(-\infty~0)\}$, with $t + 2$ distinct scalar
    multiples;
  \item
    $\RowS(X(i))$, for any $i \in \Kmaxt$, has unique basis
    $\{(i~0),~(0~-\infty)\}$, with $t + 2$ distinct scalar multiples each;
  \item $\RowS(W(j, k))$, for $1 \leq j \leq k \leq t$, has unique basis
    $\{(0~j),~(k~0)\}$, with $t + 2$ distinct scalar multiples each.
\end{itemize}
It follows that the only possible isomorphisms between row spaces of distinct
generators are between $\RowS(W(j, k))$, $\RowS(W(p, q))$, $\RowS(X(i))$, and
$\RowS(X(l))$ for some $(p, q) \neq (j, k)$ and $l \neq i$. We note that in
$\RowS(X(i)$ the sum of the two basis rows is equal to one of the basis rows,
while the same is not true in $\RowS(W(j, k))$; hence there can be no
isomorphism between row spaces of these types of generators. Furthermore,
$\RowS(X(i))$ is non-isomorphic to $\RowS(X(j))$ for $i \neq j$ since $i$ is the
largest value such that for some choice of $g_1$ and $g_2$ being distinct basis
rows, $ig_1 \oplus g_2 = g_2$, and this is an isomorphism invariant.

We show that $\RowS(W(j, k))$ is not isomorphic to $\RowS(W(p, q))$ in a similar
way. Let $x_1, x_2$ be distinct basis rows of $W(j, k)$. Then there exist least
elements $a, b \in \{1, \ldots, t\}$ such that $ax_1 \oplus x_2 = ax_1$ and
$bx_2 \oplus x_1 = bx_2$. By examination of the basis given above, the lesser of
$a$ and $b$ (if there is one) is equal to $j$ and the greater is equal to $k$
(or all of the values are equal). Since $\{a, b\}$ is an isomorphism invariant,
there is no isomorphism between $\RowS(W(j, k))$ and $\RowS(W(p, q))$ for $(j,
k) \neq (p, q)$.
\end{proof}
By \cref{prop:Wilf}, this completes the proof of
\cref{cor:FiniteMinPlusGenSet}. As in the previous section, a corollary of the
previous lemma is that the generating set given in \cref{thm:MinPlusGenSet}
also consists of $\J$-non-equivalent elements.

%%%%%%%%%%%%%%%%%%%%%%%%%%%%%%%%%%%%%%%%%%%%%%%%%%%%%%%%%%%%%%%%%%%%%%%%%%%%%%%
%%%%%%%%%%%%%%%%%%%%%%%%%%%%%%%%%%%%%%%%%%%%%%%%%%%%%%%%%%%%%%%%%%%%%%%%%%%%%%%

\section{Matrices over $\mathbb{Z}_n$}
\label{sec:IntegersModN}
\setcounter{subsection}{1}

Perhaps surprisingly, there appears to be no known general characterisation of a
minimal generating set of the multiplicative group $U_n$ of integers coprime to
$n$, modulo $n$. Any description of a minimal generating set for $M_k(\Z_n)$ for
general $k$ would provide such a minimal generating set for $U_n$, by taking $k
= 1$ and restricting the generators to those of the group of units. This leads
us to consider the so-called \defn{relative rank} of $M_k(\Z_n)$ with respect to
the group of units $GL_k(\Z_n)$. For a semigroup $S$ and subset $A \subseteq S$,
the relative rank of $S$ with respect to $A$, denoted $\d(S:A)$, is defined to
be the minimum cardinality of any subset $B$ such that $A \cup B$ generates $S$.
In this section, we will determine $\d(M_k(\Z_n):GL_k(\Z_n))$ by proving the
following analogues to Devadze's Theorem (see \cref{thm:Devadzefull}).

\begin{thm}
  \label{thm:ZnGenSet}
    Let $k,n \in \Np$.
    The monoid $M_k(\Z_n)$ is generated by any set consisting of $GL_k(\Z_n)$
    and one element from each of the $\J$-classes
    immediately below $GL_k(\Z_n)$ in the $\J$-order of $M_k(\Z_n)$.
    For example,
    $M_k(\Z_n)$ is generated by the set
    $$
    GL_k(\Z_n)
    \cup
    \left\{
    \begin{pmatrix}
      p \operatorname{mod} n & 0      & \cdots & 0      \\
      0       & 1      &        & \vdots \\
      \vdots  &        & \ddots & 0      \\
      0       & \cdots & 0      & 1      \\
     \end{pmatrix}
     \ :\
     p\ \text{is a prime divisor of}\ n\ \text{in}\ \mathbb{N}
     \right\}.
     $$
\end{thm}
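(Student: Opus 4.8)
The plan is to deduce both assertions from the Smith normal form over $\Z$, reduced modulo $n$. The key structural input is: every $A\in M_k(\Z_n)$ lifts to $\widetilde A\in M_k(\Z)$, which admits $\widetilde P\widetilde A\widetilde Q=\operatorname{diag}(d_1,\dots,d_k)$ with $\widetilde P,\widetilde Q\in GL_k(\Z)$ and $d_1\mid d_2\mid\cdots\mid d_k$; reducing mod $n$ and using $\det\widetilde P,\det\widetilde Q=\pm1$ gives $A=P^{-1}\operatorname{diag}(\bar d_1,\dots,\bar d_k)Q^{-1}$ with $P,Q\in GL_k(\Z_n)$. Since diagonal matrices multiply entrywise and conjugation by a transposition permutation matrix moves a diagonal entry to the $(1,1)$ slot, it suffices to analyse $\operatorname{diag}(\bar d,1,\dots,1)$. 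Put $g=\gcd(d,n)$; then $\bar d=\bar g\,\overline{d/g}$, and since $\gcd(d/g,n/g)=1$ the surjectivity of $(\Z/n)^\times\to(\Z/(n/g))^\times$ lets us replace $\overline{d/g}$ by a genuine unit $u\in GL_1(\Z_n)$ with $\bar g u=\bar d$. As $g\mid n$, factor $g=p_1\cdots p_m$ into (not necessarily distinct) prime divisors of $n$; then $\operatorname{diag}(\bar d,1,\dots,1)=\operatorname{diag}(u,1,\dots,1)\prod_{\ell=1}^m\operatorname{diag}(\bar p_\ell,1,\dots,1)$, a product of a unit and copies of the generators $G_{p_\ell}$ in the statement. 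Stitching these together writes an arbitrary $A$ as a product of elements of $GL_k(\Z_n)$ and of the $G_p$, which proves the ``For example'' claim: $M_k(\Z_n)=\genset{GL_k(\Z_n)\cup\{G_p:p\mid n\}}$.

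For the general statement I would apply \cref{lem:GenSetDecomposition} to $X=GL_k(\Z_n)\cup R$, where $R$ is any transversal of the $\J$-classes immediately below $GL_k(\Z_n)$. The above decomposition shows that a matrix with Smith invariants $d_i$ becomes, up to units, a product of $\sum_i\Omega(\gcd(d_i,n))$ of the $G_p^{(j)}:=\operatorname{diag}(1,\dots,1,\bar p,1,\dots,1)$ (entry in slot $j$), where $\Omega$ counts prime factors with multiplicity; moreover $[\Z_n^k:\RowS(A)]=\prod_i\gcd(d_i,n)$ (since $\RowS(A)\cong\RowS(\operatorname{diag}(\bar d_1,\dots,\bar d_k))$ as submodules of $\Z_n^k$ up to the automorphism $Q^{-1}$, and $[\Z_n:g\Z_n]=g$ for $g\mid n$). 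Using \cref{prop:rowsandideals}(ii) and \cref{lem:GreensRowColumnSpaces} (so $\L$ and $\leq_{\L}$ are read off from containment of row spaces) together with \cref{prop:RowSpacesJRelation} (a proper submodule of the finite module $\Z_n^k$ is not isomorphic to a strictly larger one), I would establish: (i) $A$ is a unit iff $\sum_i\Omega(\gcd(d_i,n))=0$; (ii) $J_A$ is immediately below $GL_k(\Z_n)$ iff $\sum_i\Omega(\gcd(d_i,n))=1$, equivalently $\gcd(d_i,n)=1$ for $i<k$ and $\gcd(d_k,n)=p$ prime, in which case the factorization collapses to $A=U_1G_pU_2$ with $U_1,U_2\in GL_k(\Z_n)$; (iii) if $\sum_i\Omega(\gcd(d_i,n))\geq2$ then $A$ is a product of matrices each strictly above $J_A$ in the $\J$-order — split off the invertible part and write the singular part as a product of $G_p^{(j)}$'s, each of which has strictly larger row space than $A$ by \cref{prop:rowsandideals}(i). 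Point (ii) identifies the $\J$-classes immediately below $GL_k(\Z_n)$ as exactly $\{J_{G_p}:p\mid n\}$ (pairwise distinct since $[\Z_n^k:\RowS(G_p)]=p$), shows each is a single two-sided $GL_k(\Z_n)$-orbit, and hence (writing any $A\J x$ as $U_1xU_2$) gives $J_x\subseteq\genset{X}$ for every $x\in R$; points (i) and (iii) verify the decomposition hypothesis of \cref{lem:GenSetDecomposition} for every element not $\J$-related to an element of $X$. Then \cref{lem:GenSetDecomposition} yields $\genset{X}=M_k(\Z_n)$.

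The main obstacle is the hard direction of (ii): that a matrix whose row space is a \emph{maximal} proper submodule of $\Z_n^k$ necessarily has Smith invariants of the stated form, hence is $GL_k(\Z_n)\times GL_k(\Z_n)$-equivalent to the corresponding $G_p$; equivalently, that distinct prime divisors $p\neq q$ of $n$ give maximal submodules $\RowS(G_p)$, $\RowS(G_q)$ that do not embed into one another. I would settle this by decomposing $\Z_n^k\cong\prod_{p\mid n}\Z_{p^{v_p(n)}}^{\,k}$ via the CRT and comparing $p$-primary parts: the $p$-part of $\RowS(G_q)$ is all of $\Z_{p^{v_p(n)}}^{\,k}$ when $q\neq p$ but is strictly smaller when $q=p$, so on cardinality grounds at the prime $p$ there is no embedding $\RowS(G_p)\hookrightarrow\RowS(G_q)$ for $p\neq q$; this simultaneously confirms that $J_{G_p}$ is genuinely immediately below (not merely below) $GL_k(\Z_n)$. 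It is worth flagging that one cannot shortcut this by appealing to ``prime matrices sit immediately below the group of units'', because $G_p$ need not be prime — e.g.\ over $\Z_6$ one has $G_2=G_2\cdot\operatorname{diag}(\bar4,1,\dots,1)$ with neither factor a unit — so the argument must route through the submodule/Smith-form analysis. Alternatively, one may cite that $\Z_n$ is an exact semiring, so that $\J$-equivalence in $M_k(\Z_n)$ coincides with isomorphism of row spaces, which makes several of these verifications immediate.
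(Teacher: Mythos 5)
Your proposal is correct and follows essentially the same route as the paper: reduce an arbitrary matrix to a standard diagonal form by pre- and post-multiplying by units (the paper performs the elimination directly over $\Z_n$ rather than lifting to a Smith normal form over $\Z$, and your use of the surjectivity of $(\Z/n)^\times \to (\Z/(n/g))^\times$ plays exactly the role of \cref{lem:ModularProducts}), factor the diagonal into the prime-scaling generators, and identify their $\J$-classes as precisely those immediately below $GL_k(\Z_n)$ by a row-space index argument (the paper uses maximality of the row space as a subgroup plus Lagrange's Theorem where you use CRT and primary parts). The only blemish is a harmless index swap at the end: since $J_{G_p}\leq J_{G_q}$ forces $\RowS(G_p)$ to lie inside a quotient of $\RowS(G_q)$, the obstruction lives at the prime $q$ (whose primary part is full in $\RowS(G_p)$ but deficient in any subquotient of $\RowS(G_q)$), not at $p$ --- but the comparison is symmetric in $p$ and $q$, so nothing is lost.
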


For each prime divisor $p$ of $n$ in $\N$, we write
  $$X_p =
    \begin{pmatrix}
      p \operatorname{mod} n & 0      & \cdots & 0      \\
      0       & 1      &        & \vdots \\
      \vdots  &        & \ddots & 0      \\
      0       & \cdots & 0      & 1      \\
    \end{pmatrix}$$
and denote by $\mathcal{X}_n$ the set $\set{X_p}{p \text{ is a prime divisor of
  } n}$.

Note that the entry $p$ mod $n$ is, of course, simply $p$ unless $p = n$. Since
the generating sets defined in \cref{thm:ZnGenSet} contain precisely one
element from each $\J$-class immediately below the group of units, those
elements are irredundant. By \cref{prop:Wilf}, this proves the following
theorem:
\begin{thm}
  \label{thm:ZnMinimal}
  Let $k,n \in \Np$, let $G$ be a generating set of minimum cardinality for
  $GL_k(\Z_n)$, and let $P$ be a set of representatives of the
  $\J$-classes immediately below $GL_k(\Z_n)$ in the $\J$-order of
  $M_k(\Z_n)$. Then $G \cup P$ is a minimal generating set for $M_k(\Z_n)$.
  In particular,
  $$\d(M_k(\Z_n) : GL_k(\Z_n))
                       =
                       |\set{p}{p\ \text{is a prime divisor of}\ n}|.
  $$
\end{thm}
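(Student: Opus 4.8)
The plan is to deduce \cref{thm:ZnMinimal} from \cref{thm:ZnGenSet} together with the argument underlying \cref{prop:Wilf}, so the real work is proving \cref{thm:ZnGenSet}. The key tool is the Smith normal form over $\Z_n$: since $\Z_n$ is a quotient of the principal ideal domain $\Z$ it is an elementary divisor ring, so every $A\in M_k(\Z_n)$ can be written $A=PDQ$ with $P,Q\in GL_k(\Z_n)$ and $D=\operatorname{diag}(d_1,\dots,d_k)$, where each $d_i$ is a divisor of $n$ (we read the entry $0$ as the divisor $n$) and $d_1\mid d_2\mid\cdots\mid d_k$; by uniqueness of invariant factors the multiset $\{d_1,\dots,d_k\}$ is a complete invariant of the relation ``$A=P'BQ'$ for some $P',Q'\in GL_k(\Z_n)$'', and $A\in GL_k(\Z_n)$ exactly when all $d_i=1$. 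I would also use that left multiplication by a unit preserves the row space $\RowS$ (by \cref{lem:GreensRowColumnSpaces}) while right multiplication by a unit acts on $\RowS$ by a $\Z_n$-module automorphism.

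First I would show that $GL_k(\Z_n)\cup\mathcal{X}_n$ generates $M_k(\Z_n)$. A diagonal matrix with a single prime $p\mid n$ in position $i$ and $1$'s elsewhere is conjugate, via the permutation matrix transposing $1$ and $i$, to $X_p$, hence lies in $\genset{GL_k(\Z_n)\cup\mathcal{X}_n}$; since every divisor of $n$ is a product of primes dividing $n$, the matrix $D$ above is a product of such matrices, so $A=PDQ\in\genset{GL_k(\Z_n)\cup\mathcal{X}_n}$. Next I would identify the $\J$-classes immediately below $GL_k(\Z_n)$. The determinant is a homomorphism $M_k(\Z_n)\to(\Z_n,\times)$, so $X_p=SX_qT$ would give $p\equiv q\det(S)\det(T)\pmod n$, forcing $q\mid p$ in $\Z$ (as $q\mid n$) and hence $p=q$; thus $J_{X_p}\le J_{X_q}$ implies $p=q$, so the $J_{X_p}$ are pairwise incomparable proper $\J$-classes. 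Conversely, if $A$ is any non-unit then its largest invariant factor $d_k$ is $\neq 1$, and picking a prime $p\mid d_k$ and writing $D=\operatorname{diag}(1,\dots,1,p)\operatorname{diag}(d_1,\dots,d_{k-1},d_k/p)$ exhibits $A$ as a product one of whose factors is $\J$-equivalent to $X_p$, so $J_A\le J_{X_p}$. Since $GL_k(\Z_n)$ is the unique maximal $\J$-class, it follows that the $\J$-classes immediately below it are exactly the $J_{X_p}$ for $p$ a prime divisor of $n$, of which there are $r:=|\set{p}{p\text{ a prime divisor of }n}|$.

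I would then show each such class is a single double coset, $J_{X_p}=GL_k(\Z_n)\,X_p\,GL_k(\Z_n)$. The inclusion $\supseteq$ is clear; for $\subseteq$, let $A\J X_p$, so $\RowS(A)\cong\RowS(X_p)$ as $\Z_n$-modules by \cref{prop:RowSpacesJRelation}. Writing $A=PDQ$ and using the remarks above, $\RowS(A)\cong\RowS(D)=\bigoplus_i d_i\Z_n$, whose invariant factors $n/d_k\mid n/d_{k-1}\mid\cdots\mid n/d_1$ are uniquely determined; comparing with those of $\RowS(X_p)$, namely $n/p\mid n\mid\cdots\mid n$, forces $d_1=\cdots=d_{k-1}=1$ and $d_k=p$, so $A=PD'Q$ with $D'$ a permutation conjugate of $X_p$, i.e.\ $A\in GL_k(\Z_n)X_pGL_k(\Z_n)$. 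Consequently any representative $y_p$ of $J_{X_p}$ satisfies $X_p=g^{-1}y_ph^{-1}$ for units $g,h$, so $X_p\in\genset{GL_k(\Z_n)\cup P}$ for any set $P$ of such representatives; with the generation statement above this proves \cref{thm:ZnGenSet}, and replacing $GL_k(\Z_n)$ by any generating set $G$ of it changes nothing since $\genset{G}=GL_k(\Z_n)$ (this last step may also be packaged via \cref{lem:GenSetDecomposition}). For \cref{thm:ZnMinimal}: in a finite monoid a product of non-units is never a unit, so any generating set $Z$ of $M_k(\Z_n)$ meets $GL_k(\Z_n)$ in a generating set of it, whence $|Z\cap GL_k(\Z_n)|\ge\d(GL_k(\Z_n))$; and, exactly as in the proof of \cref{prop:Wilf}, any factorisation of a fixed element of $J_{X_p}$ uses only elements of $GL_k(\Z_n)\cup J_{X_p}$, so $Z$ must meet each $J_{X_p}$. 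As these $r$ classes are pairwise disjoint and disjoint from $GL_k(\Z_n)$, $|Z|\ge\d(GL_k(\Z_n))+r$, a bound attained by $G\cup P$; hence $G\cup P$ is minimal and $\d(M_k(\Z_n):GL_k(\Z_n))=r$.

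I expect the main obstacle to be the translation between Green's $\J$-order on $M_k(\Z_n)$ and Smith-normal-form data --- concretely, proving $J_{X_p}=GL_k(\Z_n)X_pGL_k(\Z_n)$ and that these are precisely the $\J$-classes immediately below the units --- which relies on the module-theoretic input \cref{prop:RowSpacesJRelation} and on handling the entry $0$ uniformly (cleanly done by treating it as the divisor $n$, which removes the degenerate subcases in the factorisations above). Everything else, in particular the generation statement, is routine linear algebra once Smith normal form is available.
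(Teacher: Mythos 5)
Your proposal is correct, and the final deduction of \cref{thm:ZnMinimal} itself is the same as the paper's: any generating set must contain a generating set for the group of units (a product of non-units is never a unit), and, by the argument of \cref{prop:Wilf}, must meet each $\J$-class immediately below the units; since $G\cup P$ attains the resulting lower bound $\d(GL_k(\Z_n))+r$, it is minimal. (Your spelled-out version is in fact slightly more careful than the paper's literal appeal to \cref{prop:Wilf}, whose hypothesis of ``at most one element per $\J$-class'' is not satisfied by $G\cup P$ when $|G|>1$; the intended argument is clearly yours.) Where you genuinely diverge is in establishing the supporting structure theory behind \cref{thm:ZnGenSet}, which the paper proves via \cref{lem:ModularProducts,lem:Wilf1,lem:XpUnique,lem:Wilf2,lem:ZnGenSet,lem:ZnImmediatelyBelow}. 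You invoke Smith normal form over $\Z_n$ as a known fact about quotients of PIDs, where the paper carries out the diagonalisation by hand via elementary row and column operations together with a Bezout-type scaling lemma (\cref{lem:ModularProducts}); you separate the classes $J_{X_p}$ for distinct primes by a determinant computation, and locate them immediately below the units by refactoring the diagonal form as $\operatorname{diag}(1,\dots,1,p)\cdot\operatorname{diag}(d_1,\dots,d_k/p)$, whereas the paper argues throughout with row spaces viewed as additive groups (uniqueness of the standard diagonal form in a $\J$-class via \cref{prop:RowSpacesJRelation}, and a maximal-subgroup/Lagrange argument in \cref{lem:ZnImmediatelyBelow}). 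Your route buys brevity and a self-contained treatment at the cost of importing the existence and uniqueness of invariant factors over $\Z_n$; the paper's is longer but elementary and explicitly constructive. Both your identification $J_{X_p}=GL_k(\Z_n)X_pGL_k(\Z_n)$ and your direct proof that every non-unit class lies below some $J_{X_p}$ are sound, and the latter is arguably cleaner than the paper's generation-based argument in \cref{lem:ZnImmediatelyBelow}.
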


We note that \cref{thm:ZnGenSet} and \cref{thm:ZnMinimal} hold trivially for
$n = 1$ as $1$ has no prime divisors and for any $k \in \Np$, $|M_k(\Z_1)| = 1$.
The further results and proofs in this section may also be stated in such a way
as to hold for $n=1$, but this is not necessary and results in slightly less
tidy statements.

In order to prove \cref{thm:ZnGenSet}, we will show that each $\J$-class of
$M_k(\Z_n)$ contains a matrix in a certain standard form; we will then show how
to generate these standard forms and hence generate the full matrix monoid.

In order to demonstrate that these standard forms exist in $M_k(\Z_n)$, we first
prove the following elementary but useful lemma, without any claim of
originality.

\begin{lemma}
  \label{lem:ModularProducts}
  Let $a \in \Z_n \setminus U_n$, with $gcd(a, n) = d > 1$. Then there exists $b
  \in U_n$ such that $ab = d$ mod $n$.
\end{lemma}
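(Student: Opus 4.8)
The plan is to reduce the statement to finding a suitable unit modulo the proper divisor $m = n/d$ and then lifting it back to a unit modulo $n$. First I would fix an integer representative of $a$ in $\{0, 1, \ldots, n-1\}$ and write $a = d a'$, where $a'$ is an integer since $d \mid \gcd(a,n) \mid a$. I claim $\gcd(a', m) = 1$: if a prime $p$ divided both $a'$ and $m = n/d$, then $pd$ would divide $a' d = a$ and $m d = n$, producing a common divisor $pd > d$ of $a$ and $n$ and contradicting $\gcd(a,n) = d$.

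The key step is that the reduction-mod-$m$ map restricts to a surjective homomorphism $U_n \to U_m$. Granting this, since $\gcd(a', m) = 1$ the residue of $a'$ lies in $U_m$ and has an inverse there; choosing any $b \in U_n$ mapping to that inverse gives $a' b \equiv 1 \pmod m$. Then $ab - d = d(a'b - 1)$ is divisible by $dm = n$, so $ab \equiv d \pmod n$ with $b \in U_n$, as required. (The degenerate case $d = n$, where $a \equiv 0$, $m = 1$, and any $b \in U_n$ works, is covered by the same reasoning.)

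It remains to prove the surjectivity of $U_n \to U_m$, which is where the only genuine content lies. Given $c$ with $\gcd(c,m) = 1$, I want an integer $b$ with $b \equiv c \pmod m$ and $\gcd(b,n) = 1$. Let $p_1, \ldots, p_s$ be the primes dividing $n$ but not $m$. Since each $p_i \nmid m$, we have $\gcd(m, p_1 \cdots p_s) = 1$, so the Chinese Remainder Theorem produces an integer $b$ with $b \equiv c \pmod m$ and $b \equiv 1 \pmod{p_i}$ for all $i$. Any prime $q$ dividing $n$ either divides $m$, in which case $q \nmid b$ because $b \equiv c \pmod m$ and $\gcd(c,m) = 1$, or equals some $p_i$, in which case $q \nmid b$ because $b \equiv 1 \pmod{p_i}$; hence $\gcd(b,n) = 1$ and $b$ represents the desired preimage in $U_n$.

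The main obstacle is purely bookkeeping: ensuring the lift $b$ is coprime to every prime factor of $n$ rather than only those of $m$. The Chinese Remainder Theorem handles precisely this point, and everything else is routine divisibility computation.
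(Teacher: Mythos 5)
Your proof is correct and follows essentially the same route as the paper's: both arguments reduce the problem to inverting $a/d$ modulo $n/d$ and then lifting that inverse to a genuine unit modulo $n$ by adding a suitable multiple of $n/d$. The only difference is cosmetic --- the paper starts from Bezout ($ax + ny = d$) and picks the shift explicitly as a product of prime factors of $n$, whereas you package the lifting step as the surjectivity of $U_n \to U_{n/d}$ and invoke the Chinese Remainder Theorem; your handling of the degenerate case $d = n$ and the coprimality of $a/d$ with $n/d$ are both sound.
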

\begin{proof}
  By Bezout's identity, there exist some $x, y \in \Z$ such that $ax + ny = d$;
  equivalently $\frac{a}{d}x + \frac{n}{d}y = 1$. It follows that
  $\frac{n}{d}$ and $x$ are coprime. For every $k \in \Z$, we also have
  $\frac{a}{d}(x + k\frac{n}{d}) + \frac{n}{d}(y - k\frac{a}{d}) = 1$. It is
  therefore sufficient to find a value of $x + k\frac{n}{d}$ which is coprime to
  $n$. Since $x$ and $\frac{n}{d}$ are coprime, by setting $k$ to be the product
  of those prime factors of $n$ that do not appear in $x$ or in $\frac{n}{d}$ we
  have that each prime factor of $n$ appears in precisely one of the terms $x$
  and $k\frac{n}{d}$; hence no prime factor of $n$ divides the sum $x +
  k\frac{n}{d}$.
\end{proof}

\begin{lemma}
  \label{lem:Wilf1}
  For $n > 1$, every matrix $Y \in M_k(\Z_n)$ can be written as a product $AXB$,
  for two matrices $A, B \in GL_k(\Z_n)$ and a diagonal matrix $X \in J_Y$ of
  the form
  $$\begin{pmatrix}
      0      &        & \cdots &        &        & 0       \\
             & \ddots &        &        &        &         \\
      \vdots &        & 0      &        &        & \vdots  \\
             &        &        & d_1    &        &         \\
             &        &        &        & \ddots & 0       \\
      0      &        & \cdots &        & 0      & d_m     \\
    \end{pmatrix},$$
    for some $d_1, \ldots, d_m \in \Z_n \setminus \{0\}$,
    where $d_1, \ldots, d_m | n$ in $\mathbb{N}$,
    and $d_1 \geq \cdots \geq d_m$.
\end{lemma}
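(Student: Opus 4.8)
The plan is to reduce the problem to the classical Smith normal form over the principal ideal domain $\Z$ and then correct the resulting diagonal entries modulo $n$. First I would lift $Y$ to a matrix $\tilde{Y} \in M_k(\Z)$ by choosing integer representatives of its entries. The Smith normal form over $\Z$ provides $\tilde{A}, \tilde{B} \in GL_k(\Z)$ and nonnegative integers $f_1 \mid f_2 \mid \cdots \mid f_k$ with $\tilde{Y} = \tilde{A}\,\operatorname{diag}(f_1, \ldots, f_k)\,\tilde{B}$. Applying the entrywise reduction $M_k(\Z) \to M_k(\Z_n)$, which is a ring homomorphism carrying $GL_k(\Z)$ into $GL_k(\Z_n)$ (the determinant of an integer unit is $\pm 1$, hence a unit mod $n$), I obtain $Y = A\,D\,B$ with $A, B \in GL_k(\Z_n)$ and $D$ the diagonal matrix over $\Z_n$ with entries $f_1 \bmod n, \ldots, f_k \bmod n$.

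Next I would normalise the diagonal of $D$. For each index $i$ with $f_i \not\equiv 0 \pmod n$, put $d_i = \gcd(f_i, n)$: this is a divisor of $n$ in $\mathbb{N}$ and is nonzero in $\Z_n$, and by \cref{lem:ModularProducts} (or trivially, with $d_i = 1$, when $f_i \bmod n$ is already a unit) there is a unit $u_i \in U_n$ with $(f_i \bmod n)\,u_i = d_i$ in $\Z_n$. For the remaining indices $i$ we have $f_i \equiv 0 \pmod n$ and leave the entry $0$, taking $u_i = 1$. Multiplying $D$ on the right by the diagonal unit matrix with diagonal $(u_i)$ and absorbing its inverse into $B$ replaces $D$ by a diagonal matrix $D'$ whose entries are each either $0$ or a positive divisor of $n$, while keeping both outer factors in $GL_k(\Z_n)$. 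Finally, conjugating $D'$ by a suitable permutation matrix $R$ (equivalently, permuting the rows and columns of $D'$ by the same permutation) sorts the diagonal so that the zero entries occupy the leading positions and the nonzero entries form a weakly decreasing sequence $d_1 \geq \cdots \geq d_m$ in the bottom-right block; absorbing $R^{-1}$ and $R$ into $A$ and $B$ respectively yields $Y = A\,X\,B$ with $X$ of exactly the shape in the statement and $d_1, \ldots, d_m \mid n$ in $\mathbb{N}$.

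Since $X = A^{-1}\,Y\,B^{-1}$ with $A^{-1}, B^{-1} \in GL_k(\Z_n)$, the matrices $X$ and $Y$ generate the same principal two-sided ideal of $M_k(\Z_n)$, so $X \,\J\, Y$, i.e.\ $X \in J_Y$, which completes the argument. I do not expect a serious obstacle here: the only points that need care are routine, namely that the reduction map sends $GL_k(\Z)$ into $GL_k(\Z_n)$, and that a Smith invariant factor divisible by $n$ over $\Z$ contributes a genuine $0$ to $X$ rather than an entry equal to $n$. I should also remark that the divisibility chain $f_1 \mid \cdots \mid f_k$ plays no role — the statement only requires each $d_i$ to divide $n$ and the $d_i$ to be weakly decreasing — so any diagonalisation of $Y$ by invertible matrices over $\Z_n$ would serve equally well as the starting point; lifting to $\Z$ is merely the most economical way to obtain one.
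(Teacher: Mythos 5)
Your argument is correct, and it reaches the conclusion by a genuinely different route from the paper. The paper never leaves $M_k(\Z_n)$: it observes that row/column additions, subtractions, swaps, and scaling by elements of $U_n$ are realised by left/right multiplication by units of $GL_k(\Z_n)$, and then runs a Euclidean-algorithm-style elimination directly over $\Z_n$ to produce a matrix with at most one non-zero entry per row and column, before invoking \cref{lem:ModularProducts} to replace each non-unit non-zero entry $a$ by $\gcd(a,n)$ and permuting into position. You instead lift $Y$ to $M_k(\Z)$, invoke the Smith normal form over $\Z$ as a black box, push the decomposition through the reduction homomorphism (correctly noting that $GL_k(\Z)$ lands in $GL_k(\Z_n)$ because $\pm 1$ is a unit mod $n$), and then perform the same final normalisation via \cref{lem:ModularProducts} and a sorting permutation. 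The two proofs converge at that last step; the only substantive difference is how the diagonalisation is obtained. Your version is shorter because it delegates the elimination to a classical theorem, and your closing remark that the divisibility chain $f_1 \mid \cdots \mid f_k$ is irrelevant is accurate and clarifying; the paper's version is self-contained and keeps all the work inside $M_k(\Z_n)$, which fits its broader reliance on elementary operations being units there. One small point you handled correctly but that is worth flagging as the only place the lift could bite: an invariant factor $f_i$ with $n \mid f_i$ must become a genuine $0$ in $X$ (not the "divisor" $n$), and when $f_i \not\equiv 0 \pmod n$ the gcd $d_i = \gcd(f_i, n)$ is automatically strictly less than $n$, hence non-zero in $\Z_n$ as the statement requires.
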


\begin{proof}
  It is straightforward to show that there are units in $GL_k(\Z_n)$ which, via
  left multiplication, apply the elementary row operations of adding one row to
  another, subtracting one row from another, or exchanging rows in any matrix $Y
  \in M_k(\Z_n)$. This is familiar from linear algebra; the only ``missing''
  standard row operation is scaling by an arbitrary scalar, which is not
  always represented by an element of $GL_k(\Z_n)$. However, scaling some row by
  an element of the group $U_n$ does still correspond to left multiplication by
  an element of $GL_k(\Z_n)$. The dual statements also hold for column operations.
  Fix $Y \in M_K(\Z_n)$. It is sufficient to show that elementary row and column operations
  can ``diagonalise'' $Y$ to obtain $X$; this implies that $Y = AXB$ for $A,
  B \in GL_k(\Z_n)$, and, since multiplication by units preserves $\J$-classes,
  that $X \in J_Y$.
  The diagonalisation procedure applied is essentially that of Gaussian
  elimination, but it is not altogether obvious that such a process applies in
  this case. This is what we now demonstrate.

  Suppose that the entry $Y_{ij}$ is non-zero. Define $g$ to be the greatest
  common divisor of the entries of $Y_{i*}$. By repeatedly subtracting one
  column from another to obtain a new matrix $Y'$, we may apply the Euclidean
  algorithm so that some entry $Y'_{ik}$ in the $i$th row is equal to $g$. By
  subtracting column $k$ from column $j$ an appropriate number of times, we may
  assume that $Y'_{ij} = g$. Similarly, if $h$ is defined as the greatest common
  divisor of $g$ and the entries of $Y_{*j}$, by repeatedly subtracting
  appropriate rows we may produce a matrix $Y''$ with $Y''_{ij} = h$. Since $h$
  divides every entry in row $i$ and column $j$ of $Y''$, we may then use row and
  column subtractions to clear each non-zero entry in row $i$ and column $j$ of
  $Y''$ other than $Y''_{ij}$. Repeating this process produces a matrix $V$ with
  at most one non-zero entry in each row and column. For each row of $V$ with a
  non-zero entry $x$ coprime to $n$, we may scale that row by the inverse of $x$
  in $U_n$ by left multiplication by some unit; this produces a matrix $W$ with
  each entry either $1$ or non-coprime with $n$. By \cref{lem:ModularProducts},
  if row $i$ of $W$ contains the non-zero entry $a$ where $gcd(a, n) = d > 1$,
  there exists some $b \in U_n$ with $ab = d$ mod $n$. We can therefore
  left-multiply by some unit to scale row $i$ so that the non-zero entry in row
  $i$ of the product is $d$. By repeating this for each such row, and then
  permuting rows and columns, the matrix $X$ with the required diagonal form is
  obtained.
\end{proof}

We say that matrices in the form of \cref{lem:Wilf1} are in ``standard diagonal
form''.  The row space $\RowS(X)$ of such a standard diagonal form matrix can
easily be seen to be isomorphic (as an additive group) to the direct product
$G(X) = \Z_{n/d_1} \times \cdots \times \Z_{n/d_m}$. Note that for any prime
divisor $p$ of $n$, $X_p$ is in the form of \cref{lem:Wilf1}. Furthermore, by
considering the prime power decomoposition of $G(X_p)$, it is straightforward to
determine that there are no other matrices $X$ of the same form with $G(X) \cong
G(X_p)$. Hence there are no other matrices $X$ of the same form with $\RowS(X)
\cong \RowS(X_p)$. By \cref{prop:RowSpacesJRelation}, this observation yields
the following lemma. 

\begin{lemma}
  \label{lem:XpUnique}
  Let $p$ be a prime divisor of $n$. Then $X_p$ is the unique standard diagonal
  form matrix in $J_{X_p}$.
\end{lemma}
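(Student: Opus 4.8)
The plan is to leverage the only nontrivial input available, namely \cref{prop:RowSpacesJRelation}, together with the additive-group description $\RowS(X)\cong G(X)=\Z_{n/d_1}\times\cdots\times\Z_{n/d_m}$ of the row space of a standard diagonal form matrix recorded just above the statement. Suppose $X$ is a standard diagonal form matrix with $X\in J_{X_p}$. Then $\RowS(X)\cong\RowS(X_p)$ as additive groups (a linear bijection is in particular additive), and hence $G(X)\cong G(X_p)$; in particular $|G(X)|=|G(X_p)|$. So the whole argument reduces to a counting question about the orders of these finite groups.

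First I would compute $|G(X_p)|$. Writing $X_p=\operatorname{diag}(p\bmod n,\,1,\dots,1)$, there are two cases. If $p<n$ then all $k$ diagonal entries are nonzero, the $k-1$ diagonal $1$'s each contribute a full copy of $\Z_n$, and $|G(X_p)|=(n/p)\cdot n^{k-1}=n^{k}/p$. If $p=n$ (so $n$ is prime) then $X_p=\operatorname{diag}(0,1,\dots,1)$ has only $k-1$ nonzero rows and $|G(X_p)|=n^{k-1}=n^{k}/p$. Either way $|G(X_p)|=n^{k}/p$. Next, for an arbitrary standard diagonal form matrix $X$ with nonzero diagonal entries $d_1\ge\cdots\ge d_m$ (and $k-m$ zero rows, which contribute nothing), $|G(X)|=\prod_{i=1}^{m}(n/d_i)=n^{m}/\prod_{i=1}^{m}d_i$. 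Setting this equal to $n^{k}/p$ gives $\prod_{i=1}^{m}d_i=p\,n^{m-k}$. If $m=k$ this says $\prod d_i=p$; since $p$ is prime and each $d_i$ is a positive integer dividing $n$, exactly one $d_i$ equals $p$ and the rest equal $1$, and the ordering forces $d_1=p$, $d_2=\cdots=d_k=1$, i.e.\ $X=X_p$. If $m<k$ then $\prod d_i=p/n^{k-m}$ is a positive integer, so $n^{k-m}\mid p$; as $n\ge 2$, $k-m\ge 1$ and $p$ is prime, this forces $n^{k-m}=p$, hence $k-m=1$ and $n=p$, whereupon $\prod_{i=1}^{k-1}d_i=1$, all $d_i=1$, and $X=\operatorname{diag}(0,1,\dots,1)=X_p$. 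In every case $X=X_p$, proving the lemma.

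There is no serious obstacle here; the argument is essentially bookkeeping. The points that require a little care are (i) remembering that a diagonal entry equal to $1$ contributes a full copy of $\Z_n$ (not a trivial group) to the row space, so that the exponent of $n$ in $|G(X_p)|$ comes out right, and (ii) handling the degenerate case $p=n$, where $X_p$ genuinely has a zero row, so that the single identity $|G(X_p)|=n^{k}/p$ covers both cases uniformly. One could alternatively compare the invariant-factor (prime-power) decompositions of the two groups, as the text hints, but the crude order count already determines $X$ uniquely, so I would not bother.
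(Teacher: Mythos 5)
Your proof is correct and follows essentially the same route as the paper: both pass through \cref{prop:RowSpacesJRelation} and the identification $\RowS(X)\cong G(X)=\Z_{n/d_1}\times\cdots\times\Z_{n/d_m}$, reducing the lemma to showing that no other standard diagonal form matrix has row space isomorphic (as an additive group) to that of $X_p$. The only difference is that you settle this last step by the cardinality count $|G(X)|=n^{m}/\prod_i d_i$ versus $|G(X_p)|=n^{k}/p$, whereas the paper appeals, without giving details, to the prime-power decomposition of $G(X_p)$; your order count is a complete and slightly more elementary way of filling in that ``straightforward'' step, and your case analysis (including the degenerate case $p=n$, where the $m=k$ branch is in fact vacuous) is sound.
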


\begin{cor}
  \label{cor:XpDistinct}
  Let $p_1$ and $p_2$ be distinct prime divisors of $n$. Then $X_{p_1}$
  and $X_{p_2}$ are not $\J$-related.
\end{cor}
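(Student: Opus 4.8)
The plan is to obtain this at once from \cref{lem:XpUnique}, arguing by contradiction. First I would dispose of a small arithmetic point: since $n$ has two distinct prime divisors $p_1 \ne p_2$, and $\gcd(p_1, p_2) = 1$ with $p_1, p_2 \mid n$, we have $p_1 p_2 \mid n$, so $p_i < n$ and hence $p_i \bmod n = p_i$ for $i = 1, 2$; in particular the $(1,1)$-entry of $X_{p_i}$ is $p_i$, and therefore $X_{p_1} \ne X_{p_2}$.

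Now suppose, for contradiction, that $X_{p_1} \J X_{p_2}$, so that $X_{p_1}$ and $X_{p_2}$ lie in a common $\J$-class $J$. As noted after \cref{lem:Wilf1}, each $X_{p_i}$ is in standard diagonal form. By \cref{lem:XpUnique}, $X_{p_i}$ is the unique standard diagonal form matrix in $J_{X_{p_i}} = J$. Since $X_{p_1}$ and $X_{p_2}$ are both standard diagonal form matrices lying in $J$, uniqueness forces $X_{p_1} = X_{p_2}$, contradicting the previous paragraph. Hence $X_{p_1}$ and $X_{p_2}$ are not $\J$-related.

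There is essentially no obstacle here, as all the work is done by \cref{lem:XpUnique}; the only thing requiring a moment's care is precisely the arithmetic point above, namely that when two distinct prime divisors are present neither $p_i$ can equal $n$, so that the reduction $p_i \bmod n$ is vacuous and the two displayed matrices are genuinely distinct. If one prefers not to invoke \cref{lem:XpUnique}, the same conclusion follows directly from \cref{prop:RowSpacesJRelation}: a $\J$-equivalence between $X_{p_1}$ and $X_{p_2}$ would yield an isomorphism $\RowS(X_{p_1}) \cong \RowS(X_{p_2})$ of additive groups, but $\RowS(X_{p_i}) \cong \Z_{n/p_i} \times \Z_n^{k-1}$ has cardinality $(n/p_i)\,n^{k-1}$, and these cardinalities differ when $p_1 \ne p_2$.
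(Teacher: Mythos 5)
Your proof is correct and follows the same route as the paper, which states the corollary as an immediate consequence of \cref{lem:XpUnique} without further argument: both $X_{p_1}$ and $X_{p_2}$ are in standard diagonal form, so if they were $\J$-related the uniqueness in \cref{lem:XpUnique} would force them to be equal. Your care over the $p_i \bmod n$ point and the alternative count of $|\RowS(X_{p_i})|$ via \cref{prop:RowSpacesJRelation} are both sound, but not needed beyond what the paper intends.
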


In the following two lemmas, we show that the example set
given in \cref{thm:ZnGenSet} indeed generates $M_k(\Z_n)$.

\begin{lemma}
  \label{lem:Wilf2}
  Let $p$ be a prime divisor of $n$.
  Then $J_{X_p} \subseteq \genset{GL_k(\Z_n) \cup \{Y\}}$ for all $Y \in
  J_{X_p}$.
\end{lemma}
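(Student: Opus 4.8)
The goal is to show that for any prime divisor $p$ of $n$ and any $Y \in J_{X_p}$, the whole $\J$-class $J_{X_p}$ is contained in $\genset{GL_k(\Z_n) \cup \{Y\}}$. The plan is to use the structure theory already developed: by \cref{lem:Wilf1}, $Y$ can be written as $AX_pB$ with $A, B \in GL_k(\Z_n)$ and $X_p$ the unique standard diagonal form in $J_Y = J_{X_p}$ (the uniqueness is \cref{lem:XpUnique}). Hence $X_p = A^{-1}YB^{-1} \in \genset{GL_k(\Z_n) \cup \{Y\}}$, so it suffices to show $J_{X_p} \subseteq \genset{GL_k(\Z_n) \cup \{X_p\}}$.

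First I would recall that multiplying by units does not leave a $\J$-class: since $GL_k(\Z_n)$ is the group of units and units preserve $\J$-classes, every product $CX_pD$ with $C, D \in GL_k(\Z_n)$ lies in $J_{X_p}$. So $\{CX_pD : C, D \in GL_k(\Z_n)\} \subseteq J_{X_p} \cap \genset{GL_k(\Z_n) \cup \{X_p\}}$. The key step is therefore the reverse containment: every element of $J_{X_p}$ is of the form $CX_pD$ for units $C, D$. Take any $W \in J_{X_p}$. By \cref{lem:Wilf1} applied to $W$, we can write $W = C'X'D'$ with $C', D' \in GL_k(\Z_n)$ and $X'$ a standard diagonal form matrix in $J_W = J_{X_p}$. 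But by \cref{lem:XpUnique}, $X' = X_p$, so $W = C'X_pD' \in \genset{GL_k(\Z_n) \cup \{X_p\}}$. Combining with the first observation gives $J_{X_p} = \{CX_pD : C, D \in GL_k(\Z_n)\} \subseteq \genset{GL_k(\Z_n) \cup \{X_p\}}$, and composing with $A^{-1}, B^{-1}$ as above finishes the argument.

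The main obstacle — really the only substantive point — is making sure that the standard diagonal form produced by \cref{lem:Wilf1} for an arbitrary $W \in J_{X_p}$ is genuinely forced to equal $X_p$, rather than merely being $\J$-related to it in a way that could a priori admit several diagonal forms. This is exactly what \cref{lem:XpUnique} rules out: it was proved via \cref{prop:RowSpacesJRelation} together with the prime-power decomposition of the group $G(X) = \Z_{n/d_1} \times \cdots \times \Z_{n/d_m}$ associated to a standard diagonal form, which pins down the multiset $\{d_1, \dots, d_m\}$ (and the ordering $d_1 \ge \cdots \ge d_m$ then makes the matrix itself unique). So once \cref{lem:XpUnique} is invoked the proof is essentially immediate, and I expect the write-up to be short: a single paragraph chaining \cref{lem:Wilf1} and \cref{lem:XpUnique} together with the trivial observation that units preserve $\J$-classes.
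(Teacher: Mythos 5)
Your proposal is correct and follows essentially the same route as the paper: both combine \cref{lem:Wilf1} and \cref{lem:XpUnique} to get $J_{X_p} = \set{AX_pB}{A,B \in GL_k(\Z_n)}$, write $Y = EX_pF$ to recover $X_p$ from $Y$ by multiplying by inverses of units, and conclude that the whole class lies in $\genset{GL_k(\Z_n) \cup \{Y\}}$. Your discussion of why \cref{lem:XpUnique} is the crux matches the role it plays in the paper's one-line argument.
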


\begin{proof}
    Let $Y \in J_{X_{p}}$.
    By \cref{lem:Wilf1,lem:XpUnique},
    $J_{X_{p}} = \set{AX_{p}B}{A,B \in GL_k(\Z_n)}$;
    in particular, $Y = EX_{p}F$ for some $E,F \in
    GL_k(\Z_n)$. Hence 
    \begin{align*}
      J_{X_p} &= \set{A(E^{-1}YF^{-1})B}{A,B \in GL_k(\Z_n)} \\
              &\subseteq \genset{GL_k(\Z_n) \cup \{Y\}}. \qedhere
    \end{align*}
\end{proof}

\begin{lemma}\label{lem:ZnGenSet}
  Let $\mathcal{Y} \subseteq M_k(\Z_n)$ be a set of representatives of the
  \(\J\)-classes \(\set{J_{X_{p}}}{p \text{ is a prime divisor of } n }\).
  Then $M_k(\Z_n) = \genset{GL_k(\Z_n) \cup \mathcal{Y}}$.
\end{lemma}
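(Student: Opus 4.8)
The plan is to use \cref{lem:Wilf2} to reduce from an arbitrary set $\mathcal{Y}$ of representatives to the concrete generators $X_p$, and then to show by a direct computation that $\genset{GL_k(\Z_n) \cup \mathcal{X}_n}$ already contains every ``standard diagonal form'' matrix produced by \cref{lem:Wilf1}; since that lemma writes every element of $M_k(\Z_n)$ as a unit times such a diagonal matrix times a unit, this will finish the proof. (The case $n = 1$ is trivial since $|M_k(\Z_1)| = 1$, so throughout we assume $n > 1$.)

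First I would carry out the reduction: for each prime divisor $p$ of $n$, the set $\mathcal{Y}$ contains some $Y_p \in J_{X_p}$, and \cref{lem:Wilf2} gives $X_p \in J_{X_p} \subseteq \genset{GL_k(\Z_n) \cup \{Y_p\}} \subseteq \genset{GL_k(\Z_n) \cup \mathcal{Y}}$. Hence $\mathcal{X}_n \subseteq \genset{GL_k(\Z_n) \cup \mathcal{Y}}$, and it suffices to prove $M_k(\Z_n) = \genset{GL_k(\Z_n) \cup \mathcal{X}_n}$. Write $H$ for this submonoid. For $1 \leq i \leq k$ let $E_i \in GL_k(\Z_n)$ be the permutation matrix exchanging coordinates $1$ and $i$ (so $E_i = E_i^{-1}$ and $E_1 = I$); then $E_i X_p E_i \in H$ is the diagonal matrix with entry $p$ in position $i$ and $1$ elsewhere, and $(E_i X_p E_i)^{a} = E_i X_p^{a} E_i \in H$ is the diagonal matrix with $p^{a} \bmod n$ in position $i$ and $1$ elsewhere. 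All of these are diagonal and hence commute, so for any choice of exponents $a_{i,p}$ the product $\prod_{i,p} (E_i X_p E_i)^{a_{i,p}}$ lies in $H$ and equals the diagonal matrix whose $i$th diagonal entry is $\bigl(\prod_{p \mid n} p^{a_{i,p}}\bigr) \bmod n$.

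Next I would use this to hit an arbitrary standard diagonal form $X = \operatorname{diag}(0, \ldots, 0, d_1, \ldots, d_m)$ from \cref{lem:Wilf1}, where each $d_j$ is a positive divisor of $n$ with $d_j < n$ (being a nonzero element of $\Z_n$ that divides $n$ in $\mathbb{N}$). For the position holding $d_j$, take $a_{i,p}$ to be the exponent of $p$ in the prime factorisation of $d_j$; since $p^{a_{i,p}} \mid d_j$ we have $p^{a_{i,p}} \leq d_j < n$, so no reduction modulo $n$ occurs and $\prod_{p\mid n} p^{a_{i,p}} = d_j$ exactly. For each of the remaining $k - m$ positions take $a_{i,p}$ to be the exponent of $p$ in the factorisation of $n$, so that $\prod_{p\mid n} p^{a_{i,p}} = n \equiv 0$. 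The corresponding product then equals $X$, so $X \in H$; the extreme cases $m = 0$ (the zero matrix) and $m = k$ with all $d_j = 1$ (the identity) are covered with no special treatment. Finally, by \cref{lem:Wilf1} every $Y \in M_k(\Z_n)$ equals $AXB$ with $A, B \in GL_k(\Z_n) \subseteq H$ and $X \in H$ in standard diagonal form, so $Y \in H$ and $M_k(\Z_n) = H = \genset{GL_k(\Z_n) \cup \mathcal{Y}}$.

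I do not expect a serious obstacle here: once the factorisation into permutation‑conjugates and powers of the $X_p$ is set up, everything is routine. The one point genuinely requiring care is ensuring there is no ``wrap‑around'' modulo $n$ while assembling the diagonal entries — which is precisely why it matters that \cref{lem:Wilf1} delivers the $d_j$ as proper divisors of $n$ — together with checking the boundary cases ($n = 1$, the zero matrix, the identity).
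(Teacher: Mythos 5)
Your proposal is correct and follows essentially the same route as the paper: reduce from $\mathcal{Y}$ to $\mathcal{X}_n$ via \cref{lem:Wilf2}, build arbitrary standard diagonal form matrices from permutation-conjugates and powers of the $X_p$ (the paper phrases this via diagonal matrices with a single $0$ or prime entry and leaves the assembly as ``straightforward''), and conclude with \cref{lem:Wilf1}. Your explicit check that no wrap-around modulo $n$ occurs, because each $d_j$ is a proper divisor of $n$, fills in exactly the detail the paper elides.
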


\begin{proof}
  By \cref{lem:Wilf2}, $\mathcal{X}_n \subseteq \genset{GL_k(\Z_n) \cup
    \mathcal{Y}}$.
  Since the matrix
  \[
  \begin{pmatrix}
      0       & 0      & \cdots & 0      \\
      0       & 1      &        & \vdots \\
      \vdots  &        & \ddots & 0      \\
      0       & \cdots & 0      & 1      \\
  \end{pmatrix}
\]
  is a product of matrices in $\mathcal{X}_n$, and row- and column-permutations
  are represented by left and right multiplication by units, $\genset{GL_k(\Z_n)
    \cup \mathcal{Y}}$ contains every diagonal matrix that differs from
  the identity matrix by having a $0$ or a prime divisor of $n$ in exactly one
  place on the main diagonal. It is straightforward to see that any matrix in
  standard diagonal form may be expressed as a product of such matrices. The
  result now follows by \cref{lem:Wilf1}.
\end{proof}

In order to complete the proof of \cref{thm:ZnGenSet}, it remains to show that
the $\J$-classes $J_{X_p}$ lie immediately below the group of units for all $X_p
\in \mathcal{X}_n$. Since any generating set for $M_k(\Z_n)$ must contain a
representative of each $\J$-class immediately below the group of units, the set
$\mathcal{X}_n$ must contain a representative of each $\J$-class immediately
below the group of units. It only remains to prove that all elements of
$\mathcal{X}_n$ lie in such a maximal $\J$-class.

\begin{lemma}
  \label{lem:ZnImmediatelyBelow}
  The $\J$-classes $J_{X_p}$ are precisely the $\J$-classes immediately below
  the group of units in the $\J$-order on $M_k(\Z_n)$.
\end{lemma}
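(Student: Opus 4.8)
The plan is to prove the two containments separately; the first is essentially already available from \cref{lem:ZnGenSet}, while the second is where the work lies.

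First I would show that every $\J$-class that is immediately below $GL_k(\Z_n)$ is of the form $J_{X_p}$ for some prime divisor $p$ of $n$. By \cref{lem:ZnGenSet} the set $GL_k(\Z_n)\cup\mathcal{X}_n$ generates $M_k(\Z_n)$, so it suffices to observe that any generating set of a finite monoid meets every $\J$-class $J$ lying immediately below its group of units: writing an element $s\in J$ as a product of generators $g_1\cdots g_\ell$, we have $J=J_s\leq J_{g_i}$ for all $i$; since $GL_k(\Z_n)$ is the maximum $\J$-class and $s$ is not a unit (as $J<GL_k(\Z_n)$), not every $g_i$ can be a unit, so for some $i$ we have $J\leq J_{g_i}<GL_k(\Z_n)$, and maximality of $J$ among the proper $\J$-classes forces $J_{g_i}=J$, i.e.\ $g_i\in J$. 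Applied to $GL_k(\Z_n)\cup\mathcal{X}_n$, and since no unit lies in a proper $\J$-class, this shows $\mathcal{X}_n$ contains a representative of each such $J$, whence $J=J_{X_p}$ for some prime divisor $p$ of $n$.

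For the converse I would show each $J_{X_p}$ is immediately below $GL_k(\Z_n)$; suppose not, so that some non-unit $Y$ satisfies $J_{X_p}<J_Y$. As $M_k(\Z_n)$ has finitely many $\J$-classes, there is a $\J$-class $J'$ immediately below $GL_k(\Z_n)$ with $J_{X_p}<J_Y\leq J'$; by the first part $J'=J_{X_q}$ for some prime divisor $q$ of $n$, and $q\neq p$ because $J_{X_p}<J'$ is strict while distinct $X_r$ lie in distinct $\J$-classes by \cref{cor:XpDistinct}. Hence $X_p\leq_{\J}X_q$ in $M_k(\Z_n)$. Now reduction modulo $q$ gives a monoid homomorphism $\pi_q\colon M_k(\Z_n)\to M_k(\Z_q)$ (recall $q\mid n$), and monoid homomorphisms preserve $\leq_{\J}$, so $\pi_q(X_p)\leq_{\J}\pi_q(X_q)$ in $M_k(\Z_q)$. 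But $\Z_q$ is a field: $\pi_q(X_q)=\operatorname{diag}(0,1,\ldots,1)$ is singular and so lies in a proper $\J$-class, whereas $\pi_q(X_p)=\operatorname{diag}(p\bmod q,\,1,\ldots,1)$ has $p\bmod q\neq 0$ (since $p\neq q$ are distinct primes), hence is invertible and lies in $GL_k(\Z_q)$, the maximum $\J$-class of $M_k(\Z_q)$. Since a maximum $\J$-class cannot be $\leq_{\J}$ a proper one, this is a contradiction, so $J_{X_p}$ is indeed immediately below $GL_k(\Z_n)$, which completes the proof.

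The point requiring care is this second containment. One cannot obtain it simply by showing $X_p$ is prime, because in general it is not: for instance $X_2=\operatorname{diag}(2,1)=\operatorname{diag}(4,1)\operatorname{diag}(2,1)$ in $M_2(\Z_6)$ exhibits $X_2$ as a product of two non-units. Consequently a count of determinants or of row-space cardinalities does not by itself rule out a proper $\J$-class strictly above $J_{X_p}$; it is the reduction of the hypothetical relation $X_p\leq_{\J}X_q$ to the field $\Z_q$ that makes the argument go through.
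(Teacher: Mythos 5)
Your proof is correct, but the second containment is argued by a genuinely different route from the paper's. Both arguments begin the same way: the generating-set observation reduces everything to ruling out a strict relation $J_{X_p} < J_{X_q}$ for distinct prime divisors $p \neq q$ of $n$. The paper disposes of this by viewing row spaces as additive groups: $\RowS(X_p)$ has prime index $p$ in $\RowS(I) = \Z_n^k$ and is therefore a maximal subgroup, so $X_p = AX_qB$ forces $\RowS(X_p) = \RowS(X_qB)$, and right multiplication by $B$ would then be a surjective group homomorphism $\RowS(X_q) \to \RowS(X_p)$, which Lagrange's theorem forbids when $p \neq q$. You instead push the hypothetical relation $X_p \leq_{\J} X_q$ through the reduction homomorphism $\pi_q: M_k(\Z_n) \to M_k(\Z_q)$ and observe that over the field $\Z_q$ the image of $X_q$ is singular while the image of $X_p$ is invertible, so a unit would lie $\J$-below a non-unit. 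Your version trades the structure theory of $\RowS(X_p)$ as an abelian group for the functoriality of the $\J$-order under monoid homomorphisms plus elementary linear algebra over a field; it is arguably cleaner and avoids any appeal to the maximality of $\RowS(X_p)$ or to Lagrange. The paper's version stays entirely inside $M_k(\Z_n)$ and fits the row-space machinery (\cref{prop:rowsandideals}) already set up for \cref{lem:XpUnique}. Your closing remark that $X_p$ need not be prime (e.g.\ $\operatorname{diag}(2,1) = \operatorname{diag}(4,1)\operatorname{diag}(2,1)$ in $M_2(\Z_6)$, both factors non-units) is correct and a worthwhile caution: it explains why the "prime implies immediately below the units" fact from \cref{sec:matsemigp} cannot be invoked here, and why some genuine argument --- yours or the paper's --- is needed.
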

\begin{proof}
Observe that as an additive group, $\RowS(X_p)$ is a maximal subgroup of
$\RowS(I)$, where $I$ is the identity matrix. Let $J$ be a $\J$-class with
$J_{X_p} \leq J \leq GL_k(\Z_n)$, and suppose that $J$ lies immediately below
$GL_k(\Z_n)$. We wish to show that $J = J_{X_p}$. By \cref{lem:ZnGenSet} $J \in
\set{J_{X_q}}{X_q \in \mathcal{X}_n}$ as otherwise $J$ would not be generated by
$\mathcal{X}_n \cup GL_k(\Z_n)$. Let $q$ be the prime divisor of $n$ such that
$J = J_{X_q}$. It follows that $X_p = AX_qB$ for some matrices $A, B \in
M_k(\Z_n)$. The row space $\RowS(AX_qB) = \RowS(X_p)$ is an additive subgroup of
$\RowS(X_qB)$; since $\RowS(X_p)$ is a maximal subgroup, and $|\RowS(X_qB)| <
\RowS(I)$, it follows that $\RowS(X_p) = \RowS(X_qB)$. Right multiplication by
$B$ is therefore a surjective homomorphism from $\RowS(X_q)$ to $\RowS(X_p)$,
but by Lagrange's Theorem no such homomorphism exists for $p \neq q$; hence $J =
J_p$.
\end{proof}

\printbibliography
\end{document}